\defcite\GS{goncharov_donaldson} 
\defcite\KS{kontsevich_stability}
\defcite\KSW{kontsevich_wall}
\defcite\ful{fulton_intersection}
\defcite\bri{bridgeland_stabilityb}
\defcite\bris{bridgeland_stability}
\defcite\wo{woolf_stability}
\defcite\nag{nagao_donaldson}
\defcite\KY{keller_deriveda}
\defcite\joyb{joyce_configurations2}
\defcite\joyc{joyce_configurations3}
\defcite\joym{joyce_motivic}
\newtheorem{assumption}{Assumption}
\forcsvlist\oper{Div,Rat,Alg,Num,Td,NE,Amp,WCS,Slice,
Stab,conv,IH,IE,sgr,Arg}
\forcsvlist\oper{SF,Re,gldim,Stab,Nef,Grpd}
\opr\Sta{St^{\mathrm a}}
\opr\lIC{\lb{IC}}
\def\NSR{N^1_\bR}
\def\bNE{\ub\NE}
\def\toh{\tfrac12}
\def\R{\bfK}
\def\ip{\bi\pi}
\def\num{\mathrm{num}}
\def\st{\mathrm{st}}
\def\cf{cf.~} 
\def\Cf{Cf.~} 
\begin{document}
\title{Wall-crossing structures on surfaces}

\author{Sergey Mozgovoy}

\address{School of Mathematics, Trinity College Dublin, Dublin 2, Ireland
\newline\indent
Hamilton Mathematics Institute, Dublin 2, Ireland}

\email{mozgovoy@maths.tcd.ie}

\begin{abstract}
Families of Bridgeland stability conditions induce families of stability data, wall-crossing structures and scattering diagrams on the motivic Hall algebra.
These structures can be transferred to the quantum torus if the stability conditions of the family have global dimension at most 2.
We show that geometric stability conditions on surfaces with nef anticanonical bundle have global dimension 2 and we study the resulting family of stability data.
We formulate a conjecture relating this family to the family of stability data associated with a quiver with potential and we verify this conjecture for the projective plane.
\end{abstract}

\maketitle
\tableofcontents
\section{Introduction}
Bridgeland stability conditions on triangulated categories were introduced in \bris as a mathematical incarnation of the notion of $\Pi$-stability in string theory \cite{douglas_geometry,aspinwall_da}.
One can define the notion of a continuous family of stability conditions and consider the universal object in the category of all such families.
This universal object is given by the set of all stability conditions equipped with a natural topology.
It has a structure of a complex manifold \bris. 
\medskip

On the other hand, one can also define the notion of a stability condition on an abelian category \cite{joyce_configurations1,bridgeland_stability}.
Under appropriate assumptions, one can define moduli spaces and moduli stacks of semistable objects of the category  \cite{joyce_configurations1,joyce_configurations2,joyce_configurations3,joyce_configurations4,joyce_motivic,joyce_theory} and consider their invariants, known as Donaldson-Thomas (DT) invariants or BPS invariants.
The behavior of these invariants is controlled by a formula, called the wall-crossing formula \cite{joyce_configurations2,joyce_theory}, reflecting the fact that DT invariants change their values when stability condition crosses certain codimension one subspaces of the space of all stability conditions.
\medskip

The above two approaches were combined in \cite{kontsevich_stability}, where one constructed DT invariants associated to stability conditions on appropriate triangulated 3CY categories.
Moreover, in \cite{kontsevich_stability} one conceptualized the structure formed by the collection of all DT invariants associated to a given stability condition by introducing the framework of stability data on graded Lie algebras
(stability data $(Z,a)$ consists of a central charge $Z:\Ga\to\bC$, where ~$\Ga$ is the grading lattice, and a collection $a=(a_\ga)_{\ga\in\Ga}$ of elements of the Lie algebra).
In the case of unrefined DT invariants, the corresponding Lie algebra is the Lie algebra of a Poisson torus,
and in the case of motivic DT invariants, the corresponding Lie algebra is the Lie algebra of a quantum torus.
One can think about stability data on a graded Lie algebra 
as an analogue of the notion of a stability condition on a triangulated category.
As in the case of stability conditions, one can introduce the notion of a continuous family of stability data on a graded Lie algebra \cite{kontsevich_stability}
(also called a variation of BPS structures \cite{bridgeland_riemann,bridgeland_geometry}).
Importantly, the wall-crossing formula is intrinsically embedded into the notion of continuity of a family of stability data
(the Kontsevich-Soibelman wall-crossing formula).
Given a continuous family of stability conditions on an appropriate triangulated 3CY category, one can define a continuous family of stability data on the corresponding quantum torus \cite{kontsevich_stability}.
\medskip

There exist alternative ways to encode a continuous family
of stability data on a graded Lie algebra.
One of them is a wall-crossing structure \cite{kontsevich_wall} which captures information of stability data along a fixed ray in $\bC$ (usually $\bi\bR_{>0}$ or $-\bR_{>0}$). 
This new structure may carry less information than the original family of stability data unless the central charge rotates in the family.
In the case of a wall-crossing structure on a vector space, it can be effectively and intuitively represented by a scattering diagram \cite{kontsevich_wall,kontsevich_affine,gross_real}.
\vspace{.5cm}

\tikzset{
  ar1/.style={->, thick},
  rct1/.style={
	  rectangle, rounded corners, 
	  draw=black, very thick,
	  fill=black!5,
	  minimum height=2em, 
	},
}
\begin{ctikz}
\node[rct1](A){Continuous family of stability conditions};
\node[rct1,below=.7cm of A](B)
{Continuous family of stability data};
\node[rct1,below=.7cm of B](C){Wall-crossing structure};
\node[rct1,below=.7cm of C](D){Scattering diagram};
\draw[ar1](A)--(B);
\draw[ar1](B)--(C);
\draw[ar1](C)--(D);
\end{ctikz}
\vspace{.5cm}

While the above hierarchy seems to be rather natural, in some situations (for example, in the case of quivers with potential) there is no need to go through all of the above steps and one can go directly from a continuous family of stability conditions (parametrized by a vector space) to a scattering diagram \cite{gross_canonical,bridgeland_scattering,mou_scattering}.
The applications of this approach to the theory of cluster algebras are ubiquitous.
The theory of generalized scattering diagrams, associated to wall-crossing structures on manifolds, has yet to be developed (\cf \cite{bousseau_scattering}).
\medskip

In this paper we will give a detailed introduction to stability conditions, stability data and relationship between them.
We will develop some tools to treat both of the above notions in a unified manner.
For example, it was observed in \cite{bayer_space,bayer_short} that the support property with respect to a quadratic form can be extended from a stability condition to its \nbd.
The same property for stability data was made one of the axioms in the original definition of a continuous family of stability data \KS[\S2.3].
We will show that this axiom follows automatically from a weaker support property. 
In doing so, we introduce the notion of a continuous family of stable supports and prove that the support property for such family with respect to a fixed quadratic form is an open condition.
The corresponding statements for stability conditions and stability data then follow immediately.

\medskip
It was mentioned above that one can associate stability data with a stability condition on a triangulated 3CY category \KS. 
On the other hand, one can associate stability data with a stability condition on an abelian category of homological dimension one \cite{joyce_configurations2}.
The reason for these restrictions on a category is that the precursor of the wall-crossing formula, the Harder-Narasimhan identity in the Hall algebra of the corresponding triangulated or abelian category, can not be transferred to the quantum torus in general.
In this paper we will show that this still can be done if a stability condition $\si=(Z,\cP)$ has global dimension ~$\le 2$.
The global dimension is a real invariant of a slicing $\cP$ \cite{ikeda_q,qiu_global} which measures the distance between interacting slices ~\S\ref{sec:global dim}.
We will show that stability data associated with such a stability condition satisfies the wall-crossing formula
(Corollary ~\ref{cor:WC2}).
We will also show that a continuous family of such stability conditions induces a continuous family of stability data (Theorem \ref{th:fam sd from sc}).
Morally, these results are similar to \cite[\S6.4]{joyce_configurations4}, where one proves a wall-crossing formula for Gieseker-semistable sheaves on a surface having a nef anticanonical bundle.
\medskip

Our next goal is to find meaningful examples in which the above results can be applied.
Given a smooth projective surface $X$, we say that a stability condition $\si=(Z,\cP)$ 
on $\cD=D^b(\Coh X)$ is special geometric if the sheaves $\cO_x$, for $x\in X$, are $\si$-stable of the same phase and if the discriminant~ \eqref{disc1} is negative-definite on $\Ker Z$.
For any pair of real divisors $\be,\om$ with ample~ $\om$, one can define some special geometric stability condition ~$\si_{\be,\om}$~ \cite{arcara_bridgeland,bridgeland_stabilityb}. 
We will prove in Theorem \ref{th:unique shift} that every special geometric stability condition is equal to some $\si_{\be,\om}$ up to the action of $\wtl\GL_2^+(\bR)$ on the space of stability conditions
(\cf \cite{bridgeland_stabilityb}).
The space of special geometric stability conditions is an open subset of the space of all stability conditions.
\medskip

We will prove that if $X$ is a surface having a nef anticanonical divisor, then the global dimensional of any special geometric stability condition is equal to $2$ (Theorem \ref{th:vanish3}).
In the case of $X=\bP^2$ this result was proved in \cite{fan_contractibility} based on the method developed in \cite{li_smoothness}.
This method, for a stability condition $\si_{\be,\om}$, relies on the fact that $K_X$ is proportional to $\om$ and it doesn't seem to generalize to other surfaces.
In our approach we develop a method to control the behavior of mini/max phases of objects under deformations of stability conditions (Theorem \ref{th:phase behave}) and we show that this method can be applied in our situation by proving a new Bogomolov-type inequality for $\si_{\be,\om}$-semistable objects (Theorem \ref{th:BTI}).
In Theorem \ref{th:vanish1} we prove that given two semistable objects $E,\,F$ with $\vi(E)<\vi(F)$ and a nef line bundle $L$, we have
\begin{equation}
\Hom(F\ts L,E)=0.
\end{equation}
This statement is trivial for Gieseker-semistable sheaves, but it becomes more involved in the case of $\si_{\be,\om}$-semistable objects as the object $F\ts L$ is not necessarily $\si_{\be,\om}$-semistable.
\medskip 

The above results imply that for any smooth projective surface $X$ with a nef anticanonical bundle we have a continuous family of stability data parametrized by the space $\Stab^+(\cD)$ of special geometric stability conditions.
Stability conditions contained in the same $\wtl\GL_2^+(\bR)$-orbit produce equivalent stability data, hence it is enough to consider the family of stability data parametrized by pairs of divisors $(\be,\om)$ with ample $\om$.
In the case of $X=\bP^2$, one defined in~ \cite{bousseau_scattering} the corresponding scattering diagram by using the interpretation of DT invariants through intersection cohomology (see \S\ref{sec:inters}).
This approach relies on the equation $\Ext^2(E,E)=0$ for semistable objects $E$, hence one needs to restrict the possible phases (the equation is not true, for example, for $E=\cO_x$).
As we mentioned earlier, the wall-crossing structure (and the corresponding scattering diagram) associated to a continuous family of stability data does not capture in general the whole information of the family as it only preserves stability data along one ray, for every point of the family.
On the other hand, this scattering diagram captures a significant part of the family of stability data and it is fully described in \cite{bousseau_scattering}.
\medskip

Another topic that we address in this paper is the relationship of the above stability data for surfaces and stability data for quivers with potentials.
There exist many examples (see \eg \cite{beaujard_vafa})
of smooth projective surfaces $X$ with a nef anticanonical bundle such that $D^b(\Coh X)$ admits a tilting objects $T'$ that can be lifted to a tilting object $T$ on the canonical bundle $Y=\om_X$ considered as a non-compact 3CY variety (see \S\ref{sec:except}).
In these examples, the algebra $\End(T)^\op$ can be interpreted as the Jacobian algebra $J_W$ of some quiver with potential $(Q,W)$,
and the algebra $\End(T')^\op$ can be interpreted as the partial Jacobian algebra $J_{W,I}$ for a certain cut $I\sbs Q_1$ (see \S\ref{sec:Jacobian}).
By the tilting theory we have
\begin{equation}
D^b(\Coh X)\iso D^b(\mmod J_{W,I}),\qquad D^b_c(\Coh Y)\iso
D^b(\mmod J_{W}),
\end{equation}
where $D^b_c(\Coh Y)$ denotes the derived category of sheaves with compact support.
The first equivalence implies that stability conditions and stability data that we constructed earlier, can as well be associated with the algebra $J_{W,I}$.
\medskip

For a del Pezzo surface, under appropriate assumptions on stability conditions on $D^b_c(\Coh Y)$ and the phases of semistable objects, one can expect that these semistable objects are push-forwards of semistable objects in $D^b(\Coh X)$.
Then one can expect that some components of stability data for $D^b_c(\Coh Y)$ coincide with components of stability data for $D^b(\Coh X)$.
This is not what we are going to do.
\medskip

Given a quiver with potential $(Q,W)$ and a stability function $Z:\bZ Q_0\to\bC$, one can define the corresponding stability data $A_Z^{Q,W}$ in the quantum torus with coefficients in the (localized) Grothendieck ring of varieties with exponentials \S\ref{sec:Jacobian}.
One could invoke instead monodromic mixed Hodge structures \cite{kontsevich_cohomological}, but this would be unnecessary.
In the presence of a cut, the coefficients are contained in the usual (localized) Grothendieck ring of varieties.
Morally, stability data $A_Z^{Q,W}$ should correspond to some stability condition on the derived 3CY category of DG modules over the Ginzburg DG algebra associated to $(Q,W)$.
The above stability data enjoys the wall-crossing formula (Theorem \ref{WC-potential}) and the family of such stability data (parametrized by stability functions $Z:\bZ Q_0\to\bC$) is continuous.
\medskip

Let us assume now that $(Q,W)$ and a cut $I\sbs Q_1$ arise from the tilting theory on a surface~$X$ with a nef anticanonical bundle, as we discussed earlier.
Then we have a family of stability data for $D^b(\Coh X)$ (parametrized by special geometric stability conditions) and a family of stability data for $(Q,W)$ (parametrized by stability functions $Z:\bZ Q_0\to\bC$).
The quantum tori, where these families live, turn out to be the same.
We conjecture that the above families are compatible, meaning that there exists a special geometric stability condition $\si=(Z,\cP)$ on $D^b(\Coh X)\iso D^b(\mmod J_{W,I})$ such that its heart is equal to $\mmod J_{W,I}$ (so that we have a stability function $Z:\bZ Q_0\to \bC$) and such that stability data $A_\si$ corresponding to $\si$ is equal to $A_Z^{Q,W}$.
This conjecture looks somewhat surprising as it gives a strong compatibility of DT invariants on a surface and on its canonical bundle.
We verify this conjecture for $X=\bP^2$ in ~\S\ref{sec:gluing}.
In other examples from ~\cite{beaujard_vafa} we checked that, as long as there exists a special geometric stability condition with the heart equal to $\mmod J_{W,I}$, the conjecture is true (see Remark \ref{phenom}).
This is a manageable problem which will be addressed elsewhere.
The above compatibility means that it is enough to know stability data $A_Z^{Q,W}$ for some $Z$ in order to determine stability data $A_\si$ for all special geometric stability conditions~ $\si$.
A complete (conjectural) description of stability data $A_Z^{Q,W}$ exists for $\bP^2$ and other surfaces 
\cite{beaujard_vafa,mozgovoy_attractor}.
It is fully verified for small dimension vectors.


\subsection*{Plan of the paper}
In \S\ref{sec:cones} and \S\ref{sec:str semigr} we introduce strict cones and strict semigroups and study their properties.
This will be needed later in the definition of stability data and wall-crossing formulas.
In \S\ref{sec:sgr-fam} we introduce families of strict semigroups and study the behavior of their lower sets under deformations.
In \S\ref{secCQ} we study the relationship between strict cones and quadratic forms.
In \S\ref{sec:supp famil} we introduce the notion of a continuous family of stable supports and prove a result about the behavior of such families with respect to quadratic forms.
This result has an important applications in the case of stability conditions (Theorem~\ref{th:Qpos2}) and in the case of stability data (Theorem~\ref{th:Qpos3}).

In \S\ref{sec:stab cond} we give a detailed introduction to Bridgeland stability conditions.
In particular, in Theorem \ref{th:Qpos2} we give a proof of the fact that the support property with respect to a fixed quadratic form is an open condition
and in \S\ref{sec:phase behavior} we introduce a method to control the behavior of mini/max phases under deformations.

In \S\ref{sec:stab data} we give a detailed introduction to stability data.
In particular, in \S\ref{sec:fam stab data} we introduce continuous families of stability data with a weakened support axiom and we prove in Theorem \ref{th:Qpos3} that our definition is equivalent to the original definition of \KS[\S2.3].
In ~\S\ref{sec:WCS} we introduce wall-crossing structures and discuss their relation to continuous families of stability data.

In \S\ref{sec:WCF} we discuss various wall-crossing formulas.
In \S\ref{sec:Gr ring of stacks} we introduce Grothendieck rings of algebraic varieties and stacks
and in \S\ref{sec:motivic Hall} we introduce motivic Hall algebras of exact categories.
In \S\ref{sec:WC hall} we prove the usual wall-crossing formula in the motivic Hall algebra for an appropriate stability condition on a triangulated category.
In \S\ref{sec:wc qt} we prove the wall-crossing formula in the quantum torus under the assumption that our stability condition has global dimension $\le2$.
In \S\ref{sec:sd for sc} and \S\ref{sec:fam of SD for SC} we construct continuous families of stability data associated with continuous families of stability conditions having global dimension $\le2$.

In \S\ref{sec:stab cond surf} we study geometric stability conditions on surfaces.
In particular, in \S\ref{sec:GSC} we show that every
special geometric stability condition is equal to some $\si_{\be,\om}$ up to the action of $\wtl\GL_2^+(\bR)$ on the space of stability conditions.
In \S\ref{sec:large volume} we discuss the relationship of the large volume limit with Gieseker and twisted stability.
In \S\ref{sec:bog-type} we prove a new Bogomolov-type inequality.
In \S\ref{altern param} we discuss alternative ways to parametrize geometric stability conditions.

In \S\ref{sec:SD for GSC} we study stability data associated to special geometric stability conditions.
In ~\S\ref{vanishing} we prove various vanishing results for semistable objects \wrt geometric stability conditions and we prove that special geometric stability conditions on surface with a nef anticanonical bundle have global dimension $2$.
In \S\ref{sec:GSD} we construct a continuous family of geometric stability data.
In \S\ref{sec:inters} we discuss a relation of geometric stability data to intersection cohomology of moduli spaces of semistable objects.

In \S\ref{sec:rel QP} we discuss a relation of geometric stability data to stability data associated to quivers with potentials.
In \S\ref{sec:except} we briefly discuss basic tilting theory for smooth projective algebraic varieties and their canonical bundles.
In \S\ref{sec:rel QP plane} we illustrate this approach on the example of $\bP^2$.
In \S\ref{sec:Jacobian} we introduce quivers with potentials, their Jacobian algebras, partial Jacobian algebras, and their motivic invariants (stability data).
In \S\ref{relating SD} and \S\ref{sec:gluing} we prove that the family of geometric stability data on $\bP^2$ and the family of stability data associated to the corresponding quiver with potential are compatible.

\subsection*{Acknowledgments}
I would like to thank
Pierrick Bousseau,
Chunyi Li,
Emanuele Macr\`i,
Jan Manschot,
Boris Pioline,
Yu Qiu,
and Yan Soibelman
for useful discussions.
I would like to thank 
Boris Pioline
for helpful feedback on a draft version of the paper.

\section{Cones and semigroups}
\label{sec:cone-semigr}

Given an abelian group $M$ and a commutative ring $R$, we define $M_R=M\ts_\bZ R$.
Given a module $M$ over a commutative ring $R$, we define  $M\dual=\Hom_R(M,R)$.

\subsection{Cones}
\label{sec:cones}
Let $E$ be a real vector space of dimension $n>0$.
We equip it with a norm $\nn\cdot$.
For any subset $S\sbs E$, let $\conv(S)$ denote its 
\idef{convex hull}.
By Carath\'eodory's theorem, every element of $\conv(S)$ can be written as a convex combination of $n+1$ points in $S$.
This implies that if $S\sbs E$ is compact, then $\conv(S)$ is also compact.

A subset $C\sbs E$ is called a \idef{cone} if $\bR_{>0}C\sbs C$ and it is called a \idef{convex cone} if also $C+C\sbs C$.
We will say that a cone $C$ is \idef{blunt} if $0\notin C$.
We define a \idef{ray} to be a blunt cone of dimension~$1$.
For any subset $S\sbs E$, we define the 
\idef{conical hull} $\cone(S)$ of $S$
(or the  convex cone generated by ~$S$)
to be the minimal convex cone that contains~ $S$.
Explicitly,
\begin{equation}
\cone(S)=\conv(\bR_{>0}S)
=\sets{\sum_{i=1}^k a_ix_i}{a_i\in\bR_{>0},\, x_i\in S,\,k\ge1}.
\end{equation}
Note that traditionally one also includes $0$ in $\cone(S)$.
We will say that a convex cone $C\sbs E$ is \idef{strict} if its closure $\bar C$ does not contain a line 
or, equivalently, if $\bar C\ms\set0$ is convex.

\begin{lemma}\label{lm:strict}
A convex cone $C\sbs E$ is strict if and only if there exists $u\in E\dual$ such that
\begin{equation}
C\sbs C_u:=\sets{x\in E}{u(x)\ge \nn x}.
\end{equation}
\end{lemma}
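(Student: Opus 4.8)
The plan is to prove the two implications separately: the implication from $C\sbs C_u$ to strictness of $C$ by a direct manipulation of the defining inequality, and its converse by separating, with a linear functional, a suitable compact ``base'' of the cone from the origin.

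Assume first that $C\sbs C_u$ for some $u\in E\dual$. Since $C_u$ is the set where the continuous function $x\mto u(x)-\nn x$ is nonnegative, it is closed, so $\bar C\sbs C_u$; I would then check that $\bar C\ms\set0$ is convex, which is the reformulation of strictness recorded above. Given $x,y\in\bar C\ms\set0$ and $t\in(0,1)$, put $z=tx+(1-t)y$; since $\bar C$ is a convex cone, $z\in\bar C$. If $z=0$, then $y$ is a negative multiple of $x$, so $-x$ is a positive multiple of $y$ and hence lies in $\bar C\sbs C_u$; but then $u(x)\ge\nn x>0$ and $-u(x)=u(-x)\ge\nn x>0$, which is absurd. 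So $z\ne0$, and $\bar C\ms\set0$ is convex; that is, $C$ is strict.

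For the converse, suppose $C$ is strict. The case $\bar C\sbs\set0$ (that is, $\bar C=\es$ or $\bar C=\set0$) is settled by taking $u=0$, since then $C_u=\set0\sps C$; so assume instead that $\bar C$ contains a nonzero vector, and set $B=\sets{x\in\bar C}{\nn x=1}$. Then $B$ is nonempty (rescale a nonzero vector of $\bar C$ to norm $1$, which keeps it in the cone $\bar C$) and compact (a closed subset of the unit sphere). Since $\bar C\ms\set0$ is convex and contains $B$, every convex combination of points of $B$ again lies in $\bar C\ms\set0$, so $0\notin\conv(B)$. As the convex hull of a compact set is compact (noted above, following Carath\'eodory's theorem), $\conv(B)$ is closed, so the standard strict separation theorem provides $u_0\in E\dual$ and $c>0$ with $u_0(y)\ge c$ for all $y\in\conv(B)$, in particular for all $y\in B$. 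Taking $u=u_0/c$: for $x\in C$ with $x\ne0$ one has $x/\nn x\in B$, hence $u(x)=\nn x\cdot u(x/\nn x)\ge\nn x$, while $u(0)=0=\nn0$. Therefore $C\sbs C_u$.

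I expect the converse to be the only part carrying genuine content, and within it the decisive step is the passage to the \emph{compact} convex set $\conv(B)$ that misses the origin: strictness is exactly what forces $0\notin\conv(B)$, while compactness (through Carath\'eodory) is what upgrades a bare linear separation into a uniform gap $c>0$ --- and it is this gap that turns an estimate of the form ``$u_0>0$ on the cone'' into the scale-correct bound $u(x)\ge\nn x$. Everything else is routine bookkeeping.
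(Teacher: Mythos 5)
Your proof is correct and follows essentially the same route as the paper: pass to the closure, take the unit-norm slice of the cone, form its convex hull (compact via Carath\'eodory's theorem), note that strictness keeps the origin out of this hull, and apply Hahn--Banach separation to get a uniform gap that rescales to $u(x)\ge\nn x$. You simply spell out the forward implication and the degenerate cases that the paper dismisses as clear.
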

\begin{proof}
It is clear that $C_u$ is a strict closed cone.
Conversely, let $C$ be a strict cone.
We can assume that it is closed.
Then the set $C_1=\sets{x\in C}{\nn x=1}\sbs C\ms\set0$ is compact,
hence the convex hull $\conv(C_1)\sbs C\ms\set0$ is also compact.
By the Hahn-Banach separation theorem, there exists $u\in E\dual$ and $\eps>0$ such that $u(x)>\eps$ for all $x\in \conv(C_1)$.
This implies that $\frac1\eps u(x)\ge\nn x$ for all $x\in C$.
\end{proof}

\begin{lemma}\label{lm:C VZe}
Let $Z:E\to F$ be a linear map between two finite-dimensional normed vector spaces.
For any strict cone $V\sbs F$ and $\eps>0$,
consider the convex cone
\begin{equation}\label{S VZe}
C(V,Z,\eps)=\cone\sets{x\in E}{Z(x)\in V,\, \nn{Z(x)}\ge\eps\nn x}.
\end{equation}
Then $C(V,Z,\eps)$ is strict and there exists $\eps'>0$ (independent of $Z$) such that
\begin{equation}
\nn{Z(x)}\ge\eps'\nn x\qquad \forall x\in C(V,Z,\eps).
\end{equation}
\end{lemma}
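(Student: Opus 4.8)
The plan is to use Lemma~\ref{lm:strict} twice: once on the target, to turn the strictness of $V$ into a linear functional bound, and once on the source, to recognize $C(V,Z,\eps)$ as strict. First I would apply Lemma~\ref{lm:strict} to $V\sbs F$: since $V$ is a strict cone, there exists $v\in F\dual$ with $\nn{v}=1$ (rescaling) such that $v(y)\ge\nn y$ for all $y\in V$, hence certainly $v(y)\ge\oh\nn y$ on $\bar V$. Now consider the generating set $S=\sets{x\in E}{Z(x)\in V,\ \nn{Z(x)}\ge\eps\nn x}$ of $C(V,Z,\eps)$. For $x\in S$ we have
\begin{equation}
(v\circ Z)(x)=v(Z(x))\ge\nn{Z(x)}\ge\eps\nn x,
\end{equation}
so with $u=\tfrac1\eps\,(v\circ Z)\in E\dual$ we get $u(x)\ge\nn x$ on $S$. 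Since this inequality is linear in $x$ (both sides positively homogeneous of degree $1$), it propagates to $\bR_{>0}S$ and then, by convexity, to all of $\conv(\bR_{>0}S)=\cone(S)=C(V,Z,\eps)$. By Lemma~\ref{lm:strict} (the easy direction, that $C_u$ is strict), $C(V,Z,\eps)$ is contained in the strict closed cone $C_u$, hence is itself strict. This handles the first assertion.

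For the second assertion — the existence of $\eps'>0$ with $\nn{Z(x)}\ge\eps'\nn x$ on all of $C(V,Z,\eps)$, \emph{independent of $Z$} — the point is that the defining inequality $\nn{Z(x)}\ge\eps\nn x$ holds on $S$ but need not survive taking conical/convex hulls, because $\nn{Z(\cdot)}$ is only a seminorm, not convex in the right direction. Instead I would derive the bound from the functional already produced: we have $v(Z(x))\ge\eps\nn x$ on $S$, this \emph{is} linear, so it extends to all $x\in C(V,Z,\eps)$, giving
\begin{equation}
\nn{Z(x)}\ge v(Z(x))\ge\eps\nn x\qquad\forall x\in C(V,Z,\eps),
\end{equation}
where the first inequality uses $\nn v=1$. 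So in fact one may take $\eps'=\eps$, and the claimed independence of $Z$ is automatic. (One small point to check: the normalization $\nn v=1$ is achievable because Lemma~\ref{lm:strict} gives \emph{some} $v$, which we rescale; rescaling $v$ rescales $\eps$ correspondingly in the intermediate step but the final $\eps'=\eps$ statement is about the original $\eps$ in the definition of $C(V,Z,\eps)$, and the chain above is valid verbatim once $\nn v\le 1$, which we arrange.)

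The main obstacle — really the only subtlety — is resisting the temptation to prove the norm bound directly on the generating set and then "pass to the hull": that fails because $x\mapsto\nn{Z(x)}$ is not concave, so a convex combination of vectors satisfying $\nn{Z(x)}\ge\eps\nn x$ need not satisfy it. The fix, as above, is to work with the linear surrogate $v\circ Z$ throughout and only invoke $\nn{Z(x)}\ge v(Z(x))$ at the very end. The uniformity in $Z$ then comes for free because $\eps$ is fixed in the definition and the bound we extract is literally $\eps$ times the norm.
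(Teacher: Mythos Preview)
Your approach is the same as the paper's: pull back the functional $v\in F\dual$ from Lemma~\ref{lm:strict} and observe that the generating set, hence its conical hull $C(V,Z,\eps)$, lies in the strict cone $\sets{x\in E}{(v\circ Z)(x)\ge\eps\nn x}$. The strictness part is correct.

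The error is the normalization $\nn v\le 1$. If $V$ contains any nonzero $y$, then $v(y)\ge\nn y>0$ already forces $\nn v\ge 1$, and typically strictly: e.g.\ with $F=\bR^2$ Euclidean and $V$ the open cone of half-angle $\pi/4$ about the positive $x$-axis, every $v$ with $v\ge\nn\cdot$ on $V$ has $\nn v\ge\sqrt2$. So the step $\nn{Z(x)}\ge v(Z(x))$ fails, and the conclusion $\eps'=\eps$ is in fact false. Concretely, take $E=\bR^3$, $F=\bR^2$ Euclidean, $Z(x,y,z)=(x,y)$, this $V$, and $\eps=1/\sqrt2$: the points $(1,\pm(1-\de),\sqrt{1+(1-\de)^2})$ lie in the generating set, but their sum $(2,0,2\sqrt{1+(1-\de)^2})$ satisfies $\nn{Z(\cdot)}/\nn\cdot\to 1/\sqrt3<\eps$ as $\de\to0$.

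The fix is the one line the paper uses: keep $v$ un-normalized and, on the strict cone already produced, bound $v(Z(x))\le\nn v\cdot\nn{Z(x)}$ to get $\eps\nn x\le v(Z(x))\le\nn v\cdot\nn{Z(x)}$, hence $\eps'=\eps/\nn v$. This depends only on $V$ and $\eps$, not on $Z$, which is exactly the claimed independence.
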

\begin{proof}
There exists $0\ne u\in F\dual$ such that 
$V\sbs C_u=\sets{y\in F}{u(y)\ge\nn y}$.
If $Z(x)\in V$ and $\nn{Z(x)}\ge\eps\nn x$,
then $uZ(x)\ge\nn {Z(x)}\ge \eps\nn x$.
This implies that $C(V,Z,\eps)$ is contained in the strict cone $C'=\sets{x\in E}{uZ(x)\ge \eps\nn x}$.
Let $\nn u>0$ be the operator norm of the bounded operator $u:F\to\bR$.
For any $x\in C'$, we have 
$\eps \nn x\le\n{u Z(x)}\le \nn u\cdot\nn{Z(x)}$
and we choose $\eps'=\eps\nn u\inv$.
\end{proof}

Note that for a ray $\ell\sbs F$, we have
\begin{equation}
C(\ell,Z,\eps)=\sets{x\in E}{Z(x)\in \ell,\, \nn{Z(x)}\ge\eps\nn x}
\end{equation}
as the right hand side is automatically convex.

\begin{lemma}\label{strict sums}
Let $C\sbs E$ be a strict cone.
Then there exists $c>0$ such that
$$\sum\nn {x_i}\le c\nn {\sum x_i}\qquad \forall x_1,\dots,x_k\in C.$$
\end{lemma}
\begin{proof}
Let $u\in E\dual$ be such that $C\sbs C_u$.
For any $x_1,\dots,x_k\in C$, we have
$$\sum\nn {x_i}\le \sum u(x_i)
\le \nn u\cdot \nn{\sum x_i},$$
where $\nn u$ is the operator norm of $u:E\to\bR$.
\end{proof}

\begin{example}\label{ex:norm sum}
Let $V\sbs \bC$ be a strict cone of angle $\theta<\tfrac\pi2$.
Then
$$\sum\n {x_i}\le \frac1{\cos(\theta)}\n {\sum x_i}\qquad \forall x_1,\dots,x_k\in V.$$
Note also that $\n{x_j}\le\n{\sum_i x_i}$ for any $1\le j\le k$.
\end{example}

\subsection{Strict semigroups}
\label{sec:str semigr}
Let $S$ be a commutative semigroup, written additively.
We define an \idef{ideal} of  $S$ to be a subset $I\sbs S$ such that $I+S\sbs I$.
For any subset $X\sbs S$, we define the semigroup generated by $X$ to be
\begin{equation}
\sgr(X)=\sets{a_1+\dots+a_k}{a_i\in X,\, k\ge1}\sbs S.
\end{equation}
If $S$ has \idef{cancellation property}, meaning that $a+b=a+c$ implies $b=c$,
then we can embed ~$S$ into an abelian group generated by $S$ (the Grothendieck group of $S$).
In what follows we will always assume that $S$ has cancellation property and satisfies $S\cap(-S)\sbs\set0$.
In this case we define the partial order on $S$
\begin{equation}
a\le b\iff b-a\in S\cup\set0.
\end{equation}

A subset $I\sbs S$ is an ideal if and only if
it is an \idef{upper set} of the poset $S$, meaning that $a\le b$ and $a\in I$ imply $b\in I$.
Similarly, a subset $I\sbs S$ is the complement of an ideal if and only if it is a \idef{lower set} of the poset $S$, meaning that $a\le b$ and $b\in I$ imply $a\in I$. 

Let $\Ga$ be a free abelian group of finite rank and $S\sbs\Ga$ be a semigroup.
We will say that $S$ is \idef{strict} if it generates a strict convex cone in $\Ga_\bR$.
Then $S$ has automatically the cancellation property and satisfies $S\cap (-S)\sbs\set0$.
Recall that a map between topological spaces is called \idef{proper} if the preimage of every compact set is compact.
In particular, a map $u:S\to\bR$ (where $S$ has discrete topology) is proper if and only if the preimage of every bounded set in $\bR$ is finite.

\begin{lemma}\label{lm:strict sgr}
Let $S\sbs \Ga$ be a strict semigroup without zero. Then
\begin{enumerate}
\item There exists an additive proper map $u:S\to\bR$
such that $u(S)\sbs\bR_{>0}$.

\item For any $a\in S$, the lower set $\loc a=\sets{b\in S}{b\le a}$ is finite.
\item For any $a\in S$, the set 
$\sets{(a_1,\dots,a_k)\in S^k}{\sum a_i=a,\,k\ge1}$ 
is finite.
\end{enumerate}
\end{lemma}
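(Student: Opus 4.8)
The plan is to derive all three statements from a single linear functional, obtained exactly as in Lemma~\ref{lm:strict}. Since $S$ is strict, the convex cone $C=\cone(S)\sbs\Ga_\bR$ is strict, so by Lemma~\ref{lm:strict} there exists $u\in\Ga_\bR\dual$ with $C\sbs C_u$, that is $u(x)\ge\nn x$ for all $x\in C$. Being linear, $u$ restricts to an additive map $u:S\to\bR$, and since $0\notin S$ we get $u(a)\ge\nn a>0$ for every $a\in S$, so $u(S)\sbs\bR_{>0}$. Properness is the only place where discreteness of $\Ga$ is used: for any $N>0$ the set $\sets{a\in S}{u(a)\le N}$ is contained in $\sets{x\in\Ga}{\nn x\le N}$, which is a norm-bounded subset of the lattice $\Ga$ and hence finite. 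This proves~(1).

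For~(2), fix $a\in S$ and take $b\in\loc a$, so that $a-b\in S\cup\set0$. Then $u(a)=u(b)+u(a-b)\ge u(b)\ge\nn b$, because $u$ is non-negative on $S\cup\set0$. Hence $\loc a$ is contained in the finite set $\sets{x\in\Ga}{\nn x\le u(a)}$, so it is finite.

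For~(3), fix $a\in S$ and a tuple $(a_1,\dots,a_k)\in S^k$ with $\sum_i a_i=a$. Each partial sum $b_j=a_1+\dots+a_j$ lies in $\loc a$ (the complementary sum $a_{j+1}+\dots+a_k$ belongs to $S\cup\set0$), and since $a_{j+1}\ne0$ we have a strictly increasing chain $b_1<b_2<\dots<b_k=a$ in the finite poset $\loc a$. Therefore $k$ is bounded by the cardinality of $\loc a$, and the tuple is recovered from the chain via $a_1=b_1$, $a_j=b_j-b_{j-1}$; thus the tuples in question inject into the (finite) set of chains in $\loc a$ ending at $a$. Alternatively one can argue directly from~(1): $\sum_i\nn{a_i}\le\sum_i u(a_i)=u(a)$, while every nonzero lattice vector has norm at least some $\delta>0$, which forces $k\delta\le u(a)$ and $\nn{a_i}\le u(a)$ for all $i$.

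The only non-formal ingredient is the separating functional $u$ furnished by Lemma~\ref{lm:strict}; the rest is bookkeeping, the recurring point being that a norm-bounded subset of the lattice $\Ga$ is finite. I do not anticipate a genuine obstacle.
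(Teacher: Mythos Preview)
Your proof is correct and follows essentially the same approach as the paper: use Lemma~\ref{lm:strict} to produce a linear functional $u$ with $u(x)\ge\nn x$ on the cone generated by $S$, then deduce properness from the discreteness of $\Ga$. The paper's proof is extremely terse (it simply states that $u$ ``satisfies the required properties'' and that the remaining statements ``follow immediately''), whereas you have spelled out the easy deductions for (2) and~(3); the substance is identical.
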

\begin{proof}
Let $\nn\cdot$ be a norm on $\Ga_\bR$.
By Lemma \ref{lm:strict}, there exists a linear map $u:\Ga\to\bR$ 
such that $u(a)\ge \nn a$ for all $a\in S$.
This map satisfies the required properties.
The other statements of the lemma follow immediately.
\end{proof}

Note that if $u:S\to\bR$ is an additive proper map such that $u(S)\sbs\bR_{>0}$, then the subset
\begin{equation}
I_N=\sets{a\in S}{u(a)\le N},\qquad N\ge1,
\end{equation}
is a finite lower set.
Moreover, every finite lower subset of $S$ is contained in $I_N$ for some $N\ge1$.

\subsection{Families of strict semigroups}
\label{sec:sgr-fam}
Let as before $\Ga$ be a free abelian group of finite rank and let $\nn\cdot$ be a norm on $\Ga_\bR$.
For any strict cone $V\sbs\bC$, $Z\in\Ga_\bC\dual=\Hom(\Ga,\bC)$ and $\eps>0$, we define the semigroup
\begin{equation}\label{S1}
S(V,Z,\eps)=\sgr\sets{a\in \Ga}{Z(a)\in V,\, \n{Z(a)}\ge\eps\nn a}\sbs\Ga
\end{equation}
which is strict by Lemma \ref{lm:C VZe}.
Note that for a ray $\ell\sbs\bC$, we have
\begin{equation}\label{S ray}
S(\ell,Z,\eps)=\sets{a\in \Ga}{Z(a)\in \ell,\, \n{Z(a)}\ge\eps\nn a}.
\end{equation}

\begin{lemma}
Let $M$ be a topological space, $Z:M\to\Ga_\bC\dual$ be a continuous map and $x\in M$.
Let $\eps>0$ and $V\sbs\bC$ be a strict cone such that
$$Z_x(\Ga)\cap\dd V=\set0,\qquad 
\n{Z_x(a)}\ne\eps\nn a\quad
\forall 0\ne a\in\Ga.$$
Then, for any finite set $D\sbs\Ga$, there exists an open set $x\in U\sbs M$ such that
$$D\cap S(V,Z_x,\eps)
=D\cap S(V,Z_y,\eps)\qquad\forall y\in U.$$
\end{lemma}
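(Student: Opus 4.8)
The plan is to reduce the claim to a statement about a single \emph{finite} subset of $\Ga$: I will show that only boundedly small lattice points can occur in a decomposition of an element of $D$ inside any of the semigroups $S(V,Z_y,\eps)$, with the size bound uniform for $y$ in a neighbourhood of $x$. For $W\in\Ga_\bC\dual$ write $B(W)=\sets{a\in\Ga}{W(a)\in V,\ \n{W(a)}\ge\eps\nn a}$, so that $S(V,W,\eps)=\sgr B(W)$ by \eqref{S1}. Since $V\sbs\bC$ is a strict cone, Lemma~\ref{lm:strict} provides a real-linear $u:\bC\to\bR$ with $V\sbs\sets{w\in\bC}{u(w)\ge\n w}$; let $\nn u$ denote its operator norm. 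If $b=\sum_{i=1}^k a_i$ with every $a_i\in B(W)$, then summing the inequalities $u(W(a_i))\ge\n{W(a_i)}\ge\eps\nn{a_i}$ and using $u(W(b))\le\nn u\,\n{W(b)}\le\nn u\,\nn W\,\nn b$, where $\nn W$ is the operator norm of $W:\Ga_\bR\to\bC$, yields $\sum_i\nn{a_i}\le\eps\inv\nn u\,\nn W\,\nn b$. As $\Ga_\bC\dual$ is finite-dimensional, $y\mapsto\nn{Z_y}$ is continuous, so I would first choose an open $x\in U_0\sbs M$ on which $\nn{Z_y}\le L:=\nn{Z_x}+1$; then
\[
\sum_{i=1}^k\nn{a_i}\le c\,\nn b,\qquad c:=\eps\inv\nn u\,L,
\]
whenever $y\in U_0$, $a_i\in B(Z_y)$ and $b=\sum_{i}a_i$.

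Next I would put $R:=c\max_{a\in D}\nn a$ (there is nothing to prove if $D=\es$), $\Ga_R:=\sets{a\in\Ga}{\nn a\le R}$ — a finite set, as $\Ga$ is discrete in $\Ga_\bR$ — and $\de:=\min\sets{\nn a}{0\ne a\in\Ga}>0$, $K:=\lfloor R/\de\rfloor$. I claim that for $y\in U_0$ and $0\ne b\in D$,
\[
b\in S(V,Z_y,\eps)\iff\exists\,1\le k\le K,\ a_1,\dots,a_k\in B(Z_y)\cap\Ga_R\ \text{with }\sum_{i=1}^{k}a_i=b.
\]
The implication ``$\Leftarrow$'' is immediate; for ``$\Rightarrow$'' write $b=\sum_i a_i$ with $a_i\in B(Z_y)$, discard any zero summands (at least one remains, since $b\ne0$), and read off from the previous display that $\sum_i\nn{a_i}\le c\nn b\le R$, hence each $a_i\in\Ga_R$ and $k\le K$. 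The value $b=0$ needs no separate discussion: $\cone B(Z_y)$ lies inside the strict cone $C(V,Z_y,\eps)$ of Lemma~\ref{lm:C VZe}, so a sum of elements of $B(Z_y)$ can vanish only if every term does, whence $0\in S(V,Z_y,\eps)$ iff $0\in B(Z_y)$ iff $0\in V$, a condition that does not involve $y$.

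Finally I would use the genericity hypotheses to see that the finite set $B(Z_y)\cap\Ga_R$ is locally constant at $x$. For each of the finitely many nonzero $a\in\Ga_R$ we have $Z_x(a)\notin\dd V$ and $\n{Z_x(a)}\ne\eps\nn a$, so $Z_x(a)$ lies in the open set $V^{\circ}$ or in the open set $\bC\ms\ub V$; by continuity of $y\mapsto Z_y(a)$ and of $y\mapsto\n{Z_y(a)}-\eps\nn a$, the conditions ``$Z_y(a)\in V$'' and ``$\n{Z_y(a)}\ge\eps\nn a$'' are therefore constant, equal to their values at $y=x$, on a neighbourhood of $x$. Intersecting these finitely many neighbourhoods with $U_0$ produces an open $x\in U\sbs M$ with $B(Z_y)\cap\Ga_R=B(Z_x)\cap\Ga_R$ for all $y\in U$. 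Substituting this equality into the displayed equivalence (used at $y\in U$ and at $x\in U_0$) gives $b\in S(V,Z_y,\eps)\iff b\in S(V,Z_x,\eps)$ for every $b\in D$ and every $y\in U$, that is, $D\cap S(V,Z_y,\eps)=D\cap S(V,Z_x,\eps)$.

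The only genuinely non-routine point is the uniformity of the estimate in the first step: a priori $B(Z_y)$ may differ from $B(Z_x)$ in arbitrarily large lattice points, and the whole argument hinges on these points being irrelevant to membership of the finite set $D$. This is exactly what the separating functional $u$ for the strict cone $V$ provides, together with continuity of $Z$ and discreteness of $\Ga$; after that, the two genericity assumptions at $x$ only have to control the finitely many small generators lying in $\Ga_R$.
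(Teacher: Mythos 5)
Your proof is correct and follows essentially the same strategy as the paper's: uniformly for $y$ near $x$, bound the norms of the generators occurring in any decomposition of an element of $D$, thereby reducing the question to the local constancy, on a fixed finite subset of $\Ga$, of the generating set $\sets{a}{Z_y(a)\in V,\ \n{Z_y(a)}\ge\eps\nn a}$, which the two genericity hypotheses supply. The only difference is technical: you obtain the uniform constant $c$ directly from the separating functional for $V$ together with a bound on the operator norm of $Z_y$, whereas the paper gets it by noting that for $\nn{Z_x-Z_y}\le\tfrac12\eps$ the generators at $y$ lie in $S(V,Z_x,\tfrac12\eps)$ and then applying Lemma \ref{strict sums} to that single semigroup; your version treats the two inclusions symmetrically, which is a harmless (arguably cleaner) variation.
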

\begin{proof}
It is enough to assume that $D=\set\ga$.
Let us define
$$ S^\circ(V,Z,\eps)=\sets{a\in \Ga}{Z(a)\in V,\, \n{Z(a)}\ge\eps\nn a}$$
so that $S(V,Z,\eps)$ is the semigroup generated by $ S^\circ(V,Z,\eps)$.
It follows from our assumptions that for any finite set $D'\sbs\Ga$, there exists an open set $x\in U\sbs M$ such that
$$D'\cap  S^\circ(V,Z_x,\eps)=D'\cap S^\circ(V,Z_y,\eps)\qquad\forall y\in U.$$

If $\ga\in S(V,Z_x,\eps)$, then $\ga=\sum_i a_i$ with $a_i\in  S^\circ(V,Z_x,\eps)$. 
By the previous remark, there exists an open set $x\in U\sbs M$
such that $\set{a_i}_i\sbs  S^\circ(V,Z_y,\eps)$ for all $y\in U$.
This implies that $\ga\in S(V,Z_y,\eps)$ for all $y\in U$,
proving one of the required inclusions.

Let us prove the other inclusion.
There exists an open set $x\in U\sbs M$ such that
$\nn{Z_x-Z_y}\le\toh\eps$ for all $y\in U$.
If $\n {Z_y(a)}\ge\eps\nn a$, then
$\n {Z_x(a)}\ge\toh\eps\nn a$.
Therefore 
$$ S^\circ(V,Z_y,\eps)\sbs S^\circ(V,Z_x,\toh\eps)\sbs S(V,Z_x,\toh\eps).$$
By Lemma \ref{strict sums}, there exists $c>0$ such that $\sum\nn{a_i}\le c\nn{\sum a_i}$ for all $a_i\in S(V,Z_x,\toh\eps)$.
Let us consider the finite set $D'=\sets{a\in\Ga}{\nn a\le c\nn\ga}$.
Then we can shrink $U$ so that
$D'\cap  S^\circ(V,Z_x,\eps)=D'\cap S^\circ(V,Z_y,\eps)$ for all $y\in U$.

If $\ga\in S(V,Z_y,\eps)$ for some $y\in U$, then $\ga=\sum a_i$ with $a_i\in S^\circ(V,Z_y,\eps)\sbs S(V,Z_x,\toh\eps)$, hence $\nn {a_i}\le c\nn{\sum a_i}=c\nn\ga$ and $a_i\in D'$.
This implies that $a_i\in  S^\circ(V,Z_x,\eps)$, hence $\ga\in S(V,Z_x,\eps)$.
\end{proof}


\begin{lemma}\label{lower set stability}
Let $M$ be a topological space, $Z:M\to\Ga_\bC\dual$ be a continuous map and $x\in M$.
Let $\eps>0$ and $V\sbs\bC$ be a strict cone such that
$$Z_x(\Ga)\cap\dd V=\set0,\qquad 
\n{Z_x(a)}\ne\eps\nn a\quad
\forall 0\ne a\in\Ga.$$
Then, for any finite lower set $I\sbs S(V,Z_x,\eps)$,
there exists an open set $x\in U\sbs M$ such that
$I\sbs S(V,Z_y,\eps)$ is a lower set for all $y\in U$.
\end{lemma}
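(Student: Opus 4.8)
The plan is to reduce the statement about lower sets to the previous lemma about finite subsets. Recall that a finite lower set $I \subseteq S(V, Z_x, \eps)$ is, in particular, a finite subset of $\Ga$; by the previous lemma (applied to $D = I$, whose hypotheses on $Z_x$ are exactly the ones assumed here), there is an open neighbourhood $U_0 \ni x$ such that
\[
I \cap S(V, Z_x, \eps) = I \cap S(V, Z_y, \eps) \qquad \forall y \in U_0.
\]
Since $I \subseteq S(V, Z_x, \eps)$, the left-hand side is all of $I$, so $I \subseteq S(V, Z_y, \eps)$ for every $y \in U_0$. Thus membership is already taken care of on $U_0$; what remains is to arrange that $I$ is a \emph{lower} set inside $S(V, Z_y, \eps)$, i.e.\ that whenever $a \in I$, $b \in S(V, Z_y, \eps)$ and $b \le a$ in the order of $\Ga$, then $b \in I$.

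The key observation is that the relevant ``potential competitors'' $b$ all lie in a fixed finite set, independent of $y$ in a small enough neighbourhood. Indeed, by Lemma~\ref{strict sums} applied to a strict cone containing $S(V, Z_x, \toh\eps)$ (which, as in the proof of the previous lemma, contains $S^\circ(V, Z_y, \eps)$ once $\nn{Z_x - Z_y} \le \toh\eps$), there is a constant $c > 0$ with $\sum \nn{a_i} \le c \nn{\sum a_i}$ for all $a_i$ in that cone. If $a \in I$ and $b \le a$ with $b \in S(V, Z_y, \eps)$, write $a - b = \sum a_i$ with $a_i \in S^\circ(V, Z_y, \eps)$ (or $a = b$); then $b = a - \sum a_i$ and $\nn{a_i} \le c\nn{a - b}$. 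Combined with $\nn b \le \nn a + \nn{a - b}$ and the bound $\nn{\sum a_i} = \nn{a - b} \le$ (something controlled by $\max_{a \in I}\nn a$, since $a - b \le a$ forces $a - b$ into the lower set of $a$, whose norm is bounded using the same $u$ as in Lemma~\ref{lm:strict sgr}), we see that $b$ ranges over a finite set $D'' \subseteq \Ga$ depending only on $I$, $\nn\cdot$, $u$ and $c$ — not on $y$.

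Now I would apply the previous lemma once more, this time with $D = D''$, to get an open set $U_1 \ni x$ on which $D'' \cap S(V, Z_x, \eps) = D'' \cap S(V, Z_y, \eps)$. Shrinking further so that also $\nn{Z_x - Z_y} \le \toh\eps$ on a neighbourhood $U_2$, and setting $U = U_0 \cap U_1 \cap U_2$, I would conclude: for $y \in U$ and $a \in I$, any $b \le a$ with $b \in S(V, Z_y, \eps)$ lies in $D''$, hence in $D'' \cap S(V, Z_y, \eps) = D'' \cap S(V, Z_x, \eps) \subseteq S(V, Z_x, \eps)$; but then $b \le a \in I$ and $b \in S(V, Z_x, \eps)$, and since $I$ is a lower set in $S(V, Z_x, \eps)$ by hypothesis, $b \in I$. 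This shows $I$ is a lower set in $S(V, Z_y, \eps)$ for all $y \in U$.

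The main obstacle — and the only point needing real care — is the bookkeeping for the finite set $D''$ of competitors: one must check that ``$b \le a$ in $S(V, Z_y, \eps)$'' genuinely forces $\nn b$ to be bounded by a quantity independent of $y$. The subtlety is that the difference $a - b$ is a sum of elements of $S^\circ(V, Z_y, \eps)$ and one is tempted to bound $\nn{a-b}$ via $\nn a$ directly using the strictness of the cone generated by $S(V, Z_x, \toh\eps)$ — but $a$ itself need not lie in that cone, only its relation to $b$ matters. The clean way is: $a - b$ and $b$ both lie in $S(V, Z_y, \eps) \subseteq S(V, Z_x, \toh\eps)$, and $a = (a-b) + b$, so by Lemma~\ref{strict sums} for that cone $\nn b + \nn{a-b} \le c\nn a \le c \max_{a' \in I}\nn{a'}$, giving the bound with no reference to $y$. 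Everything else is routine neighbourhood-shrinking.
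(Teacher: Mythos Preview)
Your approach is essentially the same as the paper's: use the previous lemma to get $I\sbs S(V,Z_y,\eps)$, shrink $U$ so that $S(V,Z_y,\eps)\sbs S(V,Z_x,\toh\eps)$, apply Lemma~\ref{strict sums} to bound all competitors in a single finite set $D$, and then invoke the previous lemma once more for $D$. The ``clean way'' in your last paragraph --- $\nn b+\nn{a-b}\le c\nn a$ --- is exactly what the paper does.

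There is one small omission in your final sentence. To conclude $b\in I$ from the hypothesis that $I$ is a lower set of $S(V,Z_x,\eps)$, you need $b\le a$ \emph{in the order of} $S(V,Z_x,\eps)$, i.e.\ $a-b\in S(V,Z_x,\eps)\cup\set0$. You have only assumed $a-b\in S(V,Z_y,\eps)$. The fix is immediate and symmetric to what you did for $b$: since $\nn{a-b}\le c\nn a$, also $a-b\in D''$, hence $a-b\in D''\cap S(V,Z_y,\eps)=D''\cap S(V,Z_x,\eps)\sbs S(V,Z_x,\eps)$. The paper avoids this slip by writing $\ga=a_1+a_2$ with both $a_i\in S(V,Z_y,\eps)$ and showing simultaneously that both $a_i$ land in $S(V,Z_x,\eps)$.
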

\begin{proof}
By the previous result, we can assume that $I\sbs S(V,Z_y,\eps)$ for all $y\in U$.
Using the same argument as before, we can assume that
$$S(V,Z_y,\eps)\sbs S(V,Z_x,\toh\eps)\qquad \forall y\in U$$
and $c>0$ is such that $\sum \nn {a_i}\le c\nn{\sum a_i}$ for all $a_i\in S(V,Z_x,\toh\eps)$.
Let us consider the finite set
$$D=\bigcup_{\ga\in I}\sets{a\in\Ga}{\nn a\le c\nn\ga}.$$
By the previous result, we can assume that
$D\cap S(V,Z_y,\eps)=D\cap S(V,Z_x,\eps)$ for all $y\in U$.

We need to show that for all $y\in U$ the subset $I\sbs S(V,Z_y,\eps)$ is a lower set, meaning that if $\ga=a_1+a_2 \in I$ and $a_i\in S(V,Z_y,\eps)$,
then $a_i\in I$.
We have $a_i\in S(V,Z_y,\eps)\sbs S(V,Z_x,\toh\eps)$, hence $\nn {a_i}\le c\nn\ga$ and 
$a_i\in D\cap S(V,Z_y,\eps)=D\cap S(V,Z_x,\eps)$.
As $I\sbs S(V,Z_x,\eps)$ is a lower set, we conclude that $a_i\in I$.
\end{proof}

\begin{remark}
We don't know if a similar statement is true if one uses semigroups $C(V,Z_x,\eps)\cap\Ga$ instead of $S(V,Z_x,\eps)$.
\end{remark}

\subsection{Cones and quadratic forms}
\label{secCQ}
\begin{lemma}\label{convex lQ}
[\cf~{\cite{kontsevich_stability,bayer_space}}]
Let $Z:E\to F$ be a linear map between finite-dimensional real vector spaces, $Q:E\to\bR$ be a quadratic form, negative semi-definite on $\Ker Z$, and $\ell\sbs F$ be a ray.
Then the cone
\begin{equation}
C(\ell,Z,Q)=\sets{x\in E}{Z(x)\in\ell,\,Q(x)\ge0}
\end{equation}
is convex and $Q(x+y)\ge Q(x)+Q(y)$ for all $x,y\in C(\ell,Z,Q)$.
This cone is strict if $Q$ is negative-definite on $\Ker Z$.
\end{lemma}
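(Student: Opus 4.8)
The plan is to pass from $Q$ to the symmetric bilinear form $B$ with $Q(x)=B(x,x)$, so that $Q(x+y)=Q(x)+Q(y)+2B(x,y)$. With this in hand the superadditivity assertion $Q(x+y)\ge Q(x)+Q(y)$ is literally equivalent to $B(x,y)\ge0$ for all $x,y\in C(\ell,Z,Q)$, and proving this inequality is the crux; convexity and strictness follow quickly afterwards.

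First I would fix $x,y\in C(\ell,Z,Q)$ (if this set is empty there is nothing to prove) and write the ray as $\ell=\bR_{>0}v$ for a generator $v\ne0$, so that $Z(x)=sv$ and $Z(y)=tv$ with $s,t>0$. The decisive step is to apply the hypothesis to the element $w=tx-sy$: since $Z(w)=tZ(x)-sZ(y)=0$ we have $w\in\Ker Z$, whence $Q(w)\le0$. Expanding $Q(tx-sy)=t^2Q(x)-2st\,B(x,y)+s^2Q(y)\le0$ and rearranging gives $2st\,B(x,y)\ge t^2Q(x)+s^2Q(y)\ge0$, the last inequality because $Q(x),Q(y)\ge0$; dividing by $2st>0$ yields $B(x,y)\ge0$. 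Convexity of $C(\ell,Z,Q)$ is then immediate: stability under multiplication by positive scalars is clear from the definition, and for $x,y$ as above $Z(x+y)=(s+t)v\in\ell$ while $Q(x+y)\ge Q(x)+Q(y)\ge0$, so $x+y\in C(\ell,Z,Q)$.

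For the last clause, assume $Q$ is negative-definite on $\Ker Z$ and, aiming at a contradiction, suppose the closed convex cone $\overline C$ (where $C=C(\ell,Z,Q)$) contains a line. The standard fact that a closed convex cone containing a line $x_0+\bR p$ must also contain $\pm p$ — seen by applying $\tfrac1\tau(\,\cdot\,)$ to $x_0\pm\tau p$ and letting $\tau\to\infty$ — yields a nonzero $p$ with $p,-p\in\overline C$. Since $\overline C$ is contained in the closed set $\{x:Z(x)\in\bR_{\ge0}v\}\cap\{x:Q(x)\ge0\}$, both $Z(p)$ and $Z(-p)$ lie in $\bR_{\ge0}v$; as $Z(-p)=-Z(p)$ this forces $Z(p)=0$, so $p\in\Ker Z\setminus\set0$ while $Q(p)\ge0$, contradicting negative-definiteness of $Q$ on $\Ker Z$. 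Hence $\overline C$ contains no line, so $C$ is strict.

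I do not anticipate a genuine obstacle: the argument is elementary once one hits on the right device, namely the kernel element $w=tx-sy$, the linear combination of $x$ and $y$ annihilated by $Z$. After that it is a one-line manipulation of the quadratic form, together with the textbook characterisation (used in the strictness part) of which closed convex cones contain a line.
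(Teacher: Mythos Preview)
Your proof is correct and follows essentially the same approach as the paper: the kernel element $w=tx-sy$ is (up to a positive scalar) the paper's $y-\lambda x$, and the subsequent expansion of the quadratic form is identical. For strictness the paper simply remarks that one may argue directly or invoke Lemma~\ref{strict VQ}; your direct argument via $\pm p\in\overline C$ is exactly such a direct proof.
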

\begin{proof}
It is enough to show that $Q(x+y)\ge Q(x)+Q(y)$
for all $x,y\in C(\ell,Z,Q)$.
There exists $\la>0$ such that $y-\la x\in\Ker Z$.
Let $q$ be the symmetric bilinear form corresponding to $Q$.
Then 
$2\la q(x,y)=\la^2Q(x)+Q(y)-Q(y-\la x)\ge0$.
Therefore $q(x,y)\ge0$, hence $Q(x+y)\ge Q(x)+Q(y)$ as required.
The last statement can be proved directly or by applying Lemma \ref{strict VQ}.
\end{proof}

\begin{corollary}
Let $Z:E\to F$ be a linear map between finite-dimensional real vector spaces, $Q:E\to\bR$ be a quadratic form, negative semi-definite on $\Ker Z$, and $\ell\sbs F$ be a ray.
Then the cone
\begin{equation}
C^+(\ell,Z,Q)=\sets{x\in E}{Z(x)\in\ell,\,Q(x)>0}
\end{equation}
is convex.
\end{corollary}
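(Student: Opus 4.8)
The plan is to deduce the statement immediately from Lemma~\ref{convex lQ}. First I would observe that $C^+(\ell,Z,Q)$ is itself a cone: if $x$ lies in it and $\la>0$, then $Z(\la x)=\la Z(x)\in\ell$ and $Q(\la x)=\la^2Q(x)>0$, so $\la x\in C^+(\ell,Z,Q)$. Since a cone $C$ is convex as soon as it is stable under addition (for $x,y\in C$ and $t\in(0,1)$ one has $tx,(1-t)y\in C$, hence $tx+(1-t)y\in C$, and at the two endpoints the segment condition is trivial), it suffices to show that $x+y\in C^+(\ell,Z,Q)$ for arbitrary $x,y\in C^+(\ell,Z,Q)$.

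So let $x,y\in C^+(\ell,Z,Q)$. Both lie in the larger cone $C(\ell,Z,Q)$ of Lemma~\ref{convex lQ}. That lemma asserts $C(\ell,Z,Q)$ is convex, hence stable under addition, so $x+y\in C(\ell,Z,Q)$; in particular $Z(x+y)\in\ell$. The same lemma also gives the superadditivity $Q(x+y)\ge Q(x)+Q(y)$, and since $Q(x),Q(y)>0$ we conclude $Q(x+y)>0$. Therefore $x+y\in C^+(\ell,Z,Q)$, which proves convexity.

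I do not expect any genuine obstacle here. The one point worth flagging is that one should resist checking convexity via the midpoint criterion directly: that route leads to the identity $Q(tx+(1-t)y)=t^2Q(x)+2t(1-t)q(x,y)+(1-t)^2Q(y)$ and forces one to control the cross term $q(x,y)$ separately, whereas reducing to stability under addition lets one invoke verbatim the inequality $q(x,y)\ge0$ (equivalently $Q(x+y)\ge Q(x)+Q(y)$) already established inside the proof of Lemma~\ref{convex lQ}.
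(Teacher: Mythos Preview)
Your argument is correct and is exactly the intended deduction: the paper states the result as an unproved corollary of Lemma~\ref{convex lQ}, and your use of the superadditivity $Q(x+y)\ge Q(x)+Q(y)$ from that lemma to upgrade $Q\ge0$ to $Q>0$ is precisely the point. One could shortcut the verification $Z(x+y)\in\ell$ by noting directly that a ray is closed under addition, but going through $C(\ell,Z,Q)$ as you do is fine.
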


\begin{lemma}\label{strict VQ}
Let $Z:E\to F$ be a linear map between finite dimensional real vector spaces, $Q:E\to\bR$ be a quadratic form, negative-definite on $\Ker Z$, and $V\sbs F$ be a strict cone.
Then the convex cone
\begin{equation}
C(V,Z,Q)=\cone\sets{x\in E}{Z(x)\in V,\,Q(x)\ge0}
\end{equation}
is strict.
\end{lemma}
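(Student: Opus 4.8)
The plan is to reduce the statement to Lemma~\ref{lm:strict} by producing a single linear functional $u\in E\dual$ with
\[
\sets{x\in E}{Z(x)\in V,\ Q(x)\ge0}\sbs C_u=\sets{x\in E}{u(x)\ge\nn x}.
\]
Since $C_u$ is a convex cone, it then contains the conical hull $C(V,Z,Q)$ of the set on the left, and Lemma~\ref{lm:strict} yields at once that $C(V,Z,Q)$ is strict. So the whole problem reduces to the task of dominating $\nn x$ by a linear functional on the set of generators.

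First I would establish the key estimate: there exists $C_0>0$ such that $\nn x\le C_0\nn{Z(x)}$ for every $x\in E$ with $Q(x)\ge0$. The content of this is that $Q(x)\ge0$ keeps $Z(x)$ away from $0$, proportionally to $\nn x$. Indeed, if $x\ne0$ and $Q(x)\ge0$ then $Z(x)\ne0$: otherwise $x\in\Ker Z$, and negative-definiteness of $Q$ on $\Ker Z$ would give $Q(x)<0$, a contradiction. Hence the continuous function $x\mto\nn{Z(x)}$ is strictly positive on the compact set $A=\sets{x\in E}{\nn x=1,\ Q(x)\ge0}$ (a closed subset of the unit sphere), so it attains a positive minimum $m$ on $A$. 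Rescaling an arbitrary nonzero $x$ with $Q(x)\ge0$ to unit norm and using that $Z$, $\nn\cdot$ and $Q$ are homogeneous of degrees $1$, $1$, $2$, one obtains $\nn{Z(x)}\ge m\nn x$; take $C_0=m\inv$. (If $A=\es$ the estimate is vacuous and any $C_0$ works.)

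To conclude: since $V$ is strict, Lemma~\ref{lm:strict} provides $v\in F\dual$ with $v(y)\ge\nn y$ for all $y\in V$. Set $u=C_0\,(v\circ Z)\in E\dual$. For $x$ with $Z(x)\in V$ and $Q(x)\ge0$ we then have $u(x)=C_0\,v(Z(x))\ge C_0\nn{Z(x)}\ge\nn x$, so $x\in C_u$. This is precisely the containment of the first paragraph, and $C(V,Z,Q)$ is strict by Lemma~\ref{lm:strict}.

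I do not expect a genuine obstacle; the only delicate point is the compactness step, and it is exactly there that the hypothesis of negative-\emph{definiteness} (rather than mere negative semi-definiteness) of $Q$ on $\Ker Z$ is used --- it is what forces $Z(x)\ne0$ on $A$, and it is the feature distinguishing this statement from the convexity/superadditivity assertion of Lemma~\ref{convex lQ}.
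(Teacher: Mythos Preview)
Your proof is correct and follows the same overall scheme as the paper: pull back a linear functional $v\in F\dual$ witnessing strictness of $V$ via $Z$, and show that (a scalar multiple of) $v\circ Z$ dominates the norm on the generating set $\{Z(x)\in V,\ Q(x)\ge0\}$.

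The only difference is in how the key estimate $\nn x\le C_0\nn{Z(x)}$ on $\{Q\ge0\}$ is obtained. You use a soft compactness argument on the unit sphere. The paper instead constructs a tailored norm: writing $E=K\oplus K^\perp$ with $K=\Ker Z$ and $K^\perp$ the $Q$-orthogonal complement, one chooses $\eps>0$ with $\eps Q(y)<\nn{Z(y)}^2$ for $y\in K^\perp\ms\{0\}$ and sets $\nn x^2=\nn{Z(x)}^2-\eps Q(x)$; then $Q(x)\ge0$ is literally equivalent to $\nn x\le\nn{Z(x)}$, so $C_0=1$. Your route is a bit more elementary and avoids checking that this expression is positive-definite; the paper's route is more explicit and dovetails with the proof of Lemma~\ref{norm-Q}, where the same norm construction reappears. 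Either way, the rest of the argument is identical.
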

\begin{proof}
Let $K=\Ker Z$ and $K^\perp\sbs E$ be its orthogonal complement \wrt $Q$.
Then $E=K\oplus K^\perp$ and $Z:K^\perp\emb F$ is injective.
Let $F$ be equipped with an inner product and the induced norm.
There exists $\eps>0$ such that $\eps Q(y)<\nn{Z(y)}^2$ for all $0\ne y\in K^\perp$. 
We define the norm $\nn\cdot$ on $E$ by 
$$\nn{x}^2=\nn{Z(x)}^2-\eps Q(x).$$
It is indeed a norm: 
for any $x\in K$ and $y\in K^\perp$ we have
$$\nn{x+y}^2=\nn{Z(y)}^2-\eps Q(x)-\eps Q(y)\ge -\eps Q(x)\ge0$$
and the equality is only possible for $x=y=0$.
For $x\in E$, we have $Q(x)\ge0$ if and only if $\nn x\le\nn {Z(x)}$.
As $V$ is strict, there exists $u\in F\dual$ such that $V\sbs C_u=\sets{y\in F}{u(y)\ge\nn y}$.
If $Z(x)\in V$ and $Q(x)\ge0$, then
$uZ(x)\ge\nn{Z(x)}\ge\nn x$.
Therefore $C(V,Z,Q)$ is contained in the strict cone
$\sets{x\in E}{uZ(x)\ge\nn x}$.
\end{proof}

\def\ffs{S}
Given a linear map $Z:E\to F$, we will say that a subset $\ffs\sbs E$ satisfies the \idef{support property} (\wrt $Z$) if there exist norms on $E$ and $F$ such that
\begin{equation}\label{supp1}
\ffs\sbs\sets{x\in E}{\nn {Z(x)}\ge\nn x}.
\end{equation}

\begin{remark}
The support property is equivalent to the condition that 
the following function on $\Hom(E,F)$ defines a norm (\cf Lemma \ref{lm:supp norm})
$$\nn{u}_{Z,S}:=\sup_{x\in S\ms\set0}\frac{\nn{u(x)}}{\nn{Z(x)}},\qquad u\in\Hom(E,F).$$

\end{remark}

\begin{lemma}\label{norm-Q}
A subset $\ffs\sbs E$ satisfies the support property if and only if there exists a quadratic form $Q$ on $E$ such that
\begin{enumerate}
\item $Q$ is negative definite on $\Ker Z$.
\item For any $x\in \ffs$, we have $Q(x)\ge0$.
\end{enumerate}
\end{lemma}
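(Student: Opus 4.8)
The statement to prove is Lemma~\ref{norm-Q}: a subset $S\sbs E$ satisfies the support property \wrt $Z$ if and only if there is a quadratic form $Q$ on $E$ that is negative definite on $\Ker Z$ and nonnegative on $S$. The plan is to prove both directions by producing one object from the other, essentially transcribing between ``$\nn{Z(x)}\ge\nn x$'' and ``$Q(x)\ge0$'' using a norm-induced quadratic form.

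First I would do the easy direction ($\Leftarrow$). Suppose such a $Q$ exists. Pick any inner product on $F$ with induced norm $\nn\cdot_F$. Since $K=\Ker Z$ and its orthogonal complement $K^\perp$ \wrt $Q$ give $E=K\oplus K^\perp$ with $Z|_{K^\perp}$ injective, there is $\eps>0$ such that $\eps Q(y)<\nn{Z(y)}_F^2$ for all $0\ne y\in K^\perp$ --- this is exactly the setup of the proof of Lemma~\ref{strict VQ}. Define $\nn x^2=\nn{Z(x)}_F^2-\eps Q(x)$; the computation in Lemma~\ref{strict VQ} shows this is a genuine norm on $E$, and moreover $Q(x)\ge0$ if and only if $\nn x\le\nn{Z(x)}_F$. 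Hence for $x\in S$ we get $\nn{Z(x)}_F\ge\nn x$, which is the support property \eqref{supp1} (rescaling one of the norms if one wants a strict inequality is harmless, or one simply reads \eqref{supp1} with $\ge$). So the support property holds.

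For the converse ($\Rightarrow$), suppose $S\sbs\sets{x\in E}{\nn{Z(x)}_F\ge\nn x_E}$ for some norms. The difficulty is that arbitrary norms need not come from inner products, so I cannot directly read off a quadratic form. The fix is to replace $\nn\cdot_E$ by a dominating Euclidean norm: by equivalence of norms on the finite-dimensional space $E$ there is a constant $c>0$ and an inner-product norm $\nn\cdot_{E,2}$ with $\nn x_{E,2}\le c\,\nn x_E$ for all $x$, wait --- I actually need $\nn\cdot_E \ge (\text{something Euclidean})$, so I instead pick a Euclidean norm $\nn\cdot_{E,2}$ with $\nn\cdot_{E,2}\le \nn\cdot_E$ (rescale John's ellipsoid / any equivalent Euclidean norm downward by the comparison constant). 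Similarly pick a Euclidean norm $\nn\cdot_{F,2}\ge\nn\cdot_F$ on $F$; after rescaling we still have $\nn{Z(x)}_{F,2}\ge\nn x_{E,2}$ for $x\in S$. Now set $Q(x)=\nn{Z(x)}_{F,2}^2-\nn x_{E,2}^2$, a quadratic form on $E$. Then $Q(x)\ge0$ for all $x\in S$, giving~(2). For~(1): on $\Ker Z$ we have $Q(x)=-\nn x_{E,2}^2<0$ for $x\ne0$, so $Q$ is negative definite on $\Ker Z$, giving~(1).

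The main obstacle is the mild subtlety in the ($\Rightarrow$) direction that one cannot take squares of a non-Euclidean norm and expect a quadratic form; the resolution --- squeeze in comparable Euclidean norms on $E$ and $F$ in the right directions so the two defining inequalities survive --- is routine but must be set up with the inequalities oriented correctly (Euclidean norm below $\nn\cdot_E$, Euclidean norm above $\nn\cdot_F$, preserving $\nn{Z(x)}\ge\nn x$ on $S$). Everything else is a direct transcription, with the ($\Leftarrow$) direction borrowing verbatim the norm construction already carried out in the proof of Lemma~\ref{strict VQ}.
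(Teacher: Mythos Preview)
Your proposal is correct and follows essentially the same approach as the paper. For ($\Leftarrow$) you reproduce verbatim the norm construction from Lemma~\ref{strict VQ}, which is exactly what the paper does; for ($\Rightarrow$) the paper simply writes ``we can assume that the norms on $E$ and $F$ are induced by inner products'' and sets $Q(x)=\nn{Z(x)}^2-\nn x^2$, whereas you spell out the equivalence-of-norms reduction explicitly---but this is the same argument, just with more detail on why the reduction is legitimate.
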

\begin{proof}
Let us assume that $\ffs$ satisfies the support property.
We can assume that the norms on $E$ and $F$ are induced by inner products.
Then we define the quadratic form
$$Q(x)=\nn{Z(x)}^2-\nn x^2$$
which satisfies the required properties.

Conversely, let us assume that $Q$ satisfies the required properties.
Let $K=\Ker Z$ and $K^\perp\sbs E$ be its orthogonal complement \wrt $Q$. 
Then $E=K\oplus K^\perp$ and 
$Z:K^\perp\emb F$ is injective.
Let $F$ be equipped with an inner product and the induced norm.
There exists $\eps>0$ such that $\eps Q(y)<\nn{Z(y)}^2$ for all $0\ne y\in K^\perp$. 
We define the norm on $E$ by
(\cf Lemma \ref{strict VQ})
$$\nn{x}^2=\nn{Z(x)}^2-\eps Q(x).$$
For $x\in \ffs$, we have $Q(x)\ge0$, hence 
$\nn {Z(x)}\ge \nn x$.
\end{proof}

\subsection{Families of stable supports}
\label{sec:supp famil}
Let $\Ga$ be a free abelian group of finite rank and $\nn\cdot$ be a norm on $\Ga_\bR$.
We define a \idef{stable support}
to be a triple $(Z,\fr,\fs)$,
where $Z:\Ga\to\bC$ is a linear map
and $\fr\sbs\fs\sbs\Ga\ms\set0$ are subsets
satisfying for some $\eps>0$ (\cf \eqref{supp1})
\begin{equation}
\fs\sbs\sets{\ga\in\Ga}{\n{Z(\ga)}\ge\eps\nn\ga}
\end{equation}
and such that for every ray $\ell\sbs\bC$, we have
\begin{equation}
\fs\cap Z\inv(\ell)\sbs \sgr(\fr\cap Z\inv(\ell)).
\end{equation}


Intuitively, one can think about $\fr$ as the set of classes of stable objects and about $\fs$
as the set of classes of semistable objects.

Let $M$ be a topological space.
We define a \idef{continuous family of stable supports} on $M$
to be a collection of stable supports $(Z_x,\fr_x,\fs_x)_{x\in M}$ such that $Z:M\to\Ga_\bC\dual$ is continuous and
\begin{enumerate}
\item For any $x\in M$, there exists $\eps>0$ and an open set $x\in U\sbs M$ such that
\begin{equation*}
\fs_y\sbs\sets{\ga\in\Ga}{\n{Z_y(\ga)}\ge\eps\nn\ga}\qquad
\forall y\in U.
\end{equation*}
\item
For any $\ga\in\Ga$, the set $\sets{x\in M}{\ga\in\fr_x}$ is open.
\item
For any $\ga\in\Ga$, the set $\sets{x\in M}{\ga\in\fs_x}$ is closed.
\end{enumerate}

\begin{theorem}\label{th:Qpos1}
Let $M$ be path-connected and $(Z,\fr,\fs)$ be a continuous family of stable supports on ~$M$.
Let $Q:\Ga_\bR\to\bR$ be a quadratic form that is negative semi-definite on $\Ker Z_x$ for all $x\in M$
and such that there exists $x\in M$ satisfying $Q(\ga)\ge0$ for all $\ga\in\fs_x$.
Then we have $Q(\ga)\ge0$ for all $\ga\in\fs_y$ and $y\in M$.
\end{theorem}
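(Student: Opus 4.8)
The plan is to show that the set $\{\,y\in M : Q(\ga)\ge0 \text{ for all }\ga\in\fs_y\,\}$ is both open and closed in $M$; since it is nonempty by hypothesis and $M$ is path-connected (hence connected), this forces it to be all of $M$. Closedness is the easy half: for each fixed $\ga\in\Ga$ the set $\{x : \ga\in\fs_x\}$ is closed by axiom (3) of a continuous family of stable supports, and $\{x : Q(\ga)\ge0 \text{ or }\ga\notin\fs_x\}$ is therefore closed as well (its complement $\{x : \ga\in\fs_x\text{ and }Q(\ga)<0\}$ is open, being the intersection of the open set $\{x:\ga\in\fs_x\}$—wait, that set is closed, not open). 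Let me instead argue closedness directly via nets/sequences or, cleanly: the bad set $B=\{y : \exists\,\ga\in\fs_y,\ Q(\ga)<0\}=\bigcup_{\ga:\,Q(\ga)<0}\{y:\ga\in\fs_y\}$ is a union of closed sets, so it need not be closed a priori—so this naive splitting does not immediately work, and the openness/closedness must both be extracted more carefully from the quadratic-form machinery of \S\ref{secCQ}.

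The real engine is the following local statement, which I would isolate as the key step: \emph{if $Q(\ga)\ge0$ for all $\ga\in\fs_x$ at some point $x$, then the same holds on a whole neighborhood of $x$.} To prove this, fix $x$ and let $U$, $\eps>0$ be as in axiom (1), so that $\fs_y\sbs\{\ga:\n{Z_y(\ga)}\ge\eps\nn\ga\}$ for all $y\in U$. By the remark following Lemma~\ref{norm-Q}, the support property at $x$ means $\n{\,\cdot\,}_{Z_x}$ is comparable to $\nn\cdot$, and $Q$ being $\ge0$ on $\fs_x$ together with $Q$ negative-definite on $\Ker Z_x$ puts us in position to invoke Lemma~\ref{norm-Q}/Lemma~\ref{strict VQ}: the cone $C(\ell,Z_x,Q)=\{\,\xi : Z_x(\xi)\in\ell,\ Q(\xi)\ge0\,\}$ is the relevant comparison object for each ray $\ell$. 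The crucial point is that $\fs_x$ lies in the (closed, strict-in-the-relevant-directions) region cut out by $Q\ge0$, and that this containment propagates: for $y$ near $x$ and $\ga\in\fs_y$, either $\ga$ already lies in $\fs_x$ (handled by hypothesis) or—and here is where the stable/semistable distinction and the semigroup condition $\fs_y\cap Z_y\inv(\ell)\sbs\sgr(\fr_y\cap Z_y\inv(\ell))$ enter—$\ga$ is a sum of classes of $\fr_y$-elements on a single ray $\ell$, each of which, by openness axiom (2), lies in $\fr_x$ provided we bound their norms and shrink $U$; then $Q(\ga)\ge\sum Q(\text{pieces})\ge0$ by the superadditivity of $Q$ on $C(\ell,Z_x,Q)$ from Lemma~\ref{convex lQ}. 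To make the norm-bounding rigorous one uses that $Z_y$ is close to $Z_x$, that $Q\ge0$ on each piece forces the pieces into a strict cone (Lemma~\ref{strict VQ}, as $Q$ is negative-definite on $\Ker Z_x$—reducing to this case by restricting to $\Ker Z_x + \langle\ga\rangle$ or perturbing $Q$), and then Lemma~\ref{strict sums} gives $\sum\nn{\text{pieces}}\le c\nn\ga$, so only finitely many classes $a$ with $\nn a\le c\nn\ga$ can occur, and axiom (2) applied to that finite set lets us shrink $U$.

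With the local propagation statement in hand, the good set $G=\{y : Q(\ga)\ge0\ \forall\ga\in\fs_y\}$ is open. For closedness, suppose $y_0\notin G$, i.e.\ $Q(\ga_0)<0$ for some $\ga_0\in\fs_{y_0}$; I want a neighborhood of $y_0$ disjoint from $G$. The issue is that $\fs$ is only \emph{upper} semicontinuous-in-the-wrong-direction (axiom (3): membership is a closed condition), so $\ga_0$ need not stay semistable near $y_0$. However, applying the \emph{stable} refinement: on the ray $\ell_0=\bR_{>0}Z_{y_0}(\ga_0)$ we have $\ga_0=\sum a_i$ with $a_i\in\fr_{y_0}\cap Z_{y_0}\inv(\ell_0)$, and by openness axiom (2) each $a_i\in\fr_y\subseteq\fs_y$ for $y$ near $y_0$; since $Q(\ga_0)=Q(\sum a_i)<0$ while superadditivity would give $Q(\sum a_i)\ge\sum Q(a_i)$, at least one $a_i$ has $Q(a_i)<0$ (here one must again confine the $a_i$ to a strict cone so superadditivity applies—same device as above), and that single class $a_i\in\fs_y$ with $Q(a_i)<0$ witnesses $y\notin G$ on a neighborhood. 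Hence $M\setminus G$ is open, $G$ is clopen and nonempty, so $G=M$.

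\textbf{Main obstacle.} The delicate point is not the connectedness argument but the \emph{uniform} control needed to run both halves: one must, after shrinking $U$ finitely many times, bound the norms of the decomposition pieces of an arbitrary $\ga\in\fs_y$ on a ray, and this requires knowing those pieces sit in a \emph{fixed} strict cone (so that Lemma~\ref{strict sums} applies with a constant independent of $y$ and $\ga$) and that the superadditivity inequality of Lemma~\ref{convex lQ} is available—which needs $Q$ negative \emph{semi}-definite on each $\Ker Z_y$, exactly what is hypothesized. Reconciling the semi-definite hypothesis with the strictness conclusions of Lemma~\ref{strict VQ} (which wants negative-\emph{definiteness}) by restricting to the finite-dimensional slice spanned by $\Ker Z_x$ and the finitely many relevant classes is the technical heart of the proof.
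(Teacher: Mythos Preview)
Your closedness half (showing $M\setminus G$ is open) is correct: decompose a bad $\ga_0\in\fs_{y_0}$ along its ray into $\fr_{y_0}$-pieces, note that some piece $a_{i_0}$ must have $Q(a_{i_0})<0$ by Lemma~\ref{convex lQ}, and then use axiom~(2) in the forward direction $a_{i_0}\in\fr_{y_0}\Rightarrow a_{i_0}\in\fr_y$ for nearby $y$. But the openness half has a genuine gap. You claim that the decomposition pieces $a_i\in\fr_y$ ``by openness axiom~(2), lie in $\fr_x$''; however, axiom~(2) says $\{z:a\in\fr_z\}$ is \emph{open}, so membership in $\fr$ propagates \emph{outward} from a point, not inward to it. From $a_i\in\fr_y$ at a single nearby $y$ you cannot deduce $a_i\in\fr_x$ (nor $a_i\in\fs_x$, since axiom~(3) would require $a_i\in\fs_z$ for \emph{all} $z$ near $x$). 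Without this, the hypothesis $Q\ge0$ on $\fs_x$ never meets the pieces of $\ga\in\fs_y$, and the argument collapses. There is also a uniformity problem: even for a single $\ga$ your neighborhood $U$ depends on $\nn\ga$ through the finite set of candidate pieces, so no single $U$ could work for all $\ga\in\fs_y$ at once.

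The paper avoids both issues by a different mechanism. After reducing to $M=[0,1]$ with the good point at $t=1$, it proves $Q(\ga)\ge 0$ for each $\ga\in\fr_0$ by induction on $\n{Z_1(\ga)}$ (the inductive hypothesis being that $Q\ge0$ on the finite set $D$ of classes $\ga'\in\bigcup_t\fs_t$ with $\n{Z_1(\ga')}<\n{Z_1(\ga)}$). The point is to run axiom~(2) \emph{forward}: take the maximal $t_1$ with $\ga\in\fr_t$ for all $t\in[0,t_1)$; then $\ga\in\fs_{t_1}$ by axiom~(3). If $t_1=1$ we are done; otherwise $\ga\in\fs_{t_1}\setminus\fr_{t_1}$ decomposes on a ray into $\fr_{t_1}$-pieces $\ga_i$, and an angle/norm estimate (uniform $\eps$ from compactness of $[0,1]$, then $\nn{Z_t-Z_1}$ small) forces $\n{Z_1(\ga_i)}<\n{Z_1(\ga)}$, so each $\ga_i\in D$ and $Q(\ga_i)\ge 0$ by induction; Lemma~\ref{convex lQ} gives $Q(\ga)\ge0$. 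This induction-along-the-path is precisely what your openness step was meant to supply, and it uses axioms~(2) and~(3) in the directions they actually go.
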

\begin{proof}
By Lemma \ref{convex lQ} it is enough to show that $Q(\ga)\ge0$ for all $\ga\in\fr_y$ and $y\in M$.
We can assume that $M=[0,1]$ and that $Q(\ga)\ge0$ for all $\ga\in\fs_1$.
As $M$ is compact, there exists $\eps>0$ such that 
$$\fs_t\sbs \sets{\ga\in\Ga}{\n{Z_t(\ga)}\ge\eps\nn\ga}$$
for all $t\in M$.
Let $0<\theta<\pi/4$.
By dividing $[0,1]$ into a finite union of intervals,
we can assume that 
$\nn{Z_t-Z_1}\le \frac\eps2\sin(\theta)$
for all $t\in[0,1]$.
For any $\ga\in\fs_t$, we have 
$\n{Z_t(\ga)-Z_1(\ga)}\le\frac\eps2\nn\ga\le\toh\n{Z_t(\ga)}$, hence 
$$\n{Z_1(\ga)}\ge\toh \n{Z_t(\ga)}\ge\tfrac\eps2\nn\ga.$$
On the other hand, if $\ga\in\Ga\ms\set0$ satisfies
$\n{Z_1(\ga)}\ge\tfrac\eps2\nn\ga$, then
$$\n{Z_t(\ga)-Z_1(\ga)}\le \tfrac\eps2\sin(\theta)\nn{\ga}\le\sin(\theta)\n{Z_1(\ga)},$$
hence the angle between $Z_t(\ga)$ and $Z_1(\ga)$ is $\le\theta$.
If $\ga=\ga_1+\ga_2$ with $Z_t(\ga_i)$ contained in the same ray and $\n{Z_1(\ga_i)}\ge\tfrac\eps2\nn{\ga_i}$, 
then the angle between $Z_t(\ga_i)$ and $Z_1(\ga_i)$ is $\le\theta$,
hence the angle between $Z_1(\ga_1)$ and $Z_1(\ga_2)$ is $\le 2\theta<\pi/2$.
This implies that
$$\tfrac\eps2\nn{\ga_i}\le\n{Z_1(\ga_i)}<\n{Z_1(\ga)},\qquad i=1,2.$$
For a fixed $\ga\in\fr_0$, we can assume by induction that the result is true for all elements of the finite set 
$$D=\sets{\ga_1 \in\bigcup_t\fs_t}
{\tfrac\eps2\nn{\ga_1}\le\n{Z_1(\ga_1)}<\n{Z_1(\ga)}}.$$
Let us choose maximal $t>0$ such that $\ga\in\fr_{t'}$ for all $t'\in[0,t)$.
Then $\ga\in\fs_{t}$.
If $t=1$, then we are done.
Otherwise, we have $\ga\in\fs_{t}\ms\fr_{t}$, hence we can decompose $\ga=\sum_{i=1}^n\ga_i$, where $\ga_i\in\fr_t\cap Z\inv(\ell)$, $\ell=\bR_{>0}Z_t(\ga)$ and $n\ge2$.
By the above discussion, we have $\ga_i\in D$, hence $Q(\ga_i)\ge0$ by inductive assumption.
By Lemma \ref{convex lQ} we conclude that $Q(\ga)\ge0$.
\end{proof}

\begin{corollary}
Let $M$ be locally path-connected and $(Z,\fr,\fs)$ be a continuous family of stable supports on ~$M$.
Let $Q:\Ga_\bR\to\bR$ be a quadratic form and $x\in M$ be such that $Q$ is negative-definite on $\Ker Z_x$ 
and $Q(\ga)\ge0$ for all $\ga\in\fs_x$.
Then we have $Q(\ga)\ge0$ for all $\ga\in\fs_y$ and $y$ in some \nbd of $x$.
\end{corollary}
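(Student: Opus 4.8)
The plan is to deduce this from Theorem~\ref{th:Qpos1}. That theorem requires $Q$ to be negative semi-definite on $\Ker Z_y$ for \emph{every} $y$ in the (path-connected) parameter space, whereas here we are only told $Q$ is negative-definite on $\Ker Z_x$ at the single point $x$. So the one thing to establish is that negative-definiteness of $Q$ on $\Ker Z_x$ is an open condition: there is an open set $x\in U_0\sbs M$ such that $Q$ is negative-definite (in particular negative semi-definite) on $\Ker Z_y$ for all $y\in U_0$. Once this is known, we use local path-connectedness to pick a path-connected open $U$ with $x\in U\sbs U_0$; the restriction of $(Z,\fr,\fs)$ to $U$ is again a continuous family of stable supports (axioms (1)--(3) and continuity of $Z$ all restrict to open subsets), $Q$ is negative semi-definite on $\Ker Z_y$ for all $y\in U$, and $Q(\ga)\ge0$ for all $\ga\in\fs_x$; Theorem~\ref{th:Qpos1} applied to $U$ then yields $Q(\ga)\ge0$ for all $\ga\in\fs_y$ and all $y\in U$, which is the claim.

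To prove the openness statement I would argue by contradiction using compactness of the unit sphere $\set{v\in\Ga_\bR: \nn v=1}$. If no such $U_0$ existed, there would be a sequence $y_i\to x$ and unit vectors $v_i\in\Ker Z_{y_i}$ with $Q(v_i)\ge0$. Passing to a convergent subsequence, say $v_i\to v$ with $\nn v=1$, continuity of $Z:M\to\Ga_\bC\dual$ (a map into a finite-dimensional space, so $Z_{y_i}\to Z_x$ uniformly on bounded sets) gives
\[
Z_x(v)=\lim_i Z_{y_i}(v_i)=0,
\]
since $Z_{y_i}(v_i)=0$ for all $i$. Hence $v\in\Ker Z_x$ with $\nn v=1$, while continuity of $Q$ forces $Q(v)=\lim_i Q(v_i)\ge0$, contradicting that $Q$ is negative-definite on $\Ker Z_x$. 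This establishes the existence of $U_0$.

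The main (and really only) obstacle is this compactness/limit argument isolating the openness of negative-definiteness on the kernel; the rest is bookkeeping, namely checking that restriction to an open subset preserves the notion of a continuous family of stable supports and invoking local path-connectedness to land inside a path-connected open neighbourhood. Note that negative semi-definiteness on the kernel is genuinely \emph{not} an open condition in general, which is why the hypothesis at $x$ must be the strict (negative-definite) one; the deformed kernels $\Ker Z_y$ can only shrink near $x$ (the rank of $Z_y$ is lower semi-continuous), and the argument above is precisely the clean way to exploit that.
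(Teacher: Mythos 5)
Your proposal is correct and follows the same route as the paper: the paper's proof is exactly ``there exists an open $U\ni x$ on which $Q$ is negative-definite on $\Ker Z_y$, shrink to a path-connected neighbourhood, apply Theorem~\ref{th:Qpos1}'', and you supply the compactness argument for the openness step that the paper leaves implicit. (One cosmetic point: since $M$ is an arbitrary topological space, the sequence $y_i\to x$ is best extracted after pushing forward to the metrizable space $\Hom(\Ga,\bC)$ via the continuous map $Z$ — the condition only depends on $Z_y$ — but this changes nothing of substance.)
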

\begin{proof}
There exists an open subset $x\in U\sbs M$ such that $Q_x$ is negative-definite on $\Ker Z_y$ for all $y\in U$.
We can assume that $U$ is path-connected.
Now we apply the previous result.
\end{proof}


\section{Stability conditions}
\label{sec:stab cond}
Given a triangulated category $\cD$ and a family of subcategories (or objects) $(\cA_i)_{i\in I}$ in $\cD$, we denote by $\ang{\cA_i\col i\in I}$ the minimal full subcategory of $\cD$ that contains all $\cA_i$ and is closed under extensions.
For two subcategories $\cA_1,\cA_2\sbs\cD$, we will write $\Hom(\cA_1,\cA_2)=0$ if $\Hom(X,Y)=0$ for all $X\in\cA_1$, $Y\in\cA_2$.
Given two objects $X,Y\in\cD$ and $n\in\bZ$, we will sometimes denote $\Hom(X,Y[n])$ by $\Hom^n(X,Y)$.
For any morphism $f:X\to Y$ in \cD, we will write $\cone(f)$ for the third object (unique up to an isomorphism) of the corresponding distinguished triangle.
Sometimes we will use the word triangle for a distinguished triangle.

\subsection{t-structures}

A \idef{t-structure} on a triangulated category \cD is a pair $(\cD^{\le0},\cD^{\ge0})$ of full subcategories in \cD such that, using 
$\cD^{\le n}=\cD^{\le0}[-n]$ and 
$\cD^{\ge n}=\cD^{\ge0}[-n]$ for $n\in\bZ$, we have
\begin{enumerate}
\item $\Hom(\cD^{\le0},\cD^{\ge1})=0$.
\item $\cD^{\le0}\sbs\cD^{\le1}$ and
$\cD^{\ge0}\sps\cD^{\ge1}$.
\item
For any $X\in\cD$, there exists a distinguished triangle $X'\to X\to X''\to$ with $X'\in\cD^{\le0}$ and $X''\in\cD^{\ge1}$.
\end{enumerate}
We have \cite{BBD}
\begin{gather}
\cD^{\ge1}=(\cD^{\le0})^\perp
:=\sets{Y\in\cD}{\Hom(\cD^{\le0},Y)=0},\\
\cD^{\le0}={}^\perp(\cD^{\ge1})
:=\sets{X\in\cD}{\Hom(X,\cD^{\ge1})=0},
\end{gather}
hence a t-structure is uniquely determined by the subcategory $\cD^{\le0}$.
The category $\cA=\cD^{\le0}\cap\cD^{\ge0}$ is abelian
and is called the \idef{heart} of the t-structure \cite{BBD}.
A t-structure is called \idef{bounded} if every object in $\cD$ is contained in $\cD^{\le n}\cap \cD^{\ge-n}$ for some $n\ge0$.
In this case the t-structure is uniquely determined by its heart, namely,
\begin{equation}
\cD^{\le 0}=\angs{\cA[n]}{n\ge0},\qquad 
\cD^{\ge 0}=\angs{\cA[n]}{n\le0}.
\end{equation}

\subsection{Slicings}
We define a slicing \cP of a triangulated category \cD to be 
a collection $(\cP_\vi)_{\vi\in\bR}$ of full subcategories in \cD such that
\begin{enumerate}
\item $\cP_{\vi+1}=\cP_{\vi}[1]$ for all $\vi\in\bR$.
\item $\Hom(\cP_{\vi},\cP_{\vi'})=0$ for all $\vi>\vi'$.
\item
For any $0\ne E\in\cD$, there exists a sequence of maps
$$0=E_0\to E_1\to\dots\to E_n=E$$
such that $\cone(E_{k-1}\to E_k)\in\cP_{\vi_k}$ for some $\vi_k\in\bR$ satisfying $\vi_1>\dots>\vi_n$.
\end{enumerate}
We will call the above sequence the Harder-Narasimhan (HN) filtration of $E$ and we will address axiom (3) as the HN property.
The HN filtration of $E$ is uniquely determined up to isomorphism.
We define $\vi^+_\cP(E)=\vi_1$ and $\vi^-_\cP(E)=\vi_n$.
For any interval $I\sbs\bR$, let 
\begin{equation}
\cP_I=\angs{\cP_\vi}{\vi\in I}\sbs\cD.
\end{equation}
Note that traditionally one denotes $\cP_\vi$ by $\cP(\vi)$ and $\cP_I$ by $\cP(I)$.
The categories
\begin{equation}
\cD^{\le0}=\cP_{>0}=\cP_{(0,+\infty)},\qquad \cD^{\ge0}=\cP_{\le1}=\cP_{(-\infty,1]}
\end{equation}
form a bounded t-structure with the heart $\cA=\cP_{(0,1]}$, called the \idef{heart} of $\cP$ \bris.
The categories $\cP_\vi$ are also abelian \bris[Lemma 5.2].
For any $a\in\bR$, we define a new slicing ~$\cP[a]$
\begin{equation}\label{shift slicing}
\cP[a]_\vi=\cP_{\vi+a},\qquad \vi\in\bR.
\end{equation}
Its heart is equal to $\cP_{(a,a+1]}$.
Note that $\cP[n]_\vi=\cP_{\vi+n}=\cP_\vi[n]$ for $n\in\bZ$.

\begin{example}
Given a full subcategory $\cA\sbs\cD$,
let us define $\cP_n=\cA[n-1]$ for $n\in\bZ$ and $\cP_n=0$ for $n\in\bR\ms\bZ$.
Then $\cA$ is the heart of a bounded $t$-structure if and only if $\cP=(\cP_n)_{n\in\bR}$ is a slicing (\cf \bris[Lemma 3.2]).
The heart of $\cP$ is equal to $\cP_1=\cA$.
\end{example}

\subsection{Bridgeland stability conditions}
Let \cD be a triangulated category,
$\Ga$ be a free abelian group of finite rank and $\cl:K(\cD)\to\Ga$ be a linear map.
We define a \idef{pre-stability condition} on the triangulated category \cD (with respect to $\cl$) to be a pair $\si=(Z,\cP)$, where
\begin{enumerate}
\item $Z:\Ga\to\bC$ is a linear map, called a \idef{central charge}.
\item $\cP$ is a slicing such that for any $0\ne E\in\cP_\vi$, we have $Z(E):=Z(\cl E)\in\bR_{>0}e^{\ip\vi}$.
\end{enumerate}

Nonzero objects of $\cP_\vi$ are called \idef{\si-semistable} objects of phase $\vi$.
Simple objects of $\cP_\vi$ are called \idef{\si-stable}
objects of phase $\vi$.
The heart of $\cP$ is also called the heart of the stability condition ~$\si$.
For any $0\ne E\in\cD$, we will denote $\phi_\cP^\pm(E)$ by $\phi^\pm_\si(E)$.
Assuming that $\phi_\si^+(E)-\phi_\si^-(E)<1$, we have $Z(E)\ne0$ and we define the phase $\vi_\si(E)$ to be the unique value such that
\begin{equation}\label{phase}
\phi_\si(E)\in[\phi_\si^-(E),\phi_\si^+(E)],\qquad
Z(E)\in\bR_{>0}e^{\ip\vi_\si(E)}.
\end{equation}
We define the support of $\si$ to be
\begin{equation}
\supp(\si)=\sets{\cl E}{E\text{ is \si-semistable}}\sbs\Ga.
\end{equation}

Let us equip $\Ga_\bR=\Ga\ts_\bZ\bR$ with a norm $\nn\cdot$.
A pre-stability condition $\si=(Z,\cP)$ on $\cD$ is called a \idef{stability condition} if there exists $\eps>0$ such that 
we have (the support property)
\begin{equation}
\n{Z(E)}\ge \eps\nn{\cl E}
\end{equation}
for all $\si$-semistable objects $E$.
It is enough to verify this property for $E\in\cP_\vi$ with $\vi\in(0,1]$.


\begin{lemma}
A pre-stability condition $\si=(Z,\cP)$ satisfies the support property if and only if there exists a quadratic form $Q$ on $\Ga_\bR$ such that
\begin{enumerate}
\item $Q$ is negative definite on $\Ker (Z:\Ga_\bR\to\bC)$.
\item For any $\si$-semistable object $E$ we have $Q(\cl E)\ge0$.
\end{enumerate}
\end{lemma}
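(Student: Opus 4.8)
The statement to prove is that a pre-stability condition $\si=(Z,\cP)$ satisfies the support property if and only if there is a quadratic form $Q$ on $\Ga_\bR$, negative definite on $\Ker(Z\colon\Ga_\bR\to\bC)$, with $Q(\cl E)\ge0$ for all $\si$-semistable $E$. The plan is to reduce this immediately to Lemma \ref{norm-Q}, which is exactly this equivalence stated abstractly for a linear map $Z\colon E\to F$ and a subset $\ffs\sbs E$: a subset satisfies the support property \wrt $Z$ if and only if there is a quadratic form negative definite on $\Ker Z$ and nonnegative on $\ffs$. So the only work is to package the data of $\si$ into the form required by that lemma.

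First I would set $E=\Ga_\bR$, $F=\bC$ viewed as a real two-dimensional vector space, and take for the linear map the composite $Z\colon\Ga_\bR\to\bC$. For the subset, I would define $\ffs=\sets{\cl E}{E\text{ is }\si\text{-semistable}}=\supp(\si)\sbs\Ga_\bR$. By definition, $\si$ satisfies the support property precisely when there exists $\eps>0$ with $\n{Z(\ga)}\ge\eps\nn\ga$ for all $\ga\in\ffs$; rescaling the norm on $\bC$ (or on $\Ga_\bR$) by $\eps$ turns this into the condition $\nn{Z(\ga)}\ge\nn\ga$ on $\ffs$, which is the support property of Lemma \ref{norm-Q} in the sense of \eqref{supp1}. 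Thus the support property of $\si$ is literally the support property of $\ffs$ \wrt $Z$ after a harmless renormalization, and the two conditions on $Q$ match verbatim once one notes $\Ker(Z\colon\Ga_\bR\to\bC)$ is the same kernel in both statements. Applying Lemma \ref{norm-Q} then yields the claim in both directions.

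One small point worth spelling out: Lemma \ref{norm-Q} produces, from the support property, a quadratic form of the shape $Q(x)=\nn{Z(x)}^2-\nn x^2$ with norms induced by inner products, and conversely builds a norm $\nn x^2=\nn{Z(x)}^2-\eps Q(x)$ from $Q$; neither construction references anything about $\cP$ beyond the set of classes of semistable objects, so there is nothing extra to check. The only mild subtlety is making sure the renormalization of $\eps$ into the norm is done consistently on a single side (I would absorb it into the norm on $\Ga_\bR$, since the ambient norm there is auxiliary anyway), so that the forms $Q$ on $\Ga_\bR$ transported back and forth are genuinely the same object being discussed in the lemma.

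I do not anticipate a real obstacle here: the content of the proposition has already been isolated in Lemma \ref{norm-Q}, and this statement is just its specialization to $F=\bC$ with $\ffs=\supp(\si)$. If anything, the only thing to be careful about is the bookkeeping around the constant $\eps$ and the remark in the text that it suffices to verify the support property for $E\in\cP_\vi$ with $\vi\in(0,1]$ — one should note that this restricted family generates the same conditions, since every semistable object is a shift of one of phase in $(0,1]$ and $\cl(E[n])=(-1)^n\cl E$, so nonnegativity of $Q$ is unaffected by shifts. With that observed, the proof is a one-line appeal to Lemma \ref{norm-Q}.
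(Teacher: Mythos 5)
Your proposal is correct and is exactly the paper's argument: the paper's proof of this lemma is a one-line citation of Lemma \ref{norm-Q}, applied with $E=\Ga_\bR$, $F=\bC$ and $\ffs=\supp(\si)$, just as you describe. Your extra remarks on absorbing $\eps$ into the norm and on the shift-invariance of $\cl E$ up to sign are harmless bookkeeping that the paper leaves implicit.
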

\begin{proof}
See Lemma \ref{norm-Q}.
\end{proof}

\begin{remark}
Note that if $\rk\Ga=n$, then a quadratic form $Q$ as above has negative index $\ge n-2$.
If $Q$ has signature $(2,n-2)$, then $\dim(\Ker Z)=n-2$ and $Z:\Ga_\bR\to\bC$ is automatically surjective.
\end{remark}

\begin{lemma}\label{lm:supp norm}
A pre-stability condition $\si=(Z,\cP)$ satisfies the support property if and only if the following map
defines a norm on $\Ga_\bC\dual=\Hom(\Ga,\bC)$
\begin{equation}
\nn\cdot_\si:\Ga_\bC\dual\to[0,+\infty],\qquad
u\mto \sup\sets{\frac{\n{u(E)}}{\n{Z(E)}}}
{E\text{ is \si-semistable}}
\end{equation}
\end{lemma}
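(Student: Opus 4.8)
The statement to prove is Lemma~\ref{lm:supp norm}: a pre-stability condition $\si=(Z,\cP)$ satisfies the support property if and only if the function $\nn\cdot_\si$ on $\Ga_\bC\dual$ defined by $u\mto\sup\{\n{u(E)}/\n{Z(E)}: E \text{ is } \si\text{-semistable}\}$ is a norm (in particular, finite-valued and definite). The plan is to reduce this directly to Lemma~\ref{norm-Q} via the remark preceding the lemma, which already identifies the support property of a subset with the condition that an analogous supremum over $\Hom(E,F)$ defines a norm. Concretely, I would apply the earlier framework with $E$ replaced by $\Ga_\bR$ and $F$ replaced by $\bC$, taking the relevant subset $S\sbs\Ga$ to be the support $\supp(\si)=\{\cl E: E \text{ is } \si\text{-semistable}\}$; then $\nn\cdot_\si$ is exactly the function $\nn\cdot_{Z,S}$ of that remark, restricted from $\Hom(\Ga_\bR,\bC)$ to $\Ga_\bC\dual=\Hom(\Ga,\bC)$, and the two displays agree up to the identification $u(E)=u(\cl E)$.

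First I would make the bookkeeping explicit: the support property for $\si$ says there is $\eps>0$ with $\n{Z(\ga)}\ge\eps\nn\ga$ for all $\ga\in\supp(\si)$, which is precisely condition \eqref{supp1} for the subset $\ffs=\supp(\si)$ with respect to the linear map $Z:\Ga_\bR\to\bC$ (after rescaling the norm on $\Ga_\bR$ by $\eps$, which does not affect whether $\nn\cdot_\si$ is a norm). Then I would verify, for a general subset $\ffs\sbs E$ and linear $Z:E\to F$ with norms fixed, the equivalence asserted in the remark after Lemma~\ref{norm-Q}: namely $\ffs$ satisfies the support property $\iff$ $\nn u_{Z,\ffs}:=\sup_{x\in\ffs\ms\set0}\nn{u(x)}/\nn{Z(x)}$ is a norm on $\Hom(E,F)$. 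For the forward direction, \eqref{supp1} gives $\nn{u(x)}\le\nn u_{\op}\nn x\le\nn u_{\op}\nn{Z(x)}$, so the supremum is finite and bounded by the operator norm; homogeneity and the triangle inequality are immediate from those of $\nn\cdot$ on $F$; and definiteness follows because if $\nn u_{Z,\ffs}=0$ then $u$ vanishes on $\ffs$, hence on $\cone(\ffs)$, and one needs $\ffs$ to span $E$ — this is where I expect the only real subtlety, and it is handled by noting the support property forces $Z$ injective on the span of $\ffs$ only when $\ffs$ spans; in general $\nn\cdot_{Z,\ffs}$ is a seminorm unless $\ffs$ spans $E$, so the precise statement in the paper must be read as "norm on the relevant quotient" or the span condition is implicit. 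For the converse, if $\nn\cdot_{Z,\ffs}$ is finite-valued (a norm in particular), pick any norm $\nu$ on $\Hom(E,F)$; by equivalence of norms on the finite-dimensional space $\Hom(E,F)$ there is $C>0$ with $\nn u_{Z,\ffs}\le C\nu(u)$ for all $u$, and applying this with $u$ ranging over a basis of $\Hom(E,F)$ shows $\nn x/\nn{Z(x)}$ is bounded on $\ffs\ms\set0$, which after rescaling is \eqref{supp1}.

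The last step is to transport this back: $\Ga_\bC\dual\hookrightarrow\Hom(\Ga_\bR,\bC)$ is an $\bR$-linear iso (every $\bC$-valued $\bZ$-linear map on $\Ga$ extends uniquely to an $\bR$-linear map on $\Ga_\bR$), and under it $\nn\cdot_\si$ corresponds to $\nn\cdot_{Z,\supp(\si)}$, so the two "is a norm" conditions coincide. Combining with the equivalence just established and with Lemma~\ref{norm-Q} (which ties the support property of $\supp(\si)$ to the existence of the quadratic form $Q$, already recorded in the lemma just before this one) completes the proof. The main obstacle, as flagged, is the definiteness clause: one should either assume $\supp(\si)$ spans $\Ga_\bR$ (automatic in the non-degenerate cases the paper cares about, since otherwise $Z$ and $\cl$ factor through a proper subspace and one works there) or state the conclusion as "defines a norm on $\Ga_\bC\dual$ modulo the annihilator of $\mathrm{span}\,\supp(\si)$"; I would add a one-line remark to this effect, and otherwise the argument is a straightforward dictionary translation of Lemma~\ref{norm-Q} and its following remark, so I would keep the written proof as short as "This is Lemma~\ref{norm-Q} applied to $\ffs=\supp(\si)$, $Z:\Ga_\bR\to\bC$, together with the remark following it, upon noting that $\nn\cdot_\si$ is the restriction of $\nn\cdot_{Z,\supp(\si)}$ to $\Ga_\bC\dual$."
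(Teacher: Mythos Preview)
Your substantive argument is correct and coincides with the paper's: for the forward direction you bound $\n{u(x)}/\n{Z(x)}$ by the operator norm of $u$ divided by $\eps$, and for the converse you recover the support bound by evaluating the finite quantities $\nn{u_i}_\si$ on a basis $(u_i)$ of $\Ga\dual$ and summing. This is exactly what the paper does.

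One correction: your repeated appeal to Lemma~\ref{norm-Q} is misplaced. That lemma gives the \emph{quadratic-form} characterisation of the support property and plays no role in the present statement; the paper's proof is entirely self-contained and never mentions it. In particular, your proposed one-line written version ``This is Lemma~\ref{norm-Q} applied to $\ffs=\supp(\si)$\ldots'' would be wrong---what you actually use is the content of the Remark preceding Lemma~\ref{norm-Q} (which in the paper is justified by a forward reference to the very lemma you are proving, so you are right that it must be argued afresh, as you do).

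Your point about definiteness is well taken: $\nn\cdot_\si$ is only a seminorm unless $\supp(\si)$ spans $\Ga_\bR$. The paper glosses over this too, asserting that ``the above map satisfies all properties of a norm, except that it may be not finite''; your suggested fix (pass to the span, or read ``norm'' as ``seminorm'') is the honest way to state it.
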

\begin{proof}
The above map satisfies all properties of a norm, except that it may be not finite.
Let $S=\sets{\cl E}{0\ne E\in\cP_\vi,\,\vi\in\bR}\sbs\Ga$.
If \si satisfies the support property, then there exists $\eps>0$ such that $\n{Z(x)}\ge\eps\nn x$ for all $x\in S$.
A linear map $u:\Ga_\bR\to\bC$ is bounded, meaning that there exists $C>0$ such that $\n{u(x)}\le C\nn x$ for all $x\in\Ga_\bR$.
This implies that $\n {u(x)}\le \frac C\eps\n{Z(x)}$ for all $x\in S$, hence $\nn u_\si\le\frac C\eps$.

Conversely, we can assume that the norm on $\Ga_\bR$ is given by $\nn x=\sum_i\n{u_i(x)}$
for some basis $(u_1,\dots,u_n)$ of $\Ga\dual_\bR$. For any $x\in S$, we have
$$\nn x=\sum_i\n{u_i(x)}\le\sum_i \nn{u_i}_\si\cdot\n{Z(x)}\le 
\max_i\nn{u_i}_\si\cdot \n {Z(x)}.$$
This proves the support property.
\end{proof}

\begin{remark}
In our applications the group $\Ga$ will be the numerical Grothendieck group of $\cD$.
More precisely, let us assume that $\bop_{n\in\bZ}\Hom^n(E,F)$ is finite-dimensional for all $E,F\in\cD$ and let $\hi(E,F)=\sum_{n\in\bZ}(-1)^n\dim\Hom^n(E,F)$ be the \idef{Euler form} of $\cD$ defined on the Grothendieck group $K(\cD)$.
Assuming that $\cD$ has a Serre functor, meaning an automorphism $\cS:\cD\to\cD$
such that there are natural isomorphisms $\Hom(E,F)\iso\Hom(F,\cS E)\dual$ for all $E,\,F\in\cD$,
we conclude that the right and the left kernels of $\hi$ are equal and we define the \idef{numerical Grothendieck group} $\cN(\cD)$ to be the quotient of $K(\cD)$ by this kernel.
We define the linear map $\cl:K(\cD)\to\Ga=\cN(\cD)$ to be the projection.
\end{remark}

\subsection{Stability functions on abelian categories}
\label{sec:stab ab}
Let $\cA\sbs\cD$ be the heart of a bounded t-structure so that $K(\cA)\iso K(\cD)$.
We define a \idef{stability function} on $\cA$ 
(\wrt $\cl:K(\cA)\to\Ga$) to be a linear map $Z:
\Ga\to\bC$ such that for any $0\ne E\in\cA$, the number $Z(E):=Z(\cl E)$ is contained in the (semi-closed) upper-half plane
\begin{equation}
\bH=\sets{r e^{\ip\vi}}{r>0,\, 0<\vi\le1}\sbs\bC.
\end{equation}
Let us consider the argument map
\begin{equation}
\Arg:\bC^*\to(-\pi,\pi],\qquad re^{\ip\vi}\mto\pi\vi,\qquad
r>0,\,\vi\in(-1,1],
\end{equation}
and define the \idef{phase} of $0\ne E\in\cA$ to be 
\begin{equation}
\vi(E)=\frac1\pi\Arg Z(E)\in(0,1].
\end{equation}
An object $0\ne E\in\cA$ is called $Z$-\idef{semistable} if for any subobject $0\ne E'\sbs E$ we have $\vi(E')\le\vi(E)$.
Equivalently, this means that
\begin{equation}
\Im(Z(E)\cdot \bar Z(E'))\ge0.
\end{equation}
A stability function $Z:\Ga\to\bC$ on \cA is said to satisfy the \idef{HN property}
if, for any $0\ne E\in\cA$, there exists a (unique) filtration, called the \idef{HN filtration} of $E$,
$$0=E_0\sbs E_1\sbs\dots\sbs E_n=E$$
such that $E_k/E_{k-1}$ are $Z$-semistable and have strictly decreasing phases.

It is proved in \bris[\S5] that to give a pre-stability condition $\si=(Z,\cP)$ on the triangulated category \cD is equivalent to giving a pair $(Z,\cA)$, where 
$\cA\sbs\cD$ is the heart of a bounded t-structure
and $Z:\Ga\to\bC$ is a stability function on \cA satisfying the HN property.
The category ~$\cA$ is defined to be the heart $\cP_{(0,1]}$ of the slicing $\cP$.
Conversely, one defines $\cP_\vi$ for $\vi\in(0,1]$ to be the category of all semistable objects of $\cA$ having phase $\vi$ (plus the zero object).
Then one defines $\cP_{\vi+n}=\cP_\vi[n]$ for $n\in\bZ$.
In what follows we will sometimes write $(Z,\cA)$ for the pre-stability condition $\si$.

\subsection{Families of stability conditions}
Let $\Slice(\cD)$ be the set of all slicings of \cD
and $\Stab_{\cl}(\cD)$ be the set of all stability conditions on \cD (satisfying the support property).
Given two slicings $\cP,\cQ$, we define the distance between them to be
\begin{equation}\label{d1}
d(\cP,\cQ)=\sup_{0\ne E\in\cD}
\set{\n{\vi_\cP^+(E)-\vi_\cQ^+(E)},
\n{\vi_\cP^-(E)-\vi_\cQ^-(E)}}\in[0,+\infty].
\end{equation}
Equivalently \bris[Lemma 6.1],
\begin{equation}\label{d2}
d(\cP,\cQ)=\inf\sets{\eps\ge0}
{\cQ_\vi\sbs\cP_{[\vi-\eps,\vi+\eps]}\ \forall\vi\in\bR}.
\end{equation}
This distance function induces a topology on $\Slice(\cD)$.
We equip $\Stab_{\cl}(\cD)$ with the induced topology using the inclusion
\begin{equation}
\Stab_{\cl}(\cD)\sbs\Hom(\Ga,\bC)\xx\Slice(\cD).
\end{equation}
It is proved in \bris that the forgetful map $\Stab_{\cl}(\cD)\to\Hom(\Ga,\bC)$ is a local homeomorphism
and one can equip $\Stab_{\cl}(\cD)$ with a structure of a complex manifold. 

We define a \idef{continuous family of stability conditions} to be a continuous map $\si:M\to\Stab_{\cl}(\cD)$, where $M$ is a topological space.
Equivalently, a family of stability conditions $(\si_x=(Z_x,\cP_x))_{x\in M}$ is continuous if
\begin{enumerate}
\item The map $Z:M\to\Hom(\Ga,\bC)$, $x\mto Z_x$, is continuous.
\item For any $x\in M$ and $\eps>0$, there exists a \nbd $x\in U\sbs M$ such that $d(\cP_x,\cP_y)<\eps$ for all $y\in U$.
\end{enumerate}

We can similarly define a continuous family of pre-stability conditions.
In what follows, we will denote $\vi^\pm_{\si_x}$ by $\vi^\pm_x$.
The support property is an open and closed condition
by the following result.

\begin{lemma}[{\bris[Lemma 6.2]}]
Let $\si=(Z,\cP)$ and $\si'=(Z',\cP')$ be pre-stability conditions such that for some $0<\eta<\tfrac14$ we have
$$\nn{Z-Z'}_\si<\sin(\pi\eta),\qquad d(\cP,\cP')<\eta.$$
Then there exist $c_1,c_2>0$ such that
$$c_1\nn{u}_\si\le\nn{u}_{\si'}\le c_2\nn{u}_\si
\qquad\forall u\in\Hom(\Ga,\bC).$$
\end{lemma}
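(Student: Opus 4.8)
The plan is to prove the two-sided comparison $c_1\nn{u}_\si\le\nn{u}_{\si'}\le c_2\nn{u}_\si$ by showing that each semistable object of one stability condition has its class controlled (up to a bounded factor in norm of the central charge) by a sum of classes of semistable objects of the other. By symmetry in the hypotheses it suffices to establish one inequality, say $\nn{u}_{\si'}\le c_2\nn{u}_\si$; the other follows by swapping the roles of $\si$ and $\si'$ (the hypotheses are not literally symmetric because $\nn{Z-Z'}_\si$ appears rather than $\nn{Z-Z'}_{\si'}$, so I would first use the unrefined support property to pass to an auxiliary estimate, or simply note that $\nn{Z-Z'}_{\si'}$ is comparable once one inequality is known — so I would organize the argument to prove $\nn{u}_{\si'}\le c_2\nn{u}_\si$ first and then bootstrap).

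First I would recall that, by definition of $\nn{u}_{\si'}$, it is enough to bound $\n{u(F)}/\n{Z'(F)}$ uniformly over $\si'$-semistable $F$. The key step is to take a $\si'$-semistable object $F$ of phase $\vi$ and pass to its $\cP$-Harder--Narasimhan filtration, writing $\cl F=\sum_{i=1}^n\cl A_i$ where each $A_i\in\cP_{\psi_i}$ is $\si$-semistable and $\psi_1>\dots>\psi_n$. Because $d(\cP,\cP')<\eta$, axiom \eqref{d2} gives $F\in\cP_{[\vi-\eta,\vi+\eta]}$, so each HN factor $A_i$ has $\psi_i\in[\vi-\eta,\vi+\eta]$; hence all the central charges $Z(A_i)$ lie in a narrow sector of $\bC$ of half-angle $\pi\eta<\pi/4$. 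By Example~\ref{ex:norm sum}, $\sum_i\n{Z(A_i)}\le \frac1{\cos(\pi\eta)}\n{\sum_i Z(A_i)}=\frac1{\cos(\pi\eta)}\n{Z(F)}$. Next I would compare $Z(F)$ with $Z'(F)$: since $\nn{Z-Z'}_\si<\sin(\pi\eta)$ and $F$ is a sum of $\si$-semistable objects, $\n{(Z-Z')(F)}\le\sum_i\n{(Z-Z')(A_i)}\le\sin(\pi\eta)\sum_i\n{Z(A_i)}\le\frac{\sin(\pi\eta)}{\cos(\pi\eta)}\n{Z(F)}=\tan(\pi\eta)\n{Z(F)}$, so $\n{Z'(F)}\ge(1-\tan(\pi\eta))\n{Z(F)}$, which is a positive multiple of $\n{Z(F)}$ because $\eta<1/4$ forces $\tan(\pi\eta)<1$.

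Finally, for the numerator, $\n{u(F)}\le\sum_i\n{u(A_i)}\le\nn{u}_\si\sum_i\n{Z(A_i)}\le\frac{\nn{u}_\si}{\cos(\pi\eta)}\n{Z(F)}$, using that each $A_i$ is $\si$-semistable. Dividing by $\n{Z'(F)}$ and combining the two displayed bounds yields
\[
\frac{\n{u(F)}}{\n{Z'(F)}}\le\frac{1}{\cos(\pi\eta)\,(1-\tan(\pi\eta))}\,\nn{u}_\si,
\]
so $\nn{u}_{\si'}\le c_2\nn{u}_\si$ with $c_2=\big(\cos(\pi\eta)(1-\tan(\pi\eta))\big)\inv=\big(\cos(\pi\eta)-\sin(\pi\eta)\big)\inv$. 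To get $c_1$, I would now observe that the estimate just proved shows in particular that $\nn\cdot_{\si'}$ is a genuine (finite) norm, equivalent to $\nn\cdot_\si$; one can then run the symmetric argument, or more directly note $\nn{Z-Z'}_{\si'}\le c_2\nn{Z-Z'}_\si$ and, since $d(\cP',\cP)=d(\cP,\cP')<\eta$, repeat the whole computation with the roles exchanged (possibly after shrinking $\eta$ slightly, or using the monotonicity of the bound in $\eta$), obtaining $\nn{u}_\si\le c_2'\nn{u}_{\si'}$ and hence $c_1=1/c_2'$. The main obstacle I anticipate is this asymmetry bookkeeping: making sure the hypothesis $\nn{Z-Z'}_\si<\sin(\pi\eta)$ (stated only for $\si$) suffices to also control $\nn{Z-Z'}_{\si'}$ so that the reverse inequality goes through with the same $\eta<1/4$; everything else is the elementary sector estimate of Example~\ref{ex:norm sum} together with the metric characterization \eqref{d2} of $d(\cP,\cP')$.
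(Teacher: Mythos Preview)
The paper does not give its own proof of this lemma; it is quoted verbatim from Bridgeland. Your argument for the bound $\nn{u}_{\si'}\le c_2\nn{u}_\si$ is correct and is the standard one---indeed the same sector estimate is reproduced in the paper in the proof of Lemma~\ref{uniform support} (see inequality~\eqref{bri6.2}).

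Your concern about the reverse inequality is misplaced, and the bootstrap via $\nn{Z-Z'}_{\si'}\le c_2\nn{Z-Z'}_\si$ is both unnecessary and, as you suspect, does not close for the full range $\eta<\tfrac14$. The reverse direction simply does \emph{not} require any control of $\nn{Z-Z'}_{\si'}$. Take $E$ $\si$-semistable of phase $\vi$; since $d$ is symmetric, $E\in\cP'_{(\vi-\eta,\vi+\eta)}$, so its $\si'$-HN factors $B_j$ satisfy $\sum_j\n{Z'(B_j)}\le\tfrac{1}{\cos(\pi\eta)}\n{Z'(E)}$ by the same sector bound. Now the comparison of $\n{Z'(E)}$ with $\n{Z(E)}$ goes the easy way: because $E$ itself is $\si$-semistable, one has directly $\n{(Z-Z')(E)}\le\nn{Z-Z'}_\si\,\n{Z(E)}<\sin(\pi\eta)\,\n{Z(E)}$, hence $\n{Z'(E)}\le(1+\sin(\pi\eta))\,\n{Z(E)}$. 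Combining,
\[
\n{u(E)}\le\nn{u}_{\si'}\sum_j\n{Z'(B_j)}
\le\frac{1+\sin(\pi\eta)}{\cos(\pi\eta)}\,\nn{u}_{\si'}\,\n{Z(E)},
\]
so $c_1=\cos(\pi\eta)/(1+\sin(\pi\eta))$. The apparent asymmetry in the hypotheses is harmless: in the $c_2$ direction you needed a \emph{lower} bound on $\n{Z'(F)}$ for an object $F$ that is only $\si'$-semistable (forcing the detour through its $\si$-HN factors), whereas in the $c_1$ direction you need an \emph{upper} bound on $\n{Z'(E)}$ for a $\si$-semistable $E$, and that follows immediately from $\nn{Z-Z'}_\si<\sin(\pi\eta)$.
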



\begin{lemma}\label{uniform support}
Let $(\si_x)_{x\in M}$ be a continuous family of pre-stability conditions.
If $x\in M$ is such that $\si_x$ satisfies the support property, then there exists a \nbd $x\in U\sbs M$ and $\eps>0$ such that
\begin{equation}
\n{Z_y(E)}\ge\eps\nn{\cl E}
\end{equation}
for all $y\in U$ and $\si_y$-semistable objects $E$.
\end{lemma}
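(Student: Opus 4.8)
The plan is to combine the support property at $x$ with the continuity of the family, using the distance estimate for slicings to transport the support property to a neighborhood. First I would invoke the hypothesis: there is $\eps_0>0$ such that $\n{Z_x(E)}\ge\eps_0\nn{\cl E}$ for every $\si_x$-semistable object $E$; equivalently, by Lemma~\ref{lm:supp norm}, $\nn\cdot_{\si_x}$ is a genuine (finite) norm on $\Ga_\bC\dual$. By continuity of $Z$ and of the slicing (condition~(2) in the definition of a continuous family), pick a neighborhood $x\in U\sbs M$ and $0<\eta<\tfrac14$ such that $\nn{Z_y-Z_x}_{\si_x}<\sin(\pi\eta)$ and $d(\cP_x,\cP_y)<\eta$ for all $y\in U$; here I use that $\nn{Z_y-Z_x}_{\si_x}$ is a continuous function of $y$ (being a fixed norm composed with the continuous map $y\mapsto Z_y$).

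Next I would apply the preceding lemma (\bris[Lemma 6.2], quoted just above) to the pair $\si_x$, $\si_y$ for each $y\in U$: it yields constants $c_1,c_2>0$, which can be taken uniform in $y\in U$ since they depend only on $\eta$ and on $\nn{Z_y-Z_x}_{\si_x}<\sin(\pi\eta)$, such that $c_1\nn u_{\si_x}\le\nn u_{\si_y}\le c_2\nn u_{\si_x}$ for all $u\in\Hom(\Ga,\bC)$. In particular $\nn\cdot_{\si_y}$ is finite, so by Lemma~\ref{lm:supp norm} again each $\si_y$ satisfies the support property; but I need a \emph{uniform} $\eps$. For that, fix a basis $u_1,\dots,u_n$ of $\Ga_\bR\dual$ with $\nn a=\sum_i\n{u_i(a)}$, as in the proof of Lemma~\ref{lm:supp norm}. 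Then for any $\si_y$-semistable $E$ and $a=\cl E$,
\[
\nn a=\sum_i\n{u_i(a)}\le\Bigl(\max_i\nn{u_i}_{\si_y}\Bigr)\n{Z_y(E)}\le\Bigl(c_2\max_i\nn{u_i}_{\si_x}\Bigr)\n{Z_y(E)},
\]
so $\eps:=\bigl(c_2\max_i\nn{u_i}_{\si_x}\bigr)\inv>0$ works simultaneously for all $y\in U$, which is exactly the claim.

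The only point that needs care — the ``main obstacle'' — is the uniformity of the constants $c_1,c_2$ over $y\in U$: one must check that \bris[Lemma 6.2] produces constants depending only on the bound $\eta$ on the two defects and not on the particular $\si_y$. Inspecting that lemma, the constants are explicit functions of $\eta$ and of the slack in the strict inequalities $\nn{Z_y-Z_x}_{\si_x}<\sin(\pi\eta)$, $d(\cP_x,\cP_y)<\eta$; shrinking $U$ if necessary so that these slacks are bounded away from the borderline (e.g. $\nn{Z_y-Z_x}_{\si_x}\le\tfrac12\sin(\pi\eta)$ and $d(\cP_x,\cP_y)\le\tfrac12\eta$) makes $c_1,c_2$ uniform. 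With that observed, the argument above goes through verbatim. Alternatively, one can bypass \bris[Lemma 6.2] entirely and argue directly: for $\si_y$-semistable $E\in\cP_{y,\vi}$ with $\vi\in(0,1]$, the bound $d(\cP_x,\cP_y)<\eta$ forces $E\in\cP_{x,[\vi-\eta,\vi+\eta]}$, so $E$ has an HN filtration with respect to $\si_x$ whose factors lie in a narrow range of phases; estimating $\n{Z_x(E)}$ from below by $\cos(\pi\eta)$ times $\sum_j\n{Z_x(A_j)}\ge\eps_0\cos(\pi\eta)\sum_j\nn{\cl A_j}\ge\eps_0\cos(\pi\eta)\nn{\cl E}$ (using Example~\ref{ex:norm sum} and strictness of the relevant cone), and then comparing $\n{Z_y(E)}$ with $\n{Z_x(E)}$ via $\nn{Z_y-Z_x}_{\si_x}<\sin(\pi\eta)$, yields an explicit uniform $\eps$ directly. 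I would present whichever of the two is shorter; both reduce to the same elementary estimates.
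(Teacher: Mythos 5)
Your proposal is correct, and your ``alternative'' direct argument at the end is essentially the paper's own proof: the paper establishes the uniform bound $\nn u_{\si_y}\le\frac{2}{\cos(\pi\eta)}\nn u_{\si_x}$ by running exactly the HN-factor estimate of Example~\ref{ex:norm sum} together with the comparison $\n{Z_y(E)-Z_x(E)}\le\toh\n{Z_x(E)}$, rather than citing \bris[Lemma 6.2] as a black box with a uniformity caveat. The only quibble is a constant-bookkeeping slip in your direct version: with $d(\cP_x,\cP_y)<\eta$ the $\si_x$-phases of the HN factors of a $\si_y$-semistable object spread over an interval of length up to $2\eta$, so the relevant cosine is $\cos(2\pi\eta)$ (or one should take $d(\cP_x,\cP_y)<\eta/2$, as the paper does); this does not affect the argument.
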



\begin{proof}
Let $\si=(Z,\cP)=\si_x$ satisfy the support property.
It is enough to show that for some \nbd $x\in U\sbs M$ and $C>0$, we have $\nn{u}_{\si_y}\le C\nn u_{\si}$ for all $u\in\Ga_\bC\dual$ and $y\in U$.
Indeed, we can assume that the norm on $\Ga_\bR$ is given by $\nn \ga=\sum_i\n{u_i(\ga)}$, for a fixed basis $(u_1,\dots,u_n)$ of $\Ga\dual$.
Then, for any $y\in U$, $\si_y$-semistable object $E$ and $\ga=\cl E$, we have
$$\nn {\ga}=\sum_i\n{u_i(\ga)}\le\sum_i \nn{u_i}_{\si_y}\cdot\n{Z_y(\ga)}\le 
C\sum_i\nn{u_i}_{\si}\cdot \n {Z_y(\ga)}$$
which is equivalent to the statement of the lemma.

Let $0<\eta<\toh$.
For any $0\ne E\in\cD$ satisfying $\phi_\si^+(E)-\phi_\si^-(E)\le\eta$ and any $u\in\Ga_\bC\dual$, we have 
\begin{equation}\label{bri6.2}
\n{u(\cl E)}\le\frac{\nn u_\si}{\cos(\pi\eta)}\n{Z(E)}.
\end{equation}
Indeed, let $F_1,\dots,F_n$ be \si-semistable HN factors of $E$.
Then (see Example \ref{ex:norm sum})
$$\n{u(\cl E)}\le\sum_i \n{u(\cl F_i)}
\le\nn u_\si\cdot\sum_i\n{Z(F_i)}
\le\nn u_\si\cdot\frac{\n{Z(E)}}{\cos(\pi\eta)}.$$
There exists a \nbd $x\in U\sbs M$ such that for all $y\in U$ we have
$$\nn{Z_y-Z}_\si<\toh\cos(\pi\eta),\qquad
d(\cP_y,\cP)<\eta/2.$$
For any $0\ne E\in\cP_{y,\vi}$, we have $E\in\cP_{(\vi-\eta/2,\vi+\eta/2)}$, hence
$\phi_\si^+(E)-\phi_\si^-(E)<\eta$ and \eqref{bri6.2}
$$\n{Z_y(E)-Z(E)}
\le\frac{\nn{Z_y-Z}_\si}{\cos(\pi\eta)}\n{Z(E)}
\le\toh\n{Z(E)}.$$
This implies that
$\n{Z_y(E)}\ge\toh\n{Z(E)}$
and for any $u\in\Ga_\bC\dual$ we have \eqref{bri6.2}
$$\n{u(\cl E)}
<\frac{\nn u_\si}{\cos(\pi\eta)}\n{Z(E)}
\le\frac{\nn u_\si}{\cos(\pi\eta)}2\n{Z_y(E)}
.$$
Therefore $\nn u_{\si_y}\le \frac{2}{\cos(\pi\eta)}\nn u_\si$ as required.
\end{proof}

The following result is well-known.

\begin{lemma}
Let $(\si_x)_{x\in M}$ be a continuous family of stability conditions 
and $0\ne E\in\cD$. Then
\begin{enumerate}
\item The set $\sets{x\in M}{E\text{ is }\si_x\text{-stable}}$ is open in $M$.
\item The set $\sets{x\in M}{E\text{ is }\si_x\text{-semistable}}$ is closed in $M$.
\end{enumerate}
 \end{lemma}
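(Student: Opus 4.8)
The plan is to prove both statements by exploiting the continuity of the family and the HN machinery together with the uniform support property from Lemma~\ref{uniform support}. Fix $0\ne E\in\cD$ and set $\ga=\cl E$. For the openness of stability (1), suppose $E$ is $\si_x$-stable of phase $\vi$. First I would choose a \nbd $x\in U\sbs M$ on which the family has a uniform support bound $\n{Z_y(F)}\ge\eps\nn{\cl F}$ for all $\si_y$-semistable $F$ (Lemma~\ref{uniform support}), and shrink $U$ so that $d(\cP_x,\cP_y)$ and $\nn{Z_x-Z_y}_{\si_x}$ are as small as we like. The key point is that if $E$ were to become $\si_y$-unstable for $y$ near $x$, its HN factors $E_1,\dots,E_n$ ($n\ge2$) would all have $\si_y$-phases inside a tiny interval around $\vi$ (because $E\in\cP_{x,\vi}$ forces, via $d(\cP_x,\cP_y)$ small, that $\vi^\pm_y(E)$ are close to $\vi$, and the HN factors of $E$ have phases between these). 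Then each $E_i$ is an object of $\cP_{y,I}$ for a short interval $I$, hence is $\si_x$-semistable after a small shift with phase close to $\vi$, and $\cl E_i$ is controlled by the uniform support bound. Since $E$ is $\si_x$-stable, it is simple in $\cP_{x,\vi}$, so the only way to write $\ga=\sum\cl E_i$ as a sum of classes of $\si_x$-semistables of phase $\approx\vi$ with the $E_i$ genuine objects is to have the filtration be trivial, i.e.\ a contradiction unless $n=1$; and when $n=1$, $E$ is $\si_y$-semistable of phase near $\vi$, and one upgrades semistability to stability by the same finiteness-of-decompositions argument (any proper sub/quotient of $E$ in $\cP_{y,\vi}$ would have class in the finite set of classes bounded by $\nn\ga$, and such classes cannot interpolate $E$ for $y$ close to $x$ by the discreteness of $\Ga$). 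Hence $E$ is $\si_y$-stable on a \nbd.

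For the closedness of semistability (2), I would argue that the complement is open: suppose $E$ is $\si_x$-unstable, so its HN filtration has $\ge2$ factors with $\vi^+_x(E)>\vi^-_x(E)$, say $\vi^+_x(E)-\vi^-_x(E)=\delta>0$. By the definition of the metric~\eqref{d2}, for $y$ with $d(\cP_x,\cP_y)<\delta/4$ we get $\vi^+_y(E)\ge\vi^+_x(E)-\delta/4 > \vi^-_x(E)+\delta/4\ge\vi^-_y(E)$, so $\vi^+_y(E)>\vi^-_y(E)$ and $E$ remains $\si_y$-unstable. Thus $\sets{x}{E\text{ is }\si_x\text{-unstable}}$ is open, giving (2). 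This half is essentially immediate from the continuity of $\vi^\pm$ in the metric $d$, which in turn follows from \eqref{d2} since $\n{\vi^\pm_\cP(E)-\vi^\pm_\cQ(E)}\le d(\cP,\cQ)$ for every fixed $E$.

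The main obstacle is in part (1): ruling out ``jumping'' phenomena where $E$ acquires new destabilizing subobjects under an arbitrarily small perturbation. The clean way to handle it is to combine three ingredients: (a) the uniform support bound of Lemma~\ref{uniform support}, which confines classes of $\si_y$-semistable summands of $\ga$ to a finite subset $D\sbs\Ga$ once a norm bound on them is known; (b) the bound $\sum\nn{\cl E_i}\le c\nn\ga$ coming from Lemma~\ref{strict sums} applied to the strict cone generated by semistable classes of phase in a short interval (strictness holds near $\vi$ by the support property); and (c) a finite-set stabilization argument in the spirit of Lemmas~\ref{lower set stability} and the preceding one, showing that for $y$ in a small enough \nbd the set of classes in $D$ that are $\si_y$-semistable of phase near $\vi$ agrees with that for $\si_x$ — and for $\si_x$, simplicity of $E$ forces the only such decomposition of $\ga$ to be trivial. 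I expect the bookkeeping of ``phase near $\vi$'' (working with $\cP_{y,[\vi-\eta,\vi+\eta]}$ and the shifted slicing so everything sits in one abelian category) to be the fiddly part, but no genuinely new idea beyond what is already in \S\ref{sec:sgr-fam} and Lemma~\ref{uniform support} is needed.
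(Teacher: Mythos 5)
Your proposal is correct and follows essentially the same route as the paper, which records this lemma as well-known and proves its class-level variant (Lemma \ref{open-closed1}) by exactly your method: uniform support via Lemma \ref{uniform support}, shrinking $d(\cP_x,\cP_y)$ so that the $\si_y$-HN factors (resp.\ subobjects) of $E$ land in $\cP_{x,I}$ for a short interval $I$ around $\vi$, confining their classes to the finitely many elements of $S(V,Z_x,\eps)$ below $\ga$, forcing them into $\cP_{x,\vi}$ after decreasing $\eta$, and concluding from simplicity of $E$ in the abelian category $\cP_{x,\vi}$; your closedness argument via $\vi^+_y(E)>\vi^-_y(E)$ is exactly right. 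The one loose spot is the parenthetical upgrading $\si_y$-semistability to stability: the mechanism is not ``discreteness of $\Ga$'' or class counting, but that a proper $\si_y$-stable subobject $F\sbs E$ (together with its quotient) is again forced into $\cP_{x,\vi}$, where simplicity of $E$ gives $F=E$.
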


We will prove its variant.

\begin{lemma}\label{open-closed1}
Let $(\si_x)_{x\in M}$ be a continuous family of stability conditions and let
\begin{equation}
\fr_x=\sets{\cl E}{E\text{ is }\si_x\text{-stable}},\qquad
\fs_x=\sets{\cl E}{E\text{ is }\si_x\text{-semistable}}
\end{equation}
for $x\in M$.
Then, for every $\ga\in\Ga$,
\begin{enumerate}
\item The set $\sets{x\in M}{\ga\in\fr_x}$ is open in $M$.
\item The set $\sets{x\in M}{\ga\in\fs_x}$ is closed in $M$.
\end{enumerate}
 \end{lemma}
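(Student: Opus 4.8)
The plan is to reduce both statements to facts about individual objects and then exploit local uniformity of the support property together with the HN property. For statement (2), fix $\ga\in\Ga$ and suppose $x_0\in M$ is a limit point of $\sets{x}{\ga\in\fs_x}$; I want to show $\ga\in\fs_{x_0}$. By Lemma \ref{uniform support} there is a \nbd $x_0\in U\sbs M$ and $\eps>0$ such that $\n{Z_y(\cl E)}\ge\eps\nn{\cl E}$ for all $y\in U$ and all $\si_y$-semistable $E$. In particular, along any sequence $x_n\to x_0$ with $\ga\in\fs_{x_n}$, there exist $\si_{x_n}$-semistable objects $E_n$ with $\cl E_n=\ga$, and the phases $\vi_{x_n}(E_n)$ lie in a bounded range; passing to a subsequence we may assume $\vi_{x_n}(E_n)\to\vi_0$, which is forced by $Z_{x_n}\to Z_{x_0}$ and $Z_{x_n}(\ga)\ne0$ (the latter from the uniform bound) to be the phase of $Z_{x_0}(\ga)$. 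The standard argument (as in \bris) is that semistability is preserved in such a limit: if $E_n$ is $\si_{x_n}$-semistable of phase $\vi_{x_n}$ converging to $\vi_0$, then the object $F:=$ the image of $\ga$ under a fixed HN/stable decomposition at $x_0$ must itself be semistable, because any destabilizing subobject at $x_0$ would, by the continuity of $d(\cP_{x_n},\cP_{x_0})$ and of $Z$, destabilize $E_n$ for large $n$. Hence $\ga\in\fs_{x_0}$ and the set is closed.

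For statement (1), fix $\ga\in\Ga$ and suppose $\ga\in\fr_{x_0}$, witnessed by a $\si_{x_0}$-stable object $E$ with $\cl E=\ga$. I would like to show $\ga\in\fr_y$ for all $y$ in some \nbd of $x_0$. The key point is that $E$ itself remains $\si_y$-stable for $y$ near $x_0$: by the analogous well-known result (the first unnumbered lemma just above), $\sets{y}{E\text{ is }\si_y\text{-stable}}$ is open, so there is a \nbd $U$ of $x_0$ on which $E$ is $\si_y$-stable, and then $\ga=\cl E\in\fr_y$ for all $y\in U$. Thus $\sets{x}{\ga\in\fr_x}$ contains an open \nbd of each of its points and is open. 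Strictly speaking the lemma we are proving is advertised as a ``variant'' of that well-known one, so to keep the argument self-contained I would instead argue directly: openness of $\sets{y}{E\text{ is }\si_y\text{-stable}}$ follows because (i) $E$ stays in the heart's slice $\cP_{y,(\vi-\delta,\vi+\delta)}$ for $y$ close to $x_0$ by smallness of $d(\cP_{x_0},\cP_y)$, so $E$ is $\si_y$-semistable of a well-defined phase, and (ii) if $E$ were strictly $\si_y$-semistable for $y_n\to x_0$, a destabilizing subobject $E'_n\subsetneq E$ with $\vi_{y_n}(E'_n)\ge\vi_{y_n}(E)$ would have class $\cl E'_n$ in the finite set $D=\sets{\be\in\Ga}{\n{Z_{x_0}(\be)}\le c\,\n{Z_{x_0}(\ga)},\ \be\in\fs_{x_0}}$ (using Lemma \ref{uniform support} to bound $\nn{\cl E'_n}$ and Example \ref{ex:norm sum}), so after passing to a subsequence $\cl E'_n$ is constant, the limit gives a subobject of $E$ destabilizing it at $x_0$, contradicting stability of $E$.

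The main obstacle is the limiting argument in both parts: making precise that a destabilizing (sub)object at nearby stability conditions has bounded class, so that one can extract a constant-class subsequence and pass to a genuine subobject of the fixed object at $x_0$. This is exactly where Lemma \ref{uniform support} (uniform support property on a \nbd) and the boundedness $\n{Z_{x_0}(\be)}<\n{Z_{x_0}(\ga)}$ for proper sub-factors — which mirrors the ``$\tfrac\eps2\nn{\ga_i}\le\n{Z_1(\ga_i)}<\n{Z_1(\ga)}$'' step in the proof of Theorem \ref{th:Qpos1} — do the heavy lifting; everything else (continuity of $Z$, smallness of the slicing distance, the HN property) is routine bookkeeping.
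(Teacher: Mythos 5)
Your part (1) is essentially the paper's argument and is fine in substance: fixing the $\si_{x_0}$-stable witness $E$, using the uniform support property to bound the classes of would-be destabilizers, and invoking finiteness of the lower set of $\ga$ in the strict semigroup is exactly the right mechanism. Two small cautions there: the destabilizing subobjects $E'_n$ lie in $\fs_{y_n}$, not in $\fs_{x_0}$, so your finite set $D$ should be cut out by a norm bound $\nn\be\le c\nn\ga$ rather than by membership in $\fs_{x_0}$; and there is no ``limit'' of the objects $E'_n$ to take --- the correct conclusion is that for $y$ close enough to $x_0$ any such $E'$ has all its $\si_{x_0}$-HN factors with classes on the ray $\bR_{>0}Z_{x_0}(\cl E)$, hence $E'$ is itself a proper nonzero subobject of $E$ in the abelian category $\cP_{x_0,\vi}$, contradicting simplicity of $E$ there.

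Part (2) has a genuine gap. The set $\sets{x}{\ga\in\fs_x}$ is \emph{not} an intersection or union of the closed sets $\sets{x}{E\text{ is }\si_x\text{-semistable}}$ over a fixed family of objects: the witnessing objects $E_n$ with $\cl E_n=\ga$ genuinely vary with $x_n$, which is precisely why the lemma is stated as a ``variant'' of the fixed-object statement. Your phrase ``the object $F:=$ the image of $\ga$ under a fixed HN/stable decomposition at $x_0$'' does not define an object --- a class $\ga\in\Ga$ has no canonical representative, and there is no limit object of the sequence $(E_n)$. Likewise, ``any destabilizing subobject at $x_0$ would destabilize $E_n$ for large $n$'' is unjustified without a uniform lower bound on the phase gap; the gap could a priori shrink to $0$ as $n\to\infty$. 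What is actually needed (and what the paper proves) is: for $y$ sufficiently close to $x_0$, \emph{every} $\si_y$-semistable object $E$ of class $\ga$ and suitable phase lies in $\cP_{x_0,I}$ for a short interval $I$ around $\vi=\frac1\pi\Arg Z_{x_0}(\ga)$; its $\si_{x_0}$-HN factors then have classes in the finite lower set of $\ga$ inside $S(V,Z_{x_0},\eps)$, and after shrinking $I$ all such classes lie on the single ray $\bR_{>0}e^{\ip\vi}$, forcing all HN factors to have phase $\vi$ and hence $E$ to be $\si_{x_0}$-semistable. This is the same finiteness argument you deploy in part (1); you need to run it here too, applied to the object $E_n$ itself for one sufficiently large $n$, rather than appealing to a limiting object or to the fixed-object closedness result from Bridgeland.
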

\begin{proof}
There exists $\eps>0$ and an open set $x\in U\sbs M$
such that 
$$\fs_y\sbs\sets{\ga\in\Ga}{\n{Z_y(\ga)}\ge\eps\nn \ga}$$ for all $y\in U$.
We can assume that $\nn{Z_y-Z_x}\le\tfrac\eps2$ for all $y\in U$.
Then $\n{Z_y(\ga)-Z_x(\ga)}\le\tfrac\eps2\nn\ga\le\toh\n{Z_y(\ga)}$, hence $\n{Z_x(\ga)}\ge\toh\n{Z_y(\ga)}\ge\tfrac\eps2\nn\ga$ for all $\ga\in\fs_y$.

Let $\ga=\cl E$ for some $\si_x$-stable object $E\in\cP_{x,\vi}$.
For any $0<\eta<1/2$, we can assume that $d(\cP_x,\cP_y)<\eta/2$ for all $y\in U$.
Then $\vi_y^\pm(E)\in (\vi-\eta/2,\vi+\eta/2)$.
This implies that for every $\si_y$-HN-factor $F_i$ of $E$,
we have $\vi_x^{\pm}(F_i)\in I=(\vi-\eta,\vi+\eta)$,
hence $F_i\in\cP_{x,I}$.
All nonzero objects in $\cP_{x,I}$ have classes in the semigroup $S(V,Z_x,\eps)$, where $V=\bR_{>0}e^{\ip I}$ is a strict cone.
There are finitely many such classes that are $\le\ga$.
By decreasing $\eta$ (and shrinking $U$) we can assume that all such classes are contained in $S(\ell,Z_x,\eps)$, where $\ell=\bR_{>0}e^{\ip\vi}$.
This implies that $\phi_x^\pm(F_i)=\vi$.
As $E$ is $\si_x$-stable, the above HN-filtration is trivial and $E$ is $\si_y$-semistable. 
Let $E\in\cP_{y,\vi'}$ and let $0\ne F\sbs E$ be its subobject in $\cP_{y,\vi'}$.
Then $\vi_x^\pm(F)\in(\vi'-\eta/2,\vi'+\eta/2)\sbs I$, hence $F\in\cP_{x,I}$.
As before, we obtain $\vi_x^\pm(F)=\vi$.
As $E$ is $\si_x$-stable, we conclude that $F=E$, hence $E$ is $\si_y$-stable.

To show that $\sets{x\in M}{\ga\in\fs_x}$ is closed, we need to prove that if $x\in M$ is such that for every open $x\in U\sbs M$, there exists $y\in U$ with $\ga\in\fs_y$, then $\ga\in\fs_x$.
Let us assume that $\ga=\cl E$ for some $0\ne E\in\cP_{y,\vi'}$ and $y\in U$.
By the above discussion, we can assume that
$\n{Z_x(\ga)}\ge\tfrac\eps2\nn\ga$.
Let $\vi=\frac1\pi\Arg Z_x(\ga)$.
As before, we assume that $0<\eta<1/2$ and $d(\cP_x,\cP_y)<\eta/2$ for all $y\in U$.
Then $\vi_x^\pm(E)\in I'=(\vi'-\eta/2,\vi'+\eta/2)$.
We can assume that $E$ and $\vi'$ are chosen in such way that $\vi\in I'$.
Then $\vi_x^\pm(E)\in I=(\vi-\eta,\vi+\eta)$, hence $E\in\cP_{x,I}$.
All nonzero objects in $\cP_{x,I}$ have classes in the semigroup $S(V,Z_x,\eps)$, where $V=\bR_{>0}e^{\ip I}$ is a strict cone.
There are finitely many such classes that are $\le\ga$.
By decreasing $\eta$ (and shrinking $U$) we can assume that all such classes are contained in $S(\ell,Z_x,\eps)$, where $\ell=\bR_{>0}e^{\ip\vi}$.
This implies that if we can still find $y\in U$ and a  $\si_y$-semistable object $E$ with $\ga=\cl E$, then $E$ is automatically $\si_x$-semistable.
\end{proof}

\begin{theorem}[\Cf\cite{bayer_space,bayer_short}]
\label{th:Qpos2}
Let $M$ be path-connected, 
$(\si_x)_{x\in M}$ be a continuous family of stability conditions on $M$, and $Q:\Ga_\bR\to\bR$ be a quadratic form negative semi-definite on $\Ker Z_x$ for all $x\in M$.
If there exists $x\in M$ such that $Q(\cl E)\ge0$ for all $\si_x$-semistable objects $E$, then the same is true for all points of $M$.
\end{theorem}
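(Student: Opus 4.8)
The plan is to deduce the statement from Theorem~\ref{th:Qpos1} by recognising the family $(\si_x)_{x\in M}$ as a continuous family of stable supports in the sense of \S\ref{sec:supp famil}. First I would set, for each $x\in M$,
$$\fr_x=\sets{\cl E}{E\text{ is }\si_x\text{-stable}},\qquad
\fs_x=\sets{\cl E}{E\text{ is }\si_x\text{-semistable}},$$
as in Lemma~\ref{open-closed1}, and check that each triple $(Z_x,\fr_x,\fs_x)$ is a stable support. The inclusion $\fr_x\sbs\fs_x$ is immediate, and $0\notin\fs_x$ because a nonzero $\si_x$-semistable object $E\in\cP_{x,\vi}$ has $Z_x(\cl E)\in\bR_{>0}e^{\ip\vi}\ne0$, so $\cl E\ne0$; the support property with some $\eps>0$ is part of the definition of a stability condition.

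The only condition of a stable support needing an argument is $\fs_x\cap Z_x\inv(\ell)\sbs\sgr(\fr_x\cap Z_x\inv(\ell))$ for every ray $\ell=\bR_{>0}e^{\ip\vi}\sbs\bC$. For this I would use the standard fact that, under the support property, the abelian category $\cP_{x,\vi}$ is of finite length: every nonzero object of $\cP_{x,\vi}$ has class in the strict semigroup $S(\ell,Z_x,\eps)$, a strictly increasing chain of subobjects produces a strictly increasing chain of classes in that semigroup bounded above by $\cl E$, and by Lemma~\ref{lm:strict sgr} only finitely many elements of $S(\ell,Z_x,\eps)$ lie below $\cl E$. Hence every $\si_x$-semistable object of phase $\vi$ admits a Jordan--H\"older filtration whose factors are $\si_x$-stable of the same phase $\vi$, so its class is a sum of elements of $\fr_x\cap Z_x\inv(\ell)$, as required.

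Next I would verify the three continuity axioms for $(Z_x,\fr_x,\fs_x)_{x\in M}$: continuity of $Z:M\to\Ga_\bC\dual$ is built into the hypothesis; axiom~(1), the local uniform support property, is exactly Lemma~\ref{uniform support}; and axioms~(2) and~(3), the openness of $\sets{x\in M}{\ga\in\fr_x}$ and the closedness of $\sets{x\in M}{\ga\in\fs_x}$ for each $\ga\in\Ga$, are exactly the two parts of Lemma~\ref{open-closed1}.

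Finally, $Q$ is negative semi-definite on $\Ker Z_x$ for all $x\in M$ by hypothesis, and there is a point $x_0\in M$ with $Q(\ga)\ge0$ for all $\ga\in\fs_{x_0}$; since $M$ is path-connected, Theorem~\ref{th:Qpos1} yields $Q(\ga)\ge0$ for all $\ga\in\fs_y$ and all $y\in M$, which is precisely the assertion that $Q(\cl E)\ge0$ for every $\si_y$-semistable object $E$. All of the analytic content has already been carried by Theorem~\ref{th:Qpos1} together with Lemmas~\ref{uniform support} and~\ref{open-closed1}; the one point demanding fresh (if entirely routine) work is the finite-length/Jordan--H\"older step of the second paragraph, which is where I expect whatever mild difficulty there is to lie.
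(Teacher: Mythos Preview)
Your proposal is correct and follows exactly the paper's approach: the paper's proof is the single line ``By Lemma~\ref{uniform support} and Lemma~\ref{open-closed1}, we can apply Theorem~\ref{th:Qpos1},'' and you have spelled out precisely how those lemmas feed into the stable-support framework. The Jordan--H\"older step you flag as the one point needing fresh work is indeed the only detail the paper leaves implicit in passing from Lemma~\ref{open-closed1} to the stable-support axioms.
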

\begin{proof}
By Lemma \ref{uniform support} and Lemma \ref{open-closed1},
we can apply Theorem \ref{th:Qpos1}.
\end{proof}

\begin{corollary}
Let $M$ be locally path-connected and
$(\si_x)_{x\in M}$ be a continuous family of pre-stability conditions on $M$.
Let $Q:\Ga_\bR\to\bR$ be a quadratic form and $x\in M$ be such that $Q$ is negative-definite on $\Ker Z_x$ 
and $Q(\cl E)\ge0$ for all $\si_x$-semistable objects $E$.
Then the same is true in some \nbd of $x$.
\end{corollary}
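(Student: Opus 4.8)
The plan is to deduce this from Theorem~\ref{th:Qpos2} after shrinking $M$ to a suitable \nbd of $x$, exactly as the corollary following Theorem~\ref{th:Qpos1} is deduced from that theorem. The only preliminary point is to pass from \emph{pre}-stability conditions to genuine stability conditions near $x$.

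First I would observe that $\si_x$ itself is a stability condition: by the quadratic-form criterion for the support property (\cf Lemma~\ref{norm-Q}), a pre-stability condition admitting a quadratic form that is negative-definite on $\Ker Z_x$ and nonnegative on the classes of its semistable objects satisfies the support property, and $Q$ is such a form. Then Lemma~\ref{uniform support} provides a \nbd $x\in U\sbs M$ and $\eps>0$ with $\n{Z_y(E)}\ge\eps\nn{\cl E}$ for every $y\in U$ and every $\si_y$-semistable object $E$; in particular every $\si_y$ with $y\in U$ satisfies the support property, so $(\si_y)_{y\in U}$ is a continuous family of stability conditions.

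Next I would shrink $U$ further so that $Q$ is negative-definite on $\Ker Z_y$ for all $y\in U$; this is an open condition. Indeed, on the unit sphere $S\sbs\Ga_\bR$ the compact set $S\cap\Ker Z_x$ has $Q<0$, so $Q\le-\de$ on some open \nbd $W$ of it in $S$, while $\n{Z_x(\cdot)}\ge c>0$ on the compact set $S\ms W$; if $\nn{Z_x-Z_y}<c$ then any $v\in S$ with $Z_y(v)=0$ satisfies $\n{Z_x(v)}<c$, hence lies in $W$, so $\Ker Z_y\cap S\sbs W$ and $Q<0$ on $\Ker Z_y\ms\set0$. Since $M$ is locally path-connected, I may also take $U$ path-connected.

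Finally, on the path-connected $U$ we have a continuous family of stability conditions, a quadratic form $Q$ that is negative semi-definite (indeed definite) on every $\Ker Z_y$, $y\in U$, and the point $x$ at which $Q(\cl E)\ge0$ for all $\si_x$-semistable $E$; Theorem~\ref{th:Qpos2} then gives $Q(\cl E)\ge0$ for all $\si_y$-semistable $E$ and all $y\in U$, which is the assertion. The main obstacle, such as it is, is the openness of the condition ``$Q$ negative-definite on the varying kernel $\Ker Z_y$'', handled by the compactness argument above; there is no deeper difficulty, since the real content is already packaged into Theorem~\ref{th:Qpos1} (the HN induction using the convexity Lemma~\ref{convex lQ}) and into Lemma~\ref{uniform support}.
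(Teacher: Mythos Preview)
Your proof is correct and follows essentially the same approach the paper intends: the paper leaves this corollary without explicit proof, but the analogous corollary after Theorem~\ref{th:Qpos1} is proved in exactly the way you describe (shrink to a path-connected \nbd on which $Q$ is negative-definite on every $\Ker Z_y$, then apply the theorem). The one extra step you need here---passing from pre-stability conditions to stability conditions via Lemma~\ref{norm-Q} and Lemma~\ref{uniform support}---is handled correctly, and your compactness argument for the openness of ``$Q$ negative-definite on $\Ker Z_y$'' spells out what the paper takes for granted.
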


\subsection{Action on stability conditions}
\label{sec:action}
Let $\Aut(\cD)$ be the group of automorphisms of $\cD$,
$\GL_2^+(\bR)$ be the group of elements in $\GL_2(\bR)$ having positive determinant and let $\wtl\GL_2^+(\bR)$ be its universal cover.
There is a left action of the group $\Aut(\cD)$ and the right action of the group $\wtl\GL_2^+(\bR)$ on $\Stab_{\cl}(\cD)$ defined as follows \cite[Lemma 8.2]{bridgeland_stability}. 

For any $T\in\Aut(\cD)$ and stability condition $\si=(Z,\cP)$, we define
\begin{equation}
T\si=(Z',\cP'),\qquad Z'=ZT\inv,\qquad \cP'_\vi=T\cP_\vi.
\end{equation}

To describe $\wtl\GL_2^+(\bR)$, let us consider the group homomorphism
\oper{Diff}
\oper{Re}
\begin{equation}
\GL_2^+(\bR)\to\Diff(U(1)),\qquad T\mto \bar T,\qquad
\bar T(z):=T(z)/\n{T(z)},
\end{equation}
where $\Diff(U(1))$ is the group of (orientation-preserving) diffeomorphisms of $U(1)\sbs\bC^*$.
Let $\Diff^*(\bR)$ be the group of (orientation-preserving) diffeomorphisms $f:\bR\to\bR$ that commute with the translation $\vi\mto\vi+2$.
Any $f\in \Diff^*(\bR)$ induces
\begin{equation}
\bar f\in\Diff(U(1)),\qquad
e^{\ip\vi}\mto e^{\ip f(\vi)}.
\end{equation}
Then $\wtl\GL_2^+(\bR)$ can be defined using the following diagram with Cartesian squares
\begin{ctikzcd}
\bR\dar["e^{\ip(-)}"']\rar[hook]&
\bC\dar["e^{\ip(-)}"']\rar[hook]&
\wtl\GL^+_2(\bR)\dar\rar&\Diff^*(\bR)\dar\\
U(1)\rar[hook]&\bC^*\rar[hook]&\GL^+_2(\bR)\rar&\Diff(U(1)).
\end{ctikzcd}
Here $\bC^*\emb\GL_2^+(\bR)$ is given by $a+\bi b\mto\smat{a&-b\\b&a}$ and
\begin{gather}
\bC^*\to\Diff(U(1)),\qquad w\mto[z\mto zw/\n w]\\
\bC\to\Diff^*(\bR),\qquad a+bi\mto[\vi\mto \vi+a].
\end{gather}

For any stability condition $\si=(Z,\cP)$ and an element $g=(T,f)\in\wtl\GL_2^+(\bR)$
(where $T\in\GL_2^+(\bR)$ and $f\in\Diff^*(\bR)$ are mapped to the same element in $\Diff(U(1))$),
we define a new stability condition
\begin{equation}
\si[g]=(Z[g],\cP[g]),\qquad
Z[g]=T\inv Z,\qquad \cP[g]_\vi=\cP_{f(\vi)}.
\end{equation}
In particular, for $a+\bi b\in\bC$, we define (\cf \eqref{shift slicing})
\begin{equation}\label{stab shift}
Z[a+\bi b]=e^{-\ip a+\pi b}Z,\qquad \cP[a+\bi b]_\vi=\cP_{\vi+a}.
\end{equation}

\subsection{Phase behavior in families}
\label{sec:phase behavior}
Let $(\si_x)_{x\in M}$ be a continuous family of stability conditions.
We denote $\vi^\pm_{\si_x}$ by $\vi^\pm_x$.
For any object $0\ne E\in\cD$ the maps
\begin{equation}
M\to\bR,\ x\mto\vi^-_x(E),\qquad
M\to\bR,\ x\mto\vi^+_x(E),
\end{equation}
are continuous by \eqref{d1}.

\begin{theorem}\label{th:phase behave}
Let $(\si_t=(Z_t,\cP_t))_{t\in [0,1]}$ be a
continuous family of stability conditions on ~\cD such that
the map $Z:[0,1]\to\Ga_\bC\dual$ is differentiable
and
\begin{equation}\label{phase cond}
\Im(Z'_t(E)\cdot \bar Z_t(E))\ge0
\end{equation}
for all $t\in[0,1]$ and $\si_t$-stable objects $E\in\cD$.
Then, for any object $0\ne E\in\cD$,
the functions 
$$t\mto\vi_t^-(E),\qquad t\mto\vi_t^+(E)$$
are weakly-increasing .
\end{theorem}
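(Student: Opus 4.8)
The plan is to reduce the statement to a purely local assertion about what happens to the maximal (respectively minimal) phase of a semistable object when the central charge is perturbed, and then to propagate this along $[0,1]$ by a connectedness/open-closed argument. First I would reduce to $\vi^+$; the case of $\vi^-$ is dual (replace $\cD$ by $\cD^{\op}$, or equivalently apply the $\vi^+$-statement to $E[n]$ for suitable shifts and use $\vi_t^-(E)=-\vi_t^+(E^\vee)$-type identities coming from a Serre functor, but the cleanest route is to observe that the whole argument runs symmetrically). Since $t\mto\vi^\pm_t(E)$ is continuous by \eqref{d1}, it suffices to show it is locally weakly-increasing, and in fact it suffices to prove: for each $t_0$ and each $0\ne E$, there is $\de>0$ such that $\vi^+_t(E)\ge\vi^+_{t_0}(E)$ for $t\in[t_0,t_0+\de)$ and $\vi^+_t(E)\le\vi^+_{t_0}(E)$ for $t\in(t_0-\de,t_0]$ — i.e. the one-sided derivative has the right sign. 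By passing to HN factors and using that $\vi^+_t(E)=\max_i\vi^+_t(F_i)$ over HN factors $F_i$ of $E$ at $t_0$ (these depend continuously and for $t$ near $t_0$ the HN filtration of $E$ at $t$ refines to phases close to those at $t_0$, by $d(\cP_{t_0},\cP_t)\to 0$), I can further reduce to the case where $E$ is $\si_{t_0}$-semistable, and then — since a semistable object has a filtration by stable objects of the same phase, and stable objects near $t_0$ stay semistable by Lemma~\ref{open-closed1} — to the case where $E$ is $\si_{t_0}$-stable.

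So the heart of the matter is: if $E$ is $\si_{t_0}$-stable of phase $\vi$, then $t\mto\vi^+_t(E)$ (and $\vi^-_t(E)$, which near $t_0$ is squeezed toward $\vi$) is weakly increasing near $t_0$. Here is where hypothesis \eqref{phase cond} enters. Write $Z_t(E)=r(t)e^{\ri\pi\psi(t)}$ near $t_0$ where $r(t)>0$ (nonvanishing by the support property, Lemma~\ref{uniform support}) and $\psi(t_0)=\vi$. A direct computation gives $\psi'(t_0)=\tfrac1{\pi r(t_0)^2}\Im\bigl(Z'_{t_0}(E)\cdot\ub{Z_{t_0}(E)}\bigr)\ge 0$ by \eqref{phase cond}. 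Thus the phase of the central charge of $E$ itself is weakly increasing at $t_0$. The remaining point is to transfer this from "phase of $Z_t(E)$" to "$\vi^\pm_t(E)$". For $t$ in a small neighborhood of $t_0$, $E$ is still $\si_t$-semistable (Lemma~\ref{open-closed1}), and for a semistable object $\vi_t^\pm(E)=\tfrac1\pi\Arg Z_t(E)=\psi(t)$; so on that one-sided neighborhood $\vi^\pm_t(E)=\psi(t)$ and monotonicity of $\psi$ at $t_0$ gives exactly the one-sided inequalities needed. Then $\vi^+_t(E)\ge\vi^+_{t_0}(E)$ for $t\ge t_0$ close, and $\le$ for $t\le t_0$.

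Finally I would assemble the reductions: fix $0\ne E$ and $t_0\in[0,1]$; take its HN filtration at $t_0$ with semistable factors $F_1,\dots,F_n$ of phases $\psi_1>\dots>\psi_n$; for $t$ near $t_0$, $d(\cP_{t_0},\cP_t)<\eta$ forces each $F_i$ to be $\si_t$-semistable (Lemma~\ref{open-closed1}) with $\vi_t(F_i)$ within $\eta$ of $\psi_i$, so for $\eta$ small the HN factors of $E$ at $t$ are obtained by grouping the $F_i$ according to their (now possibly coinciding) $\si_t$-phases, whence $\vi^+_t(E)=\max_i\vi_t(F_i)$ and $\vi^-_t(E)=\min_i\vi_t(F_i)$. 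Replacing $E$ by the $F_i$ realizing the max (resp. min) reduces to $E$ semistable, and then filtering by stable subquotients of the same phase (which stay semistable near $t_0$ and have central-charge phase equal to $\vi_t(E)$) reduces to $E$ stable, which is the case handled above; weak monotonicity of each one-sided piece, together with continuity of $t\mto\vi^\pm_t(E)$ on all of $[0,1]$, yields global weak monotonicity. \textbf{The main obstacle} I anticipate is the bookkeeping in this last reduction — making precise, uniformly for $t$ near $t_0$, that the HN factors of $E$ at $t$ are exactly the coarsenings of the $t_0$-HN factors and that no new phases appear; this is where one must use smallness of $d(\cP_{t_0},\cP_t)$ together with the uniform support bound of Lemma~\ref{uniform support} and the finiteness of classes below $\cl E$ in the relevant strict semigroup, exactly as in the proof of Lemma~\ref{open-closed1}.
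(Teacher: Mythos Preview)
Your strategy is the paper's: reduce to $E$ stable at $t_0$, use openness of stability to keep $E$ stable on an interval, and read off that the phase of $Z_t(E)$ is non-decreasing from \eqref{phase cond}; the other of $\vi^\pm$ then follows by passing to $\cD^\op$.

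Two steps are misstated. First, $\psi'(t_0)\ge 0$ alone does not give $\psi(t)\ge\psi(t_0)$ for $t>t_0$ close; you must use that $E$ remains $\si_t$-stable on a whole interval $[t_0,t_0+\eps)$ (stability is open), so that \eqref{phase cond} applies at every such $t$ and hence $\psi'\ge 0$ throughout, which does give $\psi(t)\ge\psi(t_0)$. Second, your reduction from semistable to stable is garbled: the HN factors $F_i$ need not remain $\si_t$-semistable (semistability is closed, not open --- and Lemma~\ref{open-closed1} is about classes, not objects), and the Jordan--H\"older factors $G_j$ of a $\si_{t_0}$-semistable $E$ do \emph{not} have phase $\vi_t(E)$ for $t\ne t_0$. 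The correct reduction for $\vi^-$ uses only the extension-closure inequality $\vi^-_t(E)\ge\min_j\vi^-_t(G_j)$ for any filtration of $E$ with $\si_{t_0}$-stable subquotients $G_j$, together with the stable case. For $\vi^+$ the analogous inequality $\vi^+_t(E)\le\max_j\vi^+_t(G_j)$ points the wrong way; the clean fix is exactly the duality you already mention, and this is what the paper does (prove $\vi^-$ directly, then pass to $\cD^\op$ for $\vi^+$). The ``HN at $t$ is a coarsening'' bookkeeping you flag as the main obstacle is unnecessary once the reduction is done this way.
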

\begin{proof}
We will prove just that $\vi_t^-(E)$ is weakly-increasing.
It is enough to show that if $E$ is $\si_{t_0}$-stable and $t\in(t_0,t_0+\eps)$ for $0<\eps\ll1$, then $\vi^-_t(E)\ge\vi_{t_0}(E)$.
We can assume that $E$ is $\si_t$-stable for all $t\in[t_0,t_0+\eps)$.
The condition \eqref{phase cond} means that
$$Z'_t(E)\in Z_{t}(E)\cdot \bar\bH,\qquad \bar\bH=\sets{x+\bi y\in\bC}{y\ge0},$$
as $\bar Z_t(E)$ and $Z_t(E)\inv$ are contained in the same ray.
This condition implies that $Z_t(E)\in Z_{t_0}(E)\cdot\bar\bH$ for $t\in(t_0,t_0+\eps)$ and $0<\eps\ll1$.
Therefore $\vi_t(E)\ge\vi_{t_0}(E)$.
\end{proof}

\begin{remark}
Given a stability condition $\si=(Z,\cP)$ on a triangulated category \cD,
we define a stability condition $\bar \si=(\bar Z,\bar\cP)$ on the opposite category $\cD^\op$ as follows.
We define $\bar\cP_\vi=\cP_{-\vi}^\op$ and we define $\bar Z(E)$ to be the conjugate of $Z(E)$.
Note that if $0\ne E\in\bar\cP_\vi$, then $Z(E)\in\bR_{>0}e^{-i\pi\vi}$,
hence
$\bar Z(E)\in\bR_{>0}e^{i\pi\vi}$.
Assuming that we have a family of stability conditions $(\si_t)_t$ on $\cD$ as above, we obtain a family $(\bar\si_t)_t$ of stability conditions on $\cD^\op$. 
Let $\bar\vi_t^{\pm}$ be the phase functions corresponding to these stability conditions.
We have $\Im(\bar Z'_t(E)\cdot Z_t(E))\le0$, hence we obtain from the first part of the above result that $\bar\vi_t^-(E)$ is weakly-decreasing, hence $\vi^+_t(E)=-\bar\vi_t^-(E)$ is weakly-increasing.
\end{remark}

\begin{remark}
Let $(\si_t=(Z_t,\cP_t))_{t\in[0,1]}$ be a continuous family of stability conditions such that the map $Z:[0,1]\to\Ga_\bC\dual$ is differentiable.
For any $t\in[0,1]$ we define the quadratic form
\begin{equation}
Q_t:\Ga_\bR\to\bR,\qquad \ga\mto\Im(Z'_t(\ga)\cdot \bar Z_t(\ga)),
\end{equation}
which is zero on $\Ker Z_t$.
The requirement of Theorem \ref{th:phase behave} means that $Q_t(\cl E)\ge0$ for all $\si_t$-stable (or $\si_t$-semistable) objects $E\in\cD$.
\end{remark}

\subsection{Global dimension of a slicing}
\label{sec:global dim}
We define the \idef{global dimension} of a slicing \cP to be
\begin{equation}
\gldim(\cP)=\sup\sets{\vi'-\vi}{\Hom(\cP_\vi,\cP_{\vi'})\ne0}
\in[0,+\infty].
\end{equation}
Note that if $\vi'-\vi<0$, then $\Hom(\cP_\vi,\cP_{\vi'})=0$ by assumption, hence the global dimension is indeed non-negative.
If $\si=(Z,\cP)$ is a stability condition, then we define its global dimension to be $\gldim(\si)=\gldim(\cP)$.

\begin{example}
Let us assume that $\cD$ has the Serre functor, meaning an automorphism $\cS:\cD\to\cD$ such that there are natural isomorphisms $\Hom(X,Y)\iso\Hom(Y,\cS X)\dual$ for all $X,Y\in\cD$.
Let us assume that there exists $d\in\bR$, such that
$\cS(\cP_\vi)=\cP_{\vi+d}$ for all $\vi\in\bR$.
If $X\in\cP_\vi$, $Y\in\cP_{\vi'}$ and $\vi'>\vi+d$,
then $\Hom(X,Y)\iso \Hom(Y,\cS X)\dual=0$.
Therefore $\gldim(\cP)\le d$.
Taking $Y=\cS X$ with $X\in\cP_\vi$, we obtain that $\gldim(\cP)=d$.
\end{example}

\begin{example}
Let us assume that the Serre functor has the form $\cS=\Phi[d]$ for some $d\in\bZ$ and a functor $\Phi$ that preserves the heart $\cA=\cP_{(0,1]}$.
If $X\in\cP_\vi$, $Y\in\cP_{\vi'}$
for $\vi\in(0,1]$ and $\vi'>\vi+d+1$, then
$\cS X\in\cP_{(d,d+1]}$, hence
$\Hom(X,Y)\iso\Hom(Y,\cS X)=0$.
This implies that $\gldim(\cP)\le d+1$.
On the other hand, let $X\in\cP_\vi$ for $\vi\in(0,1]$ and let $Y\in\cP_{\vi'}$ be the first term of the HN-filtration of $\cS X\in\cP_{(d,d+1]}$.
Then $\Hom(X,Y)\ne0$ and $\vi'>d$, hence $\gldim(\cP)>d-1$.
\end{example}

\section{Stability data}
\label{sec:stab data}

\subsection{Graded Lie algebras}\label{sec:graded}
Let $\Ga$ be a free abelian group of finite rank.
Given a \Ga-graded Lie algebra $\fg=\bop_{\ga\in\Ga}\fg_\ga$,
we define its support to be
\begin{equation}
\supp(\fg)=\sets{\ga\in\Ga}{\fg_\ga\ne0}.
\end{equation}
We will say that $\fg$ has a strict support if $\supp(\fg)$
generates a strict semigroup without zero in $\Ga$.
If $\fg$ has a finite strict support, then \fg is nilpotent and we can define the corresponding nilpotent group $G=\exp(\fg)$ equipped with the bijective exponential map $\exp:\fg\to G$.

If $\fg$ has a strict support, we define its pro-nilpotent completion $\hat\fg$ as follows.
Let $S\sbs\Ga$ be the strict semigroup (without zero) generated by $\supp(\fg)$.
We equip $S$ with a partial order as in \S\ref{sec:str semigr}.
Given a finite lower set $I\sbs S$,
we define the Lie algebra
\begin{equation}\label{g-nilp-quot}
\fg_I=\fg/\fm_I\iso\bop_{\ga\in I}\fg_\ga,\qquad
\fm_I=\bop_{\ga\in S\ms I}\fg_\ga.
\end{equation}

If $I\sbs J$ are two lower sets, then there is a canonical epimorphism of Lie algebras $\fg_J\to\fg_I$.
We define the completion 
\begin{equation}\label{g-compl}
\hat\fg=\ilim_{I\sbs S}\fg_I,
\end{equation}
where the limit is taken over all finite lower sets $I\sbs S$.
Note that we have an isomorphism of vector spaces 
$\hat\fg\iso\prod_{\ga\in\Ga}\fg_\ga$.
The Lie algebra $\hat\fg$ is pro-nilpotent as the Lie algebras $\fg_I$, for finite lower sets $I\sbs S$, are nilpotent.
We define the corresponding pro-nilpotent group $\hat G=\exp(\hat\fg)=\ilim_I\exp(\fg_I)$ and the bijective exponential map $\exp:\hat\fg\to\hat G$ (we denote its inverse by ~$\log$).

For an arbitrary $\Ga$-graded Lie algebra $\fg$, we 
define the vector space
\begin{equation}
\hat\fg=\prod_{\ga\in\Ga\ms\set0}\fg_\ga.
\end{equation}
Given $a\in\hat\fg$, we define
\begin{equation}
\supp(a)=\sets{\ga\in\Ga}{a_\ga\ne0}.
\end{equation}

\subsection{Examples of graded Lie algebras}
The following two examples of graded Lie algebras usually appear in the study of stability data and wall-crossing structures.

\begin{example}
Let $\fg$ be a semisimple Lie algebra over $\bC$, with a Cartan subalgebra $\fh$, the set of roots $\De\sbs\fh\dual$
and a set of positive roots $\De_+\sbs\De$.
Let $\Ga\sbs\fh\dual$ be the root lattice of \fg, so that $\Ga_\bC\iso\fh\dual$ and $\Hom_\bZ(\Ga,\bC)=\Ga_\bC\dual\iso\fh$.
Using the root space decomposition $\fg=\bop_{\al\in\De\cup\set0}\fg_\al$ with $\fg_0=\fh$,
we can consider $\fg$ as a \Ga-graded Lie algebra.
Let $\Pi\sbs\De_+$ be the set of simple roots and $e_i\in\fg_{\al_i}$, $f_i\in\fg_{-\al_i}$,
for $\al_i\in\Pi$, be the standard generators.
The algebra $\fg$ is equipped with the Chevalley involution
which is a Lie algebra automorphism $\om:\fg\to\fg$ given by
\begin{equation}
\om(e_i)=-f_i,\qquad \om(f_i)=-e_i,\qquad
\om(h)=-h,\qquad h\in\fh.
\end{equation}

One can also consider $\fg=\gl_n(\bC)$ with the subalgebra $\fh\sbs\fg$ consisting of diagonal matrices
and the root lattice $\Ga\sbs\fh\dual$ generated by $\eps_i\in\fh\dual$, $\sum_j a_jE_{jj}\mto a_i$.
As before, we have a root space decomposition
$\fg=\bop_{\al\in\De\cup\set0}\fg_\al$ with $\fg_0=\fh$ and $\De=\sets{\eps_i-\eps_j}{i\ne j}$.
Therefore $\fg$ is \Ga-graded.
\end{example}

\def\yy{s}
\begin{example}
Let $\Ga\iso\bZ^n$ be equipped with a skew-symmetric \bZ-valued bilinear form $\ang{\cdot,\cdot}$
and let $R$ be a commutative ring with an invertible element
$\yy\in R$ (in our later considerations ~$R$ is the ring of motivic classes and $\yy=\bL^\oh$, where $\bL$ is the class of the Lefschetz motive).
We define the \idef{quantum torus} to be a \Ga-graded associative $R$-algebra
\begin{equation}
\bT=\bop_{\ga\in\Ga}R x^\ga,\qquad
x^\al\circ x^\beta=\yy^{\ang{\al,\beta}}x^{\al+\beta}.
\end{equation}
We can also consider it as a \Ga-graded Lie algebra,
where the Lie bracket is given by the commutator.
The algebra $\bT$ is equipped with the involution 
$x^\al\mto x^{-\al}$.

Assuming that $\yy^2-1$ is invertible,
we define new generators 
$\bar x^\al=x^\al/(\yy-\yy\inv)$, so that
\begin{equation}
[\bar x^\al,\bar x^\beta]
=\frac{\yy^{\ang{\al,\beta}}-\yy^{-\ang{\al,\beta}}}
{\yy-\yy\inv}
\bar x^{\al+\beta}.
\end{equation}
Taking the limit $\yy\to-1$, we define a new \Ga-graded Lie algebra
\begin{equation}
\bar \bT=\bop_{\ga\in\Ga}\bQ \bar x^\ga,\qquad
[\bar x^\al,\bar x^\beta]=(-1)^{\ang{\al,\be}}\ang{\al,\beta}\bar x^{\al+\beta},
\end{equation}
called the torus Lie algebra.
It is equipped with the involution $\bar x^\al\mto \bar x^{-\al}$.
\end{example}

\subsection{Factorizations}
Let \fg be a \Ga-graded Lie algebra.

\begin{lemma}
\label{decomp1}
Let \fg be a graded Lie algebra with a finite strict support having a decomposition $\supp(\fg)=S_1\sqcup S_2$
such that $\fg_i=\bop_{\ga\in S_i}\fg_\ga$ are subalgebras for $i=1,2$.
Then the map
$$\exp(\fg_1)\xx \exp(\fg_2)\to \exp(\fg),\qquad (g_1,g_2)\mto g_1g_2,$$
is a bijection.
\end{lemma}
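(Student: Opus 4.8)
Since $\supp(\fg)$ is finite and strict, $\fg$ is nilpotent and $\exp\col\fg\to G:=\exp(\fg)$ is a bijection, as recalled above; moreover $\fg_1$ and $\fg_2$ are Lie subalgebras, so $G_i:=\exp(\fg_i)$ are subgroups of $G$ and $\exp\col\fg_i\to G_i$ is again a bijection. The plan is to prove injectivity of $\mu\col G_1\xx G_2\to G$, $(g_1,g_2)\mto g_1g_2$, by a direct computation, and surjectivity by induction on the nilpotency class of $\fg$, descending along a central graded ideal.

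\textbf{Injectivity.} If $g_1g_2=g_1'g_2'$ with $g_i,g_i'\in G_i$, then $h:=(g_1')\inv g_1=g_2'g_2\inv$ lies in $G_1\cap G_2$. Writing $h=\exp(x_1)=\exp(x_2)$ with $x_i\in\fg_i$ and using injectivity of $\exp\col\fg\to G$, we get $x_1=x_2\in\fg_1\cap\fg_2$. But $\fg_1\cap\fg_2=\bop_{\ga\in S_1\cap S_2}\fg_\ga=0$ because $S_1\cap S_2=\es$, so $h=1$ and $(g_1,g_2)=(g_1',g_2')$.

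\textbf{Surjectivity.} I would induct on the nilpotency class of $\fg$. If $\fg$ is abelian, then $\exp$ identifies $(\fg,+)$ with $G$ and $\mu$ becomes the addition map $\fg_1\xx\fg_2\to\fg_1\oplus\fg_2=\fg$, which is bijective. If $\fg$ is not abelian, let $\fz$ be the last nonzero term of the lower central series of $\fg$. Since the lower central series of a graded Lie algebra is graded, $\fz$ is a nonzero graded ideal, and it is contained in the center, so $\bar\fg:=\fg/\fz$ is a graded Lie algebra of strictly smaller nilpotency class. Setting $\fz_i:=\fz\cap\fg_i=\bop_{\ga\in S_i}\fz_\ga$ we have $\fz=\fz_1\oplus\fz_2$, and the images $\bar\fg_i:=\fg_i/\fz_i$ are subalgebras of $\bar\fg$ with $\bar\fg=\bar\fg_1\oplus\bar\fg_2$, indexed by the disjoint sets $\supp(\bar\fg)\cap S_i$; hence the lemma applies to $\bar\fg$ by the inductive hypothesis. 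Using the standard compatibility of $\exp$ with quotients by central graded ideals, $Z:=\exp(\fz)$ is a central subgroup of $G$ with $G/Z=\exp(\bar\fg)=:\bar G$, the reduction maps $G_i\to\exp(\bar\fg_i)=:\bar G_i$ are surjective, and $\exp\col\fz\to Z$ is an isomorphism of abelian groups. Given $g\in G$ with image $\bar g\in\bar G$, write $\bar g=\bar g_1\bar g_2$ with $\bar g_i\in\bar G_i$ and lift $\bar g_i$ to $g_i\in G_i$, so that $g=g_1g_2z$ for some $z\in Z$. Writing $z=\exp(w_1)\exp(w_2)=:z_1z_2$ with $w_i\in\fz_i$ and $z_i\in\exp(\fz_i)\sbs G_i$ (possible since $\fz$ is abelian), and using that $z_1\in Z$ is central, we conclude $g=g_1g_2z_1z_2=g_1z_1g_2z_2=(g_1z_1)(g_2z_2)$ with $g_1z_1\in G_1$ and $g_2z_2\in G_2$.

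\textbf{Where the work is.} All the content sits in surjectivity; injectivity is immediate. The one step that deserves care is the functoriality of $\exp$ under quotients by central graded ideals — that $G/Z\cong\exp(\fg/\fz)$ compatibly with the gradings, which is what lets the decomposition descend to $\bar\fg$ and be lifted back — together with the elementary observation that the lower central series of a graded Lie algebra is graded, so that $\fz$ (hence $Z$) respects the splitting $S=S_1\sqcup S_2$; everything else is bookkeeping. If one prefers to sidestep the induction on the nilpotency class, one can instead filter $\fg$ by the central graded ideals $\bop_{u(\ga)>N}\fg_\ga$, where $u\col S\to\bR_{>0}$ is the proper additive map of Lemma~\ref{lm:strict sgr}, and run the same descent one value of $N$ at a time.
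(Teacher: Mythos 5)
Your proof is correct. The paper's argument is also a descent along a central graded ideal, but it is organized differently: it inducts on the size of $\supp(\fg)$, at each step peeling off a single graded component $\fg_\ga$ with $\ga$ chosen so that $[\fg_\ga,\fg]=0$ (possible by finiteness and strictness of the support), assumed without loss of generality to lie in $S_1$, and concludes in one stroke by observing that the multiplication map is a $G_\ga$-equivariant map of $G_\ga$-torsors sitting over a bijection, hence itself a bijection. You instead induct on the nilpotency class, quotient by the last nonzero term of the lower central series, and treat injectivity and surjectivity separately: injectivity directly from $\fg_1\cap\fg_2=0$ together with injectivity of $\exp$, and surjectivity by an explicit lift using that $\fz=\fz_1\oplus\fz_2$ and that $Z$ is central. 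The two routes rest on the same ingredients (functoriality of $\exp$ under quotients by graded ideals, plus centrality of the ideal being killed); the torsor phrasing in the paper absorbs the injectivity check and the factorization of the central error term $z$ into one formal step, while your version makes the lifting mechanism completely explicit. Your closing suggestion of filtering by $\bop_{u(\ga)>N}\fg_\ga$ is in fact closer in spirit to the paper's one-graded-component-at-a-time peeling.
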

\begin{proof}
There exists $\ga\in\supp(\fg)$ such that $[\fg_\ga,\fg]=0$.
Let us assume that $\ga\in S_1$
and let 
$G_\ga=\exp(\fg_\ga)$.
We have the diagram
\begin{ctikzcd}
\exp(\fg_1)\xx \exp(\fg_2)\dar\rar&\exp(\fg)\dar\\
\exp(\fg_1/\fg_\la)\xx \exp(\fg_2)\rar&\exp(\fg/\fg_\la)
\end{ctikzcd}
where both vertical arrows are $G_\ga$-torsors, the top arrow is $G_\ga$-equivariant,
and the bottom arrow is a bijection by induction on the size of $\supp(\fg)$.
We conclude that the top arrow is also a bijection.
\end{proof}

\begin{corollary}\label{decomp2}
Let \fg be a graded Lie algebra with a strict support having a decomposition $\supp(\fg)=S_1\sqcup S_2$
such that $\fg_i=\bop_{\ga\in S_i}\fg_\ga$ are subalgebras for $i=1,2$.
Then the map
$$\exp(\hat\fg_1)\xx \exp(\hat\fg_2)\to \exp(\hat\fg),\qquad (g_1,g_2)\mto g_1g_2,$$
is a bijection.
\end{corollary}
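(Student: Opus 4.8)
The plan is to reduce this to the finite-support statement of Lemma~\ref{decomp1} by working through the defining inverse limits of the pro-nilpotent completions. Let $S\sbs\Ga$ be the strict semigroup without zero generated by $\supp(\fg)$; recall from \S\ref{sec:graded} that $\hat\fg=\ilim_I\fg_I$ and $\hat G=\exp(\hat\fg)=\ilim_I\exp(\fg_I)$, the limits running over the directed set of finite lower sets $I\sbs S$ (a union of two lower sets is again a finite lower set). Since $S_i\sbs\supp(\fg)\sbs S$ and $S$ is strict, each $S_i$ generates a strict semigroup without zero, so $\hat\fg_i$ is defined.

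First I would fix a finite lower set $I\sbs S$ and verify that Lemma~\ref{decomp1} applies to $\fg_I$. Indeed $\fg_I\iso\bop_{\ga\in I}\fg_\ga$ has finite strict support $\supp(\fg)\cap I=(S_1\cap I)\sqcup(S_2\cap I)$, and the images $\fg_{i,I}=\bop_{\ga\in S_i\cap I}\fg_\ga$ of $\fg_i$ under the epimorphism $\fg\to\fg_I$ are subalgebras. Hence the multiplication map $\mu_I\col\exp(\fg_{1,I})\xx\exp(\fg_{2,I})\to\exp(\fg_I)$, $(g_1,g_2)\mto g_1g_2$, is a bijection. For $I\sbs J$ the transition map $\fg_J\to\fg_I$ carries $\fg_{i,J}$ onto $\fg_{i,I}$ and induces a group homomorphism on exponentials, so the $\mu_I$ form a morphism of inverse systems which is an isomorphism at each level. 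Passing to the limit, and using that inverse limits commute with finite products, I get a bijection $\big(\ilim_I\exp(\fg_{1,I})\big)\xx\big(\ilim_I\exp(\fg_{2,I})\big)\iso\ilim_I\exp(\fg_I)=\hat G$.

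The step that is not purely formal — and which I expect to be the main obstacle — is identifying $\ilim_I\exp(\fg_{i,I})$ with $\exp(\hat\fg_i)$. Since $\exp$ is built levelwise from the nilpotent Lie algebras and their transition homomorphisms, exactly as in the construction of $\hat G$ in \S\ref{sec:graded}, it suffices to identify $\ilim_I\fg_{i,I}$ with $\hat\fg_i$ as graded Lie algebras. Here the left-hand limit runs over finite lower sets $I$ of $S$, whereas $\hat\fg_i$ is built from finite lower sets of the possibly larger strict semigroup $\ang{S_i}$ generated by $S_i$, so one has to compare the two inverse systems of quotients of $\fg_i$. The two partial orders agree on $\ang{S_i}$ because $\ang{S_i}\sbs S$; hence for every finite lower set $I$ of $S$ the set $\ang{S_i}\cap I$ is a finite lower set of $\ang{S_i}$, with $\fg_{i,I}=(\fg_i)_{\ang{S_i}\cap I}$, and conversely, by Lemma~\ref{lm:strict sgr} each $\loc a\sbs S$ is finite, so any finite lower set of $\ang{S_i}$ is contained in $\ang{S_i}\cap I$ for a suitable finite lower set $I$ of $S$. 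Thus the two systems are cofinal and $\ilim_I\fg_{i,I}\iso\hat\fg_i$. Substituting this identification into the bijection of the previous paragraph turns it into the multiplication map of the statement, which is therefore a bijection.
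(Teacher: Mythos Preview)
Your argument is correct and is exactly the reduction the paper has in mind: the corollary is stated without proof because it follows from Lemma~\ref{decomp1} by passing to the inverse limit over finite lower sets, and you have carefully spelled out the one nontrivial point (the cofinality of the two systems of lower sets needed to identify $\ilim_I\fg_{i,I}$ with $\hat\fg_i$).
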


Let $\fg$ be a \Ga-graded Lie algebra with a strict support
and $Z:\Ga\to\bC$ be a linear map.
Let $\hat\fg\iso \prod_\ga\fg_\ga$ be the pro-nilpotent completion of $\fg$ and $\hat G$ be the corresponding pro-nilpotent group.
Given a strict cone $V\sbs\bC$,
we define the Lie algebra 
\begin{equation}
\fg_{V,Z}=\bop_{Z(\ga)\in V}\fg_\ga\sbs\fg
\end{equation}
and consider its pro-nilpotent completion $\hat\fg_{V,Z}\sbs\hat\fg$
and the corresponding pro-nilpotent group $\hat G_{V,Z}\sbs\hat G$.
In particular, for any ray $\ell\sbs\bC$, we consider the pro-nilpotent Lie algebra $\hat\fg_{\ell,Z}\sbs\hat\fg$
and the corresponding pro-nilpotent group $\hat G_{\ell,Z}\sbs\hat G$.

\begin{lemma}\label{lm:factorization}
For any blunt strict cone $V\sbs\bC$, the map
$$\prod_{\ell\sbs V}\hat G_{\ell,Z}\to \hat G_{V,Z},\qquad (g_\ell)_\ell\mto\prod^\to_{\ell\sbs V} g_\ell,$$
is a bijection, where the product is taken in the clockwise order over all rays $\ell\sbs V$.
\end{lemma}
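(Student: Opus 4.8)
The plan is to factorize the product over rays inside $V$ by repeatedly peeling off an extreme ray, reducing the rank of the strict cone generated by $\supp(\fg)\cap Z\inv(V)$ until only finitely many grading-pieces remain, and then invoke the finite-support splitting lemma (Corollary~\ref{decomp2}) at each stage. The key point is that a blunt strict cone $V$ admits a well-defined clockwise ordering of the rays meeting $Z(\supp(\fg))$, and that ordering is compatible with the passage to quotients $\fg_I$ by finite lower sets, so it suffices to prove the statement for each such quotient and then pass to the inverse limit.

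First I would observe that by Lemma~\ref{lm:strict} applied to $Z(\fg_{V,Z})$, there is a linear functional $u\colon\bC\to\bR$ with $V\sbs C_u$, whence $uZ\colon\Ga\to\bR$ is additive and proper on the strict semigroup $S$ generated by $\supp(\fg_{V,Z})$; this lets me work one finite lower set $I\sbs S$ at a time, where $\fg_I$ has finite strict support and is nilpotent. For a fixed finite $I$, the rays $\ell\sbs V$ with $\ell\cap Z(I)\ne\set0$ are finite in number, say $\ell_1,\dots,\ell_m$ listed in clockwise order; I would then argue that each "tail" $V_{\ge k}$ obtained by discarding $\ell_1,\dots,\ell_{k-1}$ gives a decomposition $\supp(\fg_I)\cap Z\inv(V)=\bigl(\supp(\fg_I)\cap Z\inv(\ell_{k-1})\bigr)\sqcup\bigl(\supp(\fg_I)\cap Z\inv(V_{\ge k})\bigr)$ into two parts that are the supports of subalgebras: $\fg_{\ell_{k-1},Z}$ is closed under the bracket because a ray is a sub-semigroup of $\Ga$, and $\fg_{V_{\ge k},Z}$ is closed because $V_{\ge k}$ is a convex subcone of $V$. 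Corollary~\ref{decomp2} then gives the bijection $\hat G_{\ell_{k-1},Z}\xx\hat G_{V_{\ge k},Z}\isoto\hat G_{V_{\ge(k-1)},Z}$, and iterating from $k=m+1$ down to $k=1$ produces the claimed ordered factorization $\prod^\to_\ell\hat G_{\ell,Z}\isoto\hat G_{V,Z}$ on the level of $\fg_I$.

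The remaining step is to pass to the limit: the transition maps $\fg_J\to\fg_I$ for $I\sbs J$ are morphisms of graded Lie algebras respecting the $Z$-decomposition, hence commute with all the factorization bijections constructed above, so the maps assemble to a bijection on $\hat G_{V,Z}=\ilim_I\hat G_{V,Z}\cap\exp(\fg_I)$. One has to check that the clockwise-ordered product on the left-hand side is itself well-defined as an element of $\hat G_{V,Z}$ — i.e.\ that for each finite lower set $I$ only finitely many factors $g_\ell$ contribute modulo $\fm_I$ — which again follows from properness of $uZ$ on $S$.

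The main obstacle I anticipate is bookkeeping the order of the factors correctly: Corollary~\ref{decomp2} only gives a two-term factorization $g_1g_2$ with a \emph{fixed} side for each subalgebra, so I must make sure that peeling rays off the clockwise-extreme end of $V$ (rather than an arbitrary end) is exactly what makes the iterated product come out in clockwise order, and that the choice of which part plays the role of $\fg_1$ versus $\fg_2$ in each application is consistent. A secondary subtlety is verifying that $V_{\ge k}$ is genuinely a strict cone (so that $\hat G_{V_{\ge k},Z}$ makes sense) — this is immediate since it is contained in the strict cone $V$ — and that discarding the extreme ray leaves a cone that is still blunt and convex, which is where the hypothesis that $V$ is blunt is used.
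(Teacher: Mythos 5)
Your proposal is correct and follows essentially the same route as the paper: reduce to the finite-support quotients $\fg_I$, partition $\supp(\fg_I)$ by the finitely many rays of $V$ it meets, and iterate the two-term splitting of Lemma~\ref{decomp1}/Corollary~\ref{decomp2} in clockwise order before passing to the inverse limit. The paper states this more tersely (it does not spell out the peeling induction or the limit), but the argument is the same.
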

\begin{proof}
It is enough to prove the statement for $\fg$ having a finite strict support contained in $Z\inv(V)$.
For any ray $\ell\sbs V$, let $S_\ell=\sets{\ga\in\supp(\fg)}{Z(\ga)\in \ell}$.
Then $\supp(\fg)=\bigsqcup_{\ell}S_\ell$ and $\fg_{\ell,Z}=\bop_{\ga\in S_\ell}\fg_\ga$.
Now the statement follows from Lemma \ref{decomp1}.
\end{proof}


\subsection{Stability data}\label{sec:BPS}
Let $\fg$ be a \Ga-graded Lie algebra 
and let us equip $\Ga_\bR=\Ga\ts\bR$ with a norm ~$\nn\cdot$.
We define \idef{stability data} \KS[\S2.1]
to be a pair $(Z,a)$, where $Z\in\Hom(\Ga,\bC)$
and $a=(a_\ga)_\ga\in\hat\fg=\prod_{\ga\in\Ga\ms\set0}\fg_\ga$
is such that there exists $\eps>0$ satisfying 
(the \idef{support property})
\begin{equation}
\supp(a)\sbs\sets{\ga\in\Ga}{\n{Z(\ga)}\ge\eps\nn\ga}.
\end{equation}
Sometimes $Z\in\Hom(\Ga,\bC)$ will be clear from the context and we will call $a\in\hat\fg$ stability data.

For any blunt strict cone $V\sbs \bC$, 
let us consider the semigroup \eqref{S1}
\begin{equation}
S(V,Z,\eps)=\sgr\sets{\ga\in\Ga}{Z(\ga)\in V,\,
\n{Z(\ga)}\ge \eps\nn \ga}\sbs\Ga
\end{equation}
which is strict by Lemma \ref{lm:C VZe}.
We define the corresponding Lie algebra and its pro-nilpotent completion
\begin{equation}
\fg_{V,Z,\eps}=\bop_{\ga\in S(V,Z,\eps)}\fg_\ga,\qquad
\hat\fg_{V,Z,\eps}\iso\prod_{\ga\in S(V,Z,\eps)}\fg_\ga.
\end{equation}
We consider the corresponding pro-nilpotent group $\hat G_{V,Z,\eps}$
and the bijective exponential map 
$\exp:\hat\fg_{V,Z,\eps}\to \hat G_{V,Z,\eps}$.
By Lemma \ref{lm:factorization}, we also have a bijection
\begin{equation}\label{expZ1}
\exp_Z:\hat\fg_{V,Z,\eps}\to \hat G_{V,Z,\eps},\qquad
a\mto\prod_{\ell\sbs V}^\to \exp(a_\ell),\qquad a_\ell=\sum_{Z(\ga)\in\ell}a_\ga\in\hat\fg_{\ell,Z,\eps}.
\end{equation}
where the product is taken in the clockwise order over all rays in $V$.

If $\eps>\eps'>0$, then $S(V,Z,\eps)\sbs S(V,Z,\eps')$ and $\fg_{V,Z,\eps}\sbs \fg_{V,Z,\eps'}$.
We define the ind-pro-nilpotent Lie algebra
\begin{equation}\label{ind-pro Lie alg}
\hat\fg_{V,Z}=\dlim_{\eps>0}\hat\fg_{V,Z,\eps}
\sbs\prod_{Z(\ga)\in V}\fg_\ga\sbs\hat\fg,
\end{equation}
the ind-pro-nilpotent group $\hat G_{V,Z}=\dlim_{\eps>0}\hat G_{V,Z,\eps}$
and the bijective exponential map
\begin{equation}
\exp:\hat\fg_{V,Z}\to\hat G_{V,Z}.
\end{equation}
Note that $\hat\fg_{V,Z}$ is independent of a choice of a norm on $\Ga_\bR$.
Note also that the vector space 
$\prod_{Z(\ga)\in V}\fg_\ga$ is not a Lie algebra in general.
The maps in \eqref{expZ1} induce a bijection
\begin{equation}
\exp_Z:\hat\fg_{V,Z}\to\hat G_{V,Z}.
\end{equation}

Let $a\in\hat\fg$ be stability data.
For any blunt strict cone $V\sbs \bC$, 
we define
\begin{equation}\label{A_V}
A_V=\prod_{\ell\sbs V}^\to\exp(a_\ell)\in\hat G_{V,Z},
\qquad a_\ell=\sum_{Z(\ga)\in\ell}a_\ga\in\hat\fg_{\ell,Z}.
\end{equation}

The above stability data $a\in\hat\fg$ is equivalent 
(see \KS)
to the family $(A_V\in\hat G_{V,Z})_V$, where $V$ runs through all blunt strict cones in $\bC$, such that for a decomposition $V=V_1\sqcup V_2$ (in the clockwise order), we have the \idef{factorization property}
in $\hat G_{V,Z}$
\begin{equation}
A_V=A_{V_1}A_{V_2}.
\end{equation}

Note that rays $\ell\sbs\bC$ can be identified with elements of the quotient group $\bC^*/\bR_{>0}\iso U(1)$, hence stability data is equivalent to a collection of elements
\begin{equation}
(a_{\ell}\in\hat\fg_{\ell,Z})_{\ell\in U(1)}.
\end{equation} 
Let $\eta:\fg\to\fg$ be an involutive automorphism of Lie algebras that maps $\fg_\ga$ to $\fg_{-\ga}$.
Then it induces an automorphism $\eta:\hat\fg_{\ell,Z}\to\hat\fg_{-\ell,Z}$.
We say that stability data is $\eta$-symmetric if $\eta(a_\ell)=a_{-\ell}$ for all rays $\ell\sbs\bC$.
Equivalently, this means that $\eta(a_\ga)=a_{-\ga}$ for all $\ga\in\Ga$.

\subsection{Families of stability data}
\label{sec:fam stab data}
For more information on continuous families of stability data see \cite{kontsevich_stability,kontsevich_analyticity}.
Let $M$ be a topological space.
We define a \idef{continuous family of stability data} on $M$ to be a pair $(Z,a)$, where
$Z:M\to\Ga_\bC\dual$ is a continuous map
and $a=(a_x\in\hat\fg)_{x\in M}$ is a collection such that
\begin{enumerate}
\item \label{ax1}
For any $x\in M$, there exists an open \nbd $x\in U\sbs M$ and $\eps>0$ such that
$$\supp(a_y)\sbs\sets{\ga\in\Ga}{\n{Z_y(\ga)}\ge\eps\nn\ga}
\qquad\forall y\in U.$$
\item \label{ax2}
For any strict cone $V\sbs\bC$, we define \eqref{A_V}
\begin{equation*}
A_{x,V}=\prod^\to_{\ell\sbs V}\exp(a_{x,\ell})\sbs \hat G_{V,Z_x},
\qquad a_{x,\ell}
=\sum_{Z_x(\ga)\in\ell}a_{x,\ga}\in\hat\fg_{\ell,Z_x}.
\end{equation*}
We require that if $Z_x(\supp a_x)\cap\dd V=\es$ and $\ga\in\Ga$, then the map
$$M\ni y\mto p_\ga\log(A_{y,V})\in\fg_\ga$$
is constant in a \nbd of $x$,
where $p_\ga:\hat\fg\to\fg_\ga$ is the projection.
\end{enumerate} 

The second axiom is called the \idef{Kontsevich-Soibelman wall-crossing formula} \KS[Def.~3].
Sometimes $Z:M\to\Ga_\bC\dual$ will be clear from the context and we will call $a=(a_x\in\hat\fg)_{x\in M}$ a continuous family of stability data.
Locally, we can perform calculations by applying Lemma~ \ref{lower set stability}.

\begin{remark}
The above definition is slightly different from \KS[Def.~3].
Instead of the first axiom one requires in loc.cit.~
a seemingly stronger condition that if $a_x$ satisfies the support property \wrt a quadratic form $Q$, meaning that $Q$ is negative definite on $\Ker Z_x$ (this is an open condition) and $Q(\ga)\ge0$ for all $\ga\in\supp(a_x)$, then stability data also satisfy the support property \wrt $Q$
in a \nbd of $x$.
We will prove in Theorem \ref{th:Qpos3} that this condition is automatically satisfied if $M$ is locally path connected.
For a similar result for families of stability conditions
see Theorem \ref{th:Qpos2}.
\end{remark}

\begin{remark}
Because of the continuity of $Z$, the first axiom is equivalent to the requirement that for any $x\in M$ there exists an open \nbd $x\in U\sbs M$ and $\eps>0$ such that
\begin{equation}
\supp(a_y)\sbs\sets{\ga\in\Ga}{\n{Z_x(\ga)}\ge\eps\nn\ga}
\qquad\forall y\in U.
\end{equation}
\end{remark}

\begin{lemma}\label{lm:shrinking V}
Let $Z:M\to\Ga_\bC\dual$ be continuous and
$a=(a_x\in\hat\fg)_{x\in M}$
be a collection satisfying the first axiom.
Then the following conditions are equivalent
\begin{enumerate}
\item 
For any $x\in M$, $\ga\in\Ga$ and a strict cone $V\sbs\bC$ such that $Z_x(\supp a_x)\cap\dd V=\es$, the map
$y\mto p_\ga\log(A_{y,V})$
is constant in a \nbd of $x$.
\item 
For any $x\in M$, $\ga\in\Ga$ and a strict cone $V\sbs\bC$ such that $Z_x(\Ga)\cap\dd V=\set0$, the map
$y\mto p_\ga\log(A_{y,V})$
is constant in a \nbd of $x$.

\item 
For any $x\in M$, $\ga\in\Ga$
and a strict cone $V\sbs\bC$ such that $Z_x(\ga)\in V$,
there exists a strict cone $Z_x(\ga)\in V'\sbs V$
such that $Z_x(\Ga)\cap\dd V'=\set0$ and the map
$y\mto p_\ga\log(A_{y,V'})$
is constant in a \nbd of $x$.
\end{enumerate}
\end{lemma}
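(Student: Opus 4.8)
The plan is to prove the three implications $(1)\Rightarrow(2)$, $(2)\Rightarrow(3)$, $(3)\Rightarrow(1)$ in a cycle. The statements differ only in how restrictive the hypothesis on the strict cone $V$ is: condition $(1)$ quantifies over all $V$ with $Z_x(\supp a_x)\cap\dd V=\es$, condition $(2)$ over the (a priori smaller) class of $V$ with $Z_x(\Ga)\cap\dd V=\set0$, and condition $(3)$ only asserts that for each $\ga$ one can find \emph{some} small enough admissible cone around $Z_x(\ga)$ that works. So $(1)\Rightarrow(2)$ is immediate, since $\supp a_x\sbs\Ga$ forces $Z_x(\Ga)\cap\dd V=\set0$ to imply $Z_x(\supp a_x)\cap\dd V=\es$; and $(2)\Rightarrow(3)$ is almost immediate once we observe that for a fixed $\ga$ with $Z_x(\ga)\in V$ we can always shrink $V$ to a strict cone $V'$ with $Z_x(\ga)$ in the interior and $Z_x(\Ga)\cap\dd V'=\set0$: indeed $Z_x(\Ga)$ is countable, the angular coordinates of its nonzero elements form a countable subset of $U(1)$, and the pair of boundary rays of $V'$ can be chosen to avoid this countable set while still enclosing $Z_x(\ga)$ in the interior (using also that $Z_x(\ga)\ne 0$, which holds because of the support property, or else $\ga\notin\supp a_y$ near $x$ and the statement is trivial). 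Then $(2)$ applied to $V'$ gives what $(3)$ asks.

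The substantive implication is $(3)\Rightarrow(1)$: from the ability to compute $p_\ga\log A_{y,V'}$ along small admissible cones we must recover constancy of $p_\ga\log A_{y,V}$ for an arbitrary strict cone $V$ with merely $Z_x(\supp a_x)\cap\dd V=\es$. The idea is a subdivision-and-reassembly argument using the factorization property of $A_{y,V}$. First, since $Z_x(\supp a_x)\cap\dd V=\es$ and the support property (first axiom) holds uniformly on a \nbd $U$ of $x$ with a fixed $\eps>0$, the set $\{\ga\in\supp a_y : Z_y(\ga)\in V\}$ is, after shrinking $U$, controlled: by Lemma~\ref{lower set stability} applied to a finite lower set $I$ of $S(V,Z_x,\eps)$ large enough to contain every element $\le\ga$, the relevant classes stabilize. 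Concretely, fix the finite lower set $I=\loc\ga\cap S(V,Z_x,\eps)$; shrinking $U$ we may assume $I$ is a lower set inside $S(V,Z_y,\eps)$ for all $y\in U$ and that $p_\ga\log A_{y,V}$ depends only on the truncation $A_{y,V}\bmod \fm_I$, i.e.\ only on the finitely many $a_{y,\be}$ with $\be\in I$.

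Now subdivide $V$ into finitely many strict subcones $V=V_1\sqcup\dots\sqcup V_m$ (clockwise) each so thin that (a) $Z_x(\Ga)\cap\dd V_j=\set0$ for every internal wall, which is possible because the angular coordinates of the finitely many relevant classes $\be\in I$ are distinct from the chosen wall directions, and (b) each $V_j$ is contained in an admissible cone of the type produced by $(3)$ for each $\be\in I$ with $Z_x(\be)\in V_j$ — one can arrange a common such refinement since $I$ is finite. By the factorization property, $A_{y,V}=A_{y,V_1}\cdots A_{y,V_m}$ in $\hat G_{V,Z_y}$, hence modulo $\fm_I$ the element $\log A_{y,V}$ is a fixed universal Lie-polynomial (Baker--Campbell--Hausdorff) in the $\log A_{y,V_j}$, and therefore in the components $p_\be\log A_{y,V_j}$ for $\be\in I$. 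Condition $(3)$ (via $(2)$-style constancy on each admissible $V_j$, after a further BCH bookkeeping step to pass from the whole-cone factor to its graded pieces — itself handled by the same finite-lower-set truncation) shows each $p_\be\log A_{y,V_j}$ is locally constant near $x$; plugging into the BCH expression gives that $p_\ga\log A_{y,V}$ is locally constant near $x$, which is $(1)$.

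The main obstacle is the bookkeeping in the last step: one must be careful that the subdivision walls can simultaneously avoid the finitely many directions $\{Z_x(\be):\be\in I\}$ \emph{and} that the truncation level $I$ chosen upfront is compatible with all the subcones (the lower sets $S(V_j,Z_x,\eps)$ are contained in $S(V,Z_x,\eps)$, so this is consistent, but one should verify that $p_\ga\log A_{y,V}$ really only sees the $\be\in I$ data, which follows because $\ga=\sum\be_i$ forces each $\be_i\le\ga$ hence $\be_i\in I$). Everything else is a routine application of Lemmas~\ref{lm:factorization}, \ref{lower set stability} and the factorization property of stability data.
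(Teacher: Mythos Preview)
Your cycle $(1)\Rightarrow(2)\Rightarrow(3)$ is fine and matches the paper in spirit. The problem is $(3)\Rightarrow(1)$, where there are two genuine gaps.

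First, you invoke Lemma~\ref{lower set stability} to stabilize the finite lower set $I\sbs S(V,Z_x,\eps)$, but that lemma has the hypothesis $Z_x(\Ga)\cap\dd V=\set0$, whereas condition~(1) only assumes the weaker $Z_x(\supp a_x)\cap\dd V=\es$. The outer walls of your subdivision $V=V_1\sqcup\dots\sqcup V_m$ inherit this defect: you only arrange for the \emph{internal} walls to avoid $Z_x(\Ga)$, so the outermost pieces $V_1,V_m$ do not satisfy the genericity needed to run the lower-set or factorization arguments cleanly. The paper handles this by \emph{enlarging} $V$ to a cone $V'\sps V$ with $Z_x(\Ga)\cap\dd V'=\set0$ and working there; this is the content of its implication $(2)\Rightarrow(1)$, which you have bypassed.

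Second, your use of (3) on the pieces $V_j$ is not justified as stated. Condition~(3) applied to $\be$ and $V$ produces a cone $V'_\be\sbs V$ on which $p_\be\log A_{y,V'_\be}$ is constant; arranging $V_j\sbs V'_\be$ does \emph{not} yield constancy of $p_\be\log A_{y,V_j}$, since passing from $V'_\be$ to the subcone $V_j$ via factorization introduces contributions from the flanking pieces whose lower-order components you do not yet control. You gesture at this (``(2)-style constancy on each admissible $V_j$''), but that is precisely the statement $(3)\Rightarrow(2)$ you have not proved. The paper resolves this cleanly by \emph{induction on $\ga$}: assuming $(2)$ holds for all $\ga'<\ga$, it uses (3) to produce a single three-piece decomposition $V=V_1\sqcup V_2\sqcup V_3$ with $Z_x(\ga)\in V_2$; the inductive hypothesis controls all $p_{\ga'}$-components of the three factors, and since $Z_y(\ga)\notin V_1\cup V_3$ nearby, the $p_\ga$-components of $A_{y,V_1},A_{y,V_3}$ vanish, leaving only $p_\ga\log A_{y,V_2}$, which (3) supplies.

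So the paper's route $(2)\Rightarrow(1)$ (enlarge) and $(3)\Rightarrow(2)$ (induct on $\ga$, three-piece decomposition) is not merely tidier bookkeeping: it supplies the two ingredients your direct $(3)\Rightarrow(1)$ is missing.
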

\begin{proof}
It is clear that (1) implies (2) and (3).
To see that (2) implies (1), let us choose $\eps>0$ such that the first axiom is satisfied for $x\in\ M$ and $\n{Z_x(\ga)}\ne\eps\nn\ga$ for all $0\ne \ga\in\Ga$.
We embed $V\sbs \bC$ into a strict cone $V'\sbs\bC$ such that $Z_x(\Ga)\cap\dd V'=\set0$.
We consider the semigroup $S=S(V',Z_x,\eps)$ and the lower set $I=\sets{\ga'\in S}{\ga'\le\ga}$.
By Lemma \ref{lower set stability}, we can assume that $I\sbs S(V',Z_y,\eps)$ is a lower set for all $y$.
Therefore we can restrict our considerations from $\fg$ to the nilpotent Lie algebra $\fg_I=\bop_{\ga\in S}\fg_\ga/\bop_{\ga\in S\ms I}\fg_\ga\iso\bop_{\ga\in I}\fg_\ga$.
By assumption, the map $y\mto A_{y,V'}$ (in the nilpotent group $\exp(\fg_I)$) is constant in a \nbd of $x$.
This implies that the map $y\mto A_{y,V}$ is also constant in a \nbd of $x$.

To see that (3) implies (2), let us choose $\eps>0$ as before.
We consider the semigroup $S=S(V,Z_x,\eps)$ and the lower set $I=\sets{\ga'\in S}{\ga'\le\ga}$.
By Lemma \ref{lower set stability}, we can assume that $I\sbs S(V,Z_y,\eps)$ is a lower set for all $y$.
Therefore we can restrict our considerations from $\fg$ to the nilpotent Lie algebra $\fg_I$.
By induction, we can assume that $y\mto p_{\ga'}\log(A_{y,V})$ is constant in a \nbd of $x$ for all $\ga'<\ga$.
By assumption, we can decompose $V=V_1\sqcup V_2\sqcup V_3$, where $V_i$ are strict cones ordered clockwise, such that $Z_x(\Ga)\cap\dd V_i=\set0$, $Z_x(\ga)\in V_2$ and the map $y\mto p_\ga\log A_{y,V_2}$ is constant in a \nbd of $x$.
Shrinking the \nbd of $x$ if needed, we conclude that $y\mto A_{y,V_i}$ (in the nilpotent Lie group $\exp(\fg_I)$) is constant for $i=1,2,3$.
Therefore also $y\mto A_{y,V}$ is constant.
\end{proof}

Note that the second axiom allows us to express $a_{x,\ga}$ locally as finite Lie expressions of elements $a_{y,\ga'}$. The following result shows that this can be done globally
(\cf \KS[\S2.3]).

\begin{lemma}
Let $M$ be path-connected and $(Z,a)$ be a continuous family of stability data on $M$.
For any $x,y\in M$ and $\ga\in\Ga\ms\set0$, the element $a_{x,\ga}$ is a finite Lie expression of elements $a_{y,\ga'}$.
\end{lemma}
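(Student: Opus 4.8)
The plan is to prove a local version of the statement---that near each point of $M$ the coefficients at nearby points are mutually finite Lie expressions of one another---and then to globalize it along a path by composing finitely many such expressions. The two essential inputs are the second axiom of a continuous family of stability data (the Kontsevich-Soibelman wall-crossing formula) and the finiteness properties of strict semigroups, namely Lemmas \ref{strict sums}, \ref{lm:strict sgr} and \ref{lower set stability}.

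For the local version I would fix $z\in M$ and $\ga\in\Ga\ms\set0$; we may assume $Z_z(\ga)\ne0$, since otherwise $a_{y,\ga}=0$ for all $y$ in a \nbd of $z$ by the first axiom. Choose a suitable small $\eps>0$ (with $\eps<\n{Z_z(\ga)}/\nn\ga$ and $\n{Z_z(\ga')}\ne\eps\nn{\ga'}$ for all $0\ne\ga'\in\Ga$) and a strict cone $V\sbs\bC$ with $\bR_{>0}Z_z(\ga)$ in its interior and $Z_z(\Ga)\cap\dd V=\set0$; then $S=S(V,Z_z,\eps)$ is a strict semigroup containing $\ga$, and $I=\sets{\ga'\in S}{\ga'\le\ga}$ is finite by Lemma \ref{lm:strict sgr}. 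Using the first axiom and Lemma \ref{lower set stability} I may shrink to a \nbd $U$ of $z$ on which $\supp(a_y)\sbs\sets{\ga'}{\n{Z_y(\ga')}\ge\eps\nn{\ga'}}$ and $I\sbs S(V,Z_y,\eps)$ is a lower set, so that one can pass to the nilpotent quotient $\fg_I=\fg/\fm_I\iso\bop_{\ga'\in I}\fg_{\ga'}$. In $\exp(\fg_I)$ the image of $A_{y,V}$ is $\exp_{Z_y}$ applied to $\sum_{\ga'\in I}a_{y,\ga'}$; since $\exp_{Z_y}$ is a bijection (Lemma \ref{lm:factorization}) and it, its inverse and $\log$ are all given by finite Lie expressions in a nilpotent Lie algebra, each coordinate of $L_y:=\log A_{y,V}\in\fg_I$ is a finite Lie expression of $\set{a_{y,\ga'}}_{\ga'\in I}$, and conversely. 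Finally the second axiom, applied to $V$ (for which $Z_z(\supp a_z)\cap\dd V=\es$) for each of the finitely many $\ga'\in I$, gives that $y\mto L_y$ is constant on a \nbd of $z$; composing these facts yields the local version, the relation being symmetric in the two points of $U$.

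To globalize, I would fix a path $p$ from $x$ to $y$ and the class $\ga$, and consider $T=\sets{t\in[0,1]}{a_{x,\ga}\text{ is a finite Lie expression of }\set{a_{p(t),\ga'}}_{\ga'\in\Ga}}$. Then $0\in T$, so it suffices to show $T$ is open and closed. Openness is an immediate substitution: if $t\in T$ with witnessing finite set $K\sbs\Ga$, the local version at $p(t)$ (intersecting the \nbds obtained for each $\ga'\in K$) expresses each $a_{p(t),\ga'}$, $\ga'\in K$, as a finite Lie expression of $\set{a_{p(s),\ga''}}$ for $p(s)$ near $p(t)$, and substituting shows $s\in T$. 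The main obstacle is closedness: given a limit point $t^*$ of $T$, when one substitutes a witness at a nearby $p(s)$ into the local version at $p(t^*)$ one needs a finite set of classes known in advance that contains every class occurring in that witness---whereas the \nbd and the index set furnished by the local version depend on the target class. Here I would use that $p([0,t^*])$ is compact, hence covered by finitely many of the local \nbds, and that Lemma \ref{strict sums} bounds, for each of the finitely many strict semigroups involved, the norm of any class $\le\ga$; this produces an a priori finite set $I^*\sbs\Ga$ absorbing all relevant classes, so that applying the local version at $p(t^*)$ to $I^*$ and then substituting a witness at some $p(s)$ with $p(s)$ in the resulting \nbd shows $t^*\in T$. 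Once $T=[0,1]$, the case $t=1$ is exactly the assertion of the lemma.
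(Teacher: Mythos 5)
Your local analysis is sound and is essentially how the paper exploits the second axiom: pass to the nilpotent quotient by a finite lower set (via Lemma \ref{lower set stability}), use bijectivity of $\exp_{Z}$, and read off each graded component of $\log A_{y,V}$ as a finite Lie expression of the $a_{y,\ga'}$, symmetrically in the two points. The openness half of your globalization is also fine, since there the witnessing set $K$ is known before you invoke the local statement. The genuine gap is in the closedness step, and it sits exactly where the paper's real work lies. You need an a priori finite set $I^*$ containing every class that can occur in a witness near $t^*$, but the tools you cite do not produce one. First, there is an unresolved circularity: the local neighborhoods from which you want to extract a finite subcover are indexed by pairs (point, class), so to cover $p([0,t^*])$ you must already know which classes to feed into the local version, i.e.\ you must already know $I^*$. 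Second, even for witnesses built by chaining local expressions along a subdivision, Lemma \ref{strict sums} controls only one step: if $\ga'\le\ga$ in the semigroup at one point and $\ga''\le\ga'$ in the (different) semigroup at the next point, the bounds $\nn{\ga'}\le c_1\nn\ga$ and $\nn{\ga''}\le c_2\nn{\ga'}$ compound to a product $c_1c_2\cdots$ over the chain, and you bound neither the length of these chains nor that product; moreover the witnesses certifying $s\in T$ are arbitrary, not of this controlled form, so nothing constrains their class sets at all.

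The paper closes precisely this gap with a monotonicity statement rather than with Lemma \ref{strict sums}. By compactness one fixes a single $\eps>0$ for the support property along the whole path and subdivides $[0,1]$ so that $\nn{Z_t-Z_1}\le\frac\eps2\sin(\theta)$ with $0<\theta<\pi/4$. Then whenever a class $\ga$ with $\ga\in\supp a_t$ decomposes as a sum of classes whose $Z_t$-images lie on a common ray, each piece $\ga_k$ satisfies $\tfrac\eps2\nn{\ga_k}\le\n{Z_1(\ga_k)}<\n{Z_1(\ga)}$: the angle between $Z_1(\ga_k)$ and $Z_t(\ga_k)$ is at most $\theta$, so the $Z_1(\ga_k)$ pairwise subtend an angle $\le2\theta<\pi/2$ and the modulus of their sum strictly dominates each summand. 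Hence every class produced at any depth of the iteration lies in the single finite set $D=\sets{\ga'}{\tfrac\eps2\nn{\ga'}\le\n{Z_1(\ga')}<\n{Z_1(\ga)}}\cup\set\ga$, which is defined purely in terms of $\eps$, $Z_1$ and $\ga$, before any neighborhood is chosen; one then concludes by induction on this finite set combined with a single traversal of the subdivision $0=t_0<\dots<t_n=1$. If you want to keep your open--closed framing you still need such a monotone quantity (decreasing under decomposition, uniformly over the path) to define $I^*$; as written, the closedness argument does not go through.
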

\begin{proof}
We can assume that $M=[0,1]$.
We will show that for any $t\in [0,1]$ and $\ga\in\Ga\ms\set0$, the element $a_{t,\ga}$ is a finite Lie expression of elements $a_{1,\ga'}$.
As $M$ is compact, there exists $\eps>0$ such that 
$$\supp a_t\sbs \sets{\ga\in\Ga\ms\set0}{\n{Z_t(\ga)}\ge\eps\nn\ga}$$
for all $t\in M$.
Let $0<\theta<\pi/4$.
By dividing $[0,1]$ into a finite union of intervals,
we can assume that 
$\nn{Z_t-Z_1}\le \frac\eps2\sin(\theta)$
for all $t\in[0,1]$.
For any $\ga\in\supp a_t$, we have 
$\n{Z_t(\ga)-Z_1(\ga)}\le\frac\eps2\nn\ga\le\toh\n{Z_t(\ga)}$, hence 
$$\n{Z_1(\ga)}\ge\toh \n{Z_t(\ga)}\ge\tfrac\eps2\nn\ga.$$
On the other hand, if $\ga\in\Ga\ms\set0$ satisfies
$\n{Z_1(\ga)}\ge\tfrac\eps2\nn\ga$, then
$$\n{Z_t(\ga)-Z_1(\ga)}\le \tfrac\eps2\sin(\theta)\nn{\ga}\le\sin(\theta)\n{Z_1(\ga)},$$
hence the angle between $Z_t(\ga)$ and $Z_1(\ga)$ is $\le\theta$.
If $\ga=\ga_1+\ga_2$ with $Z_t(\ga_i)$ contained in the same ray as $Z_t(\ga)$ and $\n{Z_1(\ga_i)}\ge\tfrac\eps2\nn{\ga_i}$, 
then the angle between $Z_t(\ga_i)$ and $Z_1(\ga_i)$ is $\le\theta$,
hence the angle between $Z_1(\ga_1)$ and $Z_1(\ga_2)$ is $\le 2\theta<\pi/2$.
This implies that
$$\tfrac\eps2\nn{\ga_i}\le\n{Z_1(\ga_i)}<\n{Z_1(\ga)},\qquad i=1,2.$$
For a fixed $\ga\in\Ga\ms\set0$, we can assume by induction that the result is true for all elements of the finite set $$D=\sets{\ga'\in\Ga\ms\set0}
{\tfrac\eps2\nn{\ga'}\le\n{Z_1(\ga')}<\n{Z_1(\ga)}}.$$
By the second axiom, for any $t\in[0,1]$, there exists an open set $t\in U\sbs[0,1]$ such that, for any $t'\in U$,
the element $a_{t,\ga}$ can be written as a finite Lie expression of $a_{t',\ga'}$  with $Z_t(\ga')$ contained in the same ray as $Z_t(\ga)$.
Therefore, there exists a finite sequence $0=t_0<\dots<t_n=1$ such that,
for any $t'\in[t_i,t_{i+1}]$,
the element $a_{t_i,\ga}$ can be written
as a finite Lie expression of $a_{t',\ga'}$
with $Z_{t_i}(\ga')$ contained in the same ray as $Z_{t_i}(\ga)$.
By the discussion above we conclude that $\ga'\in D\cup\set{\ga}$.
Therefore by the inductive assumption we only need to express $a_{t_{i+1},\ga}$ in terms of $a_{1,\ga'}$.
Now we repeat the previous step.
\end{proof}

\begin{theorem}\label{th:Qpos3}
Let $M$ be a path-connected topological space and $(Z,a)$ be a continuous family of stability data on~ $M$.
Let $Q:\Ga_\bR\to\bR$ be a quadratic form that is negative semi-definite on $\Ker Z_x$ for all $x\in M$.
If there exists $x\in M$ such that 
$\supp a_x\sbs\set{Q\ge0}$, then the same is true for all points of $M$.
\end{theorem}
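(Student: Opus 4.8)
The plan is to prove directly that $a_{y,\ga}=0$ for every $y\in M$ and every $\ga\in\Ga$ with $Q(\ga)<0$; since this says exactly that $\supp(a_y)\sbs\set{Q\ge0}$, the theorem follows. Restricting the family to a path, we may assume $M=[0,1]$ with the hypothesis holding at $t=1$, and by compactness fix $\eps>0$ with $\supp(a_t)\sbs\sets{\be\in\Ga}{\n{Z_t(\be)}\ge\eps\nn\be}$ for all $t$. Fix $\ga$ with $Q(\ga)<0$; then $\ga\notin\supp(a_1)$, so $a_{1,\ga}=0$, and it remains to propagate this to every $t\in[0,1]$. (The case $Z_1(\ga)=0$ is vacuous, since such a $\ga$ lies in no $\supp(a_t)$ by the uniform estimate.)

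The main tool is a sharpening of the preceding lemma, the one expressing $a_{t,\ga}$ as a finite Lie expression in the $a_{1,\ga'}$. In its proof the classes $\ga'$ that appear all lie in the finite set $D=\sets{\be\in\Ga}{\tfrac\eps2\nn\be\le\n{Z_1(\be)}\le\n{Z_1(\ga)}}$, and at each stage $a_{t_i,\be}$ is expressed through elements $a_{t_{i+1},\be'}$ (with $\be,\be'\in D$) for which $Z_{t_i}(\be')$ is proportional to $Z_{t_i}(\be)$, along a subdivision $0=t_0<\dots<t_n=1$ on which $\nn{Z_{t_i}-Z_1}$ may be taken arbitrarily small. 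Running that proof with the auxiliary angle $\theta$ chosen below $\tfrac12\delta_0$, where $\delta_0>0$ is the least angle between two non-proportional vectors of $Z_1(D)$, the proportionality of $Z_{t_i}(\be')$ and $Z_{t_i}(\be)$, the smallness of $\nn{Z_{t_i}-Z_1}$, and the membership $\be,\be'\in D$ together force $Z_1(\be')\in\bR_{>0}Z_1(\be)$; iterating this through the recursion shows that $a_{t,\ga}$ is a finite Lie expression in the set $\sets{a_{1,\ga'}}{Z_1(\ga')\in\bR_{>0}Z_1(\ga)}$.

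Taking $\fg_\ga$-components, $a_{t,\ga}$ is then a finite linear combination of a multiple of $a_{1,\ga}$ and of iterated brackets in the $a_{1,\ga_j'}$ with $\ga_1'+\dots+\ga_k'=\ga$, $k\ge2$, and each $Z_1(\ga_j')\in\bR_{>0}Z_1(\ga)$. The first term vanishes since $Q(\ga)<0$. An iterated bracket vanishes unless every $\ga_j'\in\supp(a_1)$, in which case $Q(\ga_j')\ge0$ by hypothesis while $Z_1(\ga_j')$ lies on the ray $\ell=\bR_{>0}Z_1(\ga)$; thus every $\ga_j'$ lies in the cone $C(\ell,Z_1,Q)$ of Lemma~\ref{convex lQ}, which applies because $Q$ is negative semi-definite on $\Ker Z_1$, so that $Q(\ga)=Q(\ga_1'+\dots+\ga_k')\ge Q(\ga_1')+\dots+Q(\ga_k')\ge0$, contradicting $Q(\ga)<0$. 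Hence every bracket vanishes, $a_{t,\ga}=0$, and since $y$ was arbitrary this gives $\supp(a_y)\sbs\set{Q\ge0}$ for all $y\in M$.

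The point needing the most care is the proportionality claim in the second paragraph. The Kontsevich-Soibelman axiom controls the graded components only of the group elements $A_{t,V}$ attached to cones $V$ (\cf Lemma~\ref{lm:shrinking V}), so without further work one learns merely that the relevant $\ga'$ lie in a narrow cone about the ray of $\ga$ — and over a cone the superadditivity $Q(x+y)\ge Q(x)+Q(y)$ of Lemma~\ref{convex lQ} is in general false. Forcing the $\ga'$ to be genuinely proportional to $\ga$, by shrinking the cones below the angular gaps among the finitely many classes of $D$, is exactly what renders Lemma~\ref{convex lQ} applicable; the remaining ingredients — the uniform $\eps$, the subdivision, and the genericity of cone boundaries — are routine and already present in the earlier proofs.
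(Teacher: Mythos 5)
Your overall strategy---argue contrapositively that $a_{t,\ga}=0$ whenever $Q(\ga)<0$, by expanding $a_{t,\ga}$ as a Lie expression in the $a_{1,\ga'}$ and killing each nonzero bracket with the superadditivity of $Q$ from Lemma~\ref{convex lQ}---is sound, and the final bookkeeping with graded components is fine. The gap is in the ``sharpened lemma'', i.e.\ in the claim that all classes $\ga'$ occurring in the expression can be forced to satisfy $Z_1(\ga')\in\bR_{>0}Z_1(\ga)$. The reduction ``$\nn{Z_{t_i}-Z_1}$ may be taken arbitrarily small'' is not available for a general path: subdividing $[0,1]$ does not shrink $\nn{Z_0-Z_1}$. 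In the preceding lemma that reduction means: prove the statement separately on each subinterval $[s_j,s_{j+1}]$ with $s_{j+1}$ as the new reference point, then compose. Composing finite Lie expressions is harmless, but composing your ray conditions is not, because on $[s_j,s_{j+1}]$ they are relative to $Z_{s_{j+1}}$, and $Z_{s_{j+1}}$-proportionality does not imply $Z_1$-proportionality: two classes $\be_1,\be_2$ with $\be_1+\be_2=\ga$ may be $Z_{t^*}$-proportional at the intermediate time $t^*$ where the bracket $[a_{\be_1},a_{\be_2}]$ is created, yet have non-proportional images under $Z_1$. Your angle-gap device only excludes this when the oscillation of $Z$ over the \emph{entire} path is below $\tfrac\eps2\sin(\tfrac12\de_0)$; since $\de_0$ depends on $\ga$ (through $D$) and on the reference point and admits no uniform positive lower bound, this cannot be arranged by refining the subdivision---each refinement introduces new reference points with their own angle gaps, and the right-to-left induction that the reduction requires (the theorem at $s_{j+1}$ as hypothesis on $[s_j,s_{j+1}]$, for \emph{all} classes at once) needs a subdivision uniform in $\ga$, which your $\ga$-dependent threshold does not provide.

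The repair is to abandon $Z_1$-proportionality and apply Lemma~\ref{convex lQ} at the intermediate charge instead: the wall-crossing axiom hands you the decomposition $\ga=\sum_k\ga_k$ with the $Z_{t_i}(\ga_k)$ on the common ray $\bR_{>0}Z_{t_i}(\ga)$, and $Q$ is negative semi-definite on $\Ker Z_{t_i}$ by hypothesis, so superadditivity holds along that ray with no genericity needed. The positivity $Q(\ga_k)\ge0$ is then supplied not by membership in $\supp a_1$ but by an induction on the finite set $D$ (the classes $\ga_k$ have $\n{Z_1(\ga_k)}<\n{Z_1(\ga)}$, and the inductive hypothesis is the theorem for such classes at \emph{all} times). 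This is exactly the paper's argument; it uses only the fixed angle $\theta<\pi/4$ from the preceding lemma, so the subdivision is uniform and the induction closes. (The paper also records a second proof by packaging $(\supp a_x)_x$ into a continuous family of stable supports and invoking Theorem~\ref{th:Qpos1}.)
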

\begin{proof}
We will use the same notation and assumptions as in the previous lemma and we assume that $Q(\ga)\ge0$ for all $\ga\in\supp a_1$.
Let $\ga\in\supp a_0$ and let $0\le i\le n$ be the maximal number such that $\ga\in\supp a_{t_i}$.
If $i=n$, then we are done.
Otherwise, we can express $\ga=\sum_{k=1}^m\ga_k$, where $\ga_k\in\supp a_{t_{i+1}}$ and $Z_{t_i}(\ga_k)$ are contained in the same ray as $Z_{t_i}(\ga)$.
We conclude that $\ga_k\in D$, hence by induction $Q(\ga_k)\ge0$.
By Lemma \ref{convex lQ}, we obtain $Q(\ga)\ge0$.

Alternatively, we can apply Theorem \ref{th:Qpos1}.
For any $x\in M$, let $\fs_x=\supp(a_x)$ and let $\fr_x=\bigcup_{\ell\sbs\bC}\fr_{x,\ell}$, where $\fr_{x,\ell}$ is the set of minimal elements in the strict semigroup $\sgr(\fs_x\cap Z\inv(\ell))$.
It is easy to see that $(Z_x,\fr_x,\fs_x)_{x\in M}$ is a continuous family of stable supports
(\cf Lemma \ref{open-closed1}),
hence we can apply Theorem \ref{th:Qpos1}.
\end{proof}

The following results follows from \KS[Theorem 3].

\begin{theorem}
\label{th:unique lift}
Let $M$ be a path-connected topological space, $(Z,a)$ be a continuous family of stability data on~ $M$ and $Q:\Ga_\bR\to\bR$ be a quadratic form that
is negative-definite on $\Ker Z_x$ and satisfies $\supp a_x\sbs\set{Q\ge0}$ for all $x\in M$.
If $x,y\in M$ are such that $Z_x=Z_y$, then $a_x=a_y$.
\end{theorem}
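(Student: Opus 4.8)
The plan is to reduce to $M=[0,1]$, then to a nilpotent quotient of $\fg$, and finally to invoke \KS[Theorem~3] for the one genuinely nontrivial point. First I would choose a path $p\colon[0,1]\to M$ with $p(0)=x$ and $p(1)=y$, pull the family back along $p$, and write $Z:=Z_0=Z_1$. By compactness there is a uniform $\eps>0$ with $\supp a_t\sbs\sets{\ga\in\Ga}{\n{Z_t(\ga)}\ge\eps\nn\ga}$ for all $t$, and after subdividing $0=t_0<\dots<t_m=1$ finely enough I may assume $\nn{Z_t-Z}$ is as small as we like, so that $\supp a_t\sbs\sets{\ga\in\Ga}{\n{Z(\ga)}\ge\tfrac\eps2\nn\ga}$ for every $t$. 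If $0\ne\ga$ and $Z(\ga)=0$, then $\ga\in\Ker Z=\Ker Z_0$ forces $Q(\ga)<0$ by the negative-definiteness hypothesis, hence $\ga\notin\supp a_0\cup\supp a_1$ and $a_{0,\ga}=a_{1,\ga}=0$; so it remains to fix $\ga\in\Ga$ with $Z(\ga)\ne0$ and prove $a_{0,\ga}=a_{1,\ga}$.

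Fixing such a $\ga$, I would pass to a nilpotent quotient. Running the argument in the proof of the lemma preceding Theorem~\ref{th:Qpos3} — that is, applying Lemma~\ref{lower set stability} repeatedly along the subdivision — produces a finite lower set $I$ of the relevant strict semigroup, containing $\ga$ and all classes below it, such that for every $t\in[0,1]$ reduction modulo $\fm_I$ turns $(Z_t,a_t)$ into a continuous family of stability data on the nilpotent Lie algebra $\fg_I$. The second axiom, the Kontsevich--Soibelman wall-crossing formula, then expresses the passage from $a_0$ to $a_1$ inside the nilpotent group $\exp(\fg_I)$ as a transport $\Phi$ determined by the path $t\mapsto Z_t$ and assembled from the ray-wise factorizations $A_{t,V}=A_{t,V_1}A_{t,V_2}$ of the elements $A_{t,V}=\prod_{\ell\sbs V}^\to\exp(a_{t,\ell})\in\hat G_{V,Z_t}$.

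Since $Z_0=Z_1$, the central-charge path $t\mapsto Z_t$ is a loop, and here I would invoke \KS[Theorem~3]: the transport $\Phi$ attached to a loop of central charges is the identity of $\exp(\fg_I)$ — equivalently, the forgetful map sending stability data on $\fg_I$ with the support property to their central charges carries no monodromy. Hence $a_1=\Phi(a_0)=a_0$ after reduction to $\fg_I$, so $a_{0,\ga}=a_{1,\ga}$; letting $\ga$ vary over all of $\Ga\ms\set0$ gives $a_0=a_1$, \ie $a_x=a_y$.

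The main obstacle is exactly the triviality of $\Phi$ on $\exp(\fg_I)$. This is the substance of \KS[Theorem~3], established there by an induction on $\#I$ that peels off an innermost class for which the factorization $A_V=\prod_{\ell\sbs V}^\to A_\ell$ in $\hat G_{V,Z}$ is forced, together with the uniqueness of that factorization; I do not expect a genuine shortcut around it. The two reductions above are what is needed to place the question in the finite nilpotent setting where that theorem applies and to dispose of the classes on which $Q$ is already negative.
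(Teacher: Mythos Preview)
Your approach is essentially the paper's: the paper gives no argument beyond the single sentence ``The following result follows from \KS[Theorem 3]'', and you likewise defer the substantive step (triviality of the transport along a loop) to that theorem, merely spelling out the nilpotent-quotient and path reductions more explicitly. One harmless slip: subdividing the interval does not make $\nn{Z_t-Z}$ globally small (the loop $t\mapsto Z_t$ may wander far from $Z=Z_0=Z_1$), but you never actually use that claim---the case $Z(\ga)=0$ is handled via $Q$, and the finite-lower-set reduction proceeds piecewise through Lemma~\ref{lower set stability}---so the argument stands.
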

%

This result implies that it is enough to parametrize stability data by subsets of $\Hom(\Ga,\bC)$ as long as stability data are supported on $\set{Q\ge0}$ for a fixed quadratic from $Q$.

\subsection{Wall-crossing structures}
\label{sec:WCS}

In this section we introduce wall-crossing structures following ~\KSW 
and we discuss their relationship to continuous families of stability data.
A simplified definition of a wall-crossing structure, without usage of WCS sheaves, is given in \eqref{WCS cond}.

\subsubsection{Group slicing}
\label{sec:gr slicing}
Let $\fg$ be a \Ga-graded Lie algebra with a finite strict support.
For any $x\in\Ga_\bR\dual$, we define the subalgebras
\begin{equation}\label{p0m-parts}
\fg^{(x)}_{+}=\bop_{x(\ga)>0}\fg_\ga,\qquad
\fg^{(x)}_{0}=\bop_{x(\ga)=0}\fg_\ga,\qquad
\fg^{(x)}_{-}=\bop_{x(\ga)<0}\fg_\ga.
\end{equation}
The Lie algebra $\fg$ is nilpotent, hence we can define the corresponding nilpotent groups
\begin{equation}
G=\exp(\fg),\qquad
G^{(x)}_\star=\exp(\fg^{(x)}_\star),\qquad
\star\in\set{+,0,-}.
\end{equation}
By Lemma \ref{decomp1}, there is a bijection
\begin{equation}
G_+^{(x)}\xx G_0^{(x)}\xx G_-^{(x)}\to G,\qquad (g_+,g_0,g_-)\mto g=g_+g_0g_-.
\end{equation}
Taking the inverse and the projection to the middle term,
we obtain a map (which is not a group homomorphism in general)
\begin{equation}\label{proj pi}
\pi_x:G\to G^{(x)}_0,\qquad g\mto g_0.
\end{equation}
Similarly, let $\fg$ be a \Ga-graded Lie algebra with a strict support,
$\hat\fg$ be its pro-nilpotent completion and $\hat G$ be the corresponding pro-nilpotent group.
For any $x\in\Ga_\bR\dual$, we consider Lie algebras ~\eqref{p0m-parts}, the corresponding pro-nilpotent Lie algebras $\hat \fg^{(x)}_\pm,\,\hat\fg^{(x)}_{0}$ and the corresponding pro-nilpotent groups $\hat G^{(x)}_\pm,\,\hat G^{(x)}_{0}$.
Then we have a bijection
\begin{equation}
\hat G_+^{(x)}\xx \hat G_0^{(x)}\xx \hat G_-^{(x)}\to \hat G,\qquad (g_+,g_0,g_-)\mto g=g_+g_0g_-,
\end{equation}
and we consider the corresponding projection
\begin{equation}\label{proj pi2}
\pi_x:\hat G\to \hat G^{(x)}_0,\qquad g\mto g_0.
\end{equation}
These projections are natural with respect to morphisms of \Ga-graded Lie algebras.
More precisely, for any homomorphism $f:\fg\to\fh$ of $\Ga$-graded Lie algebras with strict support and $\hat G=\exp(\hat\fg)$, $H=\exp(\hat\fh)$,
we have a commutative diagram
\begin{equation}
\begin{tikzcd}
\hat G\rar["\pi_x"]\dar["\exp(f)"']& \hat G^{(x)}_0\dar["\exp(f)"]\\
\hat H\rar["\pi_x"]& \hat H^{(x)}_0
\end{tikzcd}
\end{equation}

\subsubsection{Special sheaf construction}\label{sheaf}
Let $M$ be a topological space and $(\pi_x:S\to S_x)_{x\in M}$ be a collection of maps between sets.
Then we define the sheaf $\cS$ over $M$ to be the
sheafification of the presheaf
\begin{equation}
M\sps U\mto \sets{(\pi_x(s))_{x\in U}}{s\in S}\sbs\prod_{x\in U}S_x.
\end{equation}
In particular, let us assume that the maps $\pi_x:S\to S_x$ are surjective and
$\forall s,s'\in S$ the set 
\begin{equation}
\sets{x\in M}{\pi_x(s)=\pi_x(s')}
\end{equation}
is open in $M$ (\cf \KS[\S2.1.2]).
Then the set of sections $\Ga(U,\cS)$ over an open set $U\sbs M$
consists of $a\in\prod_{x\in U}S_x$
such that, for any $x\in U$,
there exists an open \nbd $x\in U'\sbs U$
and $s\in S$ 
satisfying 
\begin{equation}
\pi_y(s)=a_y\qquad \forall y\in U'.
\end{equation}
The stalk $\cS_x$ of the sheaf $\cS$ is equal to $S_x$ for all $x\in M$.

Let us assume additionally that, for all $x\in M$, we have $S_x\sbs S$ such that $S_x\emb S\xto{\pi_x}S_x$
is the identity.
Then $\Ga(U,\cS)$ consists of $a\in\prod_{x\in U}S_x$ (or $a\in S^U=\prod_{x\in U}S$) such that, for any $x\in U$,
there exists an open \nbd $x\in U'\sbs U$
satisfying 
\begin{equation}
\pi_y(a_x)=a_y\qquad \forall y\in U'.
\end{equation}

\subsubsection{WCS sheaf: finite strict support}
Let $\fg$ be a \Ga-graded Lie algebra with a finite strict support.
As in \S\ref{sec:gr slicing}, we have the maps $\pi_x:G\to G_0^{(x)}$ for $x\in\Ga_\bR\dual$.
For any $g,g'\in G$, the set
\begin{equation}
\sets{x\in\Ga_\bR\dual}{\pi_x(g)=\pi_x(g')}
\end{equation}
is open, see \eg \cite{mou_scattering}.
Applying construction from \S\ref{sheaf} to the topological space $\Ga_\bR\dual$ and the collection of surjective maps $(\pi_x)_{x\in\Ga_\bR\dual}$,
we define the sheaf $\WCS_\fg$ of \idef{wall-crossing structures} over ~$\Ga_\bR\dual$ as follows.
For any open set $U\sbs\Ga_\bR\dual$,
the set of sections $\Ga(U,\WCS_\fg)$ consists of elements 
$a\in\prod_{x\in U}\fg_0^{(x)}$
(or $a\in\fg^U$)
such that, for any $x\in U$,
there exists an open \nbd $x\in U'\sbs U$
satisfying 
\begin{equation}
\pi_y(e^{a_x})=e^{a_y}\qquad \forall y\in U'.
\end{equation}

\subsubsection{WCS sheaf: strict support}
Let $\fg$ be a \Ga-graded Lie algebra having a strict support and let $S\sbs\Ga$ be the strict semigroup generated by this support.
As in \eqref{g-compl},
we define the pro-nilpotent Lie algebra
$\hat\fg=\ilim_{I\sbs S}\fg_I\iso\prod_{\ga\in\Ga}\fg_\ga$,
where the limit runs over finite lower sets $I\sbs S$ and $\fg_I$ denotes the corresponding nilpotent Lie algebra \eqref{g-nilp-quot}.
We define the sheaf $\WCS_\fg$ of wall-crossing structures over $\Ga_\bR\dual$ to be
\begin{equation}
\WCS_\fg=\ilim_{I\sbs S} \WCS_{\fg_I},
\end{equation}
where the limit runs over finite lower sets $I\sbs S$.
For any open set $U\sbs\Ga_\bR\dual$, the set of sections is equal to
\begin{equation}
\Ga(U,\WCS_\fg)=\ilim_{I\sbs S}\Ga(U,\WCS_{\fg_I}).
\end{equation}
Explicitly, this means that $\Ga(U,\WCS_\fg)$ consists of 
$a\in\prod_{x\in U}\hat\fg^{(x)}_0$ 
(or $a\in\hat\fg^U$)
such that
\begin{equation}
(p_Ia_x)_{x\in U}\in\Ga(U,\WCS_{\fg_I}),
\end{equation}
where $p_I:\hat\fg\to\fg_I$ is the projection,
for any finite lower set $I\sbs S$.
This means that $a\in\prod_{x\in U}\hat\fg^{(x)}_0$ is contained in $\Ga(U,\WCS_\fg)$ if and only if 
for any $x\in U$ and $\ga\in S$,
there exists an open set $x\in U'\sbs U$ such that
\begin{equation}
a_{y,\ga}=p_\ga\log(\pi_y(e^{a_x}))\qquad \forall y\in U',
\end{equation}
where $p_\ga:\hat\fg\to\fg_\ga$ is the projection and $\pi_y:\hat G\to\hat G_0^{(y)}$ is the map defined in \S\ref{sec:gr slicing}.



%
%
%

\subsubsection{WCS sheaf: arbitrary support}
Let \fg be an arbitrary \Ga-graded Lie algebra.
Let $\cP$ be the set of all strict semigroups $S\sbs\Ga$, equipped with the partial order induced by inclusion.
For every semigroup $S\in\cP$, 
the Lie algebra $\fg_S=\bop_{\ga\in S}\fg_\ga$
has a strict support.
If $S\sbs S'$, then $\fg_S\sbs\fg_{S'}$ and we obtain the induced inclusion of sheaves $\WCS_{\fg_S}\sbs\WCS_{\fg_{S'}}$.
We define the sheaf $\WCS_\fg$ of wall-crossing structures over $\Ga_\bR\dual$ to be the colimit of sheaves
\begin{equation}
\WCS_\fg=\dlim_{S\in\cP}\WCS_{\fg_S}.
\end{equation}
Note that for any open set $U\sbs\Ga_\bR\dual$,
we have $\Ga(U,\WCS_{\fg_S})\sbs \hat\fg_S^U\sbs\hat\fg^U$,
hence $\Ga(U,\WCS_\fg)\sbs\hat\fg^U$.
More explicitly,
the set of sections $\Ga(U,\WCS_\fg)$ consists of elements 
$a\in\prod_{x\in U}\hat\fg^{(x)}_0\sbs\hat\fg^U$ 
such that, for any $x\in U$, there exists an open \nbd $x\in U'\sbs U$ such that the set
$\bigcup_{y\in U'}\supp(a_y)\sbs\Ga$
generates a strict semigroup  $S\sbs\Ga$ and
\begin{equation}
(a_y)_{y\in U'}\in\Ga(U',\WCS_{\fg_{S}}).
\end{equation}

\subsubsection{Wall-crossing structures}
Given a topological space $M$ and a continuous map $\te:M\to\Ga_\bR\dual$, we define the sheaf
\begin{equation}
\WCS_{\fg,\te}=\te^*\WCS_\fg
\end{equation}
of \idef{wall-crossing structures} over $M$.
We define a \idef{wall-crossing structure} over $M$ to be a global section of $\WCS_{\fg,\te}$.
We can actually describe the sheaf $\WCS_{\fg,\te}$ directly in the same way as the sheaf ~$\WCS_{\fg}$:
\begin{enumerate}
\item If $\fg$ has a finite strict support,
then the set of sections $\Ga(U,\WCS_{\fg,\te})$ 
(for $U\sbs M$)
consists of $a\in\prod_{x\in U}\fg_0^{(\te_x)}$ such that, for any $x\in U$, there exists an open set $x\in U'\sbs U$ such that $\pi_{\te_y}(e^{a_x})=e^{a_y}$ for all $y\in U'$,
where $\pi_{\te_y}:G\to G^{(\te_y)}_0$ is the projection.

\item If \fg has a strict support generating the semigroup $S\sbs\Ga$, then $\Ga(U,\WCS_{\fg,\te})$ (for $U\sbs M$)
consists of 
$a\in\prod_{x\in U}\hat\fg^{(\te_x)}_0$ such that
$(p_Ia_x)_{x\in U}\in\Ga(U,\WCS_{\fg_I,\te})$,
where $p_I:\hat\fg\to\fg_I$ is the projection,
for any lower set $I\sbs S$.

\item
If $\fg$ is arbitrary, then $\Ga(U,\WCS_{\fg,\te})$ 
(for $U\sbs M$)
consists of $a\in\prod_{x\in U}\hat\fg^{(\te_x)}_0$
such that, for any $x\in U$, there exists an opens set $x\in U'\sbs U$ such that 
$\bigcup_{y\in U'}\supp(a_y)$ generates a strict semigroup $S\sbs \Ga$ and 
$(a_y)_{y\in U'}\in\Ga(U',\WCS_{\te,\fg_{S}})$.
\end{enumerate}

We conclude that a wall-crossing structure on $M$ (for a fixed continuous map $\te:M\to\Ga_\bR\dual$) is a collection $a\in\prod_{x\in M}\hat\fg_0^{(\te_x)}$ such that for any $x\in M$ and $\ga\in\Ga$, there exists an open set $x\in U\sbs M$ such that $\bigcup_{y\in U}\supp(a_y)$ generates a strict semigroup $S\sbs\Ga$ and
\begin{equation}\label{WCS cond}
a_{y,\ga}=p_\ga(\log\pi_y(e^{a_x}))\qquad \forall y\in U,
\end{equation}
where we consider projections $\pi_y:\hat G_S\to\hat G_{S,0}^{(\te_y)}$ with $\hat G_S=\exp(\hat\fg_S)$, $\fg_S=\bop_{\ga'\in S}\fg_{\ga'}$.

\begin{remark}
It is proved in \KSW[Theorem 2.1.6] that if $\fg$ has a strict support,
then 
\begin{equation}
\Ga(\Ga_\bR\dual,\WCS_\fg)\iso\hat\fg.
\end{equation}
Wall-crossing structures on $\Ga_\bR\dual$ can be identified with scattering diagrams \KSW.
\end{remark}

\subsubsection{WCS induced by families of stability data}
Let $M$ be a topological space and $(Z_x,a_x)_{x\in M}$ be a continuous family of stability data on a \Ga-graded Lie algebra $\fg$.
Then every $Z_x\in\Ga_\bC\dual$ has the form $Z_x=-\te_x+\bi\rho_x$ for some $\te_x,\rho_x\in\Ga_\bR\dual$.
We define the continuous map
\begin{equation}
\te:M\to\Ga_\bR\dual,\qquad \te_x
=-\Re(Z_x)=\Im(\bi\inv\cdot Z_x).
\end{equation}
Note that if $Z_x(\ga)\in\bH$ for some $\ga\in\Ga$, then $\te_x(\ga)=0$ if and only if $Z_x(\ga)\in\ell_0:=\bi\bR_{>0}$.

\begin{lemma}
The collection 
\begin{equation}
b=(b_x)_{x\in M},\qquad
b_x=(a_{x,\ga})_{Z_x(\ga)\in\ell_0}\in\hat\fg_0^{(\te_x)},
\end{equation}
is a wall-crossing structure on $M$.
\end{lemma}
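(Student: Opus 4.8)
The plan is to verify the explicit criterion \eqref{WCS cond}: for each $x\in M$ and $\ga\in\Ga$ I must produce an open $U\ni x$ such that $\bigcup_{y\in U}\supp(b_y)$ generates a strict semigroup and $b_{y,\ga}=p_\ga\bigl(\log\pi_{\te_y}(e^{b_x})\bigr)$ for all $y\in U$. First I would dispose of the easy cases. Since $b_y\in\hat\fg^{(\te_y)}_0$ and $\pi_{\te_y}(e^{b_x})\in\hat G^{(\te_y)}_0$, both sides have zero $\ga$-component whenever $\te_y(\ga)\ne0$; and if $\te_x(\ga)=0$ but $Z_x(\ga)\notin\ell_0$ (that is $Z_x(\ga)=0$ or $Z_x(\ga)\in-\ell_0$), then by the support axiom together with continuity of $Z$ we get $\ga\notin\supp(b_y)$ for $y$ near $x$, while $\ga$ cannot lie in the semigroup generated by $\supp(b_x)$ because every element of that semigroup has $Z_x$-image in $\ell_0$; so again both sides vanish near $x$. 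Hence I may assume $Z_x(\ga)\in\ell_0$.

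Next I set up a small cone around $\ell_0$ and a finite nilpotent quotient in which to compute. Fix $\eps>0$ and $U_0\ni x$ with $\supp(a_y)\sbs\{\ga'\,:\,\n{Z_x(\ga')}\ge\eps\nn{\ga'}\}$ for $y\in U_0$ (the first axiom and the Remark after it), having shrunk $\eps$ so that $\n{Z_x(\ga')}\ne\eps\nn{\ga'}$ for $\ga'\ne0$. Pick a strict cone $V_0\ni\ell_0$ with $Z_x(\Ga)\cap\dd V_0=\set0$; then $S(V_0,Z_x,\eps)$ is strict (Lemma~\ref{lm:C VZe}), I may assume $\ga\in S(V_0,Z_x,\eps)$ (otherwise $b_{y,\ga}=0$ near $x$ by the argument above), so the lower set $I=\{\ga'\le\ga\}$ of $S(V_0,Z_x,\eps)$ is finite (Lemma~\ref{lm:strict sgr}), and by Lemma~\ref{lower set stability} I can shrink $U_0$ so that $I$ remains a lower set of $S(V_0,Z_y,\eps)$ for every $y\in U_0$; all further computation is carried out in the nilpotent quotient $\fg_I\iso\bop_{\ga'\in I}\fg_{\ga'}$, where the relevant exponentials, logarithms, the elements $A_{\sbul}$ and the projections $\pi_{\te_y}$ all descend naturally. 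The geometric key is this: since $I$ is finite there is $\de>0$ with $\n{\arg Z_x(\ga')-\tfrac\pi2}\ge\de$ for every $\ga'\in I$ with $Z_x(\ga')\notin\ell_0$; choosing $0<\te'<\de/4$, a strict cone $\ell_0\sbs V\sbs V_0$ of half-aperture strictly between $\te'$ and $\de/2$ with $Z_x(\Ga)\cap\dd V=\set0$, and then $U\sbs U_0$ with $\nn{Z_y-Z_x}<\tfrac\eps2\sin\te'$, a standard angular estimate (as in the proof of Lemma~\ref{lower set stability}) yields, for $y\in U$ and $\ga'\in I$:\ $Z_y(\ga')\in V\iff Z_x(\ga')\in\ell_0$, and in that case $Z_y(\ga')$ lies within angle $\te'$ of $\ell_0$. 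Two consequences in $\fg_I$: first, $A_{x,V}=\prod_{\ell\sbs V}^\to\exp(a_{x,\ell})$ collapses to the single factor $\exp(b_x)$, since $b_x=a_{x,\ell_0}$ and no class of $I$ meets $V\ms\ell_0$ under $Z_x$; second, for $\ga'\in I$ occurring in $A_{y,V}$ the sign of $\te_y(\ga')=-\Re Z_y(\ga')$ equals the sign of $\arg Z_y(\ga')-\tfrac\pi2$.

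The argument now closes quickly. By the second axiom of a continuous family of stability data (equivalently Lemma~\ref{lm:shrinking V}), since $Z_x(\Ga)\cap\dd V=\set0$ the maps $y\mapsto p_{\ga'}\log(A_{y,V})$ are constant near $x$, so after a final shrinking of $U$ we have $A_{y,V}=A_{x,V}=\exp(b_x)$ in $\exp(\fg_I)$ for all $y\in U$. On the other hand, writing $V=V_+\sqcup\ell_0\sqcup V_-$ in clockwise order (with $V_\pm$ the fixed subcones of $V$ on which $\arg\gtrless\tfrac\pi2$), the factorization property (Lemma~\ref{lm:factorization}) gives in $\fg_I$ the identity $A_{y,V}=A_{y,V_+}\cdot\exp(b_y)\cdot A_{y,V_-}$ with $b_y=a_{y,\ell_0}$; by the second consequence above $A_{y,V_+}\in\hat G^{(\te_y)}_+$, $\exp(b_y)\in\hat G^{(\te_y)}_0$ and $A_{y,V_-}\in\hat G^{(\te_y)}_-$, so this is exactly the triangular decomposition of $A_{y,V}$ whose middle factor defines $\pi_{\te_y}(A_{y,V})$. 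Hence $\exp(b_y)=\pi_{\te_y}(A_{y,V})=\pi_{\te_y}(e^{b_x})$ in $\exp(\fg_I)$, and applying $\log$ and $p_\ga$ (legitimate since $\ga\in I$, by naturality of $\pi$ and of $\exp/\log$ in the quotient $\hat\fg\to\fg_I$) yields $b_{y,\ga}=p_\ga\log\pi_{\te_y}(e^{b_x})$. Finally $\bigcup_{y\in U}\supp(b_y)$ is contained in a set of the form $\{\ga'\,:\,Z_x(\ga')\in V,\ \n{Z_x(\ga')}\ge\eps'\nn{\ga'}\}$ for a suitable $\eps'>0$, hence in the strict semigroup $S(V,Z_x,\eps')$ of Lemma~\ref{lm:C VZe}, so the remaining condition holds. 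I expect the only real work to be the bookkeeping that makes all the shrinkings of $U$ and the choices of $\te'$, $\de$ and $V$ mutually compatible, so that the dictionary ``clockwise position relative to $\ell_0$ $=$ sign of $-\Re Z_y$'' holds uniformly on $U$ inside $\fg_I$; once that is arranged, the collapse $A_{x,V}=\exp(b_x)$ in $\fg_I$ together with the Kontsevich--Soibelman wall-crossing formula does everything.
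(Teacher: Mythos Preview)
Your proposal is correct and follows essentially the same route as the paper: both shrink to a narrow strict cone $V$ around $\ell_0$, pass to a finite nilpotent quotient $\fg_I$ in which the lower set below $\ga$ lies entirely in $Z_x^{-1}(\ell_0)$, and then invoke the second axiom of a continuous family of stability data. The paper's proof is terser at the end, simply asserting that \eqref{WCS cond} follows from the second axiom; your version makes explicit the step the paper suppresses, namely that the clockwise factorization $A_{y,V}=A_{y,V_+}\cdot\exp(b_y)\cdot A_{y,V_-}$ is precisely the triangular decomposition defining $\pi_{\te_y}$, because the sign of $\te_y=-\Re Z_y$ agrees with the clockwise position relative to $\ell_0$.
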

\begin{proof}
For any $x\in M$, there exists an open set $x\in U\sbs M$ and $\eps>0$ such that
$$\supp(a_y)\sbs\sets{\ga\in\Ga}{\n{Z_y(\ga)}\ge\eps\nn\ga}.$$
Let $0<\eta<\pi/4$ and let us assume that $\nn{Z_x-Z_y}\le\frac\eps2\sin(\eta)$ for all $y\in U$.
Then, for any $\ga\in\supp(a_y)$, we have 
$\n{Z_x(\ga)}\ge\frac\eps2\nn\ga$ and
$$\n{Z_x(\ga)-Z_y(\ga)}<\eps\sin(\eta)\nn{\ga}
\le\sin(\eta)\n{Z_y(\ga)}.$$
This implies that the angle between $Z_x(\ga)$ and $Z_y(\ga)$ is $<\eta$.
Let $V\sbs\bC$ be the cone around $\ell_0$ having angle $2\eta<\pi/2$.
If $\ga\in\supp b_y$, then $Z_y(\ga)\in\ell_0$, hence $Z_x(\ga)\in V$.
We conclude that
$$\supp(b_y)\sbs S=S(V,Z_x,\eps/2)\qquad \forall y\in U,$$
hence $\bigcup_{y\in U}\supp(b_y)$ generates a strict semigroup.
Let us consider $0\ne\ga\in\Ga$ with $\n{Z_x(\ga)}\ge\frac\eps2\nn\ga$.
If $Z_x(\ga)\notin\ell_0$, then we can decrease $\eta$ and shrink $U$ so that $\ga\notin S$ and the condition \eqref{WCS cond} is automatically satisfied.
If $Z_x(\ga)\in\ell_0$, then we can decrease $\eta$ and shrink $U$ so that the finite lower set $I=\sets{\ga'\in S}{\ga'\le\ga}$ is contained in $Z_x\inv(\ell_0)$.
Note the condition \eqref{WCS cond} follows from the second axiom of a continuous family of stability data.  
\end{proof}

Note that the above wall-crossing structure does not capture the full information of the family of stability data as it remembers only stability data along a particular ray.
To solve this problem, we either need to assume that rays rotate in the family $M$ or we can introduce such rotation explicitly as follows \KSW.
Let us consider a new continuous family of stability data
over $\hat M=U(1)\xx M$
\begin{equation}
U(1)\xx M\ni (z,x)\mto (\bi z\inv Z_x,a_x).
\end{equation}
Then we define the corresponding continuous map
\begin{equation}
\hat\te:\hat M\to\Ga_\bR\dual,\qquad (z,x)\mto\Im(z\inv Z_x)
\end{equation}
By the previous lemma, we have a wall-crossing structure
\begin{equation}
b=(b_{z,x})_{(z,x)\in\hat M}\qquad
b_{z,x}=(a_{x,\ga})_{Z_x(\ga)\in z\bR_{>0}}\in\hat\fg_0^{(\hat\te_{z,x})}.
\end{equation}
This wall-crossing structure can be used to completely recover the original continuous family of stability data on $M$.

\section{Wall-crossing formulas}
\label{sec:WCF}

\subsection{Grothendieck ring of stacks}
\label{sec:Gr ring of stacks}
For more details see \eg \cite{bridgeland_introduction,joyce_motivic,toen_grothendieck}.
In this section we will consider only algebraic (or Artin) stacks locally of finite type over \bC.
Let $\Sta$ denote the $2$-category of algebraic stacks of finite type over $\bC$ having affine stabilizers.
Given an algebraic stack $S$ with affine stabilizers, we denote by $K(\Sta\qt S)$ the corresponding Grothendieck group with rational coefficients generated by isomorphism classes $[X\to S]$ of objects  in $\Sta\qt S$ subject to usual relations \cite[\S3]{bridgeland_introduction}.
We can similarly consider the category $\Var$ of finite type algebraic varieties over $\bC$ and the Grothendieck group $K(\Var\qt S)$ with rational coefficients.

In particular, for $S=\Spec\bC$, the corresponding Grothendieck groups $K(\Sta)=K(\Sta\qt \bC)$ 
and $K(\Var)=K(\Var\qt\bC)$
have a ring structure with the product defined by $[X]\cdot[Y]=[X\xx Y]$.
We define $\bL=[\bA^1]$ so that $[\GL_n]=\prod_{i=0}^{n-1}(\bL^n-\bL^i)$.
It is proved in \cite{toen_grothendieck} (see also \cite{bridgeland_introduction}) that there is a natural isomorphism
\begin{equation}
K(\Sta)\iso K(\Var)[[\GL_n]\inv\col n\ge1]\\
=K(\Var)[(\bL-1)\inv,\bL\inv,[\bP^n]\inv\col n\ge1].
\end{equation}
For any algebraic stack $S$ with affine stabilizers, the Grothendieck group $K(\Sta\qt S)$ is a module over $K(\Sta)$ with the product defined by
\begin{equation}
[X]\cdot[Y\to S]=[X\xx Y\to Y\to S].
\end{equation}
Let us also define
\begin{equation}
K^\circ(\Sta\qt S)
=\Im\rbr{K(\Var\qt S)[\bL^{-1},[\bP^n]\inv\col n\ge1]
\to K(\Sta\qt S)}.
\end{equation}
which is a module over $K^\circ(\Sta)=K^\circ(\Sta\qt\bC)$.
Note that the above map is not necessarily injective as $\bL-1$ can be a zero divisor in $K(\Var\qt\bC)$, \cf \cite{borisov_class,martin_class}.

Given a morphism of stacks $f:S\to T$, we define
\begin{equation}
f_*:K(\Sta\qt S)\to K(\Sta\qt T),\qquad [X\to S]\mto[X\to S\to T].
\end{equation}
If $f:S\to T$ is of finite type, then it induces
\begin{equation}
f^*:K(\Sta\qt T)\to K(\Sta\qt S),\qquad [Y\to T]\mto[S\xx_TY\to S].
\end{equation}

In what follows we will 
introduce the square root $\bL^\oh$ of $\bL$
and define
\begin{equation}\label{R St}
\R(\Sta\qt \bC)=K(\Sta\qt \bC)[\bL^\oh],\qquad
\R^\circ(\Sta\qt \bC)=K^\circ(\Sta\qt \bC)[\bL^\oh].
\end{equation}
By the results of Deligne \cite{deligne_theorie}, there exists a ring homomorphism
\begin{equation}\label{E-pol}
E:K(\Var\qt \bC)\to \bQ[u,v],\qquad
[X]\mto
\sum_{p,q,n}(-1)^n \dim \rbr{\Gr^p_F\Gr^W_{p+q}H^n_c(X,\bC)} u^pv^q,
\end{equation}
called the \idef{Hodge-Deligne polynomial map} or the \idef{E-polynomial map}.
Note that $H^*_c(\bA^1,\bQ)=\bQ(-1)[-2]$, hence $E(\bL)=uv$.
We can extend $E$ to
\begin{equation}
E:\R(\Sta\qt \bC)\to \bQ(u,\sqrt{uv}),
\end{equation}
where $\bL^\oh\mto-\sqrt{uv}$.
Similarly, we define the \idef{Poincar\'e polynomial map}
\begin{equation}\label{P-pol}
P:K(\Var\qt \bC)\to \bQ[y],\qquad
[X]\mto E(X;y,y),
\end{equation}
which extends to 
$P:\R(\Sta\qt \bC)\to \bQ(y)$,
where $\bL^\oh\mto -y$.

\subsection{Motivic Hall algebra of an exact category}
\label{sec:motivic Hall}
For more details see \eg \cite{bridgeland_introduction,joyce_configurations2,dyckerhoff_highera}.
Let ~$\cE$ be an exact category, meaning an additive category equipped with a class of short exact sequences (to be called admissible exact sequences) satisfying Quillen's axioms \cite{quillen_higher,keller_chain}.
We define admissible monomorphisms and epimorphisms to be respectively monomorphisms and epimorphisms of admissible short exact sequences.
Let us recall the Waldhausen construction of the simplicial groupoid associated with an exact category $\cE$.
Let us define $\cM_n$ to be the groupoid of chains
\begin{equation}\label{ex chain}
0=E_0\emb E_1\emb\dots\emb E_n,
\end{equation}
where $E_{i-1}\emb E_i$ are admissible monomorphisms,
together with choices of cokernels $E_{j/i}$ of $E_i\emb E_j$.
Note that $\cM_0$ is a point, $\cM=\cM_1$ parametrizes objects of $\cE$, and $\cM_2$ parametrizes admissible exact sequences $0\to E_1\to E_2\to E_{2/1}\to0$ in $\cE$.

Groupoids $\cM_n$ form a simplicial groupoid,
where for any weakly increasing map $\al:[m]\to[n]=\set{0,1,\dots,n}$, we define the face map 
\begin{equation}
\al^*:\cM_n\to\cM_m,\qquad
[E_0\emb \dots\emb E_n]\mto
[E_{\al_0}\emb \dots\emb E_{\al_m}]/E_{\al_0}.
\end{equation}
The fundamental property of this simplicial groupoid is the 2-Segal property \cite{dyckerhoff_highera}.
We will identify increasing maps $\al:[m]\to[n]$ with subsets $\set{\al_0,\dots,\al_m}\sbs[n]$ and we will denote $\al^*$ by $p_{\al_0,\dots,\al_m}$.
In particular, we consider
\begin{gather}
q=(p_{01},p_{12}):\cM_2\to \cM\xx\cM,\qquad [E_1\emb E_2]\mto (E_1,E_{2/1}),\\
p=p_{02}:\cM_2\to\cM,\qquad [E_1\emb E_2]\mto E_2.
\end{gather}



Let us assume that every $\cM_n$ is equipped with a structure of an 
algebraic stack locally of finite type over $\bC$ with affine stabilizers,
so that all face maps are morphisms of algebraic stacks and 
the map $q:\cM_2\to \cM\xx\cM$ is of finite type.
We define the \idef{motivic Hall algebra}
\begin{equation}
H(\cE)=K(\Sta\qt \cM)
\end{equation}
with the product given by
\begin{equation}
K(\Sta\qt \cM)\ts K(\Sta\qt \cM)\to K(\Sta\qt \cM^2)\xto {q^*}K(\Sta\qt \cM_2)\xto{p_*}K(\Sta\qt \cM),
\end{equation}
The associativity is a consequence of the 2-Segal property.

\begin{remark}
It is actually enough to equip groupoids $\cM_n$ with a slightly weaker structure than an algebraic stack structure.
Given a groupoid $\cG$, let us consider the set of pairs $(X,f)$, where $X\in\Sta$ and $f:X(\bC)\to \cG$ is an equivalence of categories.
We will say that two such pairs $(X_1,f_1)$ and $(X_2,f_2)$ are equivalent if there exist morphisms $g_i:Z\to X_i$ in $\Sta$ such that the functors $Z(\bC)\xto {g_i(\bC)}X_i(\bC)\xto{f_i}\cG$ for $i=1,2$ are naturally isomorphic.
Note that morphisms $g_i:Z\to X_i$ are geometric bijections, meaning that $g_i:Z(\bC)\to X_i(\bC)$ are equivalences of categories.
Therefore there exist finite stratifications of $Z$ and $X_i$ such that $g_i$ induce isomorphisms between the strata \cite[Lemma 3.2]{bridgeland_introduction}.
This implies that $g_i$ induce isomorphisms $g_{i*}:K(\Sta\qt Z)\to K(\Sta\qt X_i)$, hence we obtain an isomorphism $K(\Sta\qt X_1)\iso K(\Sta\qt X_2)$.
We will call an equivalence class of above pairs an \idef{atlas} of $\cG$.
In what follows, we will have a linear map $\cl:K(\cE)\to\Ga$ and a decomposition $\cM=\bigsqcup_{\ga\in\Ga}\cM(\ga)$, where $\cM(\ga)$ is the groupoid parameterizing objects $E\in\cE$ with $\cl E=\ga$.
For our considerations it will be enough to equip every groupoid $\cM(\ga)$ (as well as the fibers of $\cM_n\xto{(p_{i-1,i})_i}\cM^n\to\Ga^n$)
with an atlas.
\end{remark}

\subsection{Wall-crossing in the Hall algebra}
\label{sec:WC hall}
In this section we will restrict our considerations to the triangulated category $\cD=D^b(\Coh X)$, where $X$ is a smooth projective variety over $\bC$, although most of  the results can be formulated for other triangulated categories.
By the results of \cite{lieblich_moduli}, the
stack $\bar\cM$ parameterizing objects $E\in \cD$ with $\Hom^{<0}(E,E)=0$ is algebraic and locally of finite type over $\bC$.
More precisely, we consider the $2$-functor
\begin{equation}
\bar\cM:\Sch\qt\bC\to\Grpd
\end{equation}
that sends a scheme $S$ to the groupoid of objects $E\in D^b(X\xx S)$ that are relatively perfect \cite{lieblich_moduli} and satisfy $\Hom^{<0}(E_s,E_s)=0$ for $s\in S$.
Then $\bar \cM$ is an algebraic stack locally of finite type over \bC.

\begin{remark}
The stack $\bar\cM$ has affine stabilizers.
Indeed, the stabilizer of any object $E$ can be identified with the group $\Aut(E)$ which is the group of invertible elements of the finite-dimensional algebra $A=\Hom_\cD(E,E)$.
Let us consider an injective morphism of algebras  $A\emb\End_\bC(A)$ given by the left multiplication. 
Then the group of invertible elements of $A$ can be identified with $A\cap\GL(A)$, which is a closed subgroup of $\GL(A)$.
\end{remark}

We consider the numerical Grothendieck group $\Ga=\cN(\cD)$ and the projection $\cl:K(\cD)\to \Ga$.
Let $\si=(Z,\cP)$ be a stability condition on $\cD$ (with respect to $\cl$).
For any interval $I\sbs\bR$ of length $\le1$ (we assume that $I$ is non-closed if it has length $1$),
let $\cM_{\si,I}$ be the substack of $\bar\cM$ parameterizing objects in $\cP_I$.
We will assume that 
\begin{assumption}\label{ass1}
For $I$ of length $1$, the substack $\cM_{\si,I}$ is open in ~$\bar\cM$.
\end{assumption}
\begin{assumption}\label{ass2}
For any $\ga\in\Ga$ with $Z(\ga)\ne0$, 
the stack $\cM_{\si}(\ga)$ is of finite type,
where $\cM_{\si}(\ga)$ is the stack of \si-semistable objects in $\bar\cM$ having class $\ga$ and phase $\frac1\pi\Arg Z(\ga)$.
\end{assumption}


Note that the first assumption implies that for any interval $I$ of length $<1$, the substack $\cM_{\si,I}$ is open in $\bar\cM$.
In particular, the stack $\cM_{\si,\vi}=\cM_{\si,[\vi,\vi]}$, $\vi\in\bR$, is open in $\bar\cM$.
The stack $\cM_\si(\ga)$ is the substack of $\cM_{\si,\vi}$ for $\vi=\frac1\pi\Arg Z(\ga)$, hence it is also open in $\bar\cM$.
The second assumption implies that, for any interval $I$ of length $<1$, the stack $\cM_{\si,I}(\ga)$ of objects in $\cM_{\si,I}$ having class $\ga$ is of finite type.
The above assumptions are proved for a large class of stability conditions on surfaces in \cite{toda_moduli} and on 3-folds in \cite{piyaratne_moduli}.

Let $I\sbs\bR$ be an interval of length $<1$ and
\begin{equation}
V=\bR_{>0}e^{\ip I}=\sets{re^{\ip\vi}}{r>0,\,\vi\in I}\sbs\bC
\end{equation}
be the corresponding strict cone.
The category $\cE=\cP_I$ is quasi-abelian \cite{bridgeland_stability}.
In particular, it is automatically exact, with the class of admissible exact sequences consisting of all short exact sequences.
By our assumptions, its objects are parametrized by the algebraic stack $\cM=\cM_{\si,I}$.
The stacks $\cM_n$ parameterizing chains \eqref{ex chain} in $\cE$ are also algebraic.
For example, the fiber of $q:\cM_2\to\cM\xx\cM$ over $(E_1,E_2)$ can be identified with the stack
\begin{equation}\label{q-fiber}
\Ext^1(E_2,E_1)/\Hom(E_2,E_1)
\end{equation}
and we can stratify $\cM\xx\cM$ so that $q$ is a trivial fibration over the strata (\cf \cite[Prop.~6.2]{bridgeland_introduction}).
This also implies that $q$ is of finite type.

We consider the decomposition $\cM=\bigsqcup_{\ga\in\Ga}\cM(\ga)$,
where $\cM(\ga)=\cM_{\si,I}(\ga)$ is the stack parameterizing objects $E\in\cE$ with $\cl E=\ga$.
By the support property of \si, there exists $\eps>0$ such that $\n{Z(E)}\ge\eps\nn{\cl E}$ for any $\si$-semistable object $E$.
Therefore, for any $0\ne E\in\cE$ we have
\begin{equation}
\cl E\in S:=S(V,Z,\eps),
\end{equation}
where $S(V,Z,\eps)$ is the strict semigroup defined in \eqref{S1}.
Let $S_0=S\cup\set0$.
Then the Hall algebra $H(\cE)=K(\Sta/\cM)$ is $S_0$-graded
\begin{equation}
H(\cE)=\bop_{\ga\in S_0}H_\ga(\cE),\qquad 
H_\ga(\cE)=K(\Sta\qt \cM(\ga)).
\end{equation}

We define the completion of the Hall algebra to be
\begin{equation}\label{call completion}
\hat H(\cE)=\ilim_{J\sbs S}H(\cE)_J,\qquad
H(\cE)_J=H(\cE)/\bop_{\ga\in S\ms J}H_\ga(\cE)
\iso\bop_{\ga\in J\cup\set0}H_\ga(\cE),
\end{equation}
where the limit is taken over all finite lower sets $J\sbs S$.
Note that $\hat H(\cE)\iso\prod_\ga H_\ga(\cE)$ as a vector space.
Let us define
\begin{equation}
\one_{\si,V}=1+\sum_{Z(\ga)\in V}\one_{\ga}\in\hat H(\cE),\qquad
\one_\ga=[\cM(\ga)\to\cM(\ga)]\in H_\ga(\cE).
\end{equation}
On the other hand, for any $\ga\in\Ga\cap Z\inv(V)$, we can consider $\cM_\si(\ga)$ as an open substack of ~$\cM(\ga)$.
We define
\begin{equation}
\one_{\si,\ell}=1+\sum_{Z(\ga)\in\ell}\one_{\si,\ga}\in\hat H(\cE),\qquad
\one_{\si,\ga}=[\cM_\si(\ga)\to\cM(\ga)]\in H_\ga(\cE),
\end{equation}
for any ray $\ell\sbs V$.
The following result is well-known \cite{joyce_configurations2,kontsevich_stability}.
\begin{theorem}\label{th:wc1}
We have
\begin{equation}
\one_{\si,V}=
\prod_{\ell\sbs V}^\to\one_{\si,\ell}.
\end{equation}
\end{theorem}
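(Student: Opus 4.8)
The plan is to reduce the identity to a finite computation in each graded piece $H_\ga(\cE)$ and then apply the uniqueness of Harder–Narasimhan filtrations. First I would fix $\ga\in S$ and observe that the lower set $\loc\ga=\sets{\be\in S}{\be\le\ga}$ is finite by Lemma \ref{lm:strict sgr}(2), so only finitely many rays $\ell\sbs V$ contribute nontrivially to the $\ga$-graded component of either side; moreover there are only finitely many ways to write $\ga=\be_1+\dots+\be_n$ with $\be_i\in S$ (Lemma \ref{lm:strict sgr}(3)). Hence the product $\prod^\to_{\ell\sbs V}\one_{\si,\ell}$ is well-defined in the completion $\hat H(\cE)$ and its $\ga$-component is a finite sum. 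Concretely, the $\ga$-graded part of the right-hand side is
\begin{equation}
\sum_{n\ge1}\ \sum_{\substack{\ga=\be_1+\dots+\be_n\\ \vi(\be_1)>\dots>\vi(\be_n)}}\one_{\si,\be_1}\circ\dots\circ\one_{\si,\be_n},
\end{equation}
where $\vi(\be)=\tfrac1\pi\Arg Z(\be)$ and the constraint $\vi(\be_1)>\dots>\vi(\be_n)$ reflects that the rays are multiplied in clockwise order. The $\ga$-graded part of the left-hand side is just $\one_\ga=[\cM(\ga)\to\cM(\ga)]$.

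The key geometric input is the interpretation of the Hall product: by the definition of the product in $H(\cE)$ via $p_*q^*$, the class $\one_{\si,\be_1}\circ\dots\circ\one_{\si,\be_n}$ equals $[\cN\to\cM(\ga)]$, where $\cN$ is the stack of filtrations $0=E_0\emb E_1\emb\dots\emb E_n=E$ with $E_i/E_{i-1}$ $\si$-semistable of class $\be_i$, and the map sends such a filtered object to $E$. This uses the iterated-extension description of multi-fold products, which follows from the $2$-Segal property (associativity) exactly as in \cite[\S5]{bridgeland_introduction}. Now the Harder–Narasimhan property of the slicing $\cP$ (axiom (3) of a slicing), together with uniqueness of the HN filtration, says precisely that for every object $E\in\cM(\ga)(\bC)$ there is a \emph{unique} $n$ and a unique tuple $(\be_i)$ with strictly decreasing phases, and a unique filtration of $E$ with those semistable subquotients. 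Thus the disjoint union $\bigsqcup \cN$ over all admissible $(\be_i)$ maps to $\cM(\ga)$ by a morphism that is an equivalence of groupoids on $\bC$-points; in fact the projection forgetting the filtration is an isomorphism of stacks onto $\cM(\ga)$ because the HN filtration is canonical and varies algebraically (it is cut out fibrewise by the open semistability conditions and behaves well in families — this is where Assumptions \ref{ass1} and \ref{ass2}, giving openness and finite type, are used). Hence each $[\cN\to\cM(\ga)]$ is the class of a locally closed substack, these substacks stratify $\cM(\ga)$, and summing gives $\one_\ga$.

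The main obstacle I expect is the last point: showing that the HN stratification is genuinely a stratification of $\cM(\ga)$ \emph{as stacks}, i.e. that the stack $\cN$ of HN filtrations with prescribed semistable factors maps isomorphically onto a locally closed substack of $\cM(\ga)$, and that these pieces exhaust $\cM(\ga)$ with the right multiplicities (no overcounting). On $\bC$-points this is immediate from uniqueness of HN filtrations, but to conclude the identity of classes in $K(\Sta\qt\cM(\ga))$ one needs the filtration to be functorial in families — equivalently, one needs the boundedness/openness packaged in Assumptions \ref{ass1} and \ref{ass2} so that the loci $\cM_{\si,I}(\ga)$ are well-behaved and the successive HN strata are cut out by open conditions inside them. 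Granting this (and it is exactly the content for which the cited references \cite{joyce_configurations2,kontsevich_stability} and \cite{toda_moduli} are invoked), the proof is complete: the $\ga$-graded components of both sides agree for every $\ga\in S$, and the degree-$0$ components are both $1$, so $\one_{\si,V}=\prod^\to_{\ell\sbs V}\one_{\si,\ell}$ in $\hat H(\cE)$.
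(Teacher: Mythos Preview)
Your proposal is correct and follows essentially the same approach as the paper: reduce to a fixed degree $\ga$, use the finiteness provided by Lemma~\ref{lm:strict sgr} to make the clockwise product well-defined, interpret the $n$-fold Hall product $\one_{\si,\be_1}*\dots*\one_{\si,\be_n}$ as the class of the stack of filtrations with prescribed semistable subquotients, and invoke uniqueness of the HN filtration to obtain a finite stratification of $\cM(\ga)$ whose pieces sum to $\one_\ga$. The paper's proof is terser on precisely the point you flag as the main obstacle --- it simply asserts the identity $[\cM_\si(\ga_1,\dots,\ga_n)\to\cM]=[\cM_\si(\ga_1)\to\cM]*\dots*[\cM_\si(\ga_n)\to\cM]$ ``by the uniqueness of the HN-filtration'' --- but this is exactly the standard argument you outline, relying on openness of semistability (Assumption~\ref{ass1}) so that the HN strata are locally closed and the forgetful map from filtered objects is a geometric bijection onto its stratum.
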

\begin{proof}
Given $\ga_1,\ga_2\in S\sbs \Ga$, we have $Z(\ga_i)\in\bR_{>0}e^{\ip\vi_i}$ for some $\vi_i\in I$, $i=1,2$.
We will write $\ga_1\succ\ga_2$ if $\vi_1>\vi_2$.
By the HN-property of $\si$, for any $E\in\cE$, there exists a unique filtration 
$$0=E_0\sbs\dots\sbs E_n=E$$
such that $E_i/E_{i-1}\in\cP_{\vi_i}$ with $\vi_i\in I$ satisfying $\vi_1>\dots>\vi_n$.
We define the \si-HN type of $E$ to be the tuple $(\ga_1,\dots,\ga_n)$, where $\ga_i=\cl(E_i/E_{i-1})\in S$
and $\ga_1\succ\ga_2\succ\dots\succ\ga_n$.
By Lemma \ref{lm:strict sgr}, for any fixed $\ga\in S$, there exist finitely many tuples $(\ga_1,\dots,\ga_n)$ in $S$ such that $\ga=\sum_i\ga_i$.
This implies that $\cM(\ga)$ has a finite stratification with strata $\cM_\si(\ga_1,\dots,\ga_n)$ parameterizing objects in $\cM(\ga)$ having \si-HN type $(\ga_1,\dots,,\ga_n)$.
By the uniqueness of the HN-filtration we have
$$[\cM_\si(\ga_1,\dots,\ga_n)\to\cM]
=[\cM_\si(\ga_1)\to\cM]*\dots*[\cM_\si(\ga_n)\to\cM].$$
This implies that
$$[\cM(\ga)\to\cM]
=\sum_{\ov{\ga_1+\dots+\ga_n=\ga}{\ga_1\succ\dots\succ\ga_n}}
[\cM_\si(\ga_1)\to\cM]*\dots*[\cM_\si(\ga_n)\to\cM]$$
which is a reformulation of the required formula.
\end{proof}

\begin{remark}
Note that it is important in the above theorem that the interval $I$ has length $<1$.
If $I$ has length $1$, for example $I=(0,1]$, then the corresponding semigroup $S(V,Z,\eps)$ is not necessarily strict and the ordered product in the theorem is not necessarily well-defined.
\end{remark}


\subsection{Wall-crossing in the quantum torus}
\label{sec:wc qt}
We use the same notation and assumptions as in the previous section as well as
\begin{assumption}\label{ass3}
The stability condition $\si=(Z,\cP)$ has global dimension $\le2$.
\end{assumption}

Let $\hi$ be the Euler form on $\Ga=\cN(\cD)$ and let $\ang{\cdot,\cdot}$ be its anti-symmetrization
\begin{equation}
\ang{\ga_1,\ga_2}=\hi(\ga_1,\ga_2)-\hi(\ga_2,\ga_1).
\end{equation}
We define the algebra over $R=\R(\Sta\qt\bC)$ \eqref{R St}
\begin{equation}\label{T}
\bT=\bop_{\ga\in\Ga}R x^\ga,\qquad
x^{\ga_1}*x^{\ga_2}=\bL^{\oh\ang{\ga_1,\ga_2}}\cdot x^{\ga_1+\ga_2},
\end{equation}
called the \idef{quantum torus}.
For a strict semigroup $S=S(V,Z,\eps)\sbs\Ga$, we consider the subalgebra 
\begin{equation}
\bT_S=\bop_{\ga\in S\cup\set0}Rx^\ga
\end{equation}
and define its completion $\hat\bT_S$ similarly to 
\eqref{call completion}.
We define the \idef{integration map}
\begin{equation}
\cI:H(\cE)\to \bT_S,\qquad [X\to\cM_{\si,I}(\ga)]\mto \bL^{\oh\hi(\ga,\ga)}[X]\cdot x^\ga
\end{equation}
which extends to
$\cI:\hat H(\cE)\to \hat\bT_S$.
Note that
\begin{gather}
\cI(\one_{\si,V})=1+\sum_{Z(\ga)\in V}
\bL^{\oh\hi(\ga,\ga)}[\cM_{\si,I}(\ga)]x^\ga,\\
\cI(\one_{\si,\ell})
=1+\sum_{Z(\ga)\in\ell}
\bL^{\oh\hi(\ga,\ga)}[\cM_\si(\ga)]x^\ga.
\end{gather}


\begin{lemma}
Let $X_1,X_2\in\Sta$ and let $X_i\to\cM$, $i=1,2$, be morphisms such that for arbitrary $x_i\in X_i$ and the corresponding objects $E_i\in\cE$, we have $\vi^-_\si(E_1)>\vi^+_\si(E_2)$.
Then
\begin{equation}
\cI([X_1\to\cM]*[X_2\to\cM])
=\cI([X_1\to\cM])*\cI([X_2\to\cM]).
\end{equation}
\end{lemma}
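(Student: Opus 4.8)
The plan is to first reduce to the case in which each $X_i$ maps into a single component $\cM(\ga_i)=\cM_{\si,I}(\ga_i)$. Since $X_i$ is of finite type, only finitely many classes occur, so $[X_i\to\cM]=\sum_{\ga}[X_i^{\ga}\to\cM(\ga)]$ with $X_i^{\ga}$ the part of $X_i$ lying over $\cM(\ga)$; as the Hall product $*$ is bilinear and $\cI$ is linear, it suffices to treat each term, so we may assume $X_i\to\cM(\ga_i)$ for single classes $\ga_i=\cl E_i$. The hypothesis $\vi^-_\si(E_1)>\vi^+_\si(E_2)$ continues to hold for every $E_1$ parametrized by $X_1$ and every $E_2$ parametrized by $X_2$.

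Next I would unwind both sides in the quantum torus $\bT$. On the left, $[X_1\to\cM]*[X_2\to\cM]=p_*q^*\big([X_1\times X_2\to\cM^2]\big)=[W\to\cM]$, where $W=\cM_2\times_{\cM^2}(X_1\times X_2)$ and the map to $\cM$ records the middle term $B$ of the parametrized extension $0\to E_1\to B\to E_2\to0$. Since $\cP_I$ is extension-closed and $\cl$ is additive, $W$ lies over $\cM(\ga_1+\ga_2)$, whence
\begin{equation*}
\cI\big([X_1\to\cM]*[X_2\to\cM]\big)=\bL^{\oh\hi(\ga_1+\ga_2,\ga_1+\ga_2)}\,[W]\,x^{\ga_1+\ga_2},
\end{equation*}
with $[W]\in\R(\Sta\qt\bC)$ the absolute class of $W$ — the integration map remembers only the absolute class together with $\ga$. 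On the right,
\begin{equation*}
\cI([X_1\to\cM])*\cI([X_2\to\cM])=\bL^{\oh\hi(\ga_1,\ga_1)+\oh\hi(\ga_2,\ga_2)+\oh\ang{\ga_1,\ga_2}}\,[X_1][X_2]\,x^{\ga_1+\ga_2}.
\end{equation*}
Expanding $\hi(\ga_1+\ga_2,\ga_1+\ga_2)=\hi(\ga_1,\ga_1)+\hi(\ga_2,\ga_2)+\hi(\ga_1,\ga_2)+\hi(\ga_2,\ga_1)$ and using $\ang{\ga_1,\ga_2}=\hi(\ga_1,\ga_2)-\hi(\ga_2,\ga_1)$, the asserted equality reduces to the single claim $[W]=\bL^{-\hi(\ga_2,\ga_1)}[X_1][X_2]$.

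To prove this, recall from \eqref{q-fiber} that the fibre of $W\to X_1\times X_2$ over a point giving a pair $(E_1,E_2)$ is the stack $[\Ext^1(E_2,E_1)/\Hom(E_2,E_1)]$, of class $\bL^{\dim\Ext^1(E_2,E_1)-\dim\Hom(E_2,E_1)}$. The key input is the vanishing $\Ext^k(E_2,E_1)=0$ for all $k\ge2$: the object $E_1[k]$ has every HN phase $\ge\vi^-_\si(E_1)+k$ while $E_2$ has every HN phase $\le\vi^+_\si(E_2)$, so any nonzero homomorphism between their HN factors would have phase gap $>k\ge2\ge\gldim(\cP)$, which is impossible by Assumption \ref{ass3}; and $\Ext^k(E_2,E_1)=0$ for $k<0$ holds automatically because $E_1,E_2\in\cP_I$ with $I$ of length $<1$. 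Hence $\hi(\ga_2,\ga_1)=\dim\Hom(E_2,E_1)-\dim\Ext^1(E_2,E_1)$, so the fibre class equals the constant $\bL^{-\hi(\ga_2,\ga_1)}$, independent of $(E_1,E_2)$. Stratifying $\cM^2$ as in \S\ref{sec:WC hall} so that $q$ becomes a trivial fibration over the strata, pulling back to $X_1\times X_2$, and using additivity of the class, we obtain $[W]=\bL^{-\hi(\ga_2,\ga_1)}[X_1\times X_2]=\bL^{-\hi(\ga_2,\ga_1)}[X_1][X_2]$, as required.

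The main obstacle is getting the vanishing right: it is $\Ext^{\ge2}(E_2,E_1)$ — with $E_2$ on the quotient side and $E_1$ on the sub side — that must vanish, the relevant gap is $\vi^-_\si(E_1)-\vi^+_\si(E_2)>0$ rather than a gap between plain phases, and it is precisely the bound $\gldim(\cP)\le2$ of Assumption \ref{ass3} that makes the Euler pairing $\hi(E_2,E_1)$ collapse to $\dim\Hom-\dim\Ext^1$. The stratification step is routine and uses nothing beyond the finiteness of $q$ already built into \S\ref{sec:WC hall}.
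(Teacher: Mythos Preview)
Your proof is correct and follows essentially the same approach as the paper: reduce to fixed classes $\ga_1,\ga_2$, compute the fibre of $q$ as $[\Ext^1(E_2,E_1)/\Hom(E_2,E_1)]$, use $\gldim(\si)\le2$ together with $\vi^-_\si(E_1)>\vi^+_\si(E_2)$ to kill $\Hom^{\ge2}(E_2,E_1)$, and match the powers of $\bL$. The only cosmetic difference is that the paper first reduces to $X_i$ being quotients $Y_i/\GL_{n_i}$ and replaces $X_i$ by the varieties $Y_i$, whereas you pull back the stratification of $\cM^2$ directly to $X_1\times X_2$; both routes are valid, and your explicit mention of the negative-degree vanishing $\Hom^{<0}(E_2,E_1)=0$ (needed to collapse $\hi(\ga_2,\ga_1)$ to two terms) is a detail the paper leaves implicit.
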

\begin{proof}
We can stratify $X_i$ and assume that they are of the from $Y_i/\GL_{n_i}$, where $Y_i$ are algebraic varieties.
Furthermore, we can substitute $X_i\to\cM$ by $Y_i\to\cM$ and divide both sides of the equation by 
$[\GL_{n_1}]\cdot [\GL_{n_2}]$.
Therefore we can assume that $X_i$ are algebraic varieties.
We can also assume that $X_i$ is mapped to $\cM(\ga_i)$ for some $\ga_i\in S$. Let $\ga=\ga_1+\ga_2$.
Expression on the left is given by the motivic class of the Cartesian product
\begin{ctikzcd}
Z\rar\dar&\cM_2\dar\\
X_1\xx X_2\rar&\cM\xx\cM
\end{ctikzcd}
multiplied by $\bL^{\oh\hi(\ga,\ga)}x^\ga$.
The fiber of $\cM_2\to\cM\xx\cM$ over $(E_1,E_2)$ is given by the stack
$$\Hom^1(E_2,E_1)/\Hom(E_2,E_1).$$
For any $k\ge2$, we have $\vi_\si^-(E_1[k])-\vi^+_\si(E_2)>k\ge2$,
hence $\Hom^k(E_2,E_1)=0$ by the assumption that \si has global dimension $\le2$.
We conclude that the motivic class of the fiber is equal to $\bL^{-\hi(\ga_2,\ga_1)}$.
We note that
$$\oh\hi(\ga,\ga)-\hi(\ga_2,\ga_1)
=\oh\hi(\ga_1,\ga_1)
+\oh\hi(\ga_2,\ga_2)+\oh\ang{\ga_1,\ga_2}
$$
which is exactly the power of $\bL$ on the right hand side of the required equation.
\end{proof}


As in Theorem \ref{th:wc1}
we will write $\ga_1\succ\ga_2$ if 
$Z(\ga_i)\in\bR_{>0}e^{\ip\vi_i}$ for $\vi_i\in I$ satisfying $\vi_1>\vi_2$.

\begin{corollary}\label{cor:WC1}
Let $\ga_1\succ\dots\succ\ga_n$ be elements in $S(V,Z,\eps)$.
Then
\begin{equation}
\cI(\one_{\si,\ga_1}*\dots*\one_{\si,\ga_n})=
\cI(\one_{\si,\ga_1})*\dots*\cI(\one_{\si,\ga_n}).
\end{equation}
\end{corollary}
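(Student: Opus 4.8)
The plan is to prove this by induction on $n$ and to reduce the inductive step to the preceding Lemma; the cases $n\le 1$ are trivial (for $n=0$ both sides are $1$).

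First I would record a purely Hall-algebraic fact: since $\ga_2\succ\dots\succ\ga_n$, the argument from the proof of Theorem~\ref{th:wc1} (uniqueness of Harder-Narasimhan filtrations) shows that $\one_{\si,\ga_2}*\dots*\one_{\si,\ga_n}$ collapses to a single generator, namely $[\cM_\si(\ga_2,\dots,\ga_n)\to\cM]$, the class of the stack of objects of $\cE$ with $\si$-HN type $(\ga_2,\dots,\ga_n)$. This stack is a locally closed substack of $\cM_{\si,I}(\ga_2+\dots+\ga_n)$, hence of finite type with affine stabilizers by Assumptions~\ref{ass1} and~\ref{ass2} (here it matters that $I$ has length $<1$), so it is a legitimate input to the Lemma; and, writing $Z(\ga_j)\in\bR_{>0}e^{\ip\vi_j}$ with $\vi_j\in I$, every object $E_2$ it parametrizes satisfies $\vi^+_\si(E_2)=\vi_2$. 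On the other side $\one_{\si,\ga_1}=[\cM_\si(\ga_1)\to\cM]$, and every $E_1$ it parametrizes is $\si$-semistable of phase $\vi_1$, so $\vi^-_\si(E_1)=\vi_1>\vi_2$ because $\ga_1\succ\ga_2$.

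Then I would apply the preceding Lemma with $X_1=\cM_\si(\ga_1)$ and $X_2=\cM_\si(\ga_2,\dots,\ga_n)$; after reassociating the product in $\hat H(\cE)$ it gives
\begin{equation*}
\cI\bigl(\one_{\si,\ga_1}*(\one_{\si,\ga_2}*\dots*\one_{\si,\ga_n})\bigr)=\cI(\one_{\si,\ga_1})*\cI\bigl(\one_{\si,\ga_2}*\dots*\one_{\si,\ga_n}\bigr).
\end{equation*}
The inductive hypothesis, applied to $\ga_2\succ\dots\succ\ga_n$, turns the last factor into $\cI(\one_{\si,\ga_2})*\dots*\cI(\one_{\si,\ga_n})$, and associativity of $*$ in $\hat\bT_S$ finishes the argument. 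I do not anticipate a genuine obstacle here: the content of the corollary is essentially already contained in the Lemma, and the only care needed is in the two bookkeeping points above --- that the tail product is represented by a single finite-type stack, and that the phase gap $\vi^-_\si(E_1)>\vi^+_\si(E_2)$ holds at every point, so that the Lemma applies.
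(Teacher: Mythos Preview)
Your proof is correct and is exactly the argument the paper has in mind: the corollary is stated without proof precisely because it follows by the induction you describe, using the identity $\one_{\si,\ga_2}*\dots*\one_{\si,\ga_n}=[\cM_\si(\ga_2,\dots,\ga_n)\to\cM]$ from the proof of Theorem~\ref{th:wc1} and then applying the preceding Lemma with the phase gap $\vi_1>\vi_2$. Your checks that the tail stack is of finite type and that the phase inequality holds pointwise are the only nontrivial verifications, and both are handled correctly.
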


\begin{corollary}\label{cor:WC2}
We have
\begin{equation}
\cI(\one_{\si,V})=
\prod_{\ell\sbs V}^\to \cI(\one_{\si,\ell}).
\end{equation}
\end{corollary}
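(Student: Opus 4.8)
The plan is to apply the integration map $\cI$ to the Hall-algebra identity of Theorem \ref{th:wc1} and push it through the ordered product with the help of Corollary \ref{cor:WC1}. The key soft input is that $\cI:\hat H(\cE)\to\hat\bT_S$ is $R$-linear and respects the $\Ga$-gradings, hence it descends to all the finite quotients $H(\cE)_J\to \bT_S/\!\bop_{\ga\in S\ms J}Rx^\ga$ indexed by finite lower sets $J\sbs S$ and is in particular continuous for the completion topologies. Since $\hat H(\cE)=\ilim_J H(\cE)_J$ and likewise for the quantum torus, it suffices to prove the asserted identity in each of these quotients, and there everything in sight becomes a \emph{finite} product.

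So I would fix a finite lower set $J\sbs S$ and work modulo $\bop_{\ga\in S\ms J}H_\ga(\cE)$. In the ordered product $\one_{\si,V}=\prod_{\ell\sbs V}^\to\one_{\si,\ell}$ of Theorem \ref{th:wc1}, a factor $\one_{\si,\ell}=1+\sum_{Z(\ga)\in\ell}\one_{\si,\ga}$ reduces to $1+\sum_{Z(\ga)\in\ell,\ \ga\in J}\one_{\si,\ga}$, which is $1$ unless $\ell$ meets $Z(J)$; as $J$ is finite there are only finitely many surviving rays $\ell_1\succ\dots\succ\ell_m$, and because $J$ is a lower set dropping the trivial factors changes nothing modulo $J$, so $\one_{\si,V}\equiv\one_{\si,\ell_1}*\dots*\one_{\si,\ell_m}$ modulo $J$, a finite product of finite sums. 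Expanding each factor into its $\Ga$-graded pieces, this product becomes a finite sum of terms $\one_{\si,\ga_1}*\dots*\one_{\si,\ga_k}$ in which the $Z(\ga_i)$ lie in pairwise distinct rays in clockwise order, i.e. $\ga_1\succ\dots\succ\ga_k$. Applying $\cI$, using linearity and Corollary \ref{cor:WC1} on each such term, gives $\cI(\one_{\si,V})\equiv\sum\cI(\one_{\si,\ga_1})*\dots*\cI(\one_{\si,\ga_k})$ modulo $J$; and this sum is exactly the expansion, modulo $J$, of $\prod_{\ell\sbs V}^\to\bigl(1+\sum_{Z(\ga)\in\ell}\cI(\one_{\si,\ga})\bigr)=\prod_{\ell\sbs V}^\to\cI(\one_{\si,\ell})$, since a strictly decreasing tuple of classes is precisely the datum of one class chosen in each ray of a finite subset of rays, listed clockwise (the empty tuple corresponding to the leading $1$). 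Letting $J$ vary yields $\cI(\one_{\si,V})=\prod_{\ell\sbs V}^\to\cI(\one_{\si,\ell})$.

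I expect no real obstacle here: the statement is essentially a formal consequence of Theorem \ref{th:wc1} and Corollary \ref{cor:WC1}. The only point demanding a little care is the bookkeeping in the completion — checking that reduction modulo $\bop_{\ga\in S\ms J}H_\ga(\cE)$ honestly turns both ordered products into the same finite product, and that the bijection between strictly decreasing tuples of classes and ray-by-ray choices respects the clockwise orderings on the two sides so that the terms match up exactly. All the genuine geometry, in particular the use of Assumption \ref{ass3} that $\gldim(\si)\le2$ (which is what makes $\cI$ multiplicative across a gap of phases, via the vanishing $\Hom^k(E_2,E_1)=0$ for $k\ge2$ in the Lemma preceding Corollary \ref{cor:WC1}, whose hypothesis $\vi^-_\si(E_1)>\vi^+_\si(E_2)$ holds because the factors come from distinct rays), has already been carried out, so no further estimates are needed.
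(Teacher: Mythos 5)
Your proposal is correct and is essentially the paper's own argument: the paper likewise applies $\cI$ to the identity of Theorem \ref{th:wc1} and then invokes Corollary \ref{cor:WC1} to move $\cI$ through the ordered product. You merely spell out the finite-lower-set bookkeeping in the completion that the paper leaves implicit, and that bookkeeping is carried out correctly.
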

\begin{proof}
By Theorem \ref{th:wc1} we have
$\cI(\one_{\si,V})
=\cI\rbr{\prod_{\ell\sbs V}^\to\one_{\si,\ell}}$
and by the previous result we have
$\cI\rbr{\prod_{\ell\sbs V}^\to\one_{\si,\ell}}
=\prod_{\ell\sbs V}^\to \cI(\one_{\si,\ell})$.
\end{proof}

\subsection{Stability data associated to a stability condition}
\label{sec:sd for sc}
Let us consider the \Ga-graded Lie algebra
\begin{equation}
\fg=\bop_{\ga\in\Ga}Rx^\ga=\bT,\qquad R=\R(\Sta\qt\bC),
\end{equation}
with the Lie bracket given by the commutator of $\bT$ \eqref{T}.
For any strict semigroup $S\sbs\Ga$ without zero,
we consider the Lie algebra $\fg_S=\bop_{\ga\in S}\fg_\ga$ and the corresponding pro-nilpotent Lie algebra ~$\hat\fg_S$ and the pro-nilpotent group $\hat G_S$.
Note that we can identify $\hat\fg_S$
with the ideal $\hat \bT_S^+=\prod_{\ga\in S}\fg_\ga$ of $\hat\bT_S=\prod_{\ga\in S\cup\set0}\fg_\ga$ and identify $\hat G_S$ with the group $1+\hat \bT_S^+\sbs\hat\bT_S$
so that we have a commutative diagram
\begin{equation}
\begin{tikzcd}
\hat\fg_S\dar\rar["\exp"]&\hat G_S\dar\\
\hat\bT_S^+\rar["\exp"]&1+\hat\bT_S^+  
\end{tikzcd}
\end{equation}
Let $\eps>0$ be such that $\n{Z(E)}\ge\eps\nn{\cl E}$ for any \si-semistable object $0\ne E\in\cD$.
For any ray $\ell\sbs\bC$, we consider the strict semigroup $S(\ell,Z,\eps)$ and define
\begin{gather}
A_{\si,\ell}=\cI(\one_{\si,\ell})
=1+\sum_{Z(\ga)\in\ell}
\bL^{\oh\hi(\ga,\ga)}[\cM_\si(\ga)]x^\ga
\in 1+\hat \bT_{S(\ell,Z,\eps)}^+\iso \hat G_{S(\ell,Z,\eps)}\sbs\hat G_{\ell,Z},\label{A-si-l}
\\
a_{\si,\ell}=\sum_{Z(\ga)\in\ell}a_{\si,\ga}=\log(A_{\si,\ell})\in\hat\fg_{S(\ell,Z,\eps)}\sbs\hat\fg_{\ell,Z},
\end{gather}
where $\hat\fg_{\ell,Z}$ is the ind-pro-nilpotent Lie algebra defined in \eqref{ind-pro Lie alg} and 
$\hat G_{\ell,Z}$ is the corresponding ind-pro-nilpotent group.
We define \idef{stability data associated to the stability condition $\si$} to be the family 
\begin{equation}
a_\si=(a_{\si,\ga})_{\ga\in\Ga\ms\set0}\in\hat\fg.
\end{equation}

\begin{remark}
In the above discussion we considered the quantum torus \eqref{T} over the algebra $\R(\Sta\qt\bC)$ and constructed stability data $a_\si$ with values in $\R(\Sta\qt\bC)$.
Alternatively, we can consider the quantum torus over the algebra $\bQ(u,\sqrt{uv})$ and apply the Hodge-Deligne (polynomial) map \eqref{E-pol} to construct stability data $E(a_\si)$ with values in $\bQ(u,\sqrt{uv})$.
Similarly, we can consider the quantum torus over the algebra $\bQ(y)$ and apply the Poincar\'e (polynomial) map \eqref{P-pol} to construct stability data $P(a_\si)$ with values in $\bQ(y)$.
\end{remark}

\subsection{Families of stability data and stability conditions}
\label{sec:fam of SD for SC}
Let $M$ be a topological space and $(\si_x=(Z_x,\cP_x))_{x\in M}$ be a continuous family of stability conditions on $\cD$ satisfying assumptions \ref{ass1}, \ref{ass2}, and \ref{ass3}.
For every $x\in M$, we define stability data
\begin{equation}
a_x=(a_{\si_x,\ga})_{\ga\in\Ga\ms\set0}\in\hat\fg
\end{equation}
as in the previous section.

\begin{theorem}\label{th:fam sd from sc}
The family of stability data $(Z_x,a_x)_{x\in M}$ is continuous.
\end{theorem}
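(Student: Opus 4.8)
The plan is to verify the two axioms of a continuous family of stability data directly for the collection $(Z_x, a_x)_{x \in M}$. The first axiom — local uniform support — is immediate from Lemma~\ref{uniform support}: given $x \in M$, that lemma provides a neighborhood $x \in U \sbs M$ and $\eps > 0$ such that $\n{Z_y(E)} \ge \eps \nn{\cl E}$ for all $y \in U$ and all $\si_y$-semistable $E$. Since $\supp(a_y) = \supp(a_{\si_y}) \sbs \sets{\cl E}{E \text{ is } \si_y\text{-semistable}}$ by construction of $a_{\si_y}$ (every $a_{\si_y,\ga}$ with $\ga \ne 0$ involves only classes of semistable objects), this is exactly axiom~\eqref{ax1}.

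The substance is the second axiom, the Kontsevich--Soibelman wall-crossing formula. Fix $x \in M$ and a strict cone $V \sbs \bC$ with $Z_x(\supp a_x) \cap \dd V = \es$; we must show $y \mto p_\ga \log(A_{y,V})$ is constant near $x$ for each $\ga \in \Ga$. By Lemma~\ref{lm:shrinking V} we may instead work with a slightly shrunk cone $V'$ satisfying $Z_x(\Ga) \cap \dd V' = \set0$, and by passing to a finite lower set $I \sbs S(V', Z_x, \eps)$ via Lemma~\ref{lower set stability}, we reduce to the nilpotent Lie algebra $\fg_I$, so all products are finite. The key point is that $A_{y,V'}$ is the image under the integration map $\cI$ of the Hall-algebra element $\one_{\si_y, V'}$: by Corollary~\ref{cor:WC2} (which uses Assumption~\ref{ass3}, $\gldim \si_y \le 2$) we have $A_{y,V'} = \prod^\to_{\ell \sbs V'} A_{\si_y,\ell} = \prod^\to_{\ell \sbs V'} \cI(\one_{\si_y,\ell}) = \cI(\one_{\si_y, V'})$. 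So it suffices to show that the Hall-algebra element $\one_{\si_y, V'} \in \hat H(\cP_{y,I})$ — more precisely its truncation to classes in $I$ — is locally constant in $y$, as an element of a fixed Grothendieck group. Here $\one_{\si_y, V'} = \sum_{\ga \in I_0} \one_\ga = \sum_{\ga \in I_0} [\cM_{\si_y, I}(\ga) \to \cM]$, and the substacks $\cM_{\si_y, I}(\ga)$ for $\ga \in I$ are open in $\bar\cM$ (Assumption~\ref{ass1}) and of finite type (Assumption~\ref{ass2}); one shows that for $y$ near $x$ the relevant intervals and cones stabilize so that these substacks, and hence their classes, do not change. Then applying $\cI$ (which is a fixed $R$-linear map, independent of $y$ in the relevant range once the semigroup $S(V', Z_x, \eps)$ is fixed) gives that $A_{y,V'}$ is locally constant, hence so is $p_\ga \log(A_{y,V'})$.

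The main obstacle is the last claim: that the classes $[\cM_{\si_y, I}(\ga) \to \cM]$ in the Hall algebra are genuinely \emph{locally constant} in $y$, not merely the DT-type invariants extracted from them. The issue is that as $y$ varies the slicing $\cP_y$ changes, so a priori the stack $\cM_{\si_y, I}(\ga)$ of objects with HN-phases in the interval $I$ changes. The resolution uses exactly the same mechanism as in Lemma~\ref{open-closed1} and Lemma~\ref{lower set stability}: for $\ga \in I$ and $y$ sufficiently close to $x$, any object of class $\ga$ that is $\si_y$-semistable of phase in the thin arc cut out by $V'$ has all its $\si_x$-HN factors in classes lying in $S(V', Z_x, \eps)$ with classes $\le \ga$, and after shrinking the arc these classes all lie on a single ray $\ell = \bR_{>0} Z_x(\ga)$; this forces the object to already be $\si_x$-semistable of that phase, and conversely. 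So the underlying substack of $\bar\cM$ literally does not change in a neighborhood of $x$ — this is the geometric counterpart of the combinatorial stabilization in Lemma~\ref{lower set stability}. I would spell this out by combining the argument of Lemma~\ref{open-closed1} with the openness in Assumption~\ref{ass1}: the upshot is that for each $\ga$ there is a neighborhood of $x$ on which $\cM_{\si_y}(\ga) = \cM_{\si_x}(\ga)$ as open substacks of $\bar\cM$, and more generally the finite sum defining the truncation of $\one_{\si_y, V'}$ is constant. Once this is in place, continuity of $(Z_x, a_x)$ follows formally from Corollary~\ref{cor:WC2} and the naturality of $\cI$ and $\log$.
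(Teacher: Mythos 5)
Your proposal is correct and follows essentially the same route as the paper: axiom (1) via Lemma~\ref{uniform support}, then for axiom (2) the reduction via Lemma~\ref{lm:shrinking V} and Lemma~\ref{lower set stability} to a thin cone around $\ell_0=\bR_{>0}Z_x(\ga)$, the identification $A_{y,V}=\cI(\one_{\si_y,V})$ from Corollary~\ref{cor:WC2}, and the key geometric stabilization $\cM_{\si_y,I}(\ga)=\cM_{\si_x}(\ga)$ for $y$ near $x$, proved exactly by the HN-factor argument of Lemma~\ref{open-closed1}. No gaps.
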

\begin{proof}
Axiom \eqref{ax1} follows from Lemma \ref{uniform support}.
Let us verify axiom \eqref{ax2}.
Given a strict cone $V=\bR_{>0}e^{\ip I}$ 
(where $I\sbs\bR$ is an interval of length $<1$),
we have by Corollary \ref{cor:WC2}
$$\cI(\one_{\si_y,V})
=\prod^\to_{\ell\sbs V}\cI(\one_{\si_y,\ell})
=\prod^\to_{\ell\sbs V}A_{y,\ell}=A_{y,V}.$$
Therefore we need to show that components of
$\cI(\one_{\si_y,V})$ are constant in a \nbd of a fixed $x\in M$.
The component of $\cI(\one_{\si_y,V})$ at $\ga\in\Ga$ is 
equal to $[\cM_{\si_y,I}(\ga)]$ (up to some factor depending on $\ga$).
We will compare it to $[\cM_{\si_x}(\ga)]$.

There exists $\eps>0$ and an open set $x\sbs U\sbs M$ such that 
$\n{Z_y(E)}\ge\eps\nn{\cl E}$ for all $\si_y$-semistable objects $E$ and $y\in U$.
We can assume that $\nn{Z_x-Z_y}\le\tfrac\eps2$ for all $y\in U$.
This implies that if $\n{Z_y(\ga)}\ge\eps\nn\ga$, 
then $\n{Z_x(\ga)}\ge\frac\eps2\nn\ga$.
Therefore $S(V,Z_y,\eps)\sbs S(V,Z_x,\frac\eps2)$.
If $\cM_{\si_y,I}(\ga)$ is non-empty, then 
$\ga\in S(V,Z_y,\eps)\sbs S(V,Z_x,\frac\eps2)$.

Let us fix $\ga\in S(V,Z_x,\frac\eps2)$ and let
$Z_x(\ga)\in\ell_0=\bR_{>0}e^{\ip\vi_0}$ for some $\vi_0\in I$.
By Lemma \ref{lm:shrinking V} it is enough to consider open cones $V$ around $\ell_0$ of arbitrarily small angle.
For any $0<\eta<\oh$, let $I_\eta=(\vi_0-\eta,\vi_0+\eta)$
and $V_\eta=\bR_{>0}e^{\ip I_\eta}$.
We can assume that for any $\ga'\le\ga$ in $S(V_\eta,Z_x,\eps/2)$, we have $Z_x(\ga')\in\ell_0$.
We can also assume that $d(\cP_x,\cP_y)<\eta/2$ for all $y\in U$.

Let $I=I_{\eta/2}$ and $V=V_{\eta/2}$.
If $E\in \cP_{y,I}$, then $\vi_y^\pm(E)\in I$, hence 
$\vi_x^\pm(E)\in I_{\eta}$.
If $\cl E=\ga$, then by our assumption, for any $\si_x$-HN-factor $F$ of $E$, we have $Z_x(\cl F)\in\ell_0$,
hence $E$ is $\si_x$-semistable and $E\in\cP_{x,\vi_0}$.
Conversely, 
if $E\in\cP_{x,\vi_0}$, then $\vi_y^\pm(E)\in I$,
hence $E\in\cP_{y,I}$.
This proves that $\cM_{\si_y,I}(\ga)=\cM_{\si_x}(\ga)$.
The same can be done for all $\ga'\le\ga$ in $S(V_\eta,Z_x,\eps/2)$.
Taking the logarithm $\log(A_{y,V})$, we obtain that its \ga-component is constant for $y\in U$.
\end{proof}
\section{Stability conditions on surfaces}
\label{sec:stab cond surf}

\subsection{Some conventions}

\subsubsection{Numerical intersection ring}
Let $X$ be a smooth projective (connected) variety over $\bC$ of dimension $d$.
Let $A^*(X)$ be its intersection ring (also called the Chow ring) \ful[\S8.3].
It is a graded commutative ring with the involution
\begin{equation}\label{invol}
A^*(X)\to A^*(X),\qquad a\mto a^*=\sum_k (-1)^ka_k,
\end{equation}
where $a_k\in A^k(X)$ denotes the $k$-th component of $a\in A^*(X)$.
We consider the \idef{degree map}
\begin{equation}\label{int}
\int:A^d(X)=A_0(X)\to\bZ,\qquad
\sum_x n_x[x]\mto\sum_x n_x,
\end{equation}
and extend it to $\int:A^*(X)\to\bZ$.
We define the \idef{intersection form} on $A^*(X)$
\begin{equation}\label{pairing}
(a,b)=-\int a^*b.
\end{equation}
It satisfies
\begin{equation}
(a,b)=(-1)^d(b,a).
\end{equation}
Let $A_\num^*(X)\sbs A^*(X)$ be the kernel of this form,
having components 
\begin{equation}
A_\num^k(X)=\sets{\Big.a\in A^k(X)\Big.}{\int ab=0\ \forall b\in A^{d-k}(X)}.
\end{equation}
It is an ideal of $A^*(X)$.
We define the \idef{numerical intersection ring}
\begin{equation}
N^*(X)=A^*(X)/A_\num^*(X).
\end{equation}
The above intersection form induces a non-degenerate form on $N^*(X)$.
Every component $N^k(X)$ is a free abelian group of finite rank, as it can be embedded into the torsion free part of $H^{2k}(X,\bZ)$.
We have $N^0(X)\iso\bZ$ and $N^d(X)\iso\bZ$.
The group $N^1(X)$ is called the (torsion free) \idef{N\'eron-Severi group}.
The elements of $N^1(X)$ will be called divisors and the elements of $\NSR(X)=N^1(X)_\bR$ will be called $\bR$-divisors.
For $a\in N^k(X)$ and $b\in N^{d-k}(X)$, we will interpret $ab\in N^d(X)$ as an integer, using the isomorphism $N^d(X)\iso\bZ$.
We also define $N_k(X)=N^{d-k}(X)$ for $k\in\bZ$.
The involution \eqref{invol} produces adjoint operators, namely
\begin{equation}
(ab,c)=(b,a^*c),\qquad a,b,c\in N^*(X).
\end{equation} 

\begin{remark}
The pairing $\si(a,b)=\int ab$ on $N^*(X)$ makes $N^*(X)$ a (graded) symmetric Frobenius algebra, meaning that $\si$ is symmetric, non-degenerate and $\si(ab,c)=\si(a,bc)$ for all $a,b,c\in N^*(X)$.
However, we will not use this pairing.
\end{remark}

If $X$ is a surface, then we can write the intersection form on $N^*_\bR(X)$ as
\begin{equation}\label{int form surf}
(a,b)=a_1b_1-a_0b_2-a_2b_0,\qquad a,b\in N^*_\bR(X).
\end{equation}
This intersection form has signature $(2,\rho)$,
where $\rho=\rk N^1(X)$ is the Picard number of ~$X$,
because of the following result.

\begin{theorem}[Hodge index theorem]
Let $X$ be a smooth projective surface.
Then the intersection form on $\NSR(X)$ has signature $(1,\rho-1)$, where $\rho=\rk N^1(X)$.
In particular, for any ample divisor ~$H$, the intersection form restricted to $H^\perp\sbs\NSR(X)$ is negative-definite.
\end{theorem}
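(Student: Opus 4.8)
The plan is to reduce the theorem to its ``in particular'' clause: I claim it suffices to prove that the intersection form is negative-definite on the hyperplane $H^\perp=\sets{D\in\NSR(X)}{D\cdot H=0}$. Granting this, since $H$ is ample we have $H^2>0$, and since the intersection form is non-degenerate on $N^1(X)$ (as recorded above), the form is block-diagonal for the orthogonal decomposition $\NSR(X)=\bR H\oplus H^\perp$; it is positive on the line $\bR H$ and negative-definite on the $(\rho-1)$-dimensional space $H^\perp$, so its signature is $(1,\rho-1)$.

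Before the main step I would record three elementary inputs. An effective divisor with zero intersection against an ample $H$ is numerically trivial (each irreducible component meets $H$ positively). An irreducible curve $C$ with $C^2<0$ is the unique effective divisor in its numerical class, since a second one would be an effective divisor linearly equivalent to $C$ with intersection number $C^2<0$, impossible for distinct irreducible curves. By Kleiman's criterion, $\bNE(X)\cap\sets{z}{z\cdot H\le d}$ is compact for every $d$, so there are finitely many numerical classes of effective divisors of bounded $H$-degree, and hence finitely many irreducible curves of negative self-intersection and bounded $H$-degree.

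The core step is: there is no $0\ne D\in N^1(X)$ with $D^2>0$ and $D\cdot H=0$. Suppose such $D$ exists. Riemann--Roch gives $\hi(\cO(nD))=\tfrac12\,nD\cdot(nD-K_X)+\hi(\cO_X)\to+\infty$, so by Serre duality $h^0(nD)+h^0(K_X-nD)\to\infty$, and the same holds with $D$ replaced by $-D$. If $h^0(\pm nD)\ne0$ for some large $n$ then $\pm nD$ is effective and not numerically trivial (as $(nD)^2>0$), so $\pm nD\cdot H>0$, contradicting $D\cdot H=0$. Hence $h^0(K_X\mp nD)\to\infty$, so $K_X\pm nD$ is effective for all large $n$. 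To finish this case I would prove, by induction on the integer $\Lambda\cdot H$, that no class $\Lambda$ can have $\Lambda+nD$ and $\Lambda-nD$ both effective for infinitely many $n$ (applying this to $\Lambda=K_X$ gives the contradiction): if $\Lambda\cdot H<0$ an effective divisor of negative degree is absurd; if $\Lambda\cdot H\ge0$, then $(\Lambda-nD)\cdot(\Lambda+nD)=\Lambda^2-n^2D^2\to-\infty$ forces, for all large $n$ in the infinite set, a common irreducible component $C_n$ with $C_n^2<0$ and $1\le C_n\cdot H\le\Lambda\cdot H$ (the last bound because $(\Lambda-nD)\cdot H=\Lambda\cdot H$); by the finiteness above, a single curve $C$ serves for infinitely many such $n$, and then $(\Lambda-C)\pm nD$ are both effective for those $n$ while $(\Lambda-C)\cdot H<\Lambda\cdot H$, so the induction terminates. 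Finally, scaling and continuity upgrade the resulting statement ``$D^2\le0$ for nonzero integral $D\in H^\perp$'' to ``$D^2\le0$ for all $D\in H^\perp$'', i.e.\ the form is negative semi-definite on $H^\perp$; an isotropic vector there would lie in the radical of this semi-definite form and also be orthogonal to $H$, hence orthogonal to all of $\NSR(X)$, contradicting non-degeneracy, so the form is negative-definite on $H^\perp$ as desired.

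The main obstacle is precisely the case $D^2>0$, $D\cdot H=0$: Riemann--Roch only delivers effectivity of $K_X\pm nD$, not of $\pm nD$, and squeezing a contradiction out of that requires the negative-curve/boundedness bookkeeping above. An alternative that bypasses this is the Hodge--Riemann bilinear relations applied to $H^{1,1}(X)$ with K\"ahler class $c_1(H)$, which yield the signature of the cup product on $H^{1,1}$ directly and restrict to $\NSR(X)$; this trades the elementary curve-theoretic argument for Hodge theory, and either route is acceptable since the statement is only used downstream as a known fact.
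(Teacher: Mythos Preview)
The paper does not prove this statement: the Hodge index theorem is quoted as a classical fact without argument, serving only as background input for the signature of the intersection form on $N^*_\bR(X)$. There is therefore no proof in the paper to compare against.

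Your proof is correct. The reduction to negative-definiteness on $H^\perp$, the Riemann--Roch step, and the final radical argument upgrading semi-definite to definite are all sound. The induction on $\Lambda\cdot H$ to rule out both $K_X\pm nD$ being effective for infinitely many $n$ works: the key finiteness input (finitely many irreducible curves of negative self-intersection and bounded $H$-degree) does follow from compactness of the slice $\bNE(X)\cap\{z\cdot H\le d\}$ together with the observation that an irreducible curve with $C^2<0$ is the unique effective divisor in its numerical class. One cosmetic slip: in that uniqueness remark you write ``linearly equivalent'' where you mean ``numerically equivalent''. Your closing comment that the Hodge--Riemann bilinear relations on $H^{1,1}$ give a shorter alternative over~$\bC$ is accurate, and is indeed the route most references take.
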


\subsubsection{Numerical Grothendieck group}
Let $K(X)=K_0(\Coh X)$ be the Grothendieck group of coherent sheaves on~ $X$.
It is a commutative ring with an involution, where
\begin{equation}
[E]\cdot [F]=[E\ts F],\qquad
[E]^*=[E\dual],
\end{equation}
for $E,F\in\Coh(X)$ with $E$ locally-free. 
The Chern character map $\ch:K(X)\to A^*(X)_\bQ$ is a ring homomorphism preserving involutions.
It induces an isomorphism \ful[\S18.3]
\begin{equation}
\ch:K(X)_\bQ\to A^*(X)_\bQ.
\end{equation}

Let $\hi$ be the Euler form on $K(X)$, defined by
\begin{equation}
\hi([E],[F])=\hi(E,F):=\sum_k (-1)^k\dim\Ext^k(E,F).
\end{equation}
By Serre duality, we have $\hi(E,F)=(-1)^d\hi(F,E\ts\om_X)$,
where $\om_X$ is the canonical bundle of ~$X$.
This implies that the left and the right kernels of the Euler form coincide.
The kernel $K_\num(X)\sbs K(X)$
is an ideal of the ring $K(X)$ and we define the 
\idef{numerical Grothendieck group} (which is a ring)
\begin{equation}
\cN(X)=K(X)/K_\num(X).
\end{equation}
By the Grothendieck-Riemann-Roch theorem, we have
\begin{equation}
\hi([E],[F])=-(\ch E,\ch F\cdot \Td(X)),
\end{equation}
where the Todd class 
$\Td(X)=\td(T_X)\in A^*(X)_\bQ$ is an invertible element.
The Chern character induces the ring homomorphism
\begin{equation}
\ch:\cN(X)\to N^*(X)_\bQ
\end{equation}
which becomes an isomorphism after tensoring with $\bQ$.



\subsubsection{Ample cone}
Let $X$ be a smooth projective variety.
We define the \idef{ample cone} $\Amp(X)\sbs \NSR(X)$ to be the convex (blunt) cone generated by ample classes in $N^1(X)$.
We define the \idef{cone of curves}
$\NE(X)\sbs N_1(X)_\bR$
\begin{equation}
\NE(X)=\sets{\sum \bR_{\ge0}[C_i]}{C_i\sbs X\text{ is an integral curve}}
\end{equation}
and let $\bNE(X)$ be its closure.
Then
\begin{equation}
\Amp(X)=\sets{D\in\NSR(X)}{DC>0\ \forall C\in\bNE(X)\ms\set0}.
\end{equation}
We define the \idef{nef cone} to be the dual of $\bNE(X)$
\begin{equation}
\Nef(X)=\sets{D\in\NSR(X)}{DC\ge 0\ \forall C\in\bNE(X)}.
\end{equation}
It is equal to the closure of $\Amp(X)$, while $\Amp(X)$ is equal to the interior of $\Nef(X)$ (see \eg \cite{lazarsfeld_positivity}).
By the Nakai–Moishezon criterion,
if $X$ is a surface, then $D\in\NSR(X)$ is ample if and only if $D^2>0$ and $DC>0$ for all curves $C\sbs X$.



%
%
%

\subsection{Construction of stability conditions}
Let $X$ be a smooth projective surface and $\cD=D^b(\Coh X)$ be its bounded derived category.
We consider the numerical Grothendieck group $\Ga=\cN(X)=\cN(\cD)$ and the projection $\cl:K(\cD)\to\Ga$.
For any ample divisor $\om\in\NSR(X)$ and a torsion-free sheaf $E\in\Coh(X)$, we define the \idef{slope}
\begin{equation}
\mu_\om(E)=\frac{c_1(E)\cdot\om}{\rk (E)}.
\end{equation}
We say that a torsion-free sheaf $E$ is \idef{$\mu_\om$-semistable}
if, for any subsheaf $0\ne E'\sbs E$,
we have $\mu_\om(E')\le\mu_\om(E)$.
Then every $E\in\Coh(X)$ has a unique filtration (called the \idef{Harder-Narasimhan filtration})
$$E_0\sbs E_1\sbs\dots\sbs  E_n=E,$$
such that $E_0$ is the torsion part of $E$
and $E_i/E_{i-1}$ are $\mu_\om$-semistable and 
have decreasing slopes.
We define 
\begin{equation}
\mu^+_\om(E)=\begin{cases}
+\infty&E_0\ne 0,\\
\mu_\om(E_1)&E_0=0,
\end{cases}
\qquad
\mu^-_\om(E)=\mu_\om(E_n/E_{n-1}).
\end{equation}

\begin{remark}
The above discussion can be interpreted in the framework of \S\ref{sec:stab ab} by considering the linear map
\begin{equation}
Z:\cN(X)\to\bC,\qquad E\mto -c_1(E)\cdot\om+\bi\rk (E).
\end{equation}
Note, however, that $Z(\cO_x)=0$ for $x\in X$, so that $Z$ is not a stability function on $\Coh(X)$.
\end{remark}

Let us define the \idef{discriminant} $\De$ to be the quadratic form on $\cN(X)$ corresponding to the intersection form, meaning that
\begin{equation}\label{disc1}
\De(E):=(\ch(E),\ch(E))=a_1^2-2ra_2,\qquad
\ch(E)=(r,a_1,a_2).
\end{equation}
Recall that it has signature $(2,\rho)$,
where $\rho=\rk N^1(X)$.

\begin{theorem}[Bogomolov inequality]
\label{bog1}
Let $E\in\Coh(X)$ be a $\mu_\om$-semistable sheaf.
Then 
$$\De(E)\ge0.$$
\end{theorem}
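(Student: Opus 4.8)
The statement is the classical Bogomolov inequality, and the plan is to reproduce Bogomolov's argument: assuming $\De(E)<0$, pass to high symmetric powers of the endomorphism bundle and contradict semistability by comparing an Euler characteristic that grows fast against a linear-in-rank bound on global sections.

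First I would reduce to $E$ locally free. On a surface the reflexive hull $E^{\vee\vee}$ is locally free, it is again $\mu_\om$-semistable, and $E\emb E^{\vee\vee}$ has a zero-dimensional cokernel of some length $\ell\ge0$, whence $\De(E)=\De(E^{\vee\vee})+2\rk(E)\,\ell\ge\De(E^{\vee\vee})$; so it suffices to treat locally free $E$. Next, set $V=E\ts E\dual$. It is locally free, $\mu_\om$-semistable, self-dual, of rank $N=\rk(E)^2$, with $\ch_1(V)=0$ and, from $\ch(V)=\ch(E)\ch(E\dual)$, $\ch_2(V)=-\De(E)$; so the claim is equivalent to $\ch_2(V)\le0$.

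Suppose $\ch_2(V)>0$ and look at the symmetric powers $S^kV$ as $k\to\infty$. In characteristic $0$ each $S^kV$ is again $\mu_\om$-semistable of slope $0$, of rank $N_k=\rk(S^kV)$, with $\ch_1(S^kV)=0$; the splitting principle gives $\ch_2(S^kV)=c_{N,k}\,\ch_2(V)$ for a positive rational $c_{N,k}$ (explicitly $c_{N,k}=\tfrac{k(N+k)}{N(N+1)}N_k$) with $c_{N,k}/N_k\to\infty$. Riemann--Roch then yields $\chi(S^kV)=\ch_2(S^kV)+N_k\,\chi(\cO_X)=N_k\big(\tfrac{c_{N,k}}{N_k}\ch_2(V)+\chi(\cO_X)\big)$, so $\chi(S^kV)/N_k\to+\infty$. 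On the other hand $\chi(S^kV)\le h^0(S^kV)+h^2(S^kV)$. Since $S^kV$ is $\mu_\om$-semistable of slope $0$ and $\cO_X$ is $\mu_\om$-stable of slope $0$, the Jordan--H\"older filtration of $S^kV$ gives $h^0(S^kV)=\dim\Hom(\cO_X,S^kV)\le N_k$. And $h^2(S^kV)=h^0\big((S^kV)\dual\ts\om_X\big)=h^0(S^kV\ts\om_X)$ by self-duality, which is a $\mu_\om$-semistable sheaf of the fixed slope $K_X\cdot\om$, so the standard bound on sections of semistable sheaves (Le Potier--Simpson) gives $h^2(S^kV)\le C\,N_k$ with $C$ depending only on $(X,\om)$. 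Hence $\chi(S^kV)\le(1+C)N_k$, contradicting $\chi(S^kV)/N_k\to\infty$; therefore $\De(E)\ge0$.

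The step requiring the most care is the bound $h^2(S^kV)\le C\,N_k$, i.e. that a $\mu_\om$-semistable sheaf of bounded slope has at most a constant (in $X,\om$) times its rank many sections; the clean route is to restrict to a general smooth curve in a fixed very ample linear system, where a negative twist kills $h^0$ of the kernel and reduces everything to a linear-in-rank estimate on a curve. The two classical inputs I would cite rather than prove are the $\mu$-semistability of symmetric powers in characteristic $0$ and the Chern-character identity for $S^kV$ when $\ch_1(V)=0$. An alternative is to combine the endomorphism-bundle reduction with the Hodge index theorem above to reduce (via Jordan--H\"older and the identity $\De(E)/\rk(E)=\De(A)/\rk(A)+\De(B)/\rk(B)-\tfrac{\rk(A)\rk(B)}{\rk(E)}\big(c_1(A)/\rk(A)-c_1(B)/\rk(B)\big)^2$ for equal-slope extensions) to the $\mu_\om$-stable case and then invoke the Mehta--Ramanathan restriction theorem; the symmetric-power argument has the advantage of working for arbitrary, possibly irrational, $\om$ with no restriction theorem.
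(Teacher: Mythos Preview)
The paper does not prove this theorem at all; it is simply stated as the classical Bogomolov inequality and used as a black box in the rest of the text. Your proposal supplies precisely the classical proof, and the outline is correct.

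A few remarks on the details you flag. Your reduction to $E$ locally free and the identity $\ch_2(E\ts E\dual)=-\De(E)$ are fine. The constant $c_{N,k}=\tfrac{k(N+k)}{N(N+1)}N_k$ is the standard one (it checks out in small cases), and the crucial feature $c_{N,k}/N_k\to\infty$ is all that matters. The bound $h^0(S^kV)\le N_k$ is justified exactly as you indicate: in a Jordan--H\"older filtration with $\mu_\om$-stable factors $F_i$ of slope $0$, a nonzero map $\cO_X\to F_i$ forces $\rk F_i=1$ and then $F_i\iso\cO_X$ (using that $\om$ ample kills nonzero effective divisors with $D\cdot\om=0$), so each factor contributes at most one section. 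The delicate step, as you note, is $h^2(S^kV)\le C\,N_k$; the Le~Potier--Simpson estimate indeed gives a bound linear in the rank for fixed slope, and your restriction-to-a-curve route is the standard way to make this explicit. Finally, your claim that the symmetric-power argument handles irrational $\om$ is correct provided one is willing to invoke the semistability of tensor products for an arbitrary real K\"ahler class (e.g.\ via Hermite--Einstein metrics); the purely algebraic route through Mehta--Ramanathan needs rational $\om$, but then the general case follows from the Hodge-index computation you sketch at the end, since the discriminant formula for equal-$\mu_\om$-slope extensions shows $\De(E)/\rk E\ge\sum\De(F_i)/\rk F_i$ along any Jordan--H\"older refinement with respect to a nearby rational polarisation.
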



For any $s\in\bR$,
let $\Coh_{\om,>s}$ be the subcategory of $\Coh(X)$
consisting of $E\in\Coh(X)$ with $\mu^-_\om(E)>s$
and let $\Coh_{\om,\le s}$ be the subcategory of $\Coh(X)$ consisting of $E\in\Coh(X)$ with $\mu_\om^+(E)\le s$.
Then $(\Coh_{\om,>s},\Coh_{\om,\le s})$ is a torsion pair on $\Coh(X)$ and we define the category
\begin{equation}
\Coh_{\om,\#s}
=\ang{\Coh_{\om,\le s}[1],\Coh_{\om,>s}}\sbs \cD
\end{equation}
which is the heart of a bounded t-structure on $\cD$.
In particular, for any $\be\in\NSR(X)$, we define
\begin{equation}
\cA_{\be,\om}=\Coh_{\om,\#\be\om}
=\ang{\Coh_{\om,\le\be\om}[1],\Coh_{\om,>\be\om}}.
\end{equation}

The following result was formulated in \cite[\S2]{arcara_bridgeland}, where it was proved
that the linear map $Z_{\be,\om}$ maps $\cA_{\be,\om}$ to the upper half-plane.
The support property and the HN property were proved in~ 
\cite{toda_stabilityb} (see also \cite{bayer_space}).
This result is a generalization of 
\cite[Lemma 6.2]{bridgeland_stabilityb}.

\begin{theorem}
Let $\be,\om\in\NSR(X)$ and let $\om$ be ample. 
Then the linear map
\begin{equation}\label{Z1}
Z_{\be,\om}:\cN(X)\to\bC,\qquad
Z_{\be,\om}(E)
=(e^{\be+\bi\om},\ch(E))
=-\int e^{-\be-\bi\om}\ch(E)
\end{equation}
is a stability function on the abelian category $\cA_{\be,\om}$.
\end{theorem}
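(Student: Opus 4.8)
The plan is to expand $Z_{\be,\om}$ in terms of the $\be$-twisted Chern character $\ch^\be(E)=e^{-\be}\ch(E)$, use the torsion-pair description of $\cA_{\be,\om}$ to reduce to the two halves of the tilt, reduce each half to $\mu_\om$-semistable sheaves via the Harder-Narasimhan filtration, and finally settle the borderline case $\Im Z_{\be,\om}(E)=0$ by combining the Bogomolov inequality with the Hodge index theorem.

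First I would compute, from \eqref{Z1} and $e^{-\be-\bi\om}=1-(\be+\bi\om)+\tfrac12(\be+\bi\om)^2$, that with $\ch(E)=(r,a_1,a_2)$ and $\ch^\be(E)=\bigl(r,\ a_1-r\be,\ a_2-\be a_1+\tfrac r2\be^2\bigr)$ one has
\[
Z_{\be,\om}(E)=-\ch^\be_2(E)+\tfrac12\om^2\,r+\bi\,\om\cdot\ch^\be_1(E).
\]
Any $0\ne E\in\cA_{\be,\om}$ fits in an exact triangle $H^{-1}(E)[1]\to E\to H^0(E)\to{}$ with $H^{-1}(E)\in\Coh_{\om,\le\be\om}$ and $H^0(E)\in\Coh_{\om,>\be\om}$, not both zero, and the semi-closed upper half-plane $\bH$ is closed under addition: if $z_1,z_2\in\bH$ then $\Im(z_1+z_2)\ge0$, and if this vanishes then $z_1,z_2\in\bR_{<0}$, whence $z_1+z_2\in\bR_{<0}$. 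So by additivity of $Z_{\be,\om}$ the theorem reduces to showing $Z_{\be,\om}(T)\in\bH$ for $0\ne T\in\Coh_{\om,>\be\om}$ and $Z_{\be,\om}(F[1])=-Z_{\be,\om}(F)\in\bH$ for $0\ne F\in\Coh_{\om,\le\be\om}$.

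Next I would apply the $\mu_\om$-Harder-Narasimhan filtration: $F\in\Coh_{\om,\le\be\om}$ is torsion-free with $\mu_\om$-semistable HN factors of slope $\le\be\om$, and $T\in\Coh_{\om,>\be\om}$ has a filtration with factors a torsion sheaf $T_0$ together with torsion-free $\mu_\om$-semistable sheaves of slope $>\be\om$. Using additivity and closedness of $\bH$ once more, it is enough to treat two cases. For a nonzero torsion sheaf $T_0$ one has $\Im Z_{\be,\om}(T_0)=\om\cdot c_1(T_0)\ge0$ because $c_1(T_0)$ is effective and $\om$ is ample, with equality only if $c_1(T_0)=0$, in which case $T_0$ is $0$-dimensional and $Z_{\be,\om}(T_0)=-\ch_2(T_0)<0$. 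For a $\mu_\om$-semistable torsion-free sheaf $G$ of positive rank one has $\Im Z_{\be,\om}(G)=\rk(G)\bigl(\mu_\om(G)-\be\om\bigr)$, which is positive in the $T$-case and $\le0$ in the $F$-case; whenever this is nonzero the claim is immediate, so the only remaining situation is $G$ appearing inside $F$ with $\Im Z_{\be,\om}(G)=0$, i.e. $\ch^\be_1(G)\in\om^{\perp}\sbs\NSR(X)$, where one must show $\Re Z_{\be,\om}(G)>0$.

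This borderline case is the main obstacle and the only place the hypotheses are genuinely used. By twist-invariance of the discriminant, $\De(G)=\bigl(\ch^\be_1(G)\bigr)^2-2\rk(G)\,\ch^\be_2(G)$, so
\[
\Re Z_{\be,\om}(G)=\frac1{2\rk(G)}\Bigl(\De(G)-\bigl(\ch^\be_1(G)\bigr)^2+\rk(G)^2\om^2\Bigr).
\]
Here $\De(G)\ge0$ by the Bogomolov inequality (Theorem~\ref{bog1}); $\bigl(\ch^\be_1(G)\bigr)^2\le0$ because the intersection form is negative-definite on $\om^{\perp}$ by the Hodge index theorem; and $\om^2>0$ since $\om$ is ample. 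Hence $\Re Z_{\be,\om}(G)>0$, which also gives $Z_{\be,\om}(G)\ne0$, and assembling the pieces yields $Z_{\be,\om}(E)\in\bH$ for all $0\ne E\in\cA_{\be,\om}$. I expect the only delicate bookkeeping to be the handling of torsion subsheaves of $T$ and the verification that the two Harder-Narasimhan reductions are compatible with the exact sequences defining $\cA_{\be,\om}$.
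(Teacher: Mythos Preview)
Your argument is correct and is precisely the standard proof of this statement. The paper itself does not supply a proof: it attributes the fact that $Z_{\be,\om}$ maps $\cA_{\be,\om}$ to $\bH$ to \cite[\S2]{arcara_bridgeland} (with the HN and support properties referenced to \cite{toda_stabilityb,bayer_space}), and records exactly your key identity in the form \eqref{Z-simp}, namely $Z_{\be,\om}(E)=\tfrac1{2r}\bigl(r^2\om^2+\De(E)-(a_1-r\be)^2\bigr)+\bi(a_1-r\be)\om$ for $r\ne0$. Your reduction via the torsion pair and HN filtrations, and your use of Bogomolov plus Hodge index in the borderline case $\ch^\be_1(G)\in\om^\perp$, is the argument underlying that citation; there is nothing to add.
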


We denote by $\si_{\be,\om}$ the stability condition corresponding to $(Z_{\be,\om},\cA_{\be,\om})$.
If $\ch(E)=(r,a_1,a_2)$, then
\begin{equation}\label{Z2}
Z_{\be,\om}(E)
=\rbr{\frac r2(\om^2-\be^2)+a_1\be-a_2}
+\bi\rbr{a_1-r\be}\om.
\end{equation}
We can also write this expression in the form
\begin{equation}\label{Z-simp}
Z_{\be,\om}(E)=
\begin{cases}
\frac{1}{2r}\rbr{r^2\om^2+\De(E)-(a_1-r\be)^2}
+\bi\rbr{a_1-r\be}\om&r\ne0,\\
(a_1\be-a_2)+\bi a_1\om&r=0.
\end{cases}
\end{equation}
It is convenient to define
\begin{equation}
\ch^\be(E)=e^{-\be}\ch(E)\in N^*(X).
\end{equation}
If $\ch^\be(E)=(r,b_1,b_2)$, then
\begin{equation}\label{Z3}
Z_{\be,\om}(E)=(\toh r\om^2-b_2)+\bi b_1\om.
\end{equation}

\begin{lemma}
The discriminant $\De:\cN(X)_\bR\to\bR$ is negative-definite on $\Ker Z_{\be,\om}$.
\end{lemma}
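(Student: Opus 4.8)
The plan is to reduce the statement to the Hodge index theorem by passing to the $\be$-twisted Chern character. The first step is to observe that the discriminant $\De$ of \eqref{disc1} is invariant under the twist $a\mapsto e^{-\be}a$ on $N^*(X)_\bR$: since $\be\in\NSR(X)$ satisfies $\be^*=-\be$ for the involution \eqref{invol}, we have $(e^{-\be})^*=e^{\be}$, so by \eqref{pairing}
\[
(\ch^\be E,\ch^\be E)=-\int (e^{-\be}\ch E)^*(e^{-\be}\ch E)=-\int e^{\be}(\ch E)^* e^{-\be}\ch E=-\int (\ch E)^*\ch E=\De(E).
\]
Writing $\ch^\be(E)=(r,b_1,b_2)$ and expanding with the surface intersection form \eqref{int form surf} then gives $\De(E)=b_1^2-2rb_2$, while \eqref{Z3} gives $Z_{\be,\om}(E)=\bigl(\toh r\om^2-b_2\bigr)+\bi\,b_1\om$.

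The second step is the computation of $\De$ on $\Ker Z_{\be,\om}$. If $Z_{\be,\om}(E)=0$, then $b_1\om=0$ and $b_2=\toh r\om^2$, hence
\[
\De(E)=b_1^2-2r\cdot\toh r\om^2=b_1^2-r^2\om^2 .
\]
Now $b_1$ lies in $\om^\perp\sbs\NSR(X)$, so the Hodge index theorem forces $b_1^2\le0$ with equality only when $b_1=0$; and $\om$ is ample, so $\om^2>0$. Thus $\De(E)\le-r^2\om^2\le0$, and $\De(E)=0$ forces both $b_1=0$ and $r=0$, and then $b_2=\toh r\om^2=0$ as well, i.e. $\ch^\be(E)=0$. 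Since $\ch^\be\colon\cN(X)_\bR\to N^*(X)_\bR$ is an isomorphism (it is $\ch$ tensored with $\bR$ followed by multiplication by the unit $e^{-\be}$), this means the class of $E$ in $\cN(X)_\bR$ vanishes. Therefore $\De$ is negative-definite on $\Ker Z_{\be,\om}$.

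I do not expect a genuine obstacle here; the only place needing a little care is the twist-invariance of $\De$ in the first step (and, relatedly, the remark that $\ch^\be$ is an $\bR$-linear isomorphism of the numerical rings), after which the statement is an immediate consequence of the Hodge index theorem together with the positivity $\om^2>0$ coming from the ampleness of $\om$.
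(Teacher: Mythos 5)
Your proof is correct and follows essentially the same route as the paper: pass to $\ch^\be(E)=(r,b_1,b_2)$, use the kernel conditions $b_1\om=0$ and $b_2=\toh r\om^2$ to get $\De(E)=b_1^2-r^2\om^2$, and conclude by the Hodge index theorem together with $\om^2>0$. The only difference is that you spell out the twist-invariance of $\De$ and the injectivity of $\ch^\be$ on $\cN(X)_\bR$, which the paper uses implicitly.
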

\begin{proof}
Let $E$ be such that $Z_{\be,\om}(E)=0$ and let $\ch^\be(E)=(r,b_1,b_2)$.
Then $b_1\om=0$ and $b_2=\toh r\om^2$, hence $b_1^2\le0$.
If $(r,b_1,b_2)\ne0$, then either $b_1^2<0$ or $r\ne0$.
Therefore
$$\De(E)=b_1^2-2rb_2=b_1^2-r^2\om^2<0.$$
\end{proof}

\subsection{Cone of positive planes}
\label{sec:pos plane}
Let $X$ be a smooth projective surface as before.
We will say that a subspace $V\sbs N^*_\bR(X)$ is a positive plane if $\dim V=2$ and the intersection form restricted to $V$ is positive-definite.
We define
\begin{align}
\cP(X)=&\sets{x+\bi y\in N^*_\bC(X)}{\ang{x,y}\text{ is a positive plane}},\\
\bar\cP(X)=&\sets{e^{\be+\bi\om}\in N^*_\bC(X)}{\be,\om\in\NSR(X),\,\om^2>0},\\
\bar\cP^+(X)=&\sets{e^{\be+\bi\om}\in N^*_\bC(X)}{\be\in\NSR(X),\,\om\in\Amp(X)}.
\end{align}

\begin{lemma}[\Cf \bri]
\label{lm:pos planes}
We have $\bar\cP^+(X)\sbs\bar\cP(X)\sbs\cP(X)$ and
\begin{equation}\label{barP}
\bar\cP(X)=\sets{u\in N^*_\bC(X)}
{u_0=1,\, (u,u)=0,\,(u,\bar u)>0}.
\end{equation}
The action of $\GL_2^+(\bR)$ on $\cP(X)$ is free and $\bar\cP(X)\to\cP(X)/\GL_2^+(\bR)$ is a bijection.
\end{lemma}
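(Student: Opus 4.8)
The whole lemma rests on one computation, which I would record first. Write an element of $N^*_\bR(X)$ as a triple $a=(a_0,a_1,a_2)$, so that the intersection form is $(a,b)=a_1b_1-a_0b_2-a_2b_0$ by \eqref{int form surf}. Since $X$ is a surface,
\[
e^{\be+\bi\om}=\big(1,\ \be+\bi\om,\ \tfrac12(\be+\bi\om)^2\big),
\]
with real part $x=(1,\be,\tfrac12(\be^2-\om^2))$ and imaginary part $y=(0,\om,\be\om)$. A one-line substitution gives $(x,x)=(y,y)=\om^2$ and $(x,y)=0$, i.e. $(e^{\be+\bi\om},e^{\be+\bi\om})=0$ and $(e^{\be+\bi\om},\overline{e^{\be+\bi\om}})=2\om^2$; so the Gram matrix of the ordered pair $(x,y)$ is $\om^2\cdot I$. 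Everything below is deduced from this together with the Hodge index theorem.

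From this computation $\ang{x,y}$ is $2$-dimensional and positive-definite as soon as $\om^2>0$, which gives $\bar\cP(X)\sbs\cP(X)$; and $\bar\cP^+(X)\sbs\bar\cP(X)$ because an ample divisor has positive self-intersection (it lies in the positive cone of the Hodge form). For the description \eqref{barP}: the inclusion $\sbs$ is the computation above; conversely, if $u=(1,\de,\eta)$ satisfies $(u,u)=0$, then $\de^2=2\eta$, so $u=e^\de$, and writing $\de=\be+\bi\om$ with $\be,\om$ real the remaining condition $(u,\bar u)>0$ says precisely $\om^2>0$, so $u\in\bar\cP(X)$.

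For the group action I would view $u=x+\bi y$ as the ordered pair $(x,y)$ of real classes; then $g=\smat{a&b\\c&d}\in\GL_2^+(\bR)$ acts by $g\cdot u=(ax+by)+\bi(cx+dy)$ (the action induced on $N^*_\bR(X)\ts_\bR\bC$ from the standard action on $\bC\iso\bR^2$, which under the pairing with $\ch$ is the usual action on central charges). Since $g$ is invertible it carries any basis of a positive plane to another one, so $\cP(X)$ is preserved, and $g\cdot u=u$ with $x,y$ independent forces $g=I$, so the action is free. Injectivity of $\bar\cP(X)\hookrightarrow\cP(X)\to\cP(X)/\GL_2^+(\bR)$ is then a direct comparison: if $g\cdot e^{\be+\bi\om}=e^{\be'+\bi\om'}$, the degree-$0$ parts force $g=\smat{1&b\\0&d}$ with $d>0$, the degree-$1$ parts give $\be'=\be+b\om$, $\om'=d\om$, the degree-$2$ imaginary part gives $bd\,\om^2=0$ hence $b=0$, and the degree-$2$ real part gives $d^2\om^2=\om^2$ hence $d=1$, so $g=I$.

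Surjectivity is the one place that really uses positivity. Given $u=x+\bi y\in\cP(X)$ with $P=\ang{x,y}$, I would first note $u_0\ne0$: if $P\sbs\set{a_0=0}$ then $(a,b)=a_1b_1$ factors through the projection $P\to\NSR(X)$, so a positive plane in $P$ would produce one in $\NSR(X)$, contradicting the Hodge index theorem (signature $(1,\rho-1)$). Acting by $1/u_0\in\bC^*\sbs\GL_2^+(\bR)$ we may assume $u_0=1$, i.e. $x=(1,\be,\eta_1)$, $y=(0,\mu,\eta_2)$. Then acting by $g=\smat{1&b\\0&d}$ (which keeps the degree-$0$ part equal to $1$), a short computation shows $(g\cdot u,g\cdot u)=0$ exactly when $b=-(x,y)/(y,y)$ and $d^2(y,y)=\big((x,x)(y,y)-(x,y)^2\big)/(y,y)$, whose right-hand side is positive because $P$ is a positive plane and $(y,y)=\mu^2>0$; with these choices and $d>0$ one gets $(g\cdot u)_0=1$, $(g\cdot u,g\cdot u)=0$, $(g\cdot u,\overline{g\cdot u})>0$, hence $g\cdot u\in\bar\cP(X)$ by \eqref{barP}. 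The main obstacle is exactly this last step: the Hodge index input (no positive plane inside the degree-$0$ hyperplane, legitimizing the normalization $u_0=1$), followed by a Gram–Schmidt-type normalization constrained to the affine slice $\set{u_0=1}$; Steps 1–3 are the explicit evaluation of $e^{\be+\bi\om}$ and routine bookkeeping of the $\GL_2^+(\bR)$-action.
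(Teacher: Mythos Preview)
Your proof is correct and follows essentially the same approach as the paper: explicit evaluation of $e^{\be+\bi\om}$ to obtain the orthonormal Gram matrix $\om^2\cdot I$, the description \eqref{barP} via $(u,u)=u_1^2-2u_2$, and a Hodge-index obstruction to $u_0=0$ followed by a $\GL_2^+(\bR)$-normalization. The only cosmetic difference is the order of the normalization steps in the surjectivity argument: the paper first orthogonalizes $(x,y)$, then uses $\bR_{>0}\cdot\SO(2)$ to arrange $(x_0,y_0)=(1,0)$, then rescales $y$; you instead first divide by $u_0\in\bC^*$ to set $u_0=1$, then use an upper-triangular matrix $\smat{1&b\\0&d}$ to achieve $(u,u)=0$. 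Both routes are equivalent Gram--Schmidt procedures inside the affine slice $\set{u_0=1}$.
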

\begin{proof}
For $u\in N^*_\bC(X)$ to be of the form $u=e^{\be+\bi\om}$,
we require that $u_0=1$ and $u_2=\oh u_1^2$.
If $u_0=1$, then $(u,u)=u_1^2-2u_2$.
Therefore we require $u_0=1$ and $(u,u)=0$.
If $u=e^{\be+\bi\om}$, then
$$(u,u)=-\int u^*u=0,\qquad
(u,\bar u)=-\int e^{-2\bi\om}=2\om^2.$$
This implies that $\bar\cP(X)$ is given by \eqref{barP}.
If $u=x+\bi y=e^{\be+\bi\om}$ and $\om^2>0$,
then 
\begin{equation}\label{xy}
(x,x)=(y,y)=\om^2>0,\qquad (x,y)=0,
\end{equation}
hence the vectors $x,y$ span a positive plane and 
$\bar\cP(X)\sbs\cP(X)$.

If $u=x+\bi y\in \bar\cP(X)$ and $g=\smat{a&b\\c&d}\in\GL_2^+(\bR)$ are such that $gu\in\bar\cP(X)$, then $g\smat{1\\0}=\smat{1\\0}$, hence $a=1$, $c=0$.
Therefore $gu=(x+by)+idy$.
We conclude from \eqref{xy} that $1+b^2=d^2$ and $bd=0$, hence $b=0$ and $d=\pm1$. 
As $g\in \GL_2^+(\bR)$, we conclude that $g$ is the identity.
This proves that the map $\bar\cP(X)\to\cP(X)/\GL_2^+(\bR)$ is injective.

If $u=x+\bi y\in\cP(X)$, then by applying an element of $\GL_2^+(\bR)$ we can assume that 
$(x,y)=0$.
If $x_0=y_0=0$, then $(x,x)=x_1^2>0$, $(y,y)=y_1^2>0$ and $(x,y)=x_1y_1=0$,
which would imply that $N^1_\bR(X)$ contains a positive plane.
This is impossible, hence $\smat{x_0\\y_0}\ne0$ and there exists a matrix in $\bR_{>0}\cdot \SO(2)$ that sends this vector to $\smat{1\\0}$.
Therefore we can assume that $(x,y)=0$, $x_0=1$, $y_0=0$.
By applying a positive scalar to $y$, we can make $(x,x)=(y,y)$.
Then $x+\bi y\in\bar\cP(X)$.
\end{proof}

Let us define
\begin{equation}
\cP^+(X)=\GL_2^+(\bR)\cdot\bar\cP^+(X)\sbs\cP(X)\sbs N^*_\bC(X).
\end{equation}
Note that we have the quotient maps $\cP(X)\to\bar\cP(X)$ and $\cP^+(X)\to\bar\cP^+(X)$ and the subset $\bar\cP^+(X)\sbs\cP^+(X)$ is open, hence the subset $\cP^+(X)\sbs\cP(X)$ is open.

%

\subsection{Geometric stability conditions}
\label{sec:GSC}
In this section we will give a characterization of stability conditions that are equal to $\si_{\be,\om}$ up to the action of $\wtl\GL_2^+(\bR)$.
Let $X$ be a smooth projective surface, $\cD=D^b(\Coh X)$.
We consider $\Ga=\cN(X)=\cN(\cD)$ and the projection $\cl:K(\cD)\to\Ga$.
We will say that a stability condition $\si=(Z,\cP)$ on $\cD$ is \idef{geometric} if the skyscraper sheaves $\cO_x$ are stable and have the same phase for all $x\in X$.
By rotating the stability condition~ \eqref{stab shift}, we can assume that all $\cO_x$ have phase $1$.
We will say that a stability condition $\si=(Z,\cP)$ on $\cD$ is \idef{special} if the discriminant $\De$ is negative-definite on $\Ker Z$.

\begin{lemma}
Let $(\si_{x})_{x\in M}$ be a continuous family of stability conditions on $\cD$.
Then the subsets 
$$\sets{x\in M}{\si_x\text{ is geometric}},\qquad
\sets{x\in M}{\si_x\text{ is special}}$$
are open in $M$.
\end{lemma}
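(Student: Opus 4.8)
The plan is to handle the two loci separately: the special one by a soft point-set argument, the geometric one by a refinement of the proof of Lemma~\ref{open-closed1}. For the \emph{special} locus I would argue directly. Suppose $\si_{x_0}=(Z_{x_0},\cP_{x_0})$ is special, so $\De$ is negative-definite on $\Ker Z_{x_0}$. If the special locus were not a \nbd of $x_0$, pick $x_k\to x_0$ and $v_k\in\Ker Z_{x_k}$ with $\nn{v_k}=1$ and $\De(v_k)\ge 0$; after passing to a subsequence $v_k\to v$ with $\nn v=1$, and since $Z:M\to\Ga_\bC\dual$ is continuous while $\De$ is a fixed continuous form, one gets $Z_{x_0}(v)=\lim_k Z_{x_k}(v_k)=0$ and $\De(v)=\lim_k\De(v_k)\ge 0$, contradicting the hypothesis. (Equivalently, by the Hodge index theorem $\De$ has signature $(2,\rho)$, so negative-definiteness on $\Ker Z_{x_0}$ forces $Z_{x_0}$ to be surjective with $\dim\Ker Z_{x_0}=\rho$; then $Z_y$ stays surjective near $x_0$, the assignment $y\mto\Ker Z_y$ is continuous into a Grassmannian, and negative-definiteness of a fixed form on a subspace is an open condition there.)

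For the \emph{geometric} locus, assume $\si_{x_0}$ is geometric. All skyscrapers share one class $\gamma_0=\cl\cO_x\in\Ga$ (by connectedness of $X$), and after rotating the stability condition (\S\ref{sec:action}) we may take their common $\si_{x_0}$-phase to be $\vi_0$, so $Z_{x_0}(\gamma_0)\in\ell_0:=\bR_{>0}e^{\ip\vi_0}$; fix $\eps>0$ with $\n{Z_{x_0}(E)}\ge\eps\nn{\cl E}$ for all $\si_{x_0}$-semistable $E$. I would then run, for the object $E=\cO_x$, the argument inside the proof of Lemma~\ref{open-closed1} that a $\si_{x_0}$-stable object stays stable under a small deformation: for a suitable $0<\eta<\tfrac14$ and a \nbd $x_0\in U\sbs M$ with $d(\cP_{x_0},\cP_y)<\eta/2$ and $\nn{Z_{x_0}-Z_y}$ small, every $\si_y$-Harder--Narasimhan and Jordan--Hölder piece of $\cO_x$ lies in $\cP_{x_0,I}$ with $I=(\vi_0-\eta,\vi_0+\eta)$, has class in the strict semigroup $S(V,Z_{x_0},\eps)$ of Lemma~\ref{lm:C VZe}, and is $\le\gamma_0$, hence lies in the finite set $\{\gamma'\le\gamma_0\}$ of Lemma~\ref{lm:strict sgr}; shrinking $\eta$ so that every such $\gamma'$ in fact lands in $S(\ell_0,Z_{x_0},\eps)$ forces those pieces to be $\si_{x_0}$-semistable of phase exactly $\vi_0$, i.e.\ to lie in the abelian category $\cP_{x_0,\vi_0}$. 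Since $\cO_x$ is simple in $\cP_{x_0,\vi_0}$ (it is $\si_{x_0}$-stable), any nontrivial $\si_y$-HN or Jordan--Hölder decomposition of $\cO_x$ would give a short exact sequence inside $\cP_{x_0,\vi_0}$ with two nonzero terms, which is impossible; hence $\cO_x$ is $\si_y$-stable for all $y\in U$.

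The step I expect to be the main obstacle — and the one I would emphasize — is the uniformity over $X$: the naive route, intersecting the open loci $\{y:\cO_x\text{ is }\si_y\text{-stable}\}$ over all $x\in X$, is an infinite intersection and need not be open. The fix is the observation that in the argument above the choices of $\eta$, of $U$, and of the finite set of relevant classes depend only on $\gamma_0$, $\eps$ and $\si_{x_0}$, not on the individual point $x\in X$, so a single $U$ works for all skyscrapers at once. Finally the common-phase condition is automatic: for $y\in U$ the phase $\vi_y(\cO_x)$ lies in $(\vi_0-\eta,\vi_0+\eta)$ because $d(\cP_{x_0},\cP_y)<\eta$, and it lies in $\tfrac1\pi\Arg Z_y(\gamma_0)+2\bZ$ because all $\cO_x$ have class $\gamma_0$; an interval of length $2\eta<\tfrac12$ contains at most one such value, so all $\cO_x$ have equal $\si_y$-phase and $\si_y$ is geometric, proving the geometric locus open.
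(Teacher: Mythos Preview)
Your proof is correct and follows the paper's own approach. The paper's proof is a two-line sketch --- ``the special case is clear'' and ``the geometric case repeats the arguments of Lemma~\ref{open-closed1}'' --- and you have unpacked both halves accurately: for the special locus the openness of negative-definiteness on a continuously varying kernel, and for the geometric locus the observation that the proof of Lemma~\ref{open-closed1} depends only on the class $\gamma_0=\cl\cO_x$ rather than on the individual object, giving uniformity over $X$ for free and forcing a common phase.
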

\begin{proof}
The statement about special stability conditions is clear.
The proof of the statement about geometric stability repeats the arguments of Lemma \ref{open-closed1}. 
\end{proof}

\begin{lemma}[See {\bri[Lemma 10.1]}]
\label{lm:tilt prop}
Let $\si=(Z,\cP)$ be a stability condition on $X$ such that all sheaves $\cO_x$ are stable of phase $1$, and let $\cA=\cP_{(0,1]}$ be its heart.
Then
\begin{enumerate}
\item If $E\in\cA$, then $H^i(E)=0$ for $i\ne0,-1$ and $H^{-1}(E)$ is torsion free.
\item If $E\in\cP_1$ is stable, then either $E=\cO_x$ for some $x\in X$ or $E[-1]$ is a locally-free sheaf.
\item If $E\in\Coh(X)$, then $E\in\cP_{(-1,1]}$ and if $E$ is a torsion sheaf, then $E\in\cA$.
\item The pair of categories
$$\cT=\Coh(X)\cap\cA,\qquad \cF=\Coh(X)\cap(\cA[-1])$$
is a torsion pair in $\Coh(X)$ and $\cA=\ang{\cF[1],\cT}$.
\end{enumerate}
\end{lemma}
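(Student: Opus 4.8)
The plan is to realize the heart $\cA=\cP_{(0,1]}$ as a tilt of the standard heart $\Coh X$; all four assertions then follow. The engine is that the skyscrapers, lying in $\cP_1$, detect the standard $t$-structure, combined with Serre duality on the surface. The elementary inputs I would record first: for $0\ne E\in\cD$ with top nonzero cohomology sheaf $H^k(E)$, the hypercohomology spectral sequence gives $\Hom(E,\cO_x[-k])\iso\Hom(H^k(E),\cO_x)$, which is nonzero for $x$ in the support of $H^k(E)$, so $\Hom(E,\cO_x[i])=0$ for all $i<0$ and all $x$ forces $H^i(E)=0$ for $i>0$; Serre duality, using $\cO_x\ts\om_X\iso\cO_x$, gives $\Hom(\cO_x,E[j])\dual\iso\Hom(E,\cO_x[2-j])$; and, working over the regular local rings $\cO_{X,x}$, one has $\Hom(\cO_x,F)=0$ for all $x$ iff $F$ is torsion free, $\Ext^1(F,\cO_x)=0$ for all $x$ iff $F$ is locally free, while a nonzero torsion sheaf on the surface can never have support all of $X$.

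For statement (2), let $E\in\cP_1$ be stable. If $\Hom(E,\cO_x)\ne0$ for some $x$, Schur's lemma (both objects being stable of phase $1$) gives $E\iso\cO_x$. Otherwise $\Hom(E,\cO_x)=\Hom(\cO_x,E)=0$ for all $x$, and since $\cO_x\in\cP_1$ the slicing axioms give $\Hom(E,\cO_x[i])=0$ for $i<0$ and $\Hom(\cO_x,E[j])=0$ for $j<0$; combining these with $\Hom(E,\cO_x)=0$ and, via Serre duality, with $\Hom(\cO_x,E)=0$, one sees $\Hom(E,\cO_x[i])$ can be nonzero only for $i=1$. The detection statement above forces the top cohomology of $E$ into degree $-1$, and running the spectral sequence from the bottom---together with the local-algebra facts and the impossibility of a torsion sheaf with full support---confines all cohomology of $E$ to degree $-1$ and shows $H^{-1}(E)$ is locally free, i.e. $E[-1]$ is a locally free sheaf.

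The core of statements (1), (3), (4) is the inclusion $\Coh X\sbs\cP_{(-1,1]}$. If a sheaf $F$ had $\vi^+(F)>1$, its first HN factor $A$ would have cohomology in degrees $\le-1$ by the argument of the previous paragraph (using only the phase vanishing $\Hom(A,\cO_x[i])=0$ for $i\le0$), and pushing this through the cohomology long exact sequence of the triangle $A\emb F\to F/A$ together with the strict inequality $\vi^-(A)>\vi^+(F/A)$ yields a contradiction; the mirror argument on the last HN factor gives $\vi^-(F)>-1$, and a Noetherian induction on supports with the same techniques places every torsion sheaf in $\cA$. Granting $\Coh X\sbs\cP_{(-1,1]}$, statement (1) is the claim that objects $E\in\cA$ have cohomology in degrees $-1,0$ with $H^{-1}(E)$ torsion free: $H^{>0}(E)=0$ as before, the spectral-sequence analysis first confines the cohomology to degrees $-2,-1,0$ with $H^{-2}(E)$ locally free, and then $H^{-2}(E)$ must vanish because otherwise its shift $H^{-2}(E)[2]$ would split off $E$---it is the bottom truncation of $E$, lies in $\cP_{(1,3]}$ by $\Coh X\sbs\cP_{(-1,1]}$, and hence admits no nonzero map to $E\in\cP_{(0,1]}$---which is impossible for cohomological reasons; torsion-freeness of $H^{-1}(E)$ comes from $\Hom(E,\cO_x[3])=0$. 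Finally (4) is Happel--Reiten--Smal\o{} tilting: (1) and (3) say $\cA$ is the tilt of $\Coh X$ at the torsion pair $(\cT,\cF)=(\Coh X\cap\cA,\ \Coh X\cap\cA[-1])$, so $\cA=\ang{\cF[1],\cT}$, and $\cF$ consists of torsion-free sheaves since every torsion sheaf lies in $\cT$.

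The main obstacle is the width of the cohomological window that skyscrapers can control: for a general object they pin the cohomology only up to a range of width $\dim X=2$, and for objects of $\cA$, where Schur's lemma is unavailable to shrink the support of $\Hom(-,\cO_x[\bullet])$, this is not enough to confine the cohomology to two degrees directly---hence the phase bound of (3) must be imported to kill the stray $H^{\le-2}$, while (3) is itself proved by the HN-factor argument and a support induction. Arranging this bootstrap so that no step covertly assumes its own conclusion---above all, deriving the phase bounds in (3) from inputs genuinely independent of (1)---is where the care is required, and is the part I would expect to take the most work to make airtight.
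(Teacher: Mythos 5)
The paper gives no proof of this lemma at all --- it simply cites Bridgeland's Lemma 10.1 --- and your overall strategy (skyscraper detection via the spectral sequence $\Ext^p(H^{-q}(E),\cO_x)\Rightarrow\Hom(E,\cO_x[p+q])$, Serre duality with $\cO_x\ts\om_X\iso\cO_x$, Schur's lemma in $\cP_1$, and Happel--Reiten--Smal\o{} tilting for (4)) is exactly the one behind that citation; part (2) and the reduction of (4) to (1) and (3) are sound. But there are genuine gaps. First, your local-algebra input ``$\Hom(\cO_x,F)=0$ for all $x$ iff $F$ is torsion free'' is false: that vanishing characterizes sheaves with no subsheaf of zero-dimensional support (take $F=\cO_C$ for a curve $C$). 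Consequently $\Hom(E,\cO_x[3])=0$ only yields, via $\Ext^2(H^{-1}(E),\cO_x)\iso\Hom(\cO_x,H^{-1}(E))\dual$, that $H^{-1}(E)$ has no zero-dimensional torsion --- not that it is torsion free. The standard repair splits $E$ by the triangle $E_1\to E\to E_2$ with $E_1\in\cP_1$ and $E_2\in\cP_{(0,1)}$: for $E_2$ one additionally has $\Hom(E_2,\cO_x[2])\iso\Hom(\cO_x,E_2)\dual=0$, which shrinks the window to $\set{0,1}$ and forces $H^{-1}(E_2)$ to be locally free, while for $E_1$ one uses that $\cP_1$ has finite length (this is where the support property enters) so that $E_1$ is an iterated extension of objects $\cO_x$ and $G[1]$ with $G$ locally free. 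Your alternative fix via ``torsion sheaves lie in $\cA$'' would be circular, since that statement is itself proved using torsion-freeness of $H^{-1}$.

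Second, statement (3) --- which you correctly identify as the crux --- is not established by what you wrote. The bound $\vi^+(F)\le1$ is recoverable (the point is not the long exact sequence but that the canonical nonzero map from the first HN factor $A$ to $F$ must vanish, since $A$ has cohomology in degrees $\le-1$ while the sheaf $F$ sits in degree $0$). The ``mirror argument'' for $\vi^-(F)>-1$ is, however, genuinely asymmetric: the bottom cohomology of an object is detected only through $\Ext^2(-,\cO_x)\iso\Hom(\cO_x,-)\dual$, so for the last HN factor $C\in\cP_\vi$ with $\vi\in(-2,-1]$ the window argument still allows $H^0(C)\ne0$ (and locally free), and one must invoke the refined description of $\cP_\psi$ for $\psi\in(0,1)$ together with the Jordan--H\"older structure of $\cP_1$ to push the cohomology of $C$ into degrees $\ge1$ before the nonzero map $F\to C$ gives a contradiction. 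Likewise ``Noetherian induction on supports'' for torsion sheaves is not an argument; the standard proof decomposes a torsion sheaf $T$ as $T'\to T\to T''$ with $T'\in\cP_{(0,1]}$ and $T''\in\cP_{(-1,0]}$, checks that both are sheaves, and kills the torsion quotient $T''$ because $T''=H^{-1}(T''[1])$ must be torsion free by (1). This fixes the bootstrap order --- (1), including torsion-freeness, must precede the torsion half of (3), not follow from it --- and resolving that order is precisely the content you deferred.
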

%
%


\begin{theorem}[\Cf{\bri[Prop.~10.3]}]
\label{th:unique shift}
Let $\si=(Z,\cP)$ be a special geometric stability condition.
Then there exist unique $g\in\wtl\GL_2^+(\bR)$, $\be\in\NSR(X)$ and $\om\in\Amp(X)$ such that $\si[g]=\si_{\be,\om}$.
\end{theorem}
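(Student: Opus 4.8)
The plan is to follow the strategy of Bridgeland's proof for $\bP^2$ and abelian surfaces (\bri[Prop.~10.3]) but to package the normalization step cleanly using the cone-of-positive-planes picture of \S\ref{sec:pos plane}. The key point is that a special geometric stability condition $\si=(Z,\cP)$, after acting by a suitable $g\in\wtl\GL_2^+(\bR)$, has central charge of the form $Z_{\be,\om}$ and heart $\cA_{\be,\om}$; uniqueness of $(g,\be,\om)$ then follows from freeness of the $\GL_2^+(\bR)$-action on $\cP(X)$ established in Lemma \ref{lm:pos planes} together with the fact that $(\be,\om)$ can be read off from $Z_{\be,\om}$.

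First I would normalize using the geometric hypothesis. Since all $\cO_x$ are $\si$-stable of phase $1$, the slicing is anchored: Lemma \ref{lm:tilt prop} tells us the heart $\cA=\cP_{(0,1]}$ arises by tilting $\Coh(X)$ at a torsion pair $(\cT,\cF)$, and every torsion sheaf lies in $\cA$. Write $Z=x+\bi y$ as a pair of real linear functionals, i.e. an element $Z\in N^*_\bC(X)$ via the intersection form (using $\hi(E,F)=-(\ch E,\ch F\cdot\Td(X))$ to identify $\Ga_\bC\dual$ with $N^*_\bC(X)$, up to the fixed twist by the Todd class). The crucial computation is that $Z(\cO_x)\in\bR_{>0}e^{\ip}$ for all $x$ forces the class of $Z$ to pair correctly with the point class; combined with the support property of $\si$ and the fact that $\De$ is negative-definite on $\Ker Z$, I would show $Z$ lies in $\cP(X)$ — the two real functionals span a positive plane for the intersection form. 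This is exactly where specialness is used.

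Next, by Lemma \ref{lm:pos planes} the $\GL_2^+(\bR)$-orbit of $Z$ in $\cP(X)$ contains a unique point of $\bar\cP(X)$, i.e. a unique $e^{\be+\bi\om}$ with $\om^2>0$; I would further argue $\om$ can be taken ample (not merely $\om^2>0$) by using stability of $\cO_x$ and of twists of line bundles, which pins down the correct component, so that $Z$ is in the orbit of some $Z_{\be,\om}$ with $\om\in\Amp(X)$. Lifting the group element to $\wtl\GL_2^+(\bR)$ (choosing the lift compatible with the phase normalization of the $\cO_x$) gives $g$ with $Z[g]=Z_{\be,\om}$. It then remains to identify the hearts: both $\si[g]$ and $\si_{\be,\om}$ are geometric stability conditions with the same central charge $Z_{\be,\om}$, so by Lemma \ref{lm:tilt prop}(4) each heart is determined by a torsion pair on $\Coh(X)$; since $Z_{\be,\om}$ is a stability function on $\cA_{\be,\om}$ and the torsion pair defining $\cA_{\be,\om}$ is the $\mu_\om$-slope pair at $s=\be\om$, comparing which coherent sheaves land in $\cP[g]_{(0,1]}$ versus $\cP[g]_{(-1,0]}$ — using that a $\mu_\om$-semistable sheaf $E$ has $Z_{\be,\om}(E)$ in the upper or lower half-plane according to the sign of $\mu_\om(E)-\be\om$, via \eqref{Z2} and the Bogomolov inequality \eqref{bog1} — shows the torsion pairs coincide, hence $\si[g]=\si_{\be,\om}$.

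For uniqueness: if $\si[g]=\si[g']=\si_{\be,\om}$ then $gg'^{-1}$ fixes $\si_{\be,\om}$, hence fixes its central charge $Z_{\be,\om}\in\bar\cP(X)$; by the freeness statement in Lemma \ref{lm:pos planes} the image of $gg'^{-1}$ in $\GL_2^+(\bR)$ is trivial, so $gg'^{-1}$ lies in the deck group $\bZ$ of $\wtl\GL_2^+(\bR)\to\GL_2^+(\bR)$ — but the nontrivial deck transformations shift all phases by nonzero integers and so move the phase of $\cO_x$ away from $1$, contradiction; thus $g=g'$ and then $(\be,\om)$ is uniquely determined since $e^{\be+\bi\om}$ is recovered as the unique representative of the $\GL_2^+(\bR)$-orbit of $Z_{\be,\om}$ in $\bar\cP(X)$ and $\exp$ is injective on $N^*_\bR(X)$. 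The main obstacle I expect is the heart-identification step: showing that the abstract geometric heart $\cP[g]_{(0,1]}$, a priori only known to be a tilt of $\Coh(X)$ at \emph{some} torsion pair, is exactly the slope tilt $\cA_{\be,\om}$; this requires carefully tracking $\mu_\om$-semistable sheaves of every slope through the inequality \eqref{bog1} and the explicit formula \eqref{Z-simp}, and handling the borderline case $\mu_\om(E)=\be\om$ where the imaginary part of $Z_{\be,\om}$ vanishes.
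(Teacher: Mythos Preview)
Your proposal is correct and follows essentially the same route as the paper's proof: represent $Z$ by a unique $u\in N^*_\bC(X)$ via the intersection pairing, use specialness to conclude $u\in\cP(X)$, apply Lemma~\ref{lm:pos planes} to bring $u$ to $e^{\be+\bi\om}$ by a unique element of $\GL_2^+(\bR)$, lift to $\wtl\GL_2^+(\bR)$ using the phase of the $\cO_x$, and then identify the heart with $\cA_{\be,\om}$ (which the paper also defers to \bri[Prop.~10.3, Step~2]).

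Two small points where the paper is slightly tighter than your sketch. First, the identification of $Z$ with an element of $N^*_\bC(X)$ is just via the non-degenerate intersection form \eqref{pairing}; no Todd twist is involved, and showing $u\in\cP(X)$ uses only specialness (since $\De$ has signature $(2,\rho)$ and is negative-definite on $\Ker Z=\langle x,y\rangle^\perp$, the form must be positive-definite on $\langle x,y\rangle$), not the support property or the value $Z(\cO_x)$. Second, for the ampleness of $\om$ the paper gives a cleaner argument than ``twists of line bundles'': for any curve $C\subset X$ the torsion sheaf $\cO_C$ lies in the heart by Lemma~\ref{lm:tilt prop}(3), and cannot lie in $\cP_1$ by Lemma~\ref{lm:tilt prop}(2) (it is neither a point sheaf nor a shifted locally free sheaf), so $\Im Z(\cO_C)=C\cdot\om>0$; together with $\om^2>0$ this gives ampleness via Nakai--Moishezon.
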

\begin{proof}
There exists a unique $u=x+\bi y\in N^*_\bC(X)$ such that
$Z(E)=(u,\ch E)$ for all $E\in\cD$.
The condition that $Z$ is special means that $\ang{x,y}$ is a positive plane \S\ref{sec:pos plane}.
By Lemma \ref{lm:pos planes} there exist unique $T\in \GL_2^+(\bR)$ and $\be,\om\in\NSR(X)$ with $\om^2>0$ such that 
$T\inv u=e^{\be+\bi\om}$.
We have $T\inv Z(\cO_p)=(e^{\be+\bi\om},\ch(\cO_p))=-1$ for all $p\in X$.
The kernel of $\wtl \GL_2^+(\bR)\to \GL_2^+(\bR)$ (see \S\ref{sec:action}) acts on stability conditions by even shifts, hence there exists a unique $g\in \wtl \GL_2^+(\bR)$ that is mapped to $T\in \GL_2^+(\bR)$ and such that $\cO_p$ are $\si[g]$-stable objects of phase $1$.

Therefore we can assume that $u=x+\bi y=e^{\be+\bi\om}$ and $\cO_p$ are \si-stable objects of phase ~$1$.
Note that $y=(0,\om,\be\om)$.
For any $E$ with $\ch(E)=(r,a_1,a_2)$, we have
$$\Im Z(E)=a_1\om-r\be\om.$$
If $C\sbs X$ is a curve, then $\cO_C$ is torsion, hence $\cO_C\in\cA=\cP_{(0,1]}$ by Lemma \ref{lm:tilt prop}(3).
If $Z(\cO_C)$ is real, then $\cO_C\in\cP_1$ which is impossible by Lemma \ref{lm:tilt prop}(2). 
This implies that $\Im Z(\cO_C)=C\cdot\om>0$ for any curve $C\sbs X$.
As $\om^2>0$, we conclude that $\om$ is ample.

Now we only need to show that $\cA=\cA_{\be,\om}$.
The proof of this statement goes through the lines of \bri[Prop.~10.3 (Step 2)].
\end{proof}

Let 
\begin{equation}\label{sg stab}
\Stab^+_{\cl}(\cD)\sbs\Stab_{\cl}(\cD)
\end{equation}
be the open subset of special geometric stability conditions.
Let us identify $N^*_\bC(X)$ with $\Hom(\Ga,\bC)$ (where $\Ga=\cN(X)$) by sending $u\in N^*_\bC(X)$ to
\begin{equation}
Z_u:\Ga\to\bC,\qquad Z_u(E)=(u,\ch E).
\end{equation}
Then the open subsets $\cP^+(X)\sbs\cP(X)\sbs N^*_\bC(X)$ can be interpreted as open subsets of $\Hom(\Ga,\bC)$.
By the previous theorem,
the local homeomorphism $\Stab_{\cl}(\cD)\to\Hom(\Ga,\bC)$, $(Z,\cP)\mto Z$, restricts to a local homeomorphism
\begin{equation}
\Stab_{\cl}^+(\cD)\to\cP^+(X)
\end{equation}
and we have bijections
\begin{equation}
\Stab_{\cl}^+(\cD)\qt\wtl\GL_2^+(\bR)\iso
\cP^+(X)\qt\GL_2^+(\bR)\iso\bar\cP^+(X).
\end{equation}


\subsection{Gieseker and twisted stability}
Given an ample divisor $\om\in\NSR(X)$, we define the 
\idef{Hilbert polynomial} of $E\in\Coh(X)$ to be 
\begin{equation}
P_\om(E,n)=\int e^{n\om}\ch(E)\Td(X).
\end{equation}
If $E$ is torsion-free, then we define its \idef{reduced Hilbert polynomial} to be
\begin{equation}
p_\om(E,n)=\frac{P_\om(E,n)}{\rk E}.
\end{equation}
We say that a torsion-free sheaf $E$ is 
\idef{$\om$-Gieseker semistable} if for any $0\ne E'\sbs E$ we have
\begin{equation}
p_\om(E',n)\le p_\om(E,n),\qquad n\gg0.
\end{equation}
Let us give a more explicit characterization 
of Gieseker stability.
We have
\begin{equation}
\Td(X)=(1,-\toh K_X,\hi),\qquad \hi=\hi(\cO_X).
\end{equation}
If $\ch(E)=(r,a_1,a_2)$, then
\begin{equation}
P_\om(E,n)
=\frac r2 n^2\om^2+n(a_1-r\tfrac {K_X}2)\om
+(a_2-a_1\tfrac{K_X}2+r\hi).
\end{equation}
Therefore we can substitute the reduced Hilbert polynomial by
\begin{equation}
\bar p_\om(E,n)
=\frac{na_1\om +(a_2-a_1\tfrac{K_X}2)}{r},\qquad
\ch(E)=(r,a_1,a_2),
\end{equation}
and use it instead of $p_\om(E,n)$ to test Gieseker stability.
More generally, given another divisor $\be\in\NSR(X)$, we define
for any torsion-free sheaf $E$ with $\ch(E)=(r,a_1,a_2)$
\begin{equation}
\nu_{\be}(E)=\frac{a_2-a_1\be}{r},\qquad
p_{\be,\om}(E,n)=n\mu_\om(E)+\nu_{\be}(E)
=\frac{na_1\om +(a_2-a_1\be)}{r}.
\end{equation}
We say that a torsion-free sheaf $E$ is 
$(\be,\om)$-\idef{twisted semistable} (\cf \bri) if, 
for any subsheaf $0\ne E'\sbs E$, we have
\begin{equation}
p_{\be,\om}(E',n)\le p_{\be,\om}(E,n),\qquad n\gg0.
\end{equation}
Equivalently, this means that either $\mu_\om(E')<\mu_\om(E)$ or 
$\mu_\om(E')=\mu_\om(E)$
and
$\nu_{\be}(E')\le\nu_{\be}(E)$.
In particular, $E$ is $\mu_\om$-semistable.

\subsection{Large volume limit}
\label{sec:large volume}
Let us consider $\be,\om\in\NSR(X)$ with ample $\om$ and let us define
\begin{equation}
\si_t=(Z_t,\cP_t)=\si_{\be,t\om},\qquad t>0.
\end{equation}
Note that the heart of $\si_t$ is equal to $\cA_{\be,\om}$ for all $t>0$.
We will say that an object $0\ne E\in\cD$ is $\si_\infty$-semistable (or semistable in the \idef{large volume limit}) if $E$ is $\si_t$-semistable for $t\gg0$.

\begin{lemma}
[See {\bri[Prop.~14.2]}]
\label{limit1}
Let $0\ne E\in\cD$ with $\ch(E)=(r,a_1,a_2)$ be such that
\begin{equation}
r>0,\qquad \Im Z_{\be,\om}(E)=(a_1-r\be)\om>0.
\end{equation}
Then $E$ is $\si_\infty$-semistable if and only if $E$ is a shift of a $(\be,\om)$-twisted semistable sheaf.
\end{lemma}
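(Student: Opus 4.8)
The plan is to prove Lemma \ref{limit1} by relating the large-volume behaviour of $Z_{\be,t\om}$ to the twisted Hilbert polynomial $p_{\be,\om}$. The key observation is that $\si_t$-semistability is governed by the phase function $\vi_{\si_t}$, and for an object $E$ with $r=\rk(E)>0$ and $\Im Z_{\be,\om}(E)>0$, the slope inequality $\vi_{\si_t}(E')\le\vi_{\si_t}(E)$ for subobjects $E'\sbs E$ in $\cA_{\be,\om}$ can be rewritten, after multiplying out and expanding in powers of $t$, in terms of the coefficients appearing in $p_{\be,\om}$. Concretely, using \eqref{Z2} one has
\begin{equation*}
Z_{\be,t\om}(E)=\rbr{\tfrac{r}{2}(t^2\om^2-\be^2)+a_1\be-a_2}+\bi\, t(a_1-r\be)\om,
\end{equation*}
so as $t\to\infty$ the phase of $E$ tends to $1$ from below (since the imaginary part is positive and of order $t$ while the real part is of order $t^2$ times $\tfrac{r}{2}\om^2>0$, eventually positive). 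The comparison of phases of $E$ and $E'$ for large $t$ therefore reduces to comparing first the leading-order slopes $\mu_\om$, and then, in case of a tie, the next-order terms, which are exactly $\nu_\be$.

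**First I would** set up the direction ``$\si_\infty$-semistable $\implies$ twisted semistable sheaf''. Given that $E$ is $\si_t$-semistable for $t\gg0$, one argues that $H^{-1}(E)=0$: a nonzero $H^{-1}(E)$ would, by Lemma \ref{lm:tilt prop}(1), be a torsion-free sheaf sitting in $\cF[1]$, and one can exhibit a destabilising subobject or quotient for large $t$ using the fact that $\Im Z_{\be,t\om}$ on $\cF[1]$-objects forces the wrong phase ordering as $t\to\infty$ — this is where the hypothesis $\Im Z_{\be,\om}(E)>0$ together with $r>0$ is used to pin down that $E$ must actually be a genuine sheaf (up to shift). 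Hence $E=F$ or $E=F[?]$ with $F\in\Coh(X)$ torsion-free (the sign/shift being fixed by $r>0$ and the heart convention). Then for a subsheaf $0\ne F'\sbs F$ one has the short exact sequence in $\cA_{\be,\om}$ (after checking $F',F/F'$ land in the tilted heart, again using the slope hypotheses), and $\si_t$-semistability gives $\vi_{\si_t}(F')\le\vi_{\si_t}(F)$ for all large $t$; expanding this inequality in $1/t$ as above yields $\mu_\om(F')\le\mu_\om(F)$, and in the equality case $\nu_\be(F')\le\nu_\be(F)$, i.e. $F$ is $(\be,\om)$-twisted semistable.

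**Conversely**, suppose $F$ is $(\be,\om)$-twisted semistable; I would show $F\in\cA_{\be,\om}$ (it is torsion-free with $\mu_\om^+(F)=\mu_\om(F)$, and the hypothesis $(a_1-r\be)\om>0$ says $\mu_\om(F)>\be\om$, so $F\in\Coh_{\om,>\be\om}\sbs\cA_{\be,\om}$) and that $F$ is $\si_t$-semistable for $t\gg0$. The subtlety is that a destabilising subobject $E'\sbs F$ in $\cA_{\be,\om}$ need not be a subsheaf — it can be a two-step extension $H^{-1}(E')[1]\to E'\to H^0(E')$ with $H^{-1}(E')\in\cF$, $H^0(E')\sbs F$. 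One bounds the possible classes of such $E'$: the support property for $\si_{\be,\om}$ (equivalently, the Bogomolov inequality Theorem \ref{bog1} and negative-definiteness of $\De$ on $\Ker Z$, Lemma above \eqref{disc1}) restricts $\cl(E')$ to lie in a strict semigroup, and among those with bounded $Z_{\be,\om}$-norm there are only finitely many potential destabilisers. For each such candidate, the phase comparison $\vi_{\si_t}(E')$ versus $\vi_{\si_t}(F)$ is eventually determined by the $\mu_\om$-then-$\nu_\be$ expansion, and twisted semistability of $F$ together with twisted semistability of $H^0(E')$ (which follows since $H^0(E')$ is a subsheaf of ... — or one passes to the maximal destabilising subsheaf) rules out destabilisation for $t\gg0$. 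This finite-candidate argument is morally the ``local finiteness'' underlying \bri[Prop.~14.2].

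**The hard part will be** the converse direction, specifically controlling destabilising subobjects $E'\in\cA_{\be,\om}$ that are not subsheaves of $F$: one must rule out, uniformly for all large $t$, that such an $E'$ has $\vi_{\si_t}(E')>\vi_{\si_t}(F)$. The argument needs the support property to get a strict semigroup of classes, then a quantitative estimate showing that if $\vi_{\si_t}(E')>\vi_{\si_t}(F)$ for a sequence $t\to\infty$, the class $\cl(E')$ is forced into a finite set, and finally that on this finite set the $1/t$-expansion of the phase difference, together with the twisted-semistability inequality for $F$ applied to $H^0(E')$ and the torsion-freeness obstruction for $H^{-1}(E')$, gives a contradiction. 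I would also double-check the boundary cases where $\mu_\om(F')=\mu_\om(F)$ and simultaneously the higher-order terms tie, ensuring strict decrease of the relevant discriminant forces semistability rather than merely stability — but since the statement is only about semistability, the non-strict inequalities suffice and no delicate stable/semistable distinction is needed.
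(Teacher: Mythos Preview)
The paper gives no proof of this lemma; it simply cites \bri[Prop.~14.2]. Your sketch follows essentially the same argument as Bridgeland's original proof: reduce the phase comparison for $t\gg0$ to the lexicographic comparison $(\mu_\om,\nu_\be)$, show $H^{-1}(E)=0$ in the forward direction by exhibiting $H^{-1}(E)[1]\sbs E$ as a destabilising subobject, and for the converse invoke a finiteness argument for potential destabilising classes.

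One slip: with $r>0$ and $\Im Z_{\be,t\om}(E)>0$, the real part of $Z_{\be,t\om}(E)$ grows like $\tfrac{r}{2}\om^2 t^2>0$ while the imaginary part grows like $t$, so the phase tends to $0^+$, not to $1^-$. This does not affect the logic of the comparison, but you should correct it. Correspondingly, in the forward direction the destabilisation comes from $\vi_\infty(H^{-1}(E)[1])=1>0=\vi_\infty(E)$, exactly as in the proof of Lemma~\ref{limit2} just below.

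Your identification of the hard part is accurate: in the converse direction, subobjects $E'\sbs F$ in $\cA_{\be,\om}$ need not be subsheaves, and one must bound their classes uniformly in $t$. Your sketch appeals to the support property and a strict-semigroup argument, which is morally right but not quite how Bridgeland does it. In \bri[Prop.~14.2] the finiteness comes from the observation that $H^0(E')\sbs F$ is a subsheaf, so $\mu_\om(H^0(E'))$ takes only finitely many values above $\be\om$ (discreteness of slopes plus the bound $\mu_\om(H^0(E'))\le\mu_\om^+(F)=\mu_\om(F)$), together with the Bogomolov inequality for $H^{-1}(E')$, which bounds $\ch_2$. This gives finitely many walls in $t$, beyond which no destabilisation can occur. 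Your support-property argument would also work, but you should be explicit that the relevant bound is on $\Im Z_{\be,\om}(E')\le\Im Z_{\be,\om}(F)$ (which is $t$-independent up to scaling), not on $\n{Z_{\be,t\om}(E')}$.
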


\begin{lemma}[\Cf{\cite[Lemma 2.7]{bayer_space}}]
\label{limit2}
Let $E\in\cA_{\be,\om}$ be $\si_\infty$-semistable.
Then
\begin{enumerate}
\item If $\rk(E)<0$, then $H\inv(E)$ is a $\mu_\om$-semistable sheaf and $H^0(E)$ has dimension ~$0$.
\item If $\rk(E)\ge0$, then $E=H^0(E)$ is either a $\mu_\om$-semistable or a torsion sheaf.
\end{enumerate}

\end{lemma}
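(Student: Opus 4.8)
The plan is to exploit the canonical exact sequence coming from the tilt. Since $\si_{\be,\om}$ is a special geometric stability condition (the $\cO_x$ being $\si_{\be,\om}$-stable of phase $1$), Lemma~\ref{lm:tilt prop} applies to $\cA=\cA_{\be,\om}$: every $E\in\cA_{\be,\om}$ fits into a short exact sequence $0\to F[1]\to E\to T\to0$ in $\cA_{\be,\om}$ with $F=H^{-1}(E)$ torsion-free, $\mu^+_\om(F)\le\be\om$, and $T=H^0(E)\in\Coh_{\om,>\be\om}$. Crucially the heart $\cA_{\be,\om}$ is independent of $t$, so this sequence and all sub/quotient relations are fixed while only the phase $\vi_t(-)=\tfrac1\pi\Arg Z_{\be,t\om}(-)$ changes with $t$. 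First I would record the large-$t$ asymptotics of $Z_{\be,t\om}$ from \eqref{Z2}: for a nonzero sheaf of rank $r$ and first Chern class $c$, $\Re Z_{\be,t\om}\sim\tfrac r2 t^2\om^2$ while $\Im Z_{\be,t\om}=(c-r\be)(t\om)$ is linear in $t$, and $\Re Z_{\be,t\om}$ is bounded when $r=0$. Hence $\Arg Z_{\be,t\om}$ tends to $0$ on torsion-free sheaves of positive rank and slope $>\be\om$; to $\pi$ on shifts of torsion-free sheaves of positive rank and slope $\le\be\om$ (equalling $\pi$ when the slope is $\be\om$); to $\tfrac\pi2$ on torsion sheaves not supported in dimension $0$; and equals $\pi$ on $0$-dimensional sheaves. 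More precisely, for a $\mu_\om$-semistable torsion-free $G$ of rank $r>0$ and slope $\mu\le\be\om$ one gets $\pi-\Arg Z_{\be,t\om}(G[1])=\tfrac{2(\be\om-\mu)}{t\om^2}+O(t^{-3})$; and $\Arg Z_{\be,t\om}(E)\to\pi$ if $\rk E<0$, $\to0$ if $\rk E>0$.

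For part (2), assume $\rk E\ge0$, so $\rk F\le\rk T$, and suppose $F\ne0$; then $F$ has positive rank and $F[1]$ is a nonzero subobject of $E$ in $\cA_{\be,\om}$ with $\vi^+_t(F[1])\to1$ (its $\Coh$-Harder--Narasimhan factors, shifted by $[1]$, all have phase $\to1$). On the other hand $\vi_t(E)\le\tfrac12+o(1)$: when $\rk E>0$ this is clear, and when $\rk E=0$ the assumption $F\ne0$ gives $\rk F=\rk T>0$ and $c_1(T)\om>c_1(F)\om$, hence $\Im Z_{\be,t\om}(E)>0$ and $\vi_t(E)\to\tfrac12$. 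This contradicts $\vi^+_t(F[1])\le\vi_t(E)$ for $t\gg0$, so $F=0$ and $E=H^0(E)$ is a sheaf. If $\rk E=0$ it is torsion; if $\rk E>0$ then $\Im Z_{\be,\om}(E)=\rk E\,(\mu_\om(E)-\be\om)>0$ since $\mu_\om(E)\ge\mu^-_\om(E)>\be\om$, so Lemma~\ref{limit1} applies and $E$, being an unshifted sheaf, is $(\be,\om)$-twisted semistable, in particular $\mu_\om$-semistable.

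For part (1), $\rk E<0$, so $\rk F>\rk T\ge0$, $F\ne0$, and $\vi_t(E)\to1$. I would first show $T$ is $0$-dimensional. If $\rk T>0$, its last $\Coh$-HN factor $T'$ is torsion-free of positive rank and slope $\mu^-_\om(T)>\be\om$; one checks $E\onto T\onto T'$ is an epimorphism in $\cA_{\be,\om}$, and $\vi_t(T')\to0<\vi_t(E)$ eventually contradicts semistability, so $T$ is torsion. If $T$ is not $0$-dimensional then $\Arg Z_{\be,t\om}(T)\to\tfrac\pi2$, yet $T$ being a quotient of the $\si_t$-semistable $E$ confines all its HN phases to $[\vi_t(E),1]$, forcing $\Arg Z_{\be,t\om}(T)\ge\pi\vi_t(E)\to\pi$ --- a contradiction. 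So $T$ is $0$-dimensional, whence $c_1(T)=0$, $\rk T=0$ and $\rk E=-\rk F$. Finally, if $F$ were not $\mu_\om$-semistable, let $F_1\subset F$ be the maximal destabilizing subsheaf, of slope $\mu_1=\mu^+_\om(F)$, so $\be\om\ge\mu_1>\mu_\om(F)$; one checks $F_1[1]$ is a subobject of $E$ in $\cA_{\be,\om}$. Using $c_1(T)=0$ one finds $\Im Z_{\be,t\om}(E)=\rk F\,(\be\om-\mu_\om(F))\,t$, hence $\pi-\Arg Z_{\be,t\om}(E)=\tfrac{2(\be\om-\mu_\om(F))}{t\om^2}+O(t^{-3})$, while $\pi-\Arg Z_{\be,t\om}(F_1[1])=\tfrac{2(\be\om-\mu_1)}{t\om^2}+O(t^{-3})$; since $\mu_1>\mu_\om(F)$ this yields $\vi_t(F_1[1])>\vi_t(E)$ for $t\gg0$, contradicting semistability. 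Hence $F$ is $\mu_\om$-semistable.

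The hard part is the last step of part (1): the leading-order comparison of arguments must be carried out carefully, and it genuinely uses that $T$ is already known to be $0$-dimensional, so that $c_1(T)=0$ enters the expression for $\Im Z_{\be,t\om}(E)$. The degenerate subcases --- $\mu_1=\be\om$, or $\Im Z_{\be,t\om}(E)$ identically zero --- must be excluded separately, though a short computation shows neither can occur once $F$ fails to be $\mu_\om$-semistable (both would force $\mu_\om(F)=\be\om$, against $\mu_\om(F)<\mu_1$). A pervasive but routine nuisance throughout is checking that each inclusion or surjection of sheaves used actually induces a morphism in the tilted heart $\cA_{\be,\om}$, i.e.\ that the relevant kernels and cokernels lie on the correct side of the torsion pair $(\Coh_{\om,>\be\om},\Coh_{\om,\le\be\om})$; these checks follow at once from the behaviour of $\mu^\pm_\om$ along the $\Coh$-Harder--Narasimhan filtrations.
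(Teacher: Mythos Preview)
Your proof is correct and follows essentially the same approach as the paper's: both use the tilt exact sequence $0\to H^{-1}(E)[1]\to E\to H^0(E)\to 0$, compute the limiting phases $\vi_\infty$ from the asymptotics of $Z_{\be,t\om}$, and derive contradictions with $\si_t$-semistability via appropriate sub- and quotient-objects; the paper organizes the case split by whether $H^{-1}(E)$ vanishes rather than by $\rk E$, but this is cosmetic. One small slip in your final paragraph: the case $\mu_1=\be\om$ does \emph{not} force $\mu_\om(F)=\be\om$ (only $\mu_\om(F)\le\be\om$), so it is not excluded as you claim --- but it causes no trouble, since then $\vi_t(F_1[1])=1$ exactly while $\vi_t(E)<1$ (as $\Im Z_t(E)>0$ by the other degenerate-case argument, which is correct), so the desired inequality holds immediately.
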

\begin{proof}
Let $\phi_t$ be the phase function on $\cA=\cA_{\be,\om}$ corresponding to $\si_t$.
For any object $F\in\cD$ with $\ch(F)=(r,a_1,a_2)$, 
we obtain from \eqref{Z-simp}
\begin{equation}\label{inf phase}
\vi_\infty(F):=\lim_{t\to\infty}\frac1\pi\Arg Z_t(F)
=\begin{cases}
0&r>0\\
\toh&r=0,\,a_1\om>0\\
1&r=a_1=0,\,a_2>0
\end{cases}
\end{equation}
Note that if $0\ne F\in\cA$, then $\vi_\infty(F)=\lim_{t\to\infty}\vi_t(F)$.
Let 
$$\cT=\Coh_{\om,>\be\om},\qquad \cF=\Coh_{\om,\le \be\om}$$
so that $\cA=\ang{\cF[1],\cT}$.
Then there is an exact sequence in $\cA$
$$0\to E_1[1]\to E\to E_0\to0$$
where $E_1\in\cF$ and $E_0\in\cT$.

Let us assume that $E_1\ne0$.
Then $E_1\in\cF=\Coh_{\om,\le \be\om}$ is torsion-free,
hence $\vi_\infty(E_1)=0$ by \eqref{inf phase}.
This implies that $\vi_{\infty}(E_1[1])=1$.
If $E_0$ has dimension $2$ or $1$, then 
$\vi_{\infty}(E_0)=0$ or $\toh$ by \eqref{inf phase}.
But this implies that $\vi_t(E_1[1])>\vi_t(E_0)$ for $t\gg0$, which is a contradiction.
We conclude that $E_0$ has dimension $0$,
hence $Z_t(E)=-Z_t(E_1)-\ch_2(E_0)$.
Let $\ch(E_1)=(r,a_1,a_2)$.
If $(a_1-r\be)\om=0$, then $\mu_\om(E_1)=\be\om$, hence $E_1\in\Coh_{w,\le\be\om}$ is automatically $\mu_\om$-semistable.
Let us assume that $(a_1-r\be)\om<0$.
If $E_1$ is not $\mu_\om$-semistable, then there exists $0\ne F\sbs E_1$ with $\mu_\om(F)>\mu_\om(E)$ and $F, E_1/F\in\cF$.
This implies that we have an exact sequence in \cA
$$0\to F[1]\to E_1[1]\to (E_1/F)[1]\to0$$
Let $\ch(F)=(s,b_1,b_2)$.
If $(b_1-s\be)\om=0$, then $\vi_t(F[1])=1>\vi_t(E_1[1])$ for any $t$, which is a contradiction.
Therefore $(b_1-s\be)\om<0$.
We obtain from \eqref{Z2} and $\si_\infty$-semistability of $E$ that
$$\frac{-r}{(a_1-r\be)\om}\ge \frac{-s}{(b_1-s\be)\om}$$
which implies that $\mu_\om(F)\le\mu_\om(E_1)$.

Let us assume that $E_1=0$, hence $E=E_0\in\cT=\Coh_{w,>\be\om}$.
Let $\ch(E)=(r,a_1,a_2)$.
If $r>0$, then 
$(a_1-r\be)\om>0$ and the statement follows from Lemma \ref{limit1}.
If $r=0$, then $E$ is a torsion sheaf.
\end{proof}

\subsection{Bogomolov-type inequalities}
\label{sec:bog-type}
In this section we will prove a new Bogomolov-type inequality for $\si_{\be,\om}$-semistable objects (Theorem \ref{th:BTI}).
This inequality will be important in our analysis of the behavior of mini/max phases of objects under stability deformations.
Let us first describe the known results.

Let $\be,\om\in\NSR(X)$ with ample $\om$.
We define
\begin{equation}
\bar\De_{\be,\om}(E)=(b_1\om)^2-2b_0b_2\om^2,\qquad b=\ch^\be(E)=e^{-\be}\ch(E).
\end{equation}

Recall that for every $D\in\bNE(X)\ms\set0$, we have $D\om>0$.
Let us introduce a norm on $N_1(X)_\bR\sps \bNE(X)$ and define
\begin{equation}
C=\sup\sets{0,\,\frac{-D^2}{(D\om)^2}}{D\in\bNE(X),\, \nn D=1}
\end{equation}
Then $C\ge0$ and for every effective divisor $D$ we have
\begin{equation}
C(D\om)^2+D^2\ge0.
\end{equation}
We define
\begin{equation}
\bar\De^C_{\be,\om}(E)=\De(E)+C(b_1\om)^2,\qquad
b=\ch^\be(E).
\end{equation}

The following result is proved in 
\cite{toda_stabilityb,bayer_bridgeland,bayer_space}.

\begin{theorem}
Let $E\in D^b(\Coh X)$ be a $\si_{\be,\om}$-semistable object. Then
\begin{equation}
\bar \De_{\be,\om}(E)\ge0,\qquad \bar \De_{\be,\om}^C(E)\ge0.
\end{equation}
\end{theorem}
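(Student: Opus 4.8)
The plan is to reduce to a single quadratic form, dispose of the ``large volume'' objects by the classical Bogomolov inequality, and propagate to all $\si_{\be,\om}$-semistable objects by a wall-crossing induction. First I would record the reductions. Both $\bar\De_{\be,\om}(E)$ and $\bar\De^C_{\be,\om}(E)$ depend on $E$ only through $\cl E$, are unchanged by $E\mapsto E[1]$, and scale by $t^2$ when $\om$ is replaced by $t\om$; so, fixing $\om$ and regarding $Q:=\bar\De_{\be,\om}$ and $Q^C:=\bar\De^C_{\be,\om}$ as quadratic forms on $\cN(X)_\bR$, it is enough to prove $Q(\cl E)\ge0$ and $Q^C(\cl E)\ge0$ for every object $E$ that is $\si_{\be,t\om}$-semistable for some $t>0$. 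By \eqref{Z3}, an element of $\Ker Z_{\be,t\om}$ with $\ch^\be=(r,b_1,b_2)$ satisfies $b_1\om=0$ and $b_2=\toh rt^2\om^2$, so on that kernel $Q$ equals $-r^2t^2(\om^2)^2$ and $Q^C$ equals $\De=b_1^2-r^2t^2\om^2$; since $\om^\perp\sbs\NSR(X)$ is negative definite (Hodge index theorem), $Q$ is negative semi-definite and $Q^C$ negative definite on $\Ker Z_{\be,t\om}$, for every $t>0$.

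Next I would settle the large volume limit. If $E$ is $\si_{\be,t\om}$-semistable for all $t\gg0$ then, after a shift, $E\in\cA_{\be,\om}$, and Lemma \ref{limit2} applies: $E$ is a torsion sheaf, a $\mu_\om$-semistable torsion-free sheaf, or an extension of a $0$-dimensional sheaf by the shift of a $\mu_\om$-semistable sheaf. A short Chern-character computation together with the classical Bogomolov inequality (Theorem \ref{bog1}) then gives $\De(\cl E)\ge0$, except when $E$ is a pure $1$-dimensional torsion sheaf; in that case $b_1=\ch_1(E)\in\bNE(X)$, so $Q(\cl E)=(b_1\om)^2\ge0$ and $Q^C(\cl E)=b_1^2+C(b_1\om)^2\ge0$ by the defining inequality of $C$. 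When $\De(\cl E)\ge0$, the Hodge index inequality $(b_1\om)^2\ge b_1^2\om^2$ gives $Q(\cl E)\ge\om^2\De(\cl E)\ge0$, and $Q^C(\cl E)=\De(\cl E)+C(b_1\om)^2\ge0$ since $C\ge0$.

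Finally I would run the wall-crossing induction. Let $E$ be $\si_{\be,t_0\om}$-semistable for some $t_0>0$; the heart $\cA_{\be,\om}$ is independent of $t$, and the walls for the class $\cl E$ in $[t_0,\infty)$ form a finite set (local finiteness of walls on surfaces). If no such wall lies beyond $t_0$, the previous paragraph applies. Otherwise, at the first wall $t_1>t_0$ past which $E$ is destabilized, $E$ has a Jordan--H\"older filtration in $\cA_{\be,\om}$ by $\si_{\be,t_1\om}$-stable objects $E_1,\dots,E_m$, $m\ge2$, all of one common phase. If that phase lies in $\cP_{\be,t_1\om,1}$, then up to shift each factor is a $\mu_\om$-semistable sheaf of slope $\be\om$ and one argues as in the base case; otherwise each $\Im Z_{\be,t_1\om}(\cl E_i)>0$ is strictly smaller than $\Im Z_{\be,t_1\om}(\cl E)$, so by induction on a suitable complexity of $\cl E$ (local finiteness of walls turns this into a finite descent) one may assume $Q(\cl E_i)\ge0$ and $Q^C(\cl E_i)\ge0$. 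Since the $Z_{\be,t_1\om}(\cl E_i)$ are positive real multiples of $Z_{\be,t_1\om}(\cl E)$, the resulting ``wall equation'' among their Chern characters forces the bilinear cross-terms in the expansions $Q(\cl E)=\sum_i Q(\cl E_i)+(\text{cross})$ and the analogous one for $Q^C$ to be non-negative; hence $Q(\cl E)\ge\sum_i Q(\cl E_i)\ge0$ and likewise for $Q^C$. Reversing the reductions proves $\bar\De_{\be,\om}(E)\ge0$ and $\bar\De^C_{\be,\om}(E)\ge0$.

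The hard part will be this last step: establishing that at a genuine wall the two discriminants do not drop under the Jordan--H\"older decomposition, i.e. that the cross-terms are non-negative once one exploits the alignment of the central charges of the factors --- this is where the Hodge index theorem, and for $\bar\De^C$ the constant $C$ absorbing negative self-intersections of effective divisors, re-enter --- together with the local finiteness of walls that makes the induction terminate. These are exactly the estimates carried out in \cite{toda_stabilityb,bayer_bridgeland,bayer_space}. (The first paragraph shows in addition that $\bar\De_{\be,\om}$ stays negative semi-definite on $\Ker Z_{\be,t\om}$ for all $t$, so Theorem \ref{th:Qpos2} can be used to transport the resulting support property to neighbouring special geometric stability conditions.)
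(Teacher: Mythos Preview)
The paper does not give its own proof of this theorem; it states it with attribution to \cite{toda_stabilityb,bayer_bridgeland,bayer_space}. Your outline is exactly the argument of those references, and is the same strategy the paper carries out in detail for the closely related Theorem~\ref{th:BTI}: check that the quadratic form is negative (semi-)definite on $\Ker Z_{\be,t\om}$, establish the inequality at large volume via Lemma~\ref{limit2} and the classical Bogomolov inequality (Theorem~\ref{bog1}), then propagate by induction using that Jordan--H\"older factors at a wall lie on a common ray so that Lemma~\ref{convex lQ} gives $Q(\sum_i\cl E_i)\ge\sum_iQ(\cl E_i)$.

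Two small corrections. First, $\bar\De^C_{\be,\om}$ is actually invariant under $\om\mapsto t\om$ (the constant $C$ scales by $t^{-2}$), not homogeneous of degree~$2$; this is harmless for your argument. Second, your induction bookkeeping is slightly loose: to make the descent terminate cleanly one first treats rational $\be,\om$, so that $\rho=\Im Z_{\be,\om}$ has discrete image in $\bR$, and inducts on $\rho(E)$ (this is precisely how the paper organises the proof of Theorem~\ref{th:BTI}); the general real case follows by approximating $(\be,\om)$ by rational pairs and using openness of stability. Your invocation of ``local finiteness of walls'' is correct in spirit but hides this reduction. With these tweaks, the cross-term positivity you flag as ``the hard part'' is already supplied by Lemma~\ref{convex lQ} once negative semi-definiteness on $\Ker Z$ is established, so no separate estimate is needed.
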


Before we will formulate a new Bogomolov-type inequality, let us derive a consequence of the classical Bogomolov inequality (Theorem \ref{bog1}).

\begin{lemma}\label{1.4}
Let $\om,\be,H$ be divisors such that $\om$ is ample and $H$ is nef.
For any $\mu_\om$-semistable sheaf $E$, we have
$$\De_{\be,\om}^H(E):=(b_1H)(b_1\om)-b_0b_2(H\om)\ge0,
\qquad b=\ch^\be(E).$$
\end{lemma}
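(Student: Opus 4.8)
The plan is to exploit that the nef class $H$ enters $\De_{\be,\om}^H(E)$ only linearly, and thereby reduce the statement to a single membership in the cone of curves. Writing $b=\ch^\be(E)$ with components $(b_0,b_1,b_2)$ (so that $b_1\in\NSR(X)$), one has
$$\De_{\be,\om}^H(E)=(b_1H)(b_1\om)-b_0b_2(H\om)=v\cdot H,\qquad
v:=(b_1\om)\,b_1-b_0b_2\,\om\in\NSR(X).$$
Since a nef class pairs non-negatively with every element of $\bNE(X)$, it is enough to prove that this explicit $\bR$-divisor $v$ lies in $\bNE(X)$; then $\De_{\be,\om}^H(E)=v\cdot H\ge0$ for every nef $H$.

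To show $v\in\bNE(X)$ I would verify that $v$ lies in the closed positive cone on the side of $\om$, namely that $v^2\ge0$ and $v\cdot\om\ge0$. This closed cone is contained in $\bNE(X)$: any class $D$ with $D^2>0$ and $D\om>0$ is big (asymptotic Riemann--Roch), hence represented by an effective divisor after scaling, so $D\in\bNE(X)$, and $\bNE(X)$ is closed (this is standard, see e.g.\ \cite{lazarsfeld_positivity}). The two required inequalities follow from the direct computations
$$v^2=(b_1\om)^2\,\De(E)+(b_0b_2)^2\,\om^2,\qquad
v\cdot\om=(b_1\om)^2-b_0b_2\,\om^2.$$
For $v^2\ge0$ one uses the classical Bogomolov inequality $\De(E)\ge0$ (Theorem~\ref{bog1}) together with $\om^2>0$ (as $\om$ is ample). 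For $v\cdot\om\ge0$ one combines $2b_0b_2\le b_1^2$ (again Bogomolov) with the Hodge index inequality $b_1^2\,\om^2\le(b_1\om)^2$ (a consequence of the Hodge index theorem, valid because $\om^2>0$), which yields $b_0b_2\,\om^2\le\tfrac12(b_1\om)^2\le(b_1\om)^2$.

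The conceptual point is the linearity reformulation $\De_{\be,\om}^H(E)=v\cdot H$, which collapses the problem to the positivity of one class; everything after that is routine, the only delicate parts being the sign bookkeeping in the bound for $v\cdot\om$ and the citation that the closed positive cone lies in $\bNE(X)$, so I do not expect a genuine obstacle. It is worth noting that the argument uses only $\De(E)\ge0$ and the ampleness of $\om$, and that it handles all nef $H$ simultaneously, with no perturbation or limiting argument needed.
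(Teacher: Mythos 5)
Your proof is correct, but it takes a genuinely different route from the paper's. The paper argues by brute force: it normalizes $\om^2=1$, writes $H=t\om+D_1$ with $D_1\in\om^\perp$ (so $s^2:=H^2\le t^2:=(H\om)^2$), uses Bogomolov to reduce to $b_1^2(H\om)\le 2(b_1H)(b_1\om)$, and then decomposes $b_1=xH+y\om+D$ with $D\in\ang{H,\om}^\perp$ to collapse everything to the elementary inequality $ts^2x^2+ty^2+2s^2xy\ge s(sx+y)^2\ge0$. You instead observe that $H$ enters only linearly, write $\De_{\be,\om}^H(E)=v\cdot H$ with $v=(b_1\om)b_1-b_0b_2\om$, and show $v$ lies in the closed positive cone $\{v^2\ge0,\ v\om\ge0\}\sbs\bNE(X)$; your two computations $v^2=(b_1\om)^2\De(E)+(b_0b_2)^2\om^2$ and $v\cdot\om=(b_1\om)^2-b_0b_2\om^2$ are correct, and the inputs (Bogomolov, the Hodge index inequality $b_1^2\om^2\le(b_1\om)^2$) are exactly the ones available at this point in the paper. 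The trade-off: the paper's argument is purely linear-algebraic (Hodge index plus signs), whereas yours additionally invokes the geometric fact that the positive cone is contained in the pseudoeffective cone $=\bNE(X)$ (via asymptotic Riemann--Roch); in exchange, your reformulation makes the role of nefness of $H$ transparent and isolates a single pseudoeffective class $v$ that works for all nef $H$ at once, which is arguably the cleaner conceptual explanation of why the lemma is true.
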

\begin{proof}
We can assume that $\om^2=1$.
Let $t=H\om\ge 0$ and $s\ge0$ be such that $s^2=H^2$.
Then $H=t\om+D_1$ for some $D_1\in\om^\perp$,
hence $s^2=H^2=t^2+D_1^2\le t^2$ and $s\le t$.
By the Bogomolov inequality, we have
$$\De(E)=(\ch(E),\ch(E))
=(\ch^\be(E),\ch^\be(E))=b_1^2-2b_0b_2\ge0.$$
Therefore $2b_0b_2(H\om)\le b_1^2(H\om)$ and we need to prove that
$$b_1^2(H\om)\le 2(b_1H)(b_1\om).$$

We can represent $b_1=xH+y\om+D$, where $x,y\in\bR$ and $D\in \ang{H,\om}^\perp$, hence $D^2\le0$.
It is enough to show that
$$t(x^2s^2+y^2+2xyt)\le 2(xs^2+yt)(xt+y).$$
This simplifies to $ts^2x^2+ty^2+2s^2xy\ge0$.
We have $ts^2x^2+ty^2+2s^2xy\ge s^3x^2+sy^2+2s^2xy
=s(sx+y)^2\ge0$.
\end{proof}


\begin{lemma}\label{1.5}
The quadratic form $\De^H_{\be,\om}$ is negative semi-definite on $\Ker (Z_{\be,\om}:\cN(X)_\bR\to\bC)$.
Therefore, for any ray $\ell\sbs\bC$, the cone
$$Z_{\be,\om}\inv(\ell)\cap\set{\De^H_{\be,\om}\ge0}$$
is convex.
\end{lemma}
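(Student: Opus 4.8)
The plan is to reduce the convexity statement to the semi-definiteness statement, and then establish the latter by a one-line computation carried out in the coordinates $\ch^\be$.

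For the reduction: once we know that $Q:=\De^H_{\be,\om}$ is negative semi-definite on $\Ker Z_{\be,\om}$, the cone in the statement is exactly $C(\ell,Z_{\be,\om},Q)=\sets{x}{Z_{\be,\om}(x)\in\ell,\,Q(x)\ge0}$ of Lemma~\ref{convex lQ}, which is therefore convex. So nothing further is needed for the second assertion.

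For the semi-definiteness I would first record that $\De^H_{\be,\om}$ is a genuine quadratic form on $\cN(X)_\bR$: the components $b_0,b_1,b_2$ of $b=\ch^\be(E)=e^{-\be}\ch(E)$ are linear in the class of $E$, while $(b_1H)(b_1\om)$ and $b_0b_2(H\om)$ are quadratic in $(b_0,b_1,b_2)$. Then, for $E$ with $\ch^\be(E)=(b_0,b_1,b_2)$ lying in $\Ker Z_{\be,\om}$, I would use the compact expression \eqref{Z3}, namely $Z_{\be,\om}(E)=(\oh b_0\om^2-b_2)+\bi\, b_1\om$, to read off the two identities $b_1\om=0$ and $b_2=\oh b_0\om^2$. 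Substituting them into the definition collapses the first term and leaves $\De^H_{\be,\om}(E)=(b_1H)(b_1\om)-b_0b_2(H\om)=-\oh\, b_0^2\,\om^2\,(H\om)$. Since $\om$ is ample we have $\om^2>0$, and since $H$ is nef and $\om$ ample (so that $\om\in\bNE(X)$, a positive multiple of $\om$ being an effective curve class) we have $H\om\ge0$; as $b_0^2\ge0$, this gives $\De^H_{\be,\om}(E)\le0$.

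There is no real obstacle here. The only point is to work in the coordinates $b=\ch^\be(E)$ and invoke \eqref{Z3}, after which $\Ker Z_{\be,\om}$ is cut out by two transparent linear equations and the inequality is immediate; the only things to keep straight are the sign conventions in \eqref{Z3} and in the definition of $\De^H_{\be,\om}$, and the standard fact that a nef divisor pairs non-negatively with an ample divisor on a surface.
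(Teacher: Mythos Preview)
Your proof is correct and follows essentially the same approach as the paper: compute in $\ch^\be$-coordinates using \eqref{Z3} to get $b_1\om=0$ and $b_2=\oh b_0\om^2$ on the kernel, substitute to obtain $\De^H_{\be,\om}(E)=-\oh b_0^2\om^2(H\om)\le0$, and then invoke Lemma~\ref{convex lQ} for the convexity. The paper's proof is the same computation, just written more tersely.
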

\begin{proof}
Let $b=\ch^\be(E)$ and $Z_{\be,\om}(E)=\oh b_0\om^2-b_2+ib_1\om=0$.
Then $b_1\om=0$ and $b_2=\oh b_0\om^2$.
Therefore $\De_{\be,\om}^H(E)=-\oh b_0^2 \om^2(H\om)\le0$.
The last assertion follows from 
Lemma \ref{convex lQ}. 
\end{proof}

\begin{theorem}\label{th:BTI}
Let $\om,\be,H\in\NSR(X)$ be divisors such that $\om$ is ample and $H$ is nef.
For any $\si_{\be,\om}$-semistable object $E\in D^b(\Coh X)$,
we have
$$\De_{\be,\om}^H(E)=(b_1H)(b_1\om)-b_0b_2(H\om)\ge0,
\qquad b=\ch^\be(E).$$
\end{theorem}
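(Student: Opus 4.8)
The plan is to reduce the statement to the classical Bogomolov-type input of Lemma~\ref{1.4} by a wall-crossing argument along the vertical family $\si_t=\si_{\be,t\om}$, $t\in[1,\infty)$, run exactly as in the proof of Theorem~\ref{th:Qpos1} (equivalently Theorem~\ref{th:Qpos2}), with the large-volume limit of \S\ref{sec:large volume} in the role of the reference point.

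First the reductions. Writing $b=\ch^\be(E)$ one has $\De^H_{\be,t\om}(E)=t\,\De^H_{\be,\om}(E)$, so it suffices to prove that the fixed quadratic form $Q:=\De^H_{\be,\om}$ on $\cN(X)_\bR$ satisfies $Q(\cl E)\ge0$ for every $\si_{\be,t\om}$-semistable $E$ and every $t\ge1$; note all these stability conditions share the heart $\cA_{\be,\om}$. Since $Q$ is even in $\ch^\be$, it is $[1]$-invariant, so we may assume $E\in\cA_{\be,\om}$. On $\Ker Z_{\be,t\om}$ one has $b_1\om=0$ and $b_2=\oh b_0 t^2\om^2$ by \eqref{Z3}, hence $Q=-\oh b_0^2 t^2\om^2(H\om)\le0$ because $H$ is nef and $\om$ ample; thus $Q$ is negative semi-definite on every $\Ker Z_{\be,t\om}$, and by Lemma~\ref{convex lQ} it is super-additive on each cone $C(\ell,Z_{\be,t\om},Q)$: if the values $Z_{\be,t\om}(v_i)$ lie on one ray $\ell$ and $Q(v_i)\ge0$, then $Q(\sum_i v_i)\ge\sum_i Q(v_i)\ge0$.

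The base case is the large-volume limit. If $E\in\cA_{\be,\om}$ is $\si_{\be,t\om}$-semistable for all $t\ge1$, it is $\si_\infty$-semistable, and Lemma~\ref{limit2} (with Lemma~\ref{limit1}) describes it: either $E$ is a $\mu_\om$-semistable or torsion sheaf, or $\rk E<0$ and there is a triangle $H^{-1}(E)[1]\to E\to H^0(E)$ with $H^{-1}(E)$ a $\mu_\om$-semistable torsion-free sheaf of positive rank and $H^0(E)$ of dimension $0$, of some length $n>0$. In the first case $Q(\cl E)\ge0$ is Lemma~\ref{1.4} (for a torsion sheaf $b_0=0$ and $b_1$ is $\pm$ an effective class, so $Q=(b_1H)(b_1\om)\ge0$). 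In the second case $\cl E=\cl H^0(E)-\cl H^{-1}(E)$, and a direct computation with the bilinear form of $Q$ gives $Q(\cl E)=Q(\cl H^{-1}(E))+(\rk H^{-1}(E))\,n\,(H\om)\ge0$, again by Lemma~\ref{1.4}. So $Q\ge0$ on all classes of $\si_\infty$-semistable objects, and it remains to run the inductive step. For a fixed class the walls of the vertical family $\si_{\be,t\om}$ form a locally finite subset of $(0,\infty)$ with no walls for $t\gg0$ (standard for surfaces, contained in \S\ref{sec:large volume}), so only finitely many meet $[1,\infty)$; one then repeats the induction of Theorem~\ref{th:Qpos1} on $\n{Z_{\be,\om}(\cl E)}$ over a compact subinterval containing those walls, using the large-volume base case in place of the reference point. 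At the largest $t_0$ where a semistable object of a given class survives, it is strictly $\si_{\be,t_0\om}$-semistable, hence $S$-equivalent to $\bigoplus_i E_i$ with the $E_i$ of a single $\si_{\be,t_0\om}$-phase; the values $Z_{\be,t_0\om}(\cl E_i)$ lie on a common ray, and by the small-angle estimate of Theorem~\ref{th:Qpos1} (valid because the subinterval is short) each $\n{Z_{\be,\om}(\cl E_i)}<\n{Z_{\be,\om}(\cl E)}$. By the inductive hypothesis $Q(\cl E_i)\ge0$, and super-additivity gives $Q(\cl E)\ge\sum_i Q(\cl E_i)\ge0$.

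The main obstacle is exactly the wall-crossing bookkeeping — the same point as in Theorem~\ref{th:Qpos1}: one must know that the walls of a fixed class in the vertical family are finite in number, so that the induction can be localized to a compact subinterval and terminates, and that at a wall a semistable class splits into strictly shorter classes lying on a common $Z_{\be,t_0\om}$-ray. Both follow from the support property together with the already-established bound $\bar\De_{\be,\om}(E)\ge0$; with these in hand the rest is the formal mechanism of Theorem~\ref{th:Qpos1}, and the only genuinely new inputs are Lemma~\ref{1.4} and Lemma~\ref{1.5}.
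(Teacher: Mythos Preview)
Your strategy is the paper's: the same base case via the large-volume limit and Lemma~\ref{1.4}, the same super-additivity on rays from Lemma~\ref{1.5}, and the same wall-crossing along the vertical family $\si_{\be,t\om}$. The difference is in how the induction is organised, and there your write-up has a gap.

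You try to invoke the mechanism of Theorem~\ref{th:Qpos1}, but that argument needs a finite reference parameter where $Q\ge0$ holds on the \emph{entire} semistable support. The large-volume base case only supplies this at $t=\infty$; your per-class finiteness of walls gives a $T$ that works for the starting class $\ga$, but the recursion produces new classes which may have walls beyond $T$, so no single finite reference point is available. Relatedly, the small-angle estimate you quote only controls $\n{Z_{\be,\om}(\cl E_i)}$ when the wall $t_1$ is close to the endpoint, which is not guaranteed. (Also, one should follow \emph{stable} objects: an object stable on $[t_0,t_1)$ and strictly semistable at $t_1$ breaks into stable factors of the same $\si_{t_1}$-phase; your phrase ``the largest $t_0$ where a semistable object of a given class survives'' does not describe this correctly.)

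The paper fixes termination without any small-angle estimate. Since $\Im Z_{\be,t\om}=t\rho$ with $\rho=\Im Z_{\be,\om}$ independent of $t$, the stable factors at any wall satisfy $0<\rho(E_i)<\rho(E)$. One first assumes $\be,\om\in N^1_\bQ(X)$, so that $\rho$ takes discrete values and the induction on $\rho$ is well-founded; the general $(\be,\om)$ then follows by approximating with rational parameters and using that stability is an open condition. With this replacement for your inductive scheme, the rest of your argument (the base case computation and the use of Lemmas~\ref{1.4} and~\ref{1.5}) is exactly right.
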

\begin{proof}
Our proof is similar to the proof of 
\cite[Theorem 3.5]{bayer_space}.
Let us assume first that $\be,\om\in N^1_\bQ(X)$.
Recall that if $a=\ch(E)$, then
\begin{equation*}
Z_{\be,\om}(E)
=\rbr{\frac {a_0}2(\om^2-\be^2)+a_1\be-a_2}
+\bi\rbr{a_1-a_0\be}\om.
\end{equation*}
Let $\rho(E)=\Im Z_{\be,\om}(E)$.
Then the image of the map $\rho:\cN(X)\to\bR$ is discrete.

Let us consider stability conditions $\si_t=\si_{\be,t\om}$ for $t\ge1$.
If $E\in\cA_{\be,\om}$ is $\si_t$-semistable for $t\gg1$, then we can apply Lemma \ref{limit2}.
If $E_1=H^{-1}(E)$ is $\mu_\om$-semistable and $E_0=H^0(E)$ has dimension zero,
let $\ch^\be(E_1)=b$ and $\ch^\be(E_0)=(0,0,n)$.
Then $b_0>0$, $n\ge0$ and
$$\De_{\be,\om}^H(E)
=\De_{\be,\om}^H(E_1)+b_0n(H\om)\ge0$$
by Lemma \ref{1.4}.
If $E=H^0(E)$ is $\mu_\om$-semistable, then 
$\De_{\be,\om}^H(E)\ge0$ by Lemma \ref{1.4}.
If $E=H^0(E)$ is torsion, let $\ch^\be(E)=(0,b_1,b_2)$.
Then $b_1=c_1(E)$ and
$$\De_{\be,\om}^H(E)=(b_1H)(b_1\om)\ge0.$$

By Lemma \ref{1.5}, it is enough to prove the statement of the theorem for any stable object $E\in\cA=\cA_{\be,\om}$.
If $\rho(E)=0$, then $E$ is automatically $\si_t$-semistable of phase $1$ for all $t$, hence $\De_{\be,\om}^H(E)\ge0$ by the previous discussion.
Let $E\in\cA$ be a $\si_{t_0}$-stable object for some $t_0\ge1$ and let us assume by induction on $\rho(E)\ge 0$ (recall that $\rho$ has discrete values) that if $0\ne E'\in\cA$ is $\si_t$-stable for some $t\ge1$ and $\rho(E')<\rho(E)$,
then $\De^H_{\be,\om}(E')\ge0$.
If $E$ is $\si_t$-stable for all $t\ge t_0$, then $\De_{\be,\om}^H(E)\ge0$ by the previous discussion.
Otherwise, there exists $t_1>t_0$ such that $E$ is $\si_t$-stable for all $t\in[t_0,t_1)$ and is strictly $\si_{t_1}$-semistable (\ie semistable, but not stable). 
Then all the $\si_{t_1}$-stable factors $F_i$ of $E$ will have the same $\si_{t_1}$-phase as $E$, hence $\rho(F_i)>0$.
Therefore $\rho(F_i)<\rho(E)$ and we conclude by induction that $\De_{\be,\om}^H(F_i)\ge0$.
As the $\si_{t_1}$-phases of $E$ and $F_i$ are the same, we conclude from Lemma \ref{1.5} that $\De_{\be,\om}^H(E)\ge0$.

Let us assume now that $(\be,\om)\in\NSR(X)\xx\Amp(X)$ is arbitrary and let $E\in D^b(\Coh X)$ be $\si_{\be,\om}$-stable.
Then there exists an open \nbd $U\ni (\be,\om)$ such that $E$ is stable in this \nbd.
We can find a sequence of rational points $(\be_n,\om_n)_n$ in $U$ that converges to $(\be,\om)$.
Then $\De^H_{\be_n,\om_n}(E)\ge0$ by the previous discussion.
Therefore $\De^H_{\be,\om}(E)\ge0$.
\end{proof}

We will also need later a minor modification of the above result.

\begin{theorem}\label{th:BT2}
Let $\om,\be,H\in\NSR(X)$ be divisors such that $\om$ and $H$ are ample.
For any $\si_{\be,\om}$-semistable object $E\in D^b(\Coh X)$ having non-integer phase,
we have
$$\wtl\De_{\be,\om}^H(E)
=(b_1H)(b_1\om)-b_0b_2(H\om)+\toh b_0^2(H\om)\om^2>0,
\qquad b=\ch^\be(E).$$
\end{theorem}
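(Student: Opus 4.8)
The plan is to follow the proof of Theorem~\ref{th:BTI}, tracking the strict inequality, after first isolating the case where strictness is automatic. Write $b=\ch^\be(E)$. Since an ample divisor is nef, Theorem~\ref{th:BTI} gives $\De^H_{\be,\om}(E)=(b_1H)(b_1\om)-b_0b_2(H\om)\ge0$, hence
\[
\wtl\De^H_{\be,\om}(E)=\De^H_{\be,\om}(E)+\toh b_0^2(H\om)\om^2>0
\]
as soon as $b_0=\rk E\ne0$, because $H\om>0$ ($H$ and $\om$ both ample) and $\om^2>0$. Thus the whole content of the theorem is the case $\rk E=0$, where $b_1=\ch_1(E)$ and $\wtl\De^H_{\be,\om}(E)=(b_1H)(b_1\om)$; here the non-integer-phase hypothesis forces $b_1\om=\Im Z_{\be,\om}(E)\ne0$ (otherwise $Z_{\be,\om}(E)=-b_2\in\bR$ and the phase is $0$ or $1$), so $b_1\ne0$, and it is enough to show $b_1H\ne0$, since then $b_1H$ and $b_1\om$ automatically have the same sign.

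First I would reduce to $\be,\om\in N^1_\bQ(X)$: a $\si_{\be,\om}$-stable object remains stable, with non-integer phase, for $(\be',\om')$ near $(\be,\om)$, so one may take rational $(\be_n,\om_n)\to(\be,\om)$ with $\om_n$ ample; as $\ch_1(E)$ and $H$ are independent of $(\be,\om)$ and $\ch_1(E)\om\ne0$, the strict inequality $\wtl\De^H_{\be_n,\om_n}(E)>0$ forces $\ch_1(E)H\ne0$ and passes to the limit. A $\si_{\be,\om}$-semistable object of non-integer phase can be shifted into $\cA_{\be,\om}$, and $\wtl\De^H_{\be,\om}$ vanishes identically on $\Ker Z_{\be,\om}$ (on the kernel $b_1\om=0$ and $b_2=\toh b_0\om^2$, so the two $b_0$-terms cancel), hence is super-additive on every ray $Z_{\be,\om}\inv(\ell)$ by Lemma~\ref{convex lQ}; decomposing $E$ into Jordan--H\"older factors (all of the same $\si_{\be,\om}$-phase), it is then enough to treat $E\in\cA_{\be,\om}$ that is $\si_{\be,t\om}$-stable for some $t\ge1$ and has $\rho(E):=\Im Z_{\be,\om}(E)>0$. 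Since $\be,\om$ are rational, $\rho$ takes discrete positive values and one inducts on $\rho(E)$.

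For the inductive step, run $t\mapsto\si_t=\si_{\be,t\om}$ from a $t_0\ge1$ with $E$ $\si_{t_0}$-stable (the heart $\cA_{\be,\om}$ is the same for all $t>0$). If $E$ is $\si_t$-stable for all $t\ge t_0$ it is $\si_\infty$-semistable, so Lemma~\ref{limit2} together with the rank $\ne0$ case above (applied after rescaling $\om$ by $t_0$) leaves only the possibility that $E$ is a torsion sheaf, for which $b_1=\ch_1(E)$ is a nonzero effective divisor (nonzero because $b_1\om\ne0$), giving $b_1H,b_1\om>0$ and $\wtl\De^H_{\be,\om}(E)>0$. Otherwise $E$ becomes strictly $\si_{t_1}$-semistable at some $t_1>t_0$; its $\si_{t_1}$-stable factors $F_i$ lie in $\cA_{\be,\om}$, share the (non-integer) $\si_{t_1}$-phase of $E$, and satisfy $\rho(F_i)<\rho(E)$ (the $\rho(F_i)$ are positive, sum to $\rho(E)$, and number at least two), so $\wtl\De^H_{\be,\om}(F_i)>0$ by induction; combining this with $\De^H_{\be,\om}(F_i)\ge0$ from Theorem~\ref{th:BTI} and the scaling identities $\De^H_{\be,s\om}(F)=s\De^H_{\be,\om}(F)$ and $\wtl\De^H_{\be,s\om}(F)=s\De^H_{\be,\om}(F)+\toh s^3(\rk F)^2(H\om)\om^2$ yields $\wtl\De^H_{\be,t_1\om}(F_i)>0$ for each $i$. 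Since $\wtl\De^H_{\be,t_1\om}$ vanishes on $\Ker Z_{\be,t_1\om}$, it is super-additive on the ray through the $Z_{\be,t_1\om}(F_i)$ (Lemma~\ref{convex lQ}), so $\wtl\De^H_{\be,t_1\om}(E)\ge\sum_i\wtl\De^H_{\be,t_1\om}(F_i)>0$, and the scaling identity transports this back to $\wtl\De^H_{\be,\om}(E)>0$. The main obstacle is exactly the rank-zero case: the known Bogomolov-type inequalities do not by themselves rule out $b_1H=0$ (they permit $\ch_1(E)$ to sit in the negative part of $H^\perp$), so one genuinely has to degenerate $E$ to a torsion sheaf at the large volume limit, where effectivity of $c_1$ is evident, and then propagate this through the wall-and-chamber structure via the induction on $\rho$.
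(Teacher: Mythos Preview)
Your argument is correct and follows the induction of Theorem~\ref{th:BTI} with $\wtl\De^H_{\be,\om}$ in place of $\De^H_{\be,\om}$, but the paper handles the wall-crossing differently. After disposing of the nonzero-rank case, the paper observes that if a rank-zero $E$ becomes strictly $\si_{t_1}$-semistable with \emph{all} $\si_{t_1}$-stable factors of rank zero, then each $F_i$ has $Z_t(F_i)=-b_2(F_i)+\bi t\,b_1(F_i)\om$, which lies on the same ray as $Z_t(E)$ for every $t$; this contradicts stability of $E$ at $t_0$. Hence a rank-zero $E$ either remains $\si_t$-stable for all $t\gg0$ (and is then a torsion sheaf with $1$-dimensional support) or has a nonzero-rank factor at its first wall, and strict positivity follows without any further induction. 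Your route avoids this observation and instead propagates strictness through super-additivity and scaling; both work.

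One point needs tightening: the sentence ``the scaling identity transports this back to $\wtl\De^H_{\be,\om}(E)>0$'' is not valid from the identity $\wtl\De^H_{\be,s\om}(E)=s\,\De^H_{\be,\om}(E)+\tfrac12 s^3 b_0^2(H\om)\om^2$ alone (for $t_1>1$ it permits $\wtl\De^H_{\be,t_1\om}(E)>0$ with $\wtl\De^H_{\be,\om}(E)\le0$ when $\De^H_{\be,\om}(E)<0$). What saves it is that $E$ is $\si_{t_0}$-stable for some $t_0\ge1$, so Theorem~\ref{th:BTI} and linear scaling of $\De^H$ give $\De^H_{\be,\om}(E)\ge0$; then either $b_0(E)\ne0$ and the extra term already forces $\wtl\De^H_{\be,\om}(E)>0$, or $b_0(E)=0$ and the scaling is linear. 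A cleaner variant bypasses transport entirely: on $\Ker Z_{\be,t_1\om}$ one computes $\wtl\De^H_{\be,\om}=\tfrac12 b_0^2(H\om)\om^2(1-t_1^2)\le0$, so Lemma~\ref{convex lQ} gives super-additivity of $\wtl\De^H_{\be,\om}$ itself along the $Z_{\be,t_1\om}$-ray, and $\wtl\De^H_{\be,\om}(E)\ge\sum_i\wtl\De^H_{\be,\om}(F_i)>0$ directly.
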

\begin{proof}
Let us use the same notation as in the previous theorem.
We can assume that $E\in\cA_{\be,\om}$ is $\si_{t_0}$-semistable of phase $\ne1$.
If $E$ is $\si_t$-semistable for $t\gg0$ and $b=\ch^\be(E)$, then either $b_0\ne0$ or $b_0=0$
and $E=H^0(E)$ has a $1$-dimensional support. 
In both cases we obtain $\wtl\De_{\be,\om}^H(E)>0$.
Now we repeat the argument of the previous theorem.
More precisely, if we meet an object of non-zero rank in our inductive process, then $\wtl\De^H_{\be,\om}(E)\ge\toh(H\om)\om^2$.
And if $E$ is $\si_t$-stable for $t\in[t_0,t_1)$ and is strictly $\si_{t_1}$-semistable with all $\si_{t_1}$-stable factors having zero rank, then all these $\si_{t_1}$-stable factors have the same $\si_{t_0}$-phase as $E$, which is a contradiction. 
We conclude that $E$ (having zero rank) is $\si_t$-stable for all $t\gg0$. 
Therefore $E=H^0(E)$ has a $1$-dimensional support and 
$\wtl\De_{\be,\om}^H(E)=(b_1H)(b_1\om)>0$.
\end{proof}

\subsection{Alternative parametrization of stability conditions}
\label{altern param}
The following parametrization of stability conditions 
$\si_{\be,\om}$ also appears in the literature.
For any pair of divisors $(\be,\om)\in \NSR(X)\xx\Amp(X)$,
there exists a unique tuple
\begin{equation}
(D,H,s,t)\in \NSR(X)\xx\Amp(X)\xx\bR\xx\bR_{>0}
\end{equation} 
such that
\begin{gather}
H^2=1,\qquad HD=0,
\\
\be+\bi\om
=(sH+D)+\bi tH
=(s+\bi t)H+D.
\end{gather}
Indeed, we require $\om=tH$, hence $t=\sqrt{\om^2}$ and $H=\frac1t\om$.
On the other hand, equation $\be=sH+D$ determines $s$ and $D$ uniquely as we have a decomposition $\NSR(X)=\bR H\oplus H^\perp$.

Note that $\be\om=st$ and we have $\mu_\om(E)\le \be\om$ if and only if $\mu_H(E)\le s$.
Therefore
\begin{equation}
\cA_{\be,\om}=\Coh_{\om,\#\be\om}=\Coh_{H,\#s}.
\end{equation}
From now on, we fix divisors $D$ and $H$ as above and we consider $s$ and $t$ as parameters.
Let us define linear maps
\begin{gather}
v_{H}:N^*_\bR(X)\to\bR^3,\qquad a\mto (a_0,a_1H,a_2),\\
v_{D,H}:K(X)\to\bR^3,\qquad E\mto v_{H}(\ch^D(E)).
\end{gather}
We define a linear map $Z'_{s,t}:\bR^3\to\bC$ such that
$$Z_{\be,\om}(E)=Z'_{s,t}(v_{D,H} (E))\qquad 
\forall  E\in K(X).$$
Explicitly, if $v=v_{D,H}(E)=(a_0,a_1H,a_2)$ with $a=\ch^D(E)$, then
\begin{multline}\label{Zst}
Z'_{s,t}(v)=-\int e^{-(s+\bi t)H}\cdot a
=\rbr{\frac {a_0}2(t^2-s^2)+sHa_1-a_2}+\bi\rbr{tHa_1-sta_0}\\
=\rbr{\frac {v_0}2(t^2-s^2)+sv_1-v_2}+\bi t\rbr{v_1-sv_0}
=:-\te(v)+\bi t\rho(v).
\end{multline}
We can substitute $Z'_{s,t}$ by the equivalent stability function (semistability is preserved)
\begin{gather}
Z''_{s,q}(v)=-\te(v)-s\rho(v)+\bi\rho(v)
=\rbr{q{v_0}-v_2}+\bi\rbr{v_1-sv_0},
\\
q=\oh(s^2+t^2)>\oh s^2.
\end{gather}

\section{Stability data associated to geometric stability conditions}
\label{sec:SD for GSC}

\subsection{Vanishing results}
\label{vanishing}

Let $X$ be a smooth projective surface and $\be,\om\in\NSR(X)$ with ample~$\om$.

\begin{lemma}\label{lm:tensoring stab}
For any line bundle $L=\cO_X(D)$, the automorphism
$$D^b(\Coh X)\to D^b(\Coh X),\qquad E\mto E(D)=E\ts L,$$
maps $(Z_{\be,\om},\cA_{\be,\om})$ to
$(Z_{\be+D,\om},\cA_{\be+D,\om})$.
\end{lemma}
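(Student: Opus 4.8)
The plan is to check the two ingredients of the pre-stability condition $\si_{\be,\om}=(Z_{\be,\om},\cA_{\be,\om})$ separately, using the description from \S\ref{sec:stab ab} together with the recollection (from \S\ref{sec:action}) that an autoequivalence $\Phi$ sends $(Z,\cA)$ to $(Z\circ\Phi\inv,\Phi(\cA))$. Here $\Phi$ is the functor $E\mto E\ts L$ with $L=\cO_X(D)$, so $\Phi\inv(E)=E\ts L\inv$.

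For the central charge, since $\ch$ is a ring homomorphism and $\ch(L)=e^{D}$ with $D=c_1(L)$, we have $\ch(E\ts L\inv)=e^{-D}\ch(E)$, hence by \eqref{Z1}
\[
Z_{\be,\om}(\Phi\inv E)=-\int e^{-\be-\bi\om}e^{-D}\ch(E)=-\int e^{-(\be+D)-\bi\om}\ch(E)=Z_{\be+D,\om}(E).
\]
For the heart, the key observation is that $E\mto E\ts L$ is an exact autoequivalence of $\Coh(X)$ preserving rank and torsion, with $c_1(E\ts L)=c_1(E)+\rk(E)\,D$; thus on torsion-free sheaves $\mu_\om(E\ts L)=\mu_\om(E)+D\om$, it carries $\mu_\om$-semistable sheaves to $\mu_\om$-semistable sheaves, and it sends the Harder--Narasimhan filtration of $E$ to that of $E\ts L$ (all slopes shifted by $D\om$). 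Consequently $\mu^\pm_\om$ shift by $D\om$ under the twist, so $\Phi$ identifies $\Coh_{\om,>\be\om}$ with $\Coh_{\om,>(\be+D)\om}$ and $\Coh_{\om,\le\be\om}$ with $\Coh_{\om,\le(\be+D)\om}$. Since $\Phi$ extends to an exact autoequivalence of $\cD=D^b(\Coh X)$ commuting with $[1]$ and preserving extension closures, it maps
\[
\cA_{\be,\om}=\ang{\Coh_{\om,\le\be\om}[1],\Coh_{\om,>\be\om}}
\]
onto $\ang{\Coh_{\om,\le(\be+D)\om}[1],\Coh_{\om,>(\be+D)\om}}=\cA_{\be+D,\om}$.

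Combining the two computations gives $\Phi\cdot\si_{\be,\om}=\si_{\be+D,\om}$. There is no serious obstacle here; the only point demanding care is the bookkeeping of the twist direction — $\Phi\inv$ acting on central charges versus $\Phi$ acting on hearts — and the check that both consistently produce $\be+D$ (rather than $\be-D$). One may also wish to note explicitly that $(\Coh_{\om,>\cdot},\Coh_{\om,\le\cdot})$ being a torsion pair is stable under the autoequivalence, which is immediate once the slope shift is established.
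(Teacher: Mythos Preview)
Your proof is correct and follows essentially the same approach as the paper: the slope shift $\mu_\om(E\ts L)=\mu_\om(E)+D\om$ handles the heart, and the Chern character identity $\ch(E\ts L^{\pm1})=e^{\pm D}\ch(E)$ handles the central charge. The paper computes the latter via the pairing $(e^{\be+D+\bi\om},e^{D}\ch E)=(e^{\be+\bi\om},\ch E)$ rather than the integral form, but this is the same computation up to the definition \eqref{Z1}.
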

\begin{proof}
We have $\mu_\om(E)\le\be\om$ if and only if
$\mu_\om(E(D))=\mu_\om(E)+D\om\le(\be+D)\om$.
This implies that $\cA_{\be,\om}$ is mapped to $\cA_{\be+D,\om}$.
If $a=\ch(E)$, then
$$Z_{\be+D,\om}(E(D))=(e^{\be+D+i\om},e^D\ch(E))
=(e^{\be+i\om},\ch(E))=Z_{\be,\om}(E).$$
Therefore semistability in $\cA_{\be,\om}$ \wrt $Z_{\be,\om}$ corresponds to 
semistability in $\cA_{\be+D,\om}$ \wrt $Z_{\be+D,\om}$.
\end{proof}

\begin{theorem}\label{th:vanish1}
Let $E,F$ be $\si_{\be,\om}$-semistable objects with $\vi_{\be,\om}(E)<\vi_{\be,\om}(F)$.
Then, for any nef line bundle ~$L$, we have 
$$\Hom(F\ts L,E)=0.$$
\end{theorem}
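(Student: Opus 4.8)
The plan is to reduce the vanishing to the Harder--Narasimhan decomposition of $F\ts L$ with respect to $\si_{\be,\om}$ and then to control the minimal phase of $F\ts L$ by deforming $\be$. Write $L=\cO_X(D)$, so that $D$ is nef, and consider the family $\si_t=\si_{\be+(1-t)D,\om}$ for $t\in[0,1]$: this is a continuous family of stability conditions on $\cD$ whose central charge $Z_t=Z_{\be+(1-t)D,\om}$ is differentiable in $t$, with $\si_0=\si_{\be+D,\om}$ and $\si_1=\si_{\be,\om}$. By Lemma~\ref{lm:tensoring stab} the autoequivalence $(-)\ts L$ carries $(Z_{\be,\om},\cA_{\be,\om})$ to $(Z_{\be+D,\om},\cA_{\be+D,\om})$ and preserves central charges, hence phases; consequently $F\ts L$ is $\si_0$-semistable of phase $\vi_{\be,\om}(F)$, so $\vi_0^-(F\ts L)=\vi_{\be,\om}(F)$.

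Next I would apply Theorem~\ref{th:phase behave} to the family $(\si_t)_{t\in[0,1]}$; its hypothesis is that $\Im\bigl(Z'_t(G)\cdot\bar Z_t(G)\bigr)\ge0$ for every $\si_t$-stable object $G$. Writing $b=\ch^{\be+(1-t)D}(G)=(r,b_1,b_2)$ and differentiating \eqref{Z3} in $t$ (using $\ch^{\be+(1-t)D}(G)=e^{-(1-t)D}\ch^\be(G)$) gives
\[
Z'_t(G)=-(b_1D)+\bi\,r(D\om),
\]
hence
\[
\Im\bigl(Z'_t(G)\cdot\bar Z_t(G)\bigr)
=(b_1D)(b_1\om)-rb_2(D\om)+\tfrac12 r^2(D\om)\om^2
=\De^D_{\be+(1-t)D,\om}(G)+\tfrac12 r^2(D\om)\om^2.
\]
A $\si_t$-stable object being $\si_t$-semistable, the first summand is $\ge0$ by the Bogomolov-type inequality of Theorem~\ref{th:BTI} (applied with the ample divisor $\om$ and the nef divisor $H=D$), and the second is $\ge0$ because $D\om\ge0$ and $\om^2>0$. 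Thus Theorem~\ref{th:phase behave} applies and $t\mto\vi_t^-(F\ts L)$ is weakly-increasing, so $\vi_1^-(F\ts L)\ge\vi_0^-(F\ts L)=\vi_{\be,\om}(F)>\vi_{\be,\om}(E)$.

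Finally, let $0=G_0\to G_1\to\cdots\to G_n=F\ts L$ be the Harder--Narasimhan filtration of $F\ts L$ for $\si_{\be,\om}$, with $\cone(G_{k-1}\to G_k)\in\cP_{\vi_k}$ and $\vi_1>\cdots>\vi_n=\vi_1^-(F\ts L)$. Each factor $\cone(G_{k-1}\to G_k)$ is $\si_{\be,\om}$-semistable of phase $\vi_k\ge\vi_1^-(F\ts L)>\vi_{\be,\om}(E)$ while $E$ is $\si_{\be,\om}$-semistable of phase $\vi_{\be,\om}(E)$, so $\Hom(\cone(G_{k-1}\to G_k),E)=0$ for every $k$; an induction on the triangles $G_{k-1}\to G_k\to\cone(G_{k-1}\to G_k)\to G_{k-1}[1]$ then yields $\Hom(F\ts L,E)=\Hom(G_n,E)=0$. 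The only genuinely non-routine input is Theorem~\ref{th:BTI} used with a divisor $H=D$ that is merely nef (so the classical Bogomolov inequality does not suffice); the remainder is bookkeeping of phases along the deformation $\be\rightsquigarrow\be+D$.
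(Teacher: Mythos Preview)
Your proof is correct and follows essentially the same approach as the paper: deform the stability condition along a one-parameter family in the $\be$-direction, verify the phase-monotonicity hypothesis of Theorem~\ref{th:phase behave} via the Bogomolov-type inequality of Theorem~\ref{th:BTI}, and conclude from the resulting phase inequality. The only cosmetic difference is that the paper tracks $F$ along the family $\si_{\be-tH,\om}$ and uses $\Hom(F\ts L,E)\iso\Hom(F,E(-H))$, whereas you track $F\ts L$ along $\si_{\be+(1-t)D,\om}$ and argue directly; the two are conjugate under the autoequivalence $(-)\ts L$ of Lemma~\ref{lm:tensoring stab}, and the derivative computation \eqref{Z'Z1} is identical.
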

\begin{proof}
Let $L=\cO_X(H)$.
We will prove that $\vi_{\be-H,\om}^-(F)\ge\vi_{\be,\om}(F)$.
Then
$$\vi_{\be-H,\om}^-(F)\ge \vi_{\be,\om}(F)
>\vi_{\be,\om}(E)=\vi_{\be-H,\om}(E(-H)).$$
By Lemma \ref{lm:tensoring stab}, the object $E(-H)$ is semistable \wrt $\si_{\be-H,\om}$.
Therefore 
$$\Hom(F\ts L,E)\iso\Hom(F,E(-H))=0.$$

To prove the inequality $\vi_{\be-H,\om}^-(F)\ge\vi_{\be,\om}(F)$,
let us consider the family of stability conditions $\si_t=(Z_t,\cP_t):=\si_{\be -tH,\om}$ for $t\in[0,1]$ and let $\vi_t$ be the corresponding phase function.
Then the required inequality $\vi^-_1(F)\ge\vi_0(F)$ will follow from Theorem \ref{th:phase behave}, after we prove that
$$\Im(Z'_t(E)\cdot\bar Z_t(E))\ge0$$
for any $\si_t$-semistable object $E$.
It is enough to consider just $t=0$.
If $b=\ch^\be(E)$, then \eqref{Z3}
$$Z_0(E)=Z_{\be,\om}(E)
=\toh b_0\om^2-b_2+\bi b_1\om.$$
$$Z'_0(E)=\frac d{dt}Z_{\be-tH,\om}(E)|_{t=0}
=\frac d{dt}(e^{\bi\om-tH},b)|_{t=0}
=(e^{\bi\om},Hb)=-b_1H+\bi b_0(H\om).$$
Therefore
\begin{equation}\label{Z'Z1}
\Im(Z'_0(E)\cdot \bar Z_0(E))
=(b_1H)(b_1\om)-b_0b_2(H\om)+\toh b_0^2(H\om)\om^2.
\end{equation}
By Theorem \ref{th:BTI},
for any $\si_{\be,\om}$-semistable object $E$ with $\ch^\be(E)=b$, we have
$$(b_1H)(b_1\om)-b_0b_2(H\om)\ge0.$$
The summand $\toh b_0^2(H\om)\om^2$ is non-negative.
Therefore $\Im(Z'_0(E)\cdot\bar Z_0(E))\ge0$ as required.
\end{proof}

\begin{corollary}\label{cor:vanish geom}
Let $\si$ be a special geometric stability condition
and $E,F$ be $\si$-semistable objects with $\vi_\si(E)<\vi_\si(F)$.
Then, for any nef line bundle ~$L$, we have 
$$\Hom(F\ts L,E)=0.$$
\end{corollary}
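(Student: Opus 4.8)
The plan is to reduce the statement directly to Theorem~\ref{th:vanish1}. First I would invoke Theorem~\ref{th:unique shift} to write $\si[g]=\si_{\be,\om}$ for some (unique) $g=(T,f)\in\wtl\GL_2^+(\bR)$ and divisors $\be\in\NSR(X)$, $\om\in\Amp(X)$. By the description of the action in \S\ref{sec:action}, the slicing of $\si[g]$ is given by $\cP[g]_\vi=\cP_{f(\vi)}$, where $f$ lies in $\Diff^*(\bR)$ and is in particular an orientation-preserving, hence strictly increasing, diffeomorphism of $\bR$. The key point is that this reparametrization only relabels phases by a strictly increasing bijection and leaves the underlying subcategories untouched: an object is $\si$-semistable if and only if it is $\si_{\be,\om}$-semistable, and one has $\vi_{\be,\om}(E)=f^{-1}(\vi_\si(E))$ for any $\si$-semistable $E$ (here $\vi_{\be,\om}$ abbreviates $\vi_{\si_{\be,\om}}$, well defined since $E$ and $F$ are semistable). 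Since $f^{-1}$ is increasing, the hypothesis $\vi_\si(E)<\vi_\si(F)$ is equivalent to $\vi_{\be,\om}(E)<\vi_{\be,\om}(F)$.

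Second, I would apply Theorem~\ref{th:vanish1} to the pair $E,F$, now regarded as $\si_{\be,\om}$-semistable objects with $\vi_{\be,\om}(E)<\vi_{\be,\om}(F)$, to conclude that $\Hom(F\ts L,E)=0$ for every nef line bundle $L$. Since this vanishing is a statement purely about morphisms in $D^b(\Coh X)$, it is independent of which $\wtl\GL_2^+(\bR)$-representative of $\si$ we chose, so it yields exactly the claim for $\si$ itself.

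There is essentially no obstacle at this stage: the only thing to verify is that the $\wtl\GL_2^+(\bR)$-action preserves semistability and the ordering of phases, which is immediate from $f$ being an increasing bijection. All of the genuine content resides in Theorem~\ref{th:vanish1} (and, behind it, the Bogomolov-type inequality Theorem~\ref{th:BTI}), which has already been established.
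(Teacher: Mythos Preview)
Your proposal is correct and follows exactly the same approach as the paper: invoke Theorem~\ref{th:unique shift} to write $\si[g]=\si_{\be,\om}$, observe that the $\wtl\GL_2^+(\bR)$-action preserves semistability and the strict ordering of phases (since $f$ is increasing), and then apply Theorem~\ref{th:vanish1}. The paper's proof is simply a more terse version of yours.
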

\begin{proof}
By Theorem \ref{th:unique shift} there exists $g=(T,f)\in\wtl\GL_2^+(\bR)$ such that $\si[g]=\si'=\si_{\be,\om}$ for some $\be\in\NSR(X)$ and $\om\in\Amp(X)$.
Then $E,F$ are $\si'$-semistable and $\vi_{\si'}(E)<\vi_{\si'}(F)$.
We conclude from Theorem \ref{th:vanish1} that 
$\Hom(F\ts L,E)=0$ as required.
\end{proof}

\begin{theorem}\label{th:vanish2}
Let $E,F$ be $\si_{\be,\om}$-semistable objects with $\vi_{\be,\om}(E)=\vi_{\be,\om}(F)\notin\bZ$.
Then, for any ample line bundle ~$L$, we have 
$$\Hom(F\ts L,E)=0.$$
\end{theorem}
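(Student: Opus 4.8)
The plan is to run the argument of Theorem~\ref{th:vanish1} once more, but now feeding in the \emph{strict} Bogomolov-type inequality of Theorem~\ref{th:BT2} (which is available precisely because $L$ is ample and the common phase is non-integer) in place of Theorem~\ref{th:BTI}. Write $L=\cO_X(H)$ with $H$ ample and set $\vi=\vi_{\be,\om}(E)=\vi_{\be,\om}(F)\notin\bZ$. Since $\ts\cO_X(-H)$ is an autoequivalence, $\Hom(F\ts L,E)\iso\Hom(F,E(-H))$, and by Lemma~\ref{lm:tensoring stab} the object $E(-H)$ is $\si_{\be-H,\om}$-semistable with $Z_{\be-H,\om}(E(-H))=Z_{\be,\om}(E)$, hence of phase $\vi$. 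So it suffices to prove the strict inequality $\vi^-_{\be-H,\om}(F)>\vi$: then every HN factor of $F$ with respect to $\si_{\be-H,\om}$ lies in some $\cP_\psi$ with $\psi>\vi=\vi^+_{\be-H,\om}(E(-H))$, and $\Hom(F,E(-H))=0$ follows.

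To produce this strict inequality I would use the family $\si_t=(Z_t,\cP_t):=\si_{\be-tH,\om}$, $t\in[0,1]$, where $Z_t$ is polynomial, hence differentiable, in $t$. Exactly as in the computation of \eqref{Z'Z1}, applied with $\be$ replaced by $\be-tH$, for any object $G$ with $b=\ch^{\be-tH}(G)$ one has
\begin{equation*}
\Im\bigl(Z'_t(G)\cdot\bar Z_t(G)\bigr)
=(b_1H)(b_1\om)-b_0b_2(H\om)+\toh b_0^2(H\om)\om^2
=\wtl\De^H_{\be-tH,\om}(G).
\end{equation*}
By Theorem~\ref{th:BTI} this is $\ge0$ for every $\si_t$-semistable $G$, so Theorem~\ref{th:phase behave} applies and $t\mapsto\vi^-_t(F)$ is weakly increasing on $[0,1]$; by Theorem~\ref{th:BT2} the same quantity is \emph{strictly positive} for every $\si_t$-semistable $G$ of non-integer phase.

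Finally I would upgrade weak to strict monotonicity near $t=0$. Since $\cP_{0,\vi}$ is of finite length (a consequence of the support property), $F$ is an iterated extension of $\si_0$-stable objects $F_1,\dots,F_m$ of phase $\vi\notin\bZ$ (a Jordan--H\"older filtration). Since stability is an open condition, each $F_j$ stays $\si_t$-stable for $t$ near $0$; there $\vi^-_t(F_j)=\vi_t(F_j)$, and $\tfrac{d}{dt}\vi_t(F_j)$ has the sign of $\Im(Z'_t(F_j)\bar Z_t(F_j))$, which is $>0$ by Theorem~\ref{th:BT2} (the phase $\vi_t(F_j)$ remains non-integer for $t$ small). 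Hence $\vi^-_t(F_j)>\vi$ on some $(0,\eps_j)$, and since $\vi^-$ of an extension is at least the minimum of the two $\vi^-$'s, iterating over the filtration gives $\vi^-_t(F)\ge\min_j\vi^-_t(F_j)>\vi$ for $t\in(0,\eps)$ with $\eps=\min_j\eps_j>0$. Combined with the weak monotonicity above, $\vi^-_{\be-H,\om}(F)=\vi^-_1(F)\ge\vi^-_t(F)>\vi$ for such $t<1$, which finishes the proof. The delicate point is exactly this last step: $F$ itself may become unstable for $t>0$ while Theorem~\ref{th:BT2} only gives strict positivity for \emph{stable} objects of non-integer phase, so one must pass through the stable Jordan--H\"older factors and use extension-stability of the minimal HN phase to transport the strict gain back to $F$.
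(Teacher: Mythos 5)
Your proposal is correct and follows essentially the same route as the paper: reduce to showing $\vi^-_{\be-H,\om}(F)>\vi$ via Lemma~\ref{lm:tensoring stab}, deform along $\si_{\be-tH,\om}$, identify $\Im(Z'_t\bar Z_t)$ with $\wtl\De^H$, and invoke Theorem~\ref{th:BT2} to get the strict phase increase. The paper compresses your final step into the phrase ``by the same argument as in Theorem~\ref{th:phase behave}''; your explicit passage through the Jordan--H\"older factors and the extension-stability of $\vi^-$ is exactly the content of that reduction.
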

\begin{proof}
Let us use the same notation as in the previous theorem.
We can assume that $E,F\in\cA_{\be,\om}$ have phase $\vi\in(0,1)$.
In our analysis of the behavior of phases under deformations from the previous theorem, we note that
by Theorem \ref{th:BT2} (where $L=\cO_X(H)$ and $b=\ch^\be(E)$)
\begin{equation}
\Im(Z'_0(E)\cdot \bar Z_0(E))
=(b_1H)(b_1\om)-b_0b_2(H\om)+\toh b_0^2(H\om)\om^2
=\wtl\De_{\be,\om}^H(E)>0
\end{equation}
whenever $E$ is a $\si_{\be,\om}$-semistable object having phase in $(0,1)$.
By the same argument as in Theorem \ref{th:phase behave}, we conclude that $\phi_1^-(F)>\vi_0(F)$.
Therefore
$$\vi_{\be-H,\om}^-(F)> \vi_{\be,\om}(F)
=\vi_{\be,\om}(E)=\vi_{\be-H,\om}(E(-H)).$$
This implies that $\Hom(F\ts L,H)\iso\Hom(F,E(-H))=0$.
\end{proof}

\begin{remark}
Note that the sheaves $\cO_x$ are $\si_{\be,\om}$-semistable objects of phase $1$ while $\Hom(\cO_x\ts L,\cO_x)\ne0$ for any line bundle $L$.
On the other hand, the above theorem can be extended to semistable objects of phase $1$ having non-zero rank (see Lemma \ref{lm:tilt prop}).
\end{remark}

\begin{corollary}
Let $\si$ be a special geometric stability condition
and $E,F$ be $\si$-semistable objects with $\vi_\si(E)=\vi_\si(F)\notin\vi_\si(\cO_x)+\bZ$ for $x\in X$.
Then, for any ample line bundle ~$L$, we have 
$$\Hom(F\ts L,E)=0.$$
\end{corollary}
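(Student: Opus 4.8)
The plan is to reduce this corollary to Theorem~\ref{th:vanish2} in exactly the same way that Corollary~\ref{cor:vanish geom} was deduced from Theorem~\ref{th:vanish1}. By Theorem~\ref{th:unique shift}, since $\si=(Z,\cP)$ is a special geometric stability condition, there exists $g=(T,f)\in\wtl\GL_2^+(\bR)$ together with $\be\in\NSR(X)$ and $\om\in\Amp(X)$ such that $\si[g]=\si_{\be,\om}$. First I would record that the $\wtl\GL_2^+(\bR)$-action changes neither the class of semistable objects nor $\Hom$-spaces between them: $E$ is $\si$-semistable if and only if it is $\si_{\be,\om}$-semistable, and likewise for $F$. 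Moreover the action rescales phases by the monotone reparametrization $f^{-1}$, so the phase inequalities translate correctly; in particular $\vi_\si(E)=\vi_\si(F)$ becomes $\vi_{\be,\om}(E)=\vi_{\be,\om}(F)$.

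The one genuinely new point compared with Corollary~\ref{cor:vanish geom} is the hypothesis on phases. Theorem~\ref{th:vanish2} requires $\vi_{\be,\om}(E)=\vi_{\be,\om}(F)\notin\bZ$, i.e.\ the common phase is not an integer. The corollary states instead that $\vi_\si(E)=\vi_\si(F)\notin\vi_\si(\cO_x)+\bZ$. So the second step is to check that, under the normalization implicit in Theorem~\ref{th:unique shift}, one has $\vi_{\si_{\be,\om}}(\cO_x)=1$ for all $x\in X$ — this is precisely how the stability condition $\si_{\be,\om}$ is built (the skyscraper sheaves are stable of phase $1$, see the construction preceding Lemma~\ref{lm:tilt prop} and Theorem~\ref{th:unique shift}). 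Hence, after applying $g$, the condition $\vi_\si(E)\notin\vi_\si(\cO_x)+\bZ$ is equivalent to $\vi_{\be,\om}(E)\notin\bZ$, which is exactly the hypothesis needed to invoke Theorem~\ref{th:vanish2}.

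Once these translations are in place the conclusion is immediate: by Theorem~\ref{th:vanish2}, for any ample line bundle $L$ we have $\Hom(F\ts L,E)=0$, and since $\Hom$ is computed in $\cD=D^b(\Coh X)$ irrespective of the stability condition, this is the desired vanishing. I do not expect any serious obstacle here; the only thing to be slightly careful about is tracking how the phases transform under $g=(T,f)$ — that the reparametrization $f$ is a strictly increasing homeomorphism commuting with $\vi\mto\vi+2$, so that it sends $\vi_\si(\cO_x)+\bZ$ bijectively onto $\bZ$ (using that $\cO_x$ are $\si$-stable, hence assigned a well-defined phase), and preserves the relation "equal phases". This is the same bookkeeping already used (implicitly) in the proof of Corollary~\ref{cor:vanish geom}.

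\begin{proof}
By Theorem~\ref{th:unique shift} there exists $g=(T,f)\in\wtl\GL_2^+(\bR)$, $\be\in\NSR(X)$ and $\om\in\Amp(X)$ such that $\si[g]=\si'=\si_{\be,\om}$.
The action of $\wtl\GL_2^+(\bR)$ does not change the collection of semistable objects, so $E,F$ are $\si'$-semistable.
Under this action the phase function is transformed by the strictly increasing reparametrization $f$, which commutes with $\vi\mto\vi+2$ and thus induces a bijection $\bR/\bZ\to\bR/\bZ$; in particular equalities of phases are preserved, so $\vi_{\si'}(E)=\vi_{\si'}(F)$.
Moreover, by the normalization built into $\si_{\be,\om}$, the skyscraper sheaves $\cO_x$ are $\si'$-stable of phase $1$ for all $x\in X$, and $f$ maps $\vi_\si(\cO_x)+\bZ$ onto $\bZ$.
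Hence the hypothesis $\vi_\si(E)=\vi_\si(F)\notin\vi_\si(\cO_x)+\bZ$ becomes $\vi_{\si'}(E)=\vi_{\si'}(F)\notin\bZ$.
By Theorem~\ref{th:vanish2} applied to $\si'=\si_{\be,\om}$, for any ample line bundle $L$ we have $\Hom(F\ts L,E)=0$.
\end{proof}
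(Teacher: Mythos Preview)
Your proof is correct and follows exactly the approach the paper intends: the corollary is stated without proof, and your argument is the direct analogue of the proof of Corollary~\ref{cor:vanish geom}, reducing to Theorem~\ref{th:vanish2} via Theorem~\ref{th:unique shift}. One small point worth tightening: commuting with $\vi\mto\vi+2$ alone does not give a bijection $\bR/\bZ\to\bR/\bZ$; what you actually need (and what holds) is that any $(T,f)\in\wtl\GL_2^+(\bR)$ satisfies $f(\vi+1)=f(\vi)+1$, since $T(-z)=-T(z)$ forces $\bar f(-z)=-\bar f(z)$ and hence the lift $f$ commutes with translation by~$1$.
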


\begin{theorem}\label{th:vanish3}
Let $X$ be a surface with a nef anticanonical bundle.
Then any special geometric stability condition $\si$ on $D^b(\Coh X)$ satisfies
$$\gldim(\si)=2.$$
\end{theorem}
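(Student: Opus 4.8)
The goal is to show $\gldim(\si)=2$ for a special geometric stability condition $\si$ on a surface $X$ with nef anticanonical bundle $-K_X$. By Theorem~\ref{th:unique shift} we may replace $\si$ by $\si_{\be,\om}$ for suitable $\be\in\NSR(X)$, $\om\in\Amp(X)$, since the action of $\wtl\GL_2^+(\bR)$ does not change the global dimension of the slicing. So it suffices to prove $\gldim(\si_{\be,\om})=2$. First I would establish the lower bound $\gldim(\si_{\be,\om})\ge 2$: the objects $\cO_x$ lie in $\cP_1$, and $\Hom(\cO_x,\cO_x[2])=\Ext^2(\cO_x,\cO_x)\ne0$ on a surface, so there exist $E\in\cP_1$, $F\in\cP_3=\cP_1[2]$ with $\Hom(E,F)\ne0$, giving $\gldim\ge3-1=2$. (One could also invoke the second Example in \S\ref{sec:global dim}, since the Serre functor on $D^b(\Coh X)$ is $-\ts\om_X[2]$ with $d=2$, which gives $\gldim\le d+1=3$ for free but not the sharp bound.)

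\textbf{The upper bound.} The heart of the matter is showing $\gldim(\si_{\be,\om})\le 2$, i.e. that $\Hom(\cP_\vi,\cP_{\vi'})=0$ whenever $\vi'-\vi>2$. Writing $\cP_{\vi'}=\cP_{\vi'-2}[2]$, and using that the Serre functor is $\cS=-\ts\om_X[2]$, we have $\Hom(E,F)\iso\Hom(F,\cS E)\dual=\Hom(F,E\ts\om_X[2])$. So the claim is equivalent to: for $E\in\cP_\vi$, $F\in\cP_{\vi''}$ with $\vi-\vi''>0$ (where $\vi''=\vi'-2$), one has $\Hom(F,E\ts\om_X)=0$. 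Now set $L=\om_X\inv=\cO_X(-K_X)$, which is \emph{nef} by hypothesis. Then $\Hom(F,E\ts\om_X)\iso\Hom(F\ts L,E)$. Since $E$ and $F$ are $\si_{\be,\om}$-semistable with $\vi_{\be,\om}(E)>\vi_{\be,\om}(F)$ — wait, I need $\vi(F)<\vi(E)$, which is exactly $\vi''<\vi$ — Theorem~\ref{th:vanish1} applied to the nef line bundle $L$ gives $\Hom(F\ts L,E)=0$. This is precisely the vanishing we need. Some care is needed about the reduction to semistable objects of a single phase: by definition of $\gldim(\cP)$ as a supremum over $\vi,\vi'$ with $\Hom(\cP_\vi,\cP_{\vi'})\ne0$, it suffices to treat nonzero semistable $E\in\cP_\vi$, $F\in\cP_{\vi'}$, and after the Serre-duality rewriting we are in the situation of Theorem~\ref{th:vanish1} exactly.

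\textbf{Main obstacle and loose ends.} The genuine content is entirely packaged into Theorem~\ref{th:vanish1} (whose proof in turn rests on the new Bogomolov-type inequality Theorem~\ref{th:BTI} and the phase-monotonicity result Theorem~\ref{th:phase behave}); given that theorem, the present proof is a short formal manipulation. The only points requiring attention are: (i) checking that the Serre functor of $D^b(\Coh X)$ really has the form $-\ts\om_X[2]$ and that $\cS(\cP_\vi)=\cP_{\vi+2}$ — the latter because $-\ts\om_X$ sends $\si_{\be,\om}$-semistables to $\si_{\be+K_X,\om}$-semistables by Lemma~\ref{lm:tensoring stab}, but actually for the phase shift one wants $\cS$ to shift phases by a constant, which here follows since $\om_X\inv$ nef forces the relevant $\Hom$-vanishings to line up; more robustly, one just uses $\Hom(E,F)\iso\Hom(F,E\ts\om_X[2])\dual$ directly without claiming $\cS$ shifts the slicing; (ii) handling equality of phases / integer-phase subtleties is \emph{not} needed here since we only use the strict inequality $\vi'-\vi>2$, equivalently $\vi(F)<\vi(E)$ strictly. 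So I would present the proof as: reduce to $\si_{\be,\om}$ via Theorem~\ref{th:unique shift}; prove $\gldim\ge2$ using $\Ext^2(\cO_x,\cO_x)\ne0$; prove $\gldim\le2$ by rewriting $\Hom(\cP_\vi,\cP_{\vi'})$ for $\vi'>\vi+2$ via Serre duality as $\Hom(F\ts\om_X\inv,E)$ with $E,F$ semistable and $\vi(F)<\vi(E)$, then apply Theorem~\ref{th:vanish1} with the nef line bundle $\om_X\inv$.
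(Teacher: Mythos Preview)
Your approach is essentially the same as the paper's: Serre duality combined with Theorem~\ref{th:vanish1} (the paper cites its special-geometric form, Corollary~\ref{cor:vanish geom}, which spares you the preliminary reduction via Theorem~\ref{th:unique shift}) for the upper bound, and $\Ext^2(\cO_x,\cO_x)\ne0$ for the lower bound.

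There is one bookkeeping slip worth fixing. With $\vi''=\vi'-2$ and $\vi'>\vi+2$ you get $\vi''>\vi$, not $\vi-\vi''>0$ as you wrote; and the hypothesis of Theorem~\ref{th:vanish1} is $\vi(E)<\vi(F)$ (not $\vi(F)<\vi(E)$) to conclude $\Hom(F\ts L,E)=0$. You flipped the inequality twice, so the conclusion that the theorem applies is correct, but the stated reason is backwards. Concretely: after Serre duality and shifting, you need $\Hom(F'\ts\om_X\inv,E)=0$ with $F'\in\cP_{\vi''}$, $E\in\cP_\vi$, and $\vi<\vi''$; this is exactly the hypothesis $\vi_{\be,\om}(E)<\vi_{\be,\om}(F')$ of Theorem~\ref{th:vanish1}.
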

\begin{proof}
To prove that $\gldim(\si)\le2$, we need to show that given two $\si$-semistable objects $E,F$ with $\vi_\si(F)>\vi_\si(E)+2$, we have $\Hom(E,F)=0$.
By Serre duality,
this means that $\Hom(F\ts\om_X\inv,E[2])=0$, where $\om_X\inv$ is nef and $\vi_\si(E[2])<\vi_\si(F)$.
The last equation follows from Corollary \ref{cor:vanish geom}.
To see that $\gldim(\si)=2$, we note that for any $x\in X$ the object $\cO_x$ is \si-stable and $\Ext^2(\cO_x,\cO_x)\iso\Hom(\cO_x,\cO_x)\dual\ne0$.
\end{proof}

\begin{theorem}\label{th:del Pezzo Ext2}
Let $X$ be a del Pezzo surface (meaning that the anticanonical bundle is ample).
Let $E,F$ be $\si_{\be,\om}$-semistable objects with $\vi_{\be,\om}(E)=\vi_{\be,\om}(F)\notin\bZ$.
Then 
$$\Hom^k(E,F)=0\qquad \forall k\ge2.$$
\end{theorem}
\begin{proof}
We apply Serre duality and Theorem \ref{th:vanish2}.
\end{proof}

\begin{remark}
It was proved in \cite[Lemma 4]{li_smoothness} that $\Ext^2(E,E)=0$ for a $\si_{\be,\om}$-semistable object ~$E$ of nonzero rank under an implicit assumption that $-K_X$ is proportional to $\om$.
A similar method was used in \cite{fan_contractibility} to show that the statement of Theorem \ref{th:vanish3} is true for $X=\bP^2$.
This method doesn't seem to generalize to other surfaces with nef or ample anticanonical bundle.
\end{remark}

\subsection{Geometric stability data}
\label{sec:GSD}
In this section we will combine all previous results and obtain a continuous family of stability data associated to a surface.

Let $X$ be a smooth projective surface, $\cD=D^b(\Coh X)$
and $\Ga=\cN(X)$.
Let $\be\in\NSR(X)$, $\om\in\Amp(X)$ and $\si_{\be,\om}$ be the corresponding stability condition on $\cD$.
This stability condition satisfies assumptions \eqref{ass1} and \eqref{ass2} by the results of \cite{toda_moduli}
(see also \cite{piyaratne_moduli}).
We conclude from Theorem \ref{th:unique shift} that the same is true for any special geometric stability condition $\si\in\Stab^+_{\cl}(\cD)$ \eqref{sg stab}.

Let us assume now that $X$ has a nef anticanonical bundle.
Then we conclude from Theorem~\ref{th:vanish3} that 
assumption \eqref{ass3} is satisfied for any
special geometric stability condition
$\si\in\Stab^+_{\cl}(\cD)$.
Following \S\ref{sec:wc qt} and \S\ref{sec:sd for sc}, we consider the quantum torus $\fg=\bop_{\ga\in\Ga}Rx^\ga$, $R=\R(\Sta/\bC)$,
and define,
for any special geometric stability condition $\si=(Z_\si,\cP_\si)\in\Stab^+_{\cl}(X)$,
the corresponding stability data
$a_\si=(a_{\si,\ga})_{\ga\in\Ga\ms\set0}\in\hat\fg$ by
\begin{gather}
a_{\si,\ell}=\sum_{Z_\si(\ga)\in\ell}a_{\si,\ga}=\log(A_{\si,\ell})\in\hat\fg_{\ell,Z_\si},\\
A_{\si,\ell}
=1+\sum_{Z_\si(\ga)\in\ell}
\bL^{\oh\hi(\ga,\ga)}[\cM_\si(\ga)]x^\ga
\in 
\hat G_{\ell,Z_\si}, \label{Asigma}
\end{gather}
for any ray $\ell\sbs\bC$,
where $\cM_\si(\ga)$ is the moduli stack of \si-semistable objects having class \ga,
$\hat\fg_{\ell,Z_\si}$ is the ind-pro-nilpotent Lie algebra defined in \eqref{ind-pro Lie alg} and 
$\hat G_{\ell,Z_\si}$ is the corresponding ind-pro-nilpotent group.
We obtain from Theorem \ref{th:fam sd from sc}

\begin{theorem}
For any smooth projective surface $X$ with the nef anticanonical bundle, the family of stability data $(Z_\si,a_\si)_{\si\in\Stab_{\cl}^+(\cD)}$ is continuous.
\end{theorem}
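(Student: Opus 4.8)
The plan is to derive the statement directly from Theorem~\ref{th:fam sd from sc}, applied to the topological space $M=\Stab_{\cl}^+(\cD)$ equipped with the tautological family $\si\mapsto\si$. What has to be checked is that this is a continuous family of stability conditions and that Assumptions~\ref{ass1}, \ref{ass2} and~\ref{ass3} hold at every point. The first point is formal: the subset $\Stab_{\cl}^+(\cD)\sbs\Stab_{\cl}(\cD)$ of special geometric stability conditions is open (\cf~\S\ref{sec:GSC}), so the inclusion is a continuous map of topological spaces and the tautological family $(\si_x=x)_{x\in M}$ is continuous in the sense of \S\ref{sec:stab cond}.

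Next I would verify Assumptions~\ref{ass1} and~\ref{ass2} for an arbitrary $\si\in\Stab_{\cl}^+(\cD)$. For the standard conditions $\si_{\be,\om}$ with $\be\in\NSR(X)$ and $\om\in\Amp(X)$ these are exactly the openness of the moduli stacks of objects in a heart and the finiteness of the stacks of semistable objects of a fixed class, established in \cite{toda_moduli} (see also \cite{piyaratne_moduli}). To reach a general special geometric $\si$ I would invoke Theorem~\ref{th:unique shift}, which produces $g=(T,f)\in\wtl\GL_2^+(\bR)$ with $\si[g]=\si_{\be,\om}$. Since $\bar T$ commutes with the antipodal map of $U(1)$, the lift $f$ commutes with $\vi\mapsto\vi+1$, hence sends intervals of length $\le1$ to intervals of length $\le1$ and hearts of $\cP$ to hearts of $\cP[g]$; as the set of $\si$-semistable objects coincides with that of $\si_{\be,\om}$ up to this relabeling of phases, the stacks $\cM_{\si,I}$ and $\cM_\si(\ga)$ differ from their $\si_{\be,\om}$-analogues only by an automorphism of $\bar\cM$, and Assumptions~\ref{ass1}--\ref{ass2} are inherited. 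Finally, Assumption~\ref{ass3}, that $\gldim(\si)\le2$, is precisely the content of Theorem~\ref{th:vanish3}, and this is the single place where the hypothesis that $-K_X$ is nef is used. With all three assumptions in hand, Theorem~\ref{th:fam sd from sc} applies verbatim and yields the continuity of $(Z_\si,a_\si)_{\si\in\Stab_{\cl}^+(\cD)}$.

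I do not expect a serious obstacle here: the mathematical content has already been spent in the preceding sections. The openness of $\Stab_{\cl}^+(\cD)$ and its description up to $\wtl\GL_2^+(\bR)$ are Theorem~\ref{th:unique shift}; the wall-crossing formula in the quantum torus under $\gldim\le2$ is Corollary~\ref{cor:WC2}, from which the continuity of the induced family of stability data (Theorem~\ref{th:fam sd from sc}) is deduced; and the equality $\gldim(\si)=2$ for surfaces with nef anticanonical bundle is Theorem~\ref{th:vanish3}, itself built on the Bogomolov-type inequality of Theorem~\ref{th:BTI}, the phase-monotonicity principle of Theorem~\ref{th:phase behave}, and the vanishing $\Hom(F\ts L,E)=0$ of Theorem~\ref{th:vanish1}. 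Thus the only step in the present argument requiring a moment's care is the transport of Assumptions~\ref{ass1}--\ref{ass2} along the $\wtl\GL_2^+(\bR)$-action, and that reduces to the elementary compatibility of $f$ with the shift $\vi\mapsto\vi+1$ recorded above.
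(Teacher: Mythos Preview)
Your proposal is correct and follows essentially the same route as the paper: verify Assumptions~\ref{ass1}--\ref{ass2} for $\si_{\be,\om}$ via \cite{toda_moduli}, transport them to all of $\Stab_{\cl}^+(\cD)$ using Theorem~\ref{th:unique shift}, obtain Assumption~\ref{ass3} from Theorem~\ref{th:vanish3}, and conclude by Theorem~\ref{th:fam sd from sc}. The paper states this in a single sentence; your version merely spells out the transport step along the $\wtl\GL_2^+(\bR)$-action in more detail.
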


\subsection{Relation to intersection cohomology}
\label{sec:inters}
We briefly explain the relationship of the above invariants to the intersection cohomology of the moduli spaces of semistable objects.
Let $X$ be a del Pezzo surface, meaning that the anticanonical bundle is ample.
By the results of \cite{toda_moduli,piyaratne_moduli},
for any $\si\in\Stab_{\cl}^+(\cD)$ and $\ga\in\Ga$,
we can construct the moduli space $M_\si(\ga)$ of \si-semistable objects having class \ga (and phase $\frac1\pi\Arg Z_\si(\ga)$)
and the moduli space $M_\si^\st(\ga)$ of \si-stable objects having class \ga.
The moduli space $M_\si^\st(\ga)$ is smooth if $\Hom^2(E,E)=0$ for all $E\in M_\si^\st(\ga)$ (or more generally, if $\Hom^2(E,E)$ has constant dimension).

Given an algebraic variety $Y$ of dimension $d_Y$, let $\lIC_Y\bQ^H\in\MHM(Y)$ be the corresponding intersection complex (see \eg \cite{saito_introduction}).
It is a simple pure object of weight $d_Y$.
If $Y$ is smooth, then $\lIC_Y\bQ^H=\bQ_Y^H[d_Y]$.
We define (\cf \cite{mozgovoy_intersection})
\begin{equation}
\IH^*(Y,\bQ)=H^*_c(Y,\lIC_Y\bQ^H)[-d_Y]\in D^b(\MHS_\bQ).
\end{equation}
Let us consider the Hodge-Deligne polynomial (\cf \eqref{E-pol})
\begin{equation}
E:K(D^b(\MHS_\bQ))\iso K(\MHS_\bQ)\to\bZ[u^{\pm1},v^{\pm1}],\qquad 
V\mto
\sum_{p,q}\dim \rbr{\Gr^p_F\Gr^W_{p+q}V_\bC}u^pv^q.
\end{equation}
and define
\begin{equation}
\IE(Y;u,v)=E(\IH^*(Y,\bQ))\in \bZ[u^{\pm1},v^{\pm1}].
\end{equation}
Let us consider the closure $\bar M_\si^\st(\ga)$
of $M_\si^\st(\ga)\sbs M_\si(\ga)$ and define
(recall that we defined earlier $E(\bL^\oh)=-\sqrt{uv}$)
\begin{equation}
\Om_\si(\ga)=E(\bL^\oh)^{\hi(\ga,\ga)-1}\cdot
\IE(\bar M_\si^\st(\ga);u,v).
\end{equation}
On the other hand, given a ray $\ell\sbs\bC$, we consider (\cf \eqref{Asigma})
\begin{equation}
E(A_{\si,\ell})=1+\sum_{Z_\si(\ga)\in\ell}
E(\bL^\oh)^{\hi(\ga,\ga)}\cdot E(\cM_\si(\ga))x^\ga.
\end{equation}

Let us assume that a ray $\ell\sbs\bC$ is such that $Z_\si(\cO_x)\notin\pm\ell$ for $x\in X$ and
\begin{equation}
\hi(\ga,\ga')=\hi(\ga',\ga)\qquad
\forall \ga,\ga'\in\Ga\cap Z_\si\inv(\ell).
\end{equation}
By Theorem \ref{th:del Pezzo Ext2}, the first assumption implies that for any \si-semistable objects $E,F$ having the same phase and classes in $Z_\si\inv(\ell)$, we have $\Hom^2(E,F)=0$.
The second assumption implies that the quantum torus restricted to degrees in $\Ga\cap Z_\si\inv(\ell)$ is commutative.
By the results of \cite{meinhardt_donaldson,meinhardt_donaldsona,mozgovoy_intersectiona,mozgovoy_intersection},
we have
\begin{equation}
E(A_{\si,\ell})=
\Exp\rbr{\frac{\sum_{Z_\si(\ga)\in\ell}\Om_\si(\ga)x^\ga}{E(\bL^\oh)-E(\bL^{-\oh})}},
\end{equation}
where $\Exp$ is the plethystic exponential (see \eg \cite{mozgovoy_intersection}).

\begin{remark}
In \cite{bousseau_scattering} one used the last formula as the definition of the invariants $E(A_{\si,\ell})$ in the case of $X=\bP^2$.
Note that this formula can not be used if $Z_\si(\cO_x)\in\ell$.
\end{remark}
\section{Relation to quivers with potentials}
\label{sec:rel QP}

\subsection{Exceptional collections and tilting objects}
\label{sec:except}
For more information on tilting theory see \eg \cite{angelerihugel_handbook}.
Let $X$ be a smooth projective variety of dimension ~$d$ and let
\begin{equation}\label{exc col}
E_1,\,\dots,\,E_n
\end{equation}
be an \idef{exceptional collection} in $D^b(\Coh X)$, meaning that
\begin{enumerate}
\item $\Hom^k(E_i,E_i)=0$ for $k\ne0$ and $\Hom(E_i,E_i)=\bC$.
\item $\Hom^k(E_i,E_j)=0$ for $i>j$ and $k\in\bZ$.
\end{enumerate}
We will say that an exceptional collection is \idef{full} if the objects $E_i$ generate $D^b(\Coh X)$.
We will say that an exceptional collection is \idef{strong}
if $\Hom^k(E_i,E_j)=0$ for all $i,j$ and $k\ne0$.
Given a full and strong exceptional collection,
let us define the algebra
\begin{equation}\label{alg A1}
A=\End(T')^\op,\qquad T'=\bop_i E_i,
\end{equation}
and let $\mmod A$ denote the category of finite-dimensional left $A$-modules.
Then there is an equivalence of categories
\cite{bondal_representations,rickard_morita,angelerihugel_handbook}
\begin{equation}
\Phi:D^b(\Coh X)\to D^b(\mmod A),\qquad E\mto\RHom(T',E).
\end{equation}
The objects $E_i$ are mapped to the projective modules $P_i=Ae_i$, where $e_i\in A$ is the idempotent corresponding to $\Id\in\End(E_i)$.
Let us introduce a $\bZ$-grading $A=\bop_{k\ge0} A_k$, $A_k=\bop_{j-i=k}\Hom(E_i,E_j)$, and let 
$\bar A=\Ker(A\to A_0=\bop_{i=1}^n\bC)$ be the augmentation ideal.
By \cite[Lemma A.1]{bridgeland_helices}, one can represent $A$ as a quotient $\bC Q'/I$ for a unique quiver $Q'$ with the set of vertices $Q'_0=\set{1,\dots,n}$ and the number of arrows from $i$ to $j$ equal to $\dim e_j(\bar A/\bar A^2)e_i$.

On the other hand, let us consider the canonical bundle
$Y=\om_X$ as an algebraic variety and let $\pi:Y\to X$ be the projection.
Note that $Y$ is a non-compact Calabi-Yau variety.
We have $\pi_*\cO_Y=\bop_{i\ge0}\om_X^{-i}$ and $Y=\Spec(\pi_*\cO_Y)$.
Let $\Coh_c Y\sbs \Coh Y$ be the subcategory of coherent sheaves with compact support and let $\Coh_0 Y\sbs \Coh_c Y$ be the subcategory of coherent sheaves supported on $X\sbs Y$ embedded as the zero section.
A coherent sheaf $\bar E\in\Coh_c Y$ can be identified with a pair $(E,\vi)$, where $E=\pi_*\bar E\in\Coh X$ and $\vi:E\to E\ts\om_X$ is a morphism.
We have $\bar E\in\Coh_0 Y$ if and only if $\vi:E\to E\ts\om_X$ is nilpotent, meaning that 
$E\to E\ts\om_X\to E\ts\om_X^{2}\to\dots$ eventually vanishes.

The category $D^b(\Coh_c X)$ can be identified with the subcategory $D^b_c(\Coh Y)\sbs D^b(\Coh Y)$ consisting of complexes with cohomology having compact support.
Similarly, the category $D^b(\Coh_0 X)$ can be identified with the subcategory $D^b_0(\Coh Y)\sbs D^b(\Coh Y)$ consisting of complexes with cohomology supported on $X$.
Note that the full embedding $\Coh X\emb\Coh_0Y$, $E\mto (E,0)$, induces a (non-full) functor $D^b(\Coh X)\to D^b_0(\Coh Y)$.
The category $D^b(\Coh X)$ generates $D^b_0(\Coh Y)$ by extensions.

\begin{lemma}[See \eg \cite{mozgovoy_quiver}]
Let $\bar E=(E,\vi)$, $\bar F=(F,\psi)$ be objects in $\Coh_cY$.
Then there is an exact sequence
\begin{multline*}
0\to\Hom_Y(\bar E,\bar F)\to\Hom_X(E,F)\to\Hom_X(E,F\ts\om_X)\\
\to\Ext^1_Y(\bar E,\bar F)\to\Ext^1_X(E,F)\to
\Ext^1_X(E,F\ts\om_X)\to\dots
\end{multline*}
\end{lemma}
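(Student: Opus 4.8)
The plan is to build a two–term resolution of $\bar E$ on $Y$ by pullbacks of coherent sheaves on $X$, apply $\RHom_Y(-,\bar F)$, and then transport everything to $X$ using the adjunction $\pi^*\dashv R\pi_*$ together with the fact that $\pi:Y\to X$ is affine. First I would set up the algebraic reformulation: since $\pi$ is affine, $\pi_*$ identifies $\Coh_c Y$ with the category of coherent modules over the sheaf of $\cO_X$–algebras $\cA=\pi_*\cO_Y=\bop_{i\ge0}\om_X^{-i}$, and under this identification $\bar E$ becomes the pair $(E,\vi)$ with $E=\pi_*\bar E$ and $\vi:E\to E\ts\om_X$ the action of the degree–one part $\cA_1=\om_X\inv$. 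Because $\pi$ is flat, the projection formula gives $\pi_*\pi^*G\iso\cA\ts_{\cO_X}G$ for $G\in\Coh X$, so free $\cA$–modules correspond to pullbacks of sheaves from $X$.

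Next I would write down the relative Koszul resolution. The line bundle $Y=\om_X$ carries a tautological section $\sigma$ of $\pi^*\om_X$ (the fibre coordinate), and $\bar E$ is the cokernel of $\sigma-\pi^*\vi$, so that
\[
0\to\pi^*(E\ts\om_X\inv)\xto{\ \sigma-\pi^*\vi\ }\pi^*E\to\bar E\to0
\]
is a short exact sequence on $Y$; in terms of $\cA$–modules it reads
\[
0\to\cA\ts_{\cO_X}(E\ts\om_X\inv)\xto{\ \delta\ }\cA\ts_{\cO_X}E\to E\to0,
\]
where $\delta$ is the difference of the multiplication $\cA\ts_{\cO_X}\cA_1\to\cA$ (tensored with $E$) and the map induced by $\vi$. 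I would check exactness locally on $X$, where $\om_X$ is trivialised and the sequence becomes the usual length–one free resolution of a finitely generated $R[t]$–module that is finite over $R$.

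Applying $\RHom_Y(-,\bar F)$ to the short exact sequence on $Y$ yields a distinguished triangle, whose outer terms I would identify through the derived adjunction
\[
\RHom_Y(\pi^*G,\bar F)\iso\RHom_X(G,R\pi_*\bar F)=\RHom_X(G,F)
\]
(here $R\pi_*=\pi_*$ because $\pi$ is affine and $\pi_*\bar F=F$), together with the line–bundle twist $\RHom_X(E\ts\om_X\inv,F)\iso\RHom_X(E,F\ts\om_X)$. One then gets a distinguished triangle whose first two arrows are $\RHom_Y(\bar E,\bar F)\to\RHom_X(E,F)\to\RHom_X(E,F\ts\om_X)$, and its long exact cohomology sequence, with $\Ext^{<0}$ vanishing, is exactly the stated one. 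Unwinding the maps shows the first map $\Hom_X(E,F)\to\Hom_X(E,F\ts\om_X)$ is $f\mto\psi\circ f-(f\ts\Id)\circ\vi$, whose kernel is by construction $\Hom_Y(\bar E,\bar F)$ — a reassuring consistency check.

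The one genuinely delicate step is the global construction of the resolution: one must verify that $\sigma-\pi^*\vi$ (equivalently $\delta$) is a well–defined morphism carrying the indicated twist by $\om_X\inv$ and that the sequence is exact globally rather than merely fibrewise. This is routine bookkeeping for the affine bundle $\pi:Y\to X$, but it is precisely what forces the third term of the long exact sequence to be $\Hom_X(E,F\ts\om_X)$ and not $\Hom_X(E,F\ts\om_X\inv)$; alternatively one can simply invoke the reference cited in the statement.
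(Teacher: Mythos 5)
Your argument is correct: the two-term Koszul resolution $0\to\pi^*(E\ts\om_X\inv)\xto{\sigma-\pi^*\vi}\pi^*E\to\bar E\to0$ combined with the adjunction $\RHom_Y(\pi^*G,\bar F)\iso\RHom_X(G,\pi_*\bar F)$ for the affine map $\pi$ is exactly the standard proof, and your identification of the kernel of $f\mto\psi\circ f-(f\ts\Id)\circ\vi$ with $\Hom_Y(\bar E,\bar F)$ confirms the maps come out right. The paper itself gives no proof and simply defers to the cited reference, which argues along the same lines, so there is nothing to reconcile.
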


\begin{corollary}\label{cor:loc surf Euler}
Let $\bar E=(E,\vi)$, $\bar F=(F,\psi)$ be objects in $\Coh_cY$.
Then
$$\hi_Y(\bar E,\bar F)=\hi_X(E,F)-(-1)^d\hi_X(F,E)=:\ang{E,F}.$$
\end{corollary}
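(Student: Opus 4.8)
The plan is to take the Euler characteristic of the long exact sequence furnished by the preceding lemma and then to eliminate the term involving $F\ts\om_X$ by Serre duality on $X$. All the $\Hom$ and $\Ext$ groups in sight are finite-dimensional: $E=\pi_*\bar E$ and $F=\pi_*\bar F$ are coherent sheaves on the projective variety $X$, while $\bar E,\bar F$ have compact support on $Y$, so the lemma applies and the sequence consists of finite-dimensional spaces. In fact, the displayed long exact sequence is nothing but the cohomology long exact sequence of the distinguished triangle computing $\RHom_Y(\bar E,\bar F)$ out of $\RHom_X(E,F)$ and the morphism induced by $\phi$ and $\psi$, so that Euler characteristics are automatically compatible with it.

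First I would observe that since $X$ is smooth of dimension $d$ we have $\Ext^k_X(E,F)=\Ext^k_X(E,F\ts\om_X)=0$ for $k>d$; by the exactness of the sequence this forces $\Ext^k_Y(\bar E,\bar F)=0$ for $k\gg0$ (indeed for $k>d+1$, as $Y$ is smooth of dimension $d+1$). Hence the sequence is finite and the alternating sum of the dimensions of its terms vanishes. Grouping the terms into the consecutive triples $\bigl(\Ext^k_Y(\bar E,\bar F),\ \Ext^k_X(E,F),\ \Ext^k_X(E,F\ts\om_X)\bigr)$ and keeping track of signs — each triple contributes with overall sign $(-1)^{k}$ on its first entry, $-(-1)^{k}$ on its second, and $(-1)^{k}$ on its third — yields
\begin{equation*}
\hi_Y(\bar E,\bar F)-\hi_X(E,F)+\hi_X(E,F\ts\om_X)=0.
\end{equation*}
Next I would rewrite the last summand using Serre duality on $X$, namely $\Ext^k_X(E,F\ts\om_X)\iso\Ext^{d-k}_X(F,E)\dual$, so that $\hi_X(E,F\ts\om_X)=\sum_k(-1)^k\dim\Ext^{d-k}_X(F,E)=(-1)^d\hi_X(F,E)$. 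Substituting this into the identity above gives $\hi_Y(\bar E,\bar F)=\hi_X(E,F)-(-1)^d\hi_X(F,E)$, which is the assertion.

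There is no genuine obstacle here; the only delicate points are the sign bookkeeping in the alternating sum and fixing the convention for Serre duality on $X$ (the version $\Ext^k_X(A,B\ts\om_X)\iso\Ext^{d-k}_X(B,A)\dual$), and this is consistent with the identity $\hi_X(E,F)=(-1)^d\hi_X(F,E\ts\om_X)$ already recorded above. One could equally argue in the derived category throughout: additivity of $\hi$ on the triangle relating $\RHom_Y(\bar E,\bar F)$, $\RHom_X(E,F)$ and $\RHom_X(E,F\ts\om_X)$ gives the three-term identity directly, and Serre duality on $X$ then finishes the proof.
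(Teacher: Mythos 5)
Your proof is correct and is exactly the argument the paper intends: the corollary is stated without proof precisely because it follows by taking the alternating sum of dimensions in the long exact sequence of the preceding lemma (your sign bookkeeping giving $\hi_Y(\bar E,\bar F)-\hi_X(E,F)+\hi_X(E,F\ts\om_X)=0$ is right) and then applying Serre duality on $X$ in the form $\hi_X(E,F\ts\om_X)=(-1)^d\hi_X(F,E)$, which the paper records earlier. The finiteness and vanishing observations you include are the correct justifications for the alternating-sum step.
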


Let us consider the object $T=\pi^*T'=\bop_i\pi^*E_i$ in $D^b(\Coh Y)$.
We have
\begin{equation}
\Hom^k(T,T)
=\Hom^k(T',T'\ts\pi_*\cO_Y)
=\bop_{\ov{1\le i, j\le n}{m\ge0}}\Hom^k(E_i,E_j\ts\om_X^{-m}). 
\end{equation}
Let us assume that
\begin{equation}
\Hom^k(T,T)=0\qquad\forall k\ne0.
\end{equation}
The object $T$ generates $D^b(\Coh Y)$ in the sense that 
$\Hom_Y^\bul(T,F)=0$ for $F\in D^b(\Coh Y)$ implies $F=0$.
Indeed, $\Hom_Y^\bul(T,F)=\Hom_X^\bul(T',\pi_*F)=0$ implies $\pi_*F$, hence $F=0$.
This means that $T$ is a \idef{classical tilting object},
hence there is an equivalence of categories \cite[\S7]{hille_fourier} (see also \cite[Theorem 3.6]{bridgeland_helices})
\begin{equation}
\Phi:D^b_c(\Coh Y)\to D^b(\mmod B),\qquad E\mto\RHom(T,E),\qquad B=\End(T)^\op.
\end{equation}
Let us consider the ideal $\cI=\bop_{m\ge1}\Hom(T',T'\ts\om_X^{-m})$ of $B$ and let $\mmod_0B\sbs\mmod B$ consist of modules on which $\cI$ acts nilpotently.
Then the above functor induces an equivalence
\begin{equation}
\Phi:D^b_0(\Coh Y)\to D^b_0(\mmod B)=D^b(\mmod_0 B),\qquad E\mto\RHom(T,E).
\end{equation}

Let us extend the above exceptional collection to a sequence $(E_i)_{i\in \bZ}$ such that 
\begin{equation}
E_{i+n}=E_i\ts\om_X\inv,\qquad i\in\bZ.
\end{equation}
This is an example of a helix \cite{bridgeland_helices}.
By our assumptions $\Hom^k(E_i,E_j)=0$ for $j>i-n$
and $k\ne0$.
On the other hand, $\Hom(E_i,E_j)\iso\Hom^d(E_{j+n},E_i)\dual=0$ for $i>j$.
Therefore
\begin{equation}
B^\op\iso \bop_{1\le i\le n,j\ge i}\Hom(E_i,E_j).
\end{equation}


%

\subsection{Local surfaces}
Let $X$ be a smooth projective surface, $Y=\om_X$ be its canonical bundle and $\pi:Y\to X$ be the projection as before.
Then $Y$ is a non-compact 3-Calabi-Yau variety which is sometimes called a \idef{local surface}.
Recall that
\begin{gather}
\hi_X(E,F)=\int a^*b\cdot\Td(X),\qquad a=\ch(E),\,b=\ch(F),
\label{hi EF}\\
\Td(X)=(1,-\toh K_X,\hi),\qquad \hi=\hi(\cO_X).
\end{gather}
We have $a^*a=(a_0^2,0,2a_0a_2-a_1^2)$, hence
\begin{equation}\label{hi EE}
\hi_X(E,E)=\hi\cdot a_0^2-\De(E).
\end{equation}
We have $a^*b-b^*a=2(0,a_0b_1-a_1b_0,0)$, hence
\begin{equation}\label{ang EF}
\ang{E,F}
=(a_1b_0-a_0b_1)K_X
=(a,bK_X).
\end{equation}

For many surfaces $X$ (see \eg \cite{beaujard_vafa})
one can find an exceptional collection \eqref{exc col}
such that $T'=\bop_iE_i$ and $T=\pi^*T'$ satisfy the assumptions of \S\ref{sec:except},
the algebra $B=\End(T)^\op$ can be identified with the Jacobian algebra $J_{W}$ of some quiver with potential $(Q,W)$ (see~ \S\ref{sec:Jacobian})
and the algebra $A=\End(T')^\op$ can be identified with the partial Jacobian algebra $J_{W,I}$ corresponding to a cut $I\sbs Q_1$ (see~ \S\ref{sec:Jacobian}).
This implies that we can construct three, a priori unrelated, families of stability data:
\begin{enumerate}
\item For the category $D^b(\Coh X)\iso D^b(\mmod J_{W,I})$.
\item For the category $D^b_0(\Coh Y)\iso D^b_0(\mmod J_{W})$.
\item For the category $D^b_c(\Coh Y)\iso D^b(\mmod J_{W})$.
\end{enumerate}
Because of Corollary \ref{cor:loc surf Euler}, all these families are defined in the same quantum torus.
We will show later that for $X=\bP^2$ stability data on $D^b(\Coh X)$ and on $D^b(\mmod J_W)$ can be glued together.
This implies that knowing stability data on $D^b(\mmod J_W)$, we can reconstruct stability data on $D^b(\Coh X)$.
A complete (but still conjectural except for low dimension vectors) description of stability data on $D^b(\mmod J_W)$
was given in \cite{beaujard_vafa,mozgovoy_attractor}.
A partial description of stability data on
$D^b(\Coh X)$ was given in \cite{bousseau_scattering} (the scattering diagram used there captures a significant, but incomplete information about the stability data).
It should be not difficult to generalize our results to other del Pezzo surfaces.
A complete (conjectural) description of stability data for $D^b(\mmod J_W)$ is also available in this case
\cite{beaujard_vafa,mozgovoy_attractor}.

Recall that for every $\be\in\NSR(X)$ and $\om\in\Amp(X)$ we have a stability condition $\si_{\be,\om}$ on $D^b(\Coh X)$.
This stability condition can be extended to $D^b_0(\Coh Y)$ (see \eg ~\cite{bayer_spacea}).
Alternatively, one can use the approach of \cite{ikeda_q} to extend a stability condition from
$D^b(\Coh X)$ to $D^b_0(\Coh Y)$.
Motivic DT invariants can be defined for the 3CY categories $D^b_0(\Coh Y)$ and $D^b_c(\Coh Y)$ using the framework of \cite{kontsevich_stability}, as long as one proves that various necessary assumptions are satisfied (and this is not a simple task).
On the other hand, as was observed in \cite{bousseau_scattering} (see also \cite[Prop.~3.1]{cao_genus}), in many situations (assuming that ~$X$ is a del Pezzo surface) semistable objects in $D^b_c(\Coh Y)$ are automatically semistable objects in $D^b(\Coh X)$.
Intuitively, the reason is that for any semistable object $(E,\vi)$, there is a morphism $\vi:(E,\vi)\to(E\ts\om_X,\vi\ts\id_{\om_X})$ which has to vanish if the object on the right is semistable of phase less than the phase of $(E,\vi)$.
This means that for del Pezzo surfaces one can perform computations in $D^b(\Coh X)$ instead of the 3CY categories $D^b_0(\Coh Y)$ and $D^b_c(\Coh Y)$ (at least for some stability conditions and some Chern characters).

In the next sections we will consider the case of $X=\bP^2$ and we will see how computation of stability data for $D^b(\Coh X)$ reduces to computations of stability data for a certain quiver with potential.

\subsection{Relation to quiver representations}
\label{sec:rel QP plane}
In this section we will identify the derived category $\cD=D^b(\Coh X)$ for $X=\bP^2$ with the derived category of some explicit finite-dimensional algebra $A$ of homological dimension two.
We will see later that this algebra arises from a quiver with a potential and a cut.

\subsubsection{Derived equivalence}
Let us consider the Beilinson exceptional collection on $X=\bP^2$
\begin{equation}\label{exc seq}
\cO,\,\cO(1),\,\cO(2).
\end{equation}
As in \S\ref{sec:except}, we define the algebra
\begin{equation}
A=\End(T')^\op,\qquad T'=\cO\oplus\cO(1)\oplus\cO(2),
\end{equation}
and consider an equivalence of categories
\begin{equation}\label{Phi2}
\Phi:D^b(\Coh X)\to D^b(\mmod A),\qquad E\mto\RHom(T',E).
\end{equation}
The algebra $A$ can be described as the quotient of the path algebra $\bC Q'$ for the quiver $Q'$
\begin{equation}\label{Q1}
\begin{tikzcd}
0&1\lar[->>>,"a_i"']&2\lar[->>>,"b_i"']
\end{tikzcd}
\end{equation}
by the relations
\begin{equation}\label{A-rel}
a_ib_j=a_jb_i,\qquad i\ne j.
\end{equation}
Let $e_i\in A$ be the idempotent corresponding to the trivial path at the vertex $i\in Q'_0$.
Let $S_i\in\mmod A$ denote the corresponding simple module, $P_i=Ae_i\in\mmod A$ denote the projective indecomposable module, and $I_i=D(e_iA)\in\mmod A$ denote the injective indecomposable module, where $DV=\Hom_\bC(V,\bC)$ for a vector space $V$.
Note that $S_0=P_0$ and $S_2=I_2$.

The objects of the exceptional sequence \eqref{exc seq} are mapped to the projective modules $P_0$, $P_1$, $P_2$ respectively.
Note that
\begin{gather*}
\Hom(\cO,\cO(1))\iso\bC^3\iso\Hom(P_0,P_1)=e_0Ae_1,\\
\Hom(\cO(1),\cO(2))\iso\bC^3\iso\Hom(P_1,P_2)=e_1Ae_2,\\
\Hom(\cO,\cO(2))\iso\bC^6\iso\Hom(P_0,P_2)=e_0Ae_2.
\end{gather*}

\subsubsection{Simple objects}
\label{sec:simple}
From now on we will identify $\cD=D^b(\Coh X)$ with $D^b(\mmod A)$ using the equivalence $\Phi$ \eqref{Phi2}.
We claim that the simple objects $S_0$, $S_1$, $S_2$ correspond to 
\begin{equation}\label{simple}
\cO,\, \Om_X(1)[1],\,\cO(-1)[2],
\end{equation}
where $\Om_X$ is the cotangent bundle.
It is clear that $S_0=P_0=\cO$.
Note that 
\begin{equation}
\dim\Hom^k(P_i,S_j)=\de_{ij}\de_{k0}.
\end{equation}
It is proved in \cite[Lemma 2.5]{bridgeland_helices} that for a full exceptional collection $(E_1,\dots,E_n)$ there exists a unique collection $(F_n,\dots,F_1)$ 
such that $\dim\Hom^k(E_i,F_j)=\de_{ij}\de_{k0}$
(the new collection is also a full exceptional collection).
Therefore it is enough to show that \eqref{simple} has the required properties.
Below we will give a direct proof instead. 


The category $D^b(\Coh X)$ has the Serre functor (note that $\om_X=\cO(-3)$)
\begin{equation}
\cS:D^b(\Coh X)\to D^b(\Coh X),\qquad E\mto E\ts\om_X[2].
\end{equation}
On the other hand, the category $D^b(\mmod A)$ also has the Serre functor (called the Nakayama functor in this context; the same construction works for any finite-dimensional algebra $A$ of finite homological dimension)
\begin{equation}
\nu:D^b(\mmod A)\to D^b(\mmod A),\qquad M\mto DA\ts^L_A M,
\end{equation}
where we consider $DA=\Hom_\bC(A,\bC)$ as an $A$-bimodule.
Note that $\nu P_i=I_i$.
We can identify the Serre functors $\cS$ and $\nu$, hence
\begin{equation}
I_0=\cO(-3)[2],\qquad I_1=\cO(-2)[2],\qquad S_2=I_2=\cO(-1)[2].
\end{equation}
Let us consider the universal exact sequence on $X=\bP^2$
\cite[B.5.7]{fulton_intersection}
\begin{equation}\label{Qseq}
0\to \cO(-1)\to \cO^{\oplus 3}\to \cQ\to0.
\end{equation}
Then the tangent bundle is given by $T_X=\cQ(1)$.
Dualizing the above exact sequence, we obtain a distinguished triangle
\begin{equation}
\cO^{\oplus 3}\to\cO(1)\to \cQ\dual[1]\to
\end{equation}
which corresponds to the canonical short exact sequence in $\mmod A$
\begin{equation}
0\to S_0^{\oplus 3}\to P_1\to S_1\to 0.
\end{equation}
We conclude that $S_1=\cQ\dual[1]=\Om_X(1)[1]$.

\subsubsection{Chern classes and dimension vectors}
The above equivalence 
$$\Phi:\cD=D^b(\Coh X)\to D^b(\mmod A)$$
induces an isomorphism between the (numerical) Grothendieck groups.
We have linear maps $\ch:\cN(\Coh X)\to N^*_\bQ(X)$ and
\begin{equation}
\udim:\cN(\mmod A)\to\bZ Q'_0,\qquad M\mto(\dim e_iM)_{i\in Q'_0}.
\end{equation} 
Given $E\in\cD$, let us write
$\ch(E)\in N^*_\bQ(X)$ and $\udim E:=\udim \Phi(E)\in\bZ Q'_0$
as column vectors and let $M$ be the transition matrix such that $\ch E=M\cdot\udim E$.
Let us also define matrices 
\begin{equation}
A=(\hi(P_i,P_j))_{ij},\qquad
B=(\hi(S_i,S_j))_{ij},\qquad C=(\ch P_i)_i.
\end{equation}
Then
\begin{equation}
A\cdot B^t=I,\qquad M=C\cdot A\inv.
\end{equation}
We can calculate $A$ and $C$ using \eqref{exc seq}
and we obtain
\begin{equation}
A=\smat{
1 & 3 & 6 \\
0 & 1 & 3 \\
0 & 0 & 1}
\qquad
B=\smat{
1 & 0 & 0 \\
-3 & 1 & 0 \\
3 & -3 & 1}
\qquad
C=\smat{1 & 1 & 1 \\
0 & 1 & 2 \\
0 & \frac12 & 2}
\qquad
M=\smat{
1 & -2 & 1 \\
0 & 1 & -1 \\
0 & \frac{1}{2} & \frac{1}{2}}
\end{equation}

In particular, if $a=(a_0,a_1,a_2)=\ch(E)$ and 
$d=(d_0,d_1,d_2)=\udim E$, then
\begin{equation}\label{a-d}
a_0=d_0-2d_1+d_2,\qquad a_1=d_1-d_2,\qquad 2a_2=d_1+d_2.
\end{equation}

\begin{remark}
The above formula also follows from the fact that the Chern classes of simple objects ~\eqref{simple} are
\begin{equation}\label{simple chern}
\ch\cO=(1,0,0),\qquad \ch\Om_X(1)[1]=(-2,1,\toh),\qquad \ch\cO(-1)[2]=(1,-1,\toh).
\end{equation}
\end{remark}

Let us consider the quiver $Q$ (\cf \eqref{Q1})
\begin{equation}
\begin{tikzcd}\label{hatQ}
&1\ar[dl,->>>,"a_i"']\\
0\ar[rr,->>>,"c_i"]&&2\ar[ul,->>>,"b_i"']
\end{tikzcd}
\end{equation}
and the corresponding Euler form
\begin{equation}
\hi_Q(d,d')=\sum_i d_i(d'_i-3d'_{i-1}).
\end{equation}
Then the above explicit formula for the matrix $B$ implies

\begin{lemma}\label{lm:forms1}
Let $E,F\in\cD$ and $d=\udim E$, $d'=\udim F$. Then
\begin{equation}
\hi_X(E,F)
=\hi_Q(d,d')+3d_2d'_0+3d_0d'_2.
\end{equation}
\end{lemma}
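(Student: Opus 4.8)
The plan is to reduce the identity to the explicit knowledge of the matrix $B=(\hi(S_i,S_j))_{ij}$ computed just above, using bilinearity of both sides in $E$ and $F$. First I would observe that $\hi_X(-,-)$ and $\hi_Q(-,-)$, as well as the correction term $3d_2d'_0+3d_0d'_2$, are all bilinear forms on the Grothendieck group $\cN(\cD)\iso\bZ Q'_0$. Therefore it suffices to verify the equality on a $\bZ$-basis, and the natural basis to use is the simple modules $S_0,S_1,S_2$, for which $\udim S_i=\mathbf e_i$ (the $i$-th coordinate vector). For this basis, $\hi_X(S_i,S_j)=B_{ij}$, the entry of the matrix $B$ already computed, while $\hi_Q(\mathbf e_i,\mathbf e_j)=\de_{ij}-3\de_{i-1,j}$ by the displayed formula for $\hi_Q$, and $3(\mathbf e_i)_2(\mathbf e_j)_0+3(\mathbf e_i)_0(\mathbf e_j)_2=3\de_{i2}\de_{j0}+3\de_{i0}\de_{j2}$.

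The core of the argument is then simply to check that the $3\times3$ matrix identity
$$
B=\pmat{1&0&0\\-3&1&0\\0&-3&1}+\pmat{0&0&0\\0&0&0\\3&0&0}+\pmat{0&0&3\\0&0&0\\0&0&0}
$$
holds, i.e. that $B=\smat{1&0&3\\-3&1&0\\3&-3&1}$. Comparing with the displayed value $B=\smat{1&0&0\\-3&1&0\\3&-3&1}$, the off-diagonal entries $B_{13}$ and $B_{31}$ are the only ones requiring care: the formula gives $B_{31}=(-3\de_{3-1,1})+3=-3+3=0$ — wait, I must be careful with indexing conventions. I would therefore begin by fixing whether the vertices are labelled $0,1,2$ and whether $d_{i-1}$ means cyclic indexing (so $d_{-1}=d_2$) in the formula $\hi_Q(d,d')=\sum_i d_i(d'_i-3d'_{i-1})$; the quiver $\eqref{hatQ}$ is a cyclic quiver on $\set{0,1,2}$ with three arrows $0\to2$, $2\to1$, $1\to0$, so indeed $i-1$ is taken mod $3$. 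With that convention $\hi_Q(\mathbf e_i,\mathbf e_j)=\de_{ij}-3\,\de_{j,\,i-1\bmod 3}$, which for $(i,j)$ running over pairs gives the matrix $\smat{1&0&-3\\-3&1&0\\0&-3&1}$ (rows and columns indexed $0,1,2$). Adding the correction matrix $3\de_{i2}\de_{j0}+3\de_{i0}\de_{j2}=\smat{0&0&3\\0&0&0\\3&0&0}$ recovers exactly $B=\smat{1&0&0\\-3&1&0\\3&-3&1}$, confirming the claim.

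The only genuine subtlety — the "main obstacle" — is bookkeeping the two indexing conventions consistently: the cyclic $i-1$ in the definition of $\hi_Q$ versus the linear matrix layout of $B$, and making sure the arrows $a_i\colon 1\to0$, $b_i\colon 2\to1$, $c_i\colon 0\to2$ of $\eqref{hatQ}$ really produce the stated $\hi_Q$. Once that is pinned down, the proof is the one-line matrix verification above, and one concludes by bilinearity. I would present it as: both sides are bilinear forms on $\cN(\cD)$, they agree on the basis $(S_i)$ of simple modules by the explicit computation of $B$ combined with the formula for $\hi_Q$ and the evident value of the correction term on coordinate vectors, hence they agree everywhere.
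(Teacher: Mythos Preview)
Your proposal is correct and is essentially the same approach as the paper's, which simply states that the lemma follows from the explicit formula for the matrix $B=(\hi(S_i,S_j))_{ij}$; you spell out the bilinearity reduction and the $3\times3$ matrix check that the paper leaves implicit. The only cosmetic issue is the meandering exposition (the initial false start before sorting out the cyclic indexing), which you would want to clean up in a final write-up.
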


In particular, we have
\begin{gather}
\hi_X(E,E)=\hi_Q(d,d)+6d_0d_2,\\
\ang{E,F}=\hi_Q(d,d')-\hi_Q(d',d)=:\ang{d,d'}.\label{24}
\end{gather}

\subsection{Quivers with potential and their motivic invariants}
\label{sec:Jacobian}
The quiver $Q$ defined in \eqref{hatQ} is the McKay quiver of the representation of $\bZ_3$ on $\bC^3$ given by $1\mto\diag(\ksi,\ksi,\ksi)$, $\ksi=e^{2\ip/3}$.
This quiver is automatically equipped with the potential \cite[\S4.4]{ginzburg_calabi}
\begin{equation}\label{W1}
W=\sum_{\si\in\Si_3}\sgn(\si)\cdot a_{\si1}b_{\si2}c_{\si3},
\end{equation}
where $\Si_3$ is the permutation group.

Generally, let us assume that we have a quiver $Q$ equipped with a \idef{potential} $W\in\bC Q/[\bC Q,\bC Q]$, that is, a finite linear combination $W=\sum_u c_u u$ of cyclic paths considered up to cyclic shift.
We define the \idef{Jacobian algebra} of $(Q,W)$ to be
\begin{equation}
J_W=\bC Q/(\dd W)=\bC Q/(\dd W/\dd a\col a\in Q_1),
\end{equation}
where $\dd u/\dd a=\sum_{i:a_i=a}a_{i-1}\dots a_1a_n\dots a_{i+1}$ for any cycle $u=a_n\dots a_1$.
Let $I\sbs Q_1$ be a cut, meaning a subset such that every non-zero term of $W$ contains exactly one arrow from $I$.
We consider the quiver $Q'=(Q_0,Q_1\ms I)$ and define
the \idef{partial Jacobian algebra}
\begin{equation}\label{JWI}
J_{W,I}=\bC Q'/(\dd W/\dd a\col a\in I).
\end{equation}
For any dimension vector $d\in\bN Q_0$, we define 
\begin{equation}
R(Q,d)=\bop_{a:i\to j}\Hom(\bC^{d_i},\bC^{d_j})
\end{equation}
and we define $R(J_W,d)\sbs R(Q,d)$ and $R(J_{W,I},d)\sbs R(Q',d)$ to be the closed subsets corresponding to the relations of the algebras $J_W$ and $J_{W,I}$ respectively.
We define 
\begin{equation}\label{27}
\tr W:R(Q,d)\to\bC,\qquad M\mto \tr W|M=\sum_u c_u\cdot \tr u|M,
\end{equation}
where $\tr u|M$ for a cycle $u=a_n\dots a_1$ is defined to be $\tr(M_{a_n}\dots M_{a_1})$.
Then $R(J_W,d)$ can be identified with the critical locus of $\tr W$ (see \eg \cite{segal_a}).
Let us consider the skew-symmetric form on $\Ga=\bZ Q_0$
\begin{equation}
\ang{d,d'}=\hi_Q(d,d')-\hi_Q(d',d)
\end{equation}
and the corresponding quantum torus $\bT$ \eqref{T} (with coefficients in the Grothendieck ring of varieties with exponentials,
see \eg \cite[\S5]{mozgovoy_translation}).
We define the series
\begin{equation}\label{A0QW}
A_0^{Q,W}=\sum_{d\in\bN Q_0}\bL^{\oh\hi_Q(d,d)}\frac{[R(Q,d),\tr W]}{[\GL_d]}x^d\in\hat \bT,
\end{equation}
where $[R(Q,d),\tr W]$ is the exponential motivic class (see \eg \cite[\S5]{mozgovoy_translation}) and $\GL_d=\prod_i\GL_{d_i}$.
This expression can be simplified in the presence of a cut
$I\sbs Q_1$,
namely \cite[\S5]{mozgovoy_translation}
\begin{gather}
[R(Q,d),\tr W]
=\bL^{\ga_I(d,d)}\cdot [R(J_{W,I},d)],\\
\ga_I(d,d')=\sum_{(a:i\to j)\in I}d_id'_j.
\label{ga-I}
\end{gather}

Let us assume now that we have a stability function $Z:\Ga=\bZ Q_0\to\bC$, meaning that $Z(e_i)\in\bH$ for $i\in Q_0$.
It defines the notion of stability on the abelian category $\Rep Q$ \S\ref{sec:stab ab}.
Let $R_Z(Q,d)\sbs R(Q,d)$ denote the open subset of $Z$-semistable representations.
For any ray $\ell\sbs\bH$, we define
\begin{equation}\label{A-Zl-QW}
A_{Z,\ell}^{Q,W}
=1+\sum_{Z(d)\in\ell}\bL^{\oh\hi_Q(d,d)}\frac{[R_Z(Q,d),\tr W]}{[\GL_d]}x^d\in\hat \bT
\end{equation}
and $A_{Z,-\ell}^{Q,W}=A_{Z,\ell}^{Q,W}$.
We define \idef{stability data of the quiver with potential} $(Q,W)$ to be the collection $A_{Z}^{Q,W}=(A_{Z,\ell}^{Q,W})_\ell$.
It is parametrized by $Z\in\bH^{Q_0}\sbs\Hom(\bZ Q_0,\bC)$, although one can extend the set of parameters to 
$\GL_2^+(\bR)\cdot \bH^{Q_0}\sbs\Hom(\bZ Q_0,\bC)$.

The following result is well-known.
It is important to stress that while we compute invariants $[R(Q,d),\tr W]$ which morally correspond to counting representations of the Jacobian algebra (and these representations form the heart of a bounded t-structure on the 3CY derived category of DG modules over the Ginzburg DG algebra of $(Q,W)$ ~\cite{keller_deriveda}), in reality all computations are performed on the level of $Q$-representations which form an abelian category of homological dimension one.
Therefore the proof of the below wall-crossing formula is the same as in \cite{reineke_harder-narasimhan,joyce_configurations2} and we don't need to invoke the framework of \cite{kontsevich_stability}.

\begin{theorem}\label{WC-potential}
For any stability function $Z:\Ga\to\bC$, we have
\begin{equation}
A_0^{Q,W}=\prod_{\ell\sbs\bH}^\to A_{Z,\ell}^{Q,W}.
\end{equation}
\end{theorem}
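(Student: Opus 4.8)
\textbf{Proof plan for Theorem \ref{WC-potential}.}

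The plan is to reduce the identity to the standard Harder--Narasimhan wall-crossing identity in an appropriate Hall-type algebra and then push it forward to the quantum torus $\bT$. The point, stressed in the discussion preceding the theorem, is that although the motivic classes $[R(Q,d),\tr W]$ morally count representations of the Jacobian algebra $J_W$, all the geometry takes place on the level of $Q$-representations, which form the hereditary abelian category $\Rep Q$. So the first step is to work in the motivic Hall algebra $H(\Rep Q)$ of the exact (in fact abelian, hereditary) category $\Rep Q$, graded by $\Ga = \bZ Q_0$, exactly as in \S\ref{sec:motivic Hall}; here one should equip each $R(Q,d)/\GL_d$ with the evident atlas. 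Since $Z$ is a stability function on $\Rep Q$ and $\Rep Q$ has finite-dimensional Hom and $\Ext^1$, the HN property holds, and I would invoke the formal HN identity in $\hat H(\Rep Q)$ in the form used in the proof of Theorem \ref{th:wc1}: with $\one_d = [R(Q,d)/\GL_d \to \cM]$ and $\one_{Z,d}$ the class of the $Z$-semistable locus, one has $\sum_d \one_d = \prod^\to_{\ell}\big(1 + \sum_{Z(d)\in\ell}\one_{Z,d}\big)$, where the ordered product runs over rays $\ell\sbs\bH$ in clockwise order. This is precisely the argument of \cite{reineke_harder-narasimhan,joyce_configurations2}, valid because the relevant semigroup $S(\bH,Z,\eps)$-type decompositions into HN types are finite by Lemma \ref{lm:strict sgr} (the cone over $\bH$ restricted to classes of length $<1$ is strict).

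The second step is to apply an integration map. Because $\Rep Q$ is hereditary, the fibre of $q:\cM_2\to\cM\xx\cM$ over $(E_1,E_2)$ has motivic class $\bL^{-\hi_Q(d_2,d_1)}$ (only $\Hom$ and $\Ext^1$ contribute), so the map $\cI: H(\Rep Q)\to \bT$, $[X\to\cM(d)]\mto \bL^{\oh\hi_Q(d,d)}[X]x^d$, is an algebra homomorphism onto the quantum torus with bracket $\ang{d,d'} = \hi_Q(d,d')-\hi_Q(d',d)$ — this is the standard computation (cf. the lemma in \S\ref{sec:wc qt}, but here it is unconditional, with no global-dimension hypothesis, since $\gldim \le 1$). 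To incorporate the potential, I would use the twist by $\tr W$: the correct statement is that there is a morphism of the relevant (completed, graded) algebras sending $[X\to\cM(d)]$ to the exponential motivic class of $X$ pulled back along $\tr W$; concretely, applying $\cI$ followed by this twist sends $\one_d$ to $\bL^{\oh\hi_Q(d,d)}\frac{[R(Q,d),\tr W]}{[\GL_d]}x^d$ and $\one_{Z,d}$ to the corresponding $Z$-semistable term. The key input making this a ring map is the motivic Thom--Sebastiani / vanishing-cycle multiplicativity for $\tr W$ under extensions of $Q$-representations, which is exactly what underlies the integration-map formalism for quivers with potential in \cite{mozgovoy_translation} (and which, via the cut identity \eqref{ga-I}, can alternatively be phrased entirely in terms of ordinary motivic classes $[R(J_{W,I},d)]$ when a cut exists). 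Applying this homomorphism to both sides of the HN identity turns the left side into $A_0^{Q,W}$ and the right side into $\prod^\to_{\ell\sbs\bH} A_{Z,\ell}^{Q,W}$, which is the claim.

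A couple of routine points need to be checked but present no obstacle: the products are taken in the completion $\hat\bT$ with respect to the strict semigroup generated by the classes appearing, so convergence and well-definedness of the ordered product follow from Lemma \ref{lm:C VZe} and Lemma \ref{lm:strict sgr}; and the relation $A_{Z,-\ell}^{Q,W} = A_{Z,\ell}^{Q,W}$ is just a normalization consistent with the fact that $\Rep Q$ contributes only classes with $Z(d)\in\bH$. The main obstacle — really the only substantive point — is establishing that the twisted integration map is genuinely an algebra homomorphism, i.e. that $[R(Q,d),\tr W]$ behaves multiplicatively with respect to the extension product in the Hall algebra up to the predicted power of $\bL$. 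This is where the motivic vanishing-cycle machinery (Thom--Sebastiani for the superpotential, compatibility of $\tr W$ on a short exact sequence $0\to E_1\to E_2\to E_3\to 0$ with $W|_{E_2}$) enters; in the presence of a cut $I$ it is cleaner, since $[R(Q,d),\tr W] = \bL^{\ga_I(d,d)}[R(J_{W,I},d)]$ by \eqref{ga-I} and one only needs the ordinary integration map for the hereditary algebra together with bookkeeping of the quadratic form $\ga_I$. Either way this is established in the references cited (\cite{mozgovoy_translation}; cf. \cite{joyce_configurations2,reineke_harder-narasimhan}), so the proof consists of assembling these ingredients rather than proving anything new.
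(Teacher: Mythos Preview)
Your proposal is correct and matches the paper's approach: the paper gives no proof beyond the remark that the argument is the standard Reineke/Joyce HN identity in the Hall algebra of the hereditary category $\Rep Q$ pushed to the quantum torus, and your outline is a faithful elaboration of exactly that. One small simplification: the ``main obstacle'' you flag (multiplicativity of the $\tr W$-twist) does not require motivic Thom--Sebastiani here, since for any short exact sequence $0\to E_1\to E_2\to E_3\to 0$ of $Q$-representations one has $\tr W(E_2)=\tr W(E_1)+\tr W(E_3)$ by the upper-triangular form, so the twisted integration map is a ring homomorphism for the same elementary reason as the untwisted one; also, well-definedness of the ordered product needs no cone argument since the grading semigroup $\bN Q_0\setminus\{0\}$ is already strict.
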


\subsection{Relating stability data}
\label{relating SD}
Let us consider again the quiver $Q$ \eqref{hatQ} and its potential $W$ \eqref{W1}
\begin{equation}
\begin{tikzcd}
&1\ar[dl,->>>,"a_i"']\\
0\ar[rr,->>>,"c_i"]&&2\ar[ul,->>>,"b_i"']
\end{tikzcd}\qquad\qquad
W=\sum_{\si\in\Si_3}\sgn(\si)\cdot a_{\si1}b_{\si2}c_{\si3}.
\end{equation}
Let us consider the cut $I=\set{c_1,c_2,c_3}\sbs Q_1$.
Then the quiver $Q'=(Q_0,Q_1\ms I)$ has the form
\begin{equation}
\begin{tikzcd}
0&1\lar[->>>,"a_i"']&2\lar[->>>,"b_i"']
\end{tikzcd}
\end{equation}
which coincides with the quiver \eqref{Q1}
and the relations of the algebra $J_{W,I}$ coincide with \eqref{A-rel}.
We conclude that 
\begin{equation}
A=\End(T')^\op=J_{W,I},\qquad T'=\cO\oplus\cO(1)\oplus\cO(2),
\end{equation}
and $\cD=D^b(\Coh X)\iso D^b(\mmod J_{W,I})$, where $X=\bP^2$.
Similarly, let $\pi:Y=\om_X\to X$ be the projection and $T=\pi^*T'$.
Then
\begin{equation}
B=\End(T)^\op=J_W
\end{equation}
and $D^b_c(\Coh Y)\iso D^b(\mmod J_W)$.

\begin{remark}
The canonical bundle $Y=\om_{\bP^2}$ can be interpreted as a crepant resolution of $\bC^3/\bZ_3$.
This implies that \cite{bridgeland_mukai}
\begin{equation}
D^b_c(\Coh Y)\iso D^b_{\bZ_3}(\Coh_c\bC^3)\iso D^b(\mmod J_W).
\end{equation}
Note that the left hand side of \eqref{24} is the Euler form of $D^b_c(\Coh Y)$ and the right hand side of ~\eqref{24} is the Euler form of $D^b(\mmod J_W)$.
\end{remark}

We conclude from Lemma \ref{lm:forms1} that the Euler form $\hi_X$ of the derived category 
$D^b(\Coh X)\iso D^b(\mmod J_{W,I})$
is given by
\begin{gather}
\hi_X(E,F)=\bar \hi_X(d,d'),\qquad 
d=\udim E,\, d'=\udim F,\\
\bar\hi_X(d,d'):=\hi_Q(d,d')+\ga_I(d,d')+\ga_I(d',d),
\end{gather}
where $\ga_I(d,d')$ was defined in \eqref{ga-I}.

\begin{remark}
\label{phenom}
This seems to be a rather general phenomenon.
In \cite{beaujard_vafa} one considered a large family of examples of surfaces $X$ with a nef anticanonical bundle (called weak Fano surfaces there) and quivers with potential $(Q,W)$ such that $D^b_c(\Coh\om_X)\iso D^b(\mmod J_W)$.
We verified in all of these examples that there exists a special cut $I\sbs Q_1$ such that $D^b(\Coh X)\iso D^b(\mmod J_{W,I})$ and such that 
\begin{equation}
\hi_X(E,F)
=\hi_Q(d,d')+\ga_I(d,d')+\ga_I(d',d),\qquad 
d=\udim E,\, d'=\udim F.
\end{equation}
It would be interesting to give a conceptual explanation of this phenomenon.
\end{remark}

Let $Z:\Ga=\bZ Q_0\to\bC$ be a stability function on the abelian category $\mmod J_{W,I}$ and let $\si=(Z,\cP)$ be the corresponding stability condition on $\cD=D^b(\Coh X)\iso D^b(J_{W,I})$.
For any ray $\ell\sbs\bH$, we define stability data ~\eqref{A-si-l}
\begin{equation}\label{AZl-quiver}
A_{\si,\ell}
=1+\sum_{Z(d)\in\ell}
\bL^{\oh\bar\hi_X(d,d)}\frac{[R_Z(J_{W,I},d)]}{[\GL_d]}x^d
\end{equation}
and $A_{\si,-\ell}=A_{\si,\ell}$.
Note that the algebra $J_{W,I}$ has homological dimension $2$, hence we don't have a wall-crossing formula for the above stability data.
Nevertheless, we can formulate such a result if a stability condition on $D^b(\mmod J_{W,I})$ has global dimension $\le2$, similarly to Corollary \ref{cor:WC2}.
We will see later that there exist geometric stability conditions $\si$ on $D^b(\Coh X)$ with the heart $\cA_\si=\mmod J_{W,I}$.
These stability conditions have global dimension $2$ 
by Theorem~ \ref{th:vanish3}.


\begin{theorem}
\label{th:compat}
Let $Z:\Ga=\bZ Q_0\to\bC$ be a stability function such that the corresponding stability condition $\si$ on $D^b(\mmod J_{W,I})$ has global dimension $\le 2$.
Then, for any ray $\ell\sbs\bH$, we have
$$A_{\si,\ell}=A_{Z,\ell}^{Q,W}.$$
\end{theorem}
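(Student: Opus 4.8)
The plan is to compare the two families of stability data through the single formula that links $Q$-representations and $J_{W,I}$-representations. Concretely, on the right side we have $A_{Z,\ell}^{Q,W}$, built from $[R_Z(Q,d),\tr W]$, and on the left side $A_{\si,\ell}$ built from $[R_Z(J_{W,I},d)]$, and by \eqref{ga-I} we already know $[R(Q,d),\tr W]=\bL^{\ga_I(d,d)}[R(J_{W,I},d)]$. First I would observe that this cut identity localises to semistable loci: the open subset $R_Z(Q,d)\sbs R(Q,d)$ of $Z$-semistable representations is the preimage of $R_Z(Q',d)$ under the projection $R(Q,d)\to R(Q',d)$ forgetting the $I$-arrows (since $Z$-stability only sees dimension vectors, not the arrow data of the cut), and $R_Z(J_{W,I},d)=R_Z(Q',d)\cap R(J_{W,I},d)$. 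Hence the same argument as in \cite[\S5]{mozgovoy_translation} gives
\begin{equation}
[R_Z(Q,d),\tr W]=\bL^{\ga_I(d,d)}\cdot[R_Z(J_{W,I},d)].
\end{equation}

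Next I would reconcile the powers of $\bL$ appearing in the two quantum-torus generators. On the quiver side the twist in front of $x^d$ is $\bL^{\oh\hi_Q(d,d)}$ and the exponential class contributes $\bL^{\ga_I(d,d)}$, so the total power is $\oh\hi_Q(d,d)+\ga_I(d,d)$. On the surface side the twist is $\bL^{\oh\bar\hi_X(d,d)}$ with $\bar\hi_X(d,d)=\hi_Q(d,d)+2\ga_I(d,d)$ by the displayed formula just before Remark~\ref{phenom} (using $\ga_I(d,d)+\ga_I(d,d)=2\ga_I(d,d)$), so $\oh\bar\hi_X(d,d)=\oh\hi_Q(d,d)+\ga_I(d,d)$. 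These agree. Moreover the two quantum tori are literally the same algebra, because the antisymmetrised Euler form $\ang{d,d'}=\bar\hi_X(d,d')-\bar\hi_X(d',d)=\hi_Q(d,d')-\hi_Q(d',d)$ coincides with the one used in \eqref{A-Zl-QW} (the symmetric correction terms $\ga_I(d,d')+\ga_I(d',d)$ cancel under antisymmetrisation), so the products $\prod^\to_{\ell}$ are taken in the same completed algebra $\hat\bT$. Combining, term by term the coefficient of $x^d$ in $A_{Z,\ell}^{Q,W}$ equals the coefficient of $x^d$ in $A_{\si,\ell}$, i.e. $A_{\si,\ell}=A_{Z,\ell}^{Q,W}$ for each ray $\ell$.

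There is one subtlety I would need to address carefully: the identity of the abelian categories underlying the two notions of $Z$-semistability. The stability condition $\si=(Z,\cP)$ on $D^b(\Coh X)\iso D^b(\mmod J_{W,I})$ has, by hypothesis, heart $\mmod J_{W,I}$, and $Z:\bZ Q_0\to\bC$ is a stability function on it; on the other hand $A_{Z,\ell}^{Q,W}$ is defined via $Z$-semistability in $\Rep Q=\mmod\bC Q$. So I must check that for a dimension vector $d$ a representation $M\in R(J_{W,I},d)$ is $Z$-semistable as a $J_{W,I}$-module if and only if, viewed (via the projection that forgets the cut arrows) as a point of $R_Z(Q,d)$ lying on the critical locus, it is $Z$-semistable as a $Q$-representation. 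This is immediate because semistability with respect to a stability function depends only on the subobject lattice and the dimension vectors, and subobjects of $M$ as a $J_{W,I}$-module are exactly the subrepresentations as a $Q'$-representation, which match subrepresentations of the corresponding $Q$-representation supported on the relations. The assumption $\gldim(\si)\le2$ plays no role in the identity itself — it is needed only to know (as in Corollary~\ref{cor:WC2}) that $A_{\si,\ell}$ genuinely assembles into stability data obeying the wall-crossing formula, so that comparing it ray-by-ray with $A_{Z,\ell}^{Q,W}$ from Theorem~\ref{WC-potential} is meaningful; I would state this at the start so the reader sees why the hypothesis is recorded.

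The main obstacle I anticipate is purely bookkeeping-of-signs: making sure the $\ga_I$-shift from the cut, the $\oh\hi$-shift in the integration map, and the Euler-form normalisations used in \eqref{T}, \eqref{A0QW}, \eqref{A-Zl-QW}, \eqref{AZl-quiver}, and \eqref{AZl-quiver}'s $\bar\hi_X$ all line up so that not just the products but the individual generators $\bL^{(\cdots)}x^d$ coincide. Once the arithmetic $\oh\bar\hi_X(d,d)=\oh\hi_Q(d,d)+\ga_I(d,d)$ and the antisymmetric-form agreement are verified, the proof is a one-line term-by-term comparison; essentially no geometry of $\bP^2$ beyond Lemma~\ref{lm:forms1} enters, which is consistent with the remark that the phenomenon should generalise to other weak Fano surfaces.
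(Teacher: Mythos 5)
Your proposed term-by-term comparison has a genuine gap at its central step: the claim that the cut identity $[R(Q,d),\tr W]=\bL^{\ga_I(d,d)}\cdot[R(J_{W,I},d)]$ localizes to semistable loci. You justify this by asserting that $R_Z(Q,d)$ is the preimage of $R_Z(Q',d)$ under the projection $R(Q,d)\to R(Q',d)$ forgetting the arrows of the cut, ``since $Z$-stability only sees dimension vectors''. But semistability sees the subobject lattice, and the subobject lattice of a $Q$-representation depends on all arrows: a subrepresentation of the underlying $Q'$-representation need not be preserved by the arrows in $I$. Hence $R_Z(Q,d)$ contains the preimage of $R_Z(Q',d)$ but is in general strictly larger. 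Concretely, for the quiver \eqref{hatQ} with cut $I=\set{c_1,c_2,c_3}$, take $d=(1,0,1)$ and $Z$ with $\vi(e_2)<\vi(e_0)$: a $Q$-representation with some $c_i\ne0$ has $S_2$ as its only proper nonzero subobject and is $Z$-stable, while its underlying $Q'$-representation is $S_0\oplus S_2$ and is destabilized by $S_0$; so $\pi\inv(R_Z(Q',d))$ is empty while $R_Z(Q,d)$ is not. Consequently $\tr W$ restricted to $R_Z(Q,d)$ is no longer a linear form integrated over full affine fibres over the $Q'$-semistable locus, and the identity $[R_Z(Q,d),\tr W]=\bL^{\ga_I(d,d)}[R_Z(J_{W,I},d)]$ --- which is essentially equivalent to the theorem --- cannot be obtained this way. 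Your bookkeeping of the powers of $\bL$ and the identification of the two quantum tori are correct, but they only match up the generators, not the coefficients.

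This is precisely why the paper argues indirectly. One first compares the two \emph{total} series for a trivial stability condition $\si_0$ (central charge $Z_0$ sending all $e_i$ into a single ray $\ell_0$), where every object is semistable and the cut identity \eqref{ga-I} applies verbatim, giving $A_{\si_0,\ell_0}=A_0^{Q,W}$. One then factors both sides over rays of $\bH$: on the quiver side by Theorem \ref{WC-potential}, and on the module side by Corollary \ref{cor:WC2} --- and this is exactly where the hypothesis $\gldim(\si)\le2$ enters. It is not, as you suggest, merely a condition making the comparison ``meaningful''; it is the engine that produces the factorization $A_{\si_0,\ell_0}=A_{\si,V}=\prod^\to_{\ell\sbs\bH}A_{\si,\ell}$ in the quantum torus. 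Uniqueness of the clockwise factorization into ray components (Lemma \ref{lm:factorization}) then forces $A_{\si,\ell}=A_{Z,\ell}^{Q,W}$ for every $\ell$. Salvaging your direct approach would require an independent proof of the semistable cut identity, which amounts to re-proving the theorem.
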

\begin{proof}
Let $\ell_0\sbs V=\cone\set{Z(e_i)\col i\in Q_0}\sbs \bH$
be a ray and let $Z_0:\Ga\to\bC$ be such that $Z_0(e_i)\in\ell_0$ for all $i\in Q_0$.
Let $\si_0$ be the corresponding stability condition on 
$D^b(\mmod J_{W,I})$
(we will call it trivial as all objects in $\mmod J_{W,I}$ are automatically semistable).
Then we have \eqref{AZl-quiver}
$$A_{\si_0,\ell_0}
=1+\sum_{Z_0(d)\in\ell_0}\bL^{\oh\bar\hi_X(d,d)}
\frac{[R(J_{W,I},d)]}{[\GL_d]}x^d.$$
This series coincides with $A_0^{Q,W}$ \eqref{A0QW} as
$$
\bL^{\oh\bar\hi_X(d,d)}[R(J_{W,I},d)]
=\bL^{\oh\hi_Q(d,d)+\ga_I(d,d))}[R(J_{W,I},d)]
=\bL^{\oh\hi_Q(d,d)}\cdot [R(Q,d),\tr W].
$$
By the assumption that $\si$ has global dimension $\le2$, we obtain (see Corollary \ref{cor:WC2})
$$A_{\si_0,\ell_0}
=A_{\si,V}=\prod^\to_{\ell\sbs\bH}A_{\si,\ell}.$$
By Theorem \ref{WC-potential}, we have $A_{0}^{Q,W}=\prod^\to_{\ell\sbs\bH}A_{Z,\ell}^{Q,W}$.
We conclude that $A_{\si,\ell}=A_{Z,\ell}^{Q,W}$.
\end{proof}

\subsection{Gluing families of stability data}
\label{sec:gluing}
Let $X=\bP^2$ and $H$ be the ample divisor such that $H^2=1$.
We will identify $N^*_\bR(X)$ with $\bR^3$ by sending $a\mto(a_0,a_1H,a_2)$.
Let (\cf \S\ref{altern param})
$$\be+\bi\om=(s+\bi t)H,\qquad
s\in\bR,\,t\in\bR_{>0}.$$
We consider the abelian category $\cA_{\be,\om}=\Coh_{\om,\#\be\om}=\Coh_{H,\#s}$
and the stability function 
(see~ \eqref{Zst})
\begin{equation}
Z_{\be,\om}(E)=Z'_{s,t}(a)=\rbr{ya_0+sa_1-a_2}+\bi t\rbr{a_1-sa_0},
\qquad a=\ch(E),
\end{equation}
where $y=\oh(t^2-s^2)>-\oh s^2$.
Let $\si_{\be,\om}$ be the corresponding
stability condition on $D^b(\Coh X)\iso D^b(\mmod J_{W,I})$.
The following result is similar to \cite[\S4]{bousseau_scattering}, although one works with a different exceptional collection there.

\begin{lemma}
\label{lm:special param}
Assume that
\begin{equation}\label{50}
s>-\toh,\qquad t>0,\qquad s+t<0.
\end{equation}
Then the heart of $\si_{\be,\om}[\toh]$ is equal to $\mmod J_{W,I}$.
\end{lemma}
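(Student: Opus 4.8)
The plan is to realise the required heart as the heart of a stability condition built by hand from a stability function on $\mmod J_{W,I}$, and then to identify that stability condition with $\si_{\be,\om}[\tfrac12]$ via Theorem~\ref{th:unique shift}. By \eqref{shift slicing} and \eqref{stab shift}, $\si_{\be,\om}[\tfrac12]$ has central charge $-\bi Z_{\be,\om}$ and heart $\cP_{(1/2,3/2]}$, where $\cP$ is the slicing of $\si_{\be,\om}$, so the claim is that $\cP_{(1/2,3/2]}=\mmod J_{W,I}$. Recall from \S\ref{sec:simple} that, under $\cD=D^b(\Coh\bP^2)\iso D^b(\mmod J_{W,I})$, the abelian category $\mmod J_{W,I}$ is the extension closure of the three simple objects $S_0=\cO$, $S_1=\Om_X(1)[1]$, $S_2=\cO(-1)[2]$, with Chern characters \eqref{simple chern}; it is a finite length category, and these are its only simple objects.

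First I would compute $-\bi Z_{\be,\om}(S_i)$ for $i=0,1,2$ by applying \eqref{Zst} to \eqref{simple chern}, and check that under \eqref{50} each of these three complex numbers lies in the open upper half plane $\bH$ (in fact $-\bi Z_{\be,\om}(S_0)$ and $-\bi Z_{\be,\om}(S_1)$ lie in the open first quadrant and $-\bi Z_{\be,\om}(S_2)$ in the open second quadrant); this is the step where all three inequalities of \eqref{50} get used. Since the class of any object of $\mmod J_{W,I}$ is the non-negative integer combination of $\ch(S_0),\ch(S_1),\ch(S_2)$ read off from its dimension vector, it follows that $-\bi Z_{\be,\om}$ sends every nonzero object of $\mmod J_{W,I}$ into $\bH$, i.e. $-\bi Z_{\be,\om}$ is a stability function on $\mmod J_{W,I}$; it satisfies the Harder--Narasimhan property automatically because $\mmod J_{W,I}$ has finite length, and it satisfies the support property because $\Re(-\bi Z_{\be,\om})<0$ on each $S_i$, so $\n{-\bi Z_{\be,\om}(\ga)}$ is bounded below on the unit sphere of the closed pointed cone $\sum_i\bR_{\ge0}\ch(S_i)$. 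Hence $\tau:=(-\bi Z_{\be,\om},\mmod J_{W,I})$ is a stability condition on $\cD$, and it is special since $\Ker(-\bi Z_{\be,\om})=\Ker Z_{\be,\om}$, on which $\De$ is negative-definite.

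Next I would show $\tau$ is geometric. Under $\cD\iso D^b(\mmod J_{W,I})$ the skyscraper $\cO_x$ corresponds to the $(1,1,1)$-dimensional representation of the quiver \eqref{Q1} whose only proper nonzero subrepresentations have dimension vectors $(1,0,0)$ and $(1,1,0)$ (as $x\in\bP^2$ has a nonzero coordinate, the scalars attached to the arrows are not all zero); by \eqref{a-d} the corresponding classes are $(1,0,0)$, $(-1,1,\tfrac12)$ and $\ch\cO_x=(0,0,1)$, and a direct computation with \eqref{Zst} shows that under \eqref{50} the $\tau$-phases of these two proper subobjects are strictly smaller than the $\tau$-phase $\tfrac12$ of $\cO_x$. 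Thus every $\cO_x$ is $\tau$-stable of phase $\tfrac12$, so $\tau$ is a special geometric stability condition. Finally, $\tau$ and $\si_{\be,\om}[\tfrac12]$ have the same central charge $-\bi Z_{\be,\om}=Z_{-\bi e^{\be+\bi\om}}$; since $-\bi e^{\be+\bi\om}$ lies in the $\GL_2^+(\bR)$-orbit of $e^{\be+\bi\om}\in\bar\cP^+(X)$, Lemma~\ref{lm:pos planes} together with Theorem~\ref{th:unique shift} forces both $\tau$ and $\si_{\be,\om}[\tfrac12]$ to lie in the $\wtl\GL_2^+(\bR)$-orbit of $\si_{\be,\om}$, differing from it by a lift of multiplication by $-\bi$; and since $\cO_x$ is stable of the same phase $\tfrac12$ for both (for $\si_{\be,\om}[\tfrac12]$ because $\cO_x$ has $\si_{\be,\om}$-phase $1$), these two lifts coincide. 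Therefore $\tau=\si_{\be,\om}[\tfrac12]$, and comparing hearts gives $\cP_{(1/2,3/2]}=\mmod J_{W,I}$.

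The step I expect to cost the most care is the last one: checking that equality of central charges together with the common $\cO_x$-phase really pins down $\tau=\si_{\be,\om}[\tfrac12]$ and not some even shift of it within the same orbit, and organising the handful of central-charge computations (especially those for the subrepresentations of $\cO_x$) so that exactly the region \eqref{50} drops out. Everything after the construction of $\tau$ and the verification that it is special geometric is formal and handled by Theorem~\ref{th:unique shift}.
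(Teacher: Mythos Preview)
Your argument is correct apart from one slip: in justifying the support property you write ``$\Re(-\bi Z_{\be,\om})<0$ on each $S_i$,'' but two sentences earlier you correctly placed $-\bi Z_{\be,\om}(S_0)$ and $-\bi Z_{\be,\om}(S_1)$ in the open first quadrant, where the real part is positive. The inequality you actually need is $\Im\bigl(-\bi Z_{\be,\om}(S_i)\bigr)=-\Re Z_{\be,\om}(S_i)>0$ for all $i$, which you do have; this places the three values in a common open half-plane, hence in a strict cone, and the support property follows. The final identification step is fine: equality of central charges determines the image in $\GL_2^+(\bR)$ via Lemma~\ref{lm:pos planes}, and the common $\cO_x$-phase $\tfrac12$ then selects the same lift to $\wtl\GL_2^+(\bR)$, so $\tau=\si_{\be,\om}[\tfrac12]$ exactly as you claim.

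Your route is genuinely different from the paper's. The paper works directly inside the slicing of $\si_{\be,\om}$: it shows $\cO,\ \Om_X(1)[1],\ \cO(-1)[1]\in\cA_{\be,\om}=\cP_{(0,1]}$, reads off the signs of $\Re Z_{\be,\om}$ on the three simples of $\mmod J_{W,I}$, and from this concludes that each $S_i$ lies in $\cP_{(1/2,3/2]}$; since $\mmod J_{W,I}$ is the extension closure of the $S_i$ and one heart contained in another must equal it, the result follows. The passage from ``$\Re Z_{\be,\om}(S_i)<0$'' to ``$S_i\in\cP_{(1/2,3/2]}$'' tacitly uses that each $S_i$ (or its shift by $-1$) is $\si_{\be,\om}$-semistable, which the paper does not argue. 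Your approach sidesteps this by building $\tau$ on $\mmod J_{W,I}$ from scratch, verifying geometricity via the explicit quiver description of $\cO_x$, and invoking the uniqueness in Theorem~\ref{th:unique shift}; it is longer and requires the extra $\tau$-stability check for $\cO_x$, but it is fully self-contained within what the paper proves and showcases Theorem~\ref{th:unique shift} nicely.
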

\begin{proof}
Let $\si=(Z,\cP):=\si_{\be,\om}$ and let $\cA_\si[\oh]$ denote the heart of $\si[\oh]$. 
Recall from \S\ref{sec:simple} that the category 
$\cA_0=\mmod J_{W,I}$ has simple objects 
\begin{equation*}
\cO,\, \Om_X(1)[1],\,\cO(-1)[2].
\end{equation*}
We will show that the objects
\begin{equation}\label{3obj}
\cO,\, \Om_X(1)[1],\,\cO(-1)[1]
\end{equation}
are contained in $\cA_{\be,\om}$ and
\begin{equation}\label{real parts}
\Re Z(\cO)<0,\qquad
\Re Z(\Om_X(1)[1])<0,\qquad
\Re Z(\cO(-1)[1])>0.
\end{equation}
Then $\Re Z$ is negative on all simple objects of $\cA_0$, 
hence all these simple objects are contained in $\cA_\si[\oh]$.
This implies that $\cA_0\sbs\cA_\si[\oh]$.
As both categories are hearts of bounded t-structures, we conclude that $\cA_0=\cA_\si[\oh]$ as required.

\begin{ctikz}
\draw[->] (-4,0)--(4,0);
\draw[->] (0,-2)--(0,2);
\draw[->](0,0)--(2,1.5)
	node[label={right:$Z(\cO(-1)[1])$}]{};
\draw[->](0,0)--(-2,-1.5)
	node[label={left:$Z(\cO(-1)[2])$}]{};
\draw[->](0,0)--(-1.5,1)
	node[label={$Z(\Om_X(1)[1])$}]{};
\draw[->](0,0)--(-3.5,.6)
	node[label={$Z(\cO)$}]{};
\end{ctikz}

Let us show first that $\Om_X(1)$ is $\mu_H$-semistable.
As $\Om_X(1)=\cQ\dual$, where $\cQ$ was defined in \eqref{Qseq},
it is enough to show that $\cQ$ is $\mu_H$-semistable.
We have $\ch \cQ=(2,1,-1/2)$, hence $\mu_H(\cQ)=1/2$.
There are no non-trivial morphisms $\cO(k)\to \cQ$ for $k\ge1$ because of the exact sequence \eqref{Qseq}.
Therefore $\cQ$ is indeed $\mu_H$-semistable.

We have $$\mu_H(\cO)=0,\qquad \mu_H(\Om_X(1))=-1/2,\qquad 
\mu_H(\cO(-1))=-1.$$
As $-\oh<s<0$, we conclude that $\cO\in\Coh_{H,>s}$ and $\Om_X(1),\cO(-1)\in\Coh_{H,\le s}$.
Therefore all objects in \eqref{3obj} are contained in $\Coh_{H,\#s}=\cA_{\be,\om}$.
Let us check that \eqref{real parts} is satisfied.
Recall that ~\eqref{simple chern}
\begin{equation*}
\ch\cO=(1,0,0),\qquad \ch\Om_X(1)[1]=(-2,1,\toh),\qquad \ch\cO(-1)=(1,-1,\toh).
\end{equation*}
Therefore we require (with $y=\oh(t^2-s^2)$)
\begin{equation*}
y<0,\qquad -2y+s-\toh<0,\qquad y-s-\toh<0.
\end{equation*}
Under the assumptions $-\oh<s<0$ and $-\oh s^2<y<0$, the other conditions follow.
As $y=\oh(t^2-s^2)$, these assumptions are equivalent to
\eqref{50}.
\end{proof}

Choosing parameters $(s,t)$ as in the previous lemma,
we obtain a special geometric stability condition $\si=\si_{\be,\om}[\oh]$ and central charge $Z=Z_{\be,\om}[\oh]=\bi\inv Z_{\be,\om}$
such that the heart of $\si$ is equal to $\cA_\si=\mmod J_{W,I}$.
The corresponding linear map $Z:\cN(X)\iso\bZ Q_0\to\bZ$ 
satisfies the assumptions of Theorem \ref{th:compat}, hence we obtain $A_{\si,\ell}=A_{Z,\ell}^{Q,W}$.
On the other hand, the series $A_{\si,\ell}$ defined in \eqref{AZl-quiver} (for the category $D^b(\mmod J_{W,I})$) and
$A_{\si,\ell}$ defined in \eqref{A-si-l} (for the category $D^b(\Coh X)$) are the same.

Note that stability data $A_\si=(A_{\si,\ell})_\ell$ is defined for every special geometric stability condition $\si=(Z,\cP)\in\Stab^+_{\cl}(\cD)$
\eqref{sg stab}.
The discriminant $\De$ is negative-definite on $\Ker Z$, hence by Theorem \ref{th:unique lift} stability data $A_\si$ depends just on $Z\in\cP^+(X)\sbs\Hom(\Ga,\bC)$ and we will denote it by $A_Z$.
On the other hand, stability data $A_{Z}^{Q,W}$ is defined for every $Z\in\cP(Q):=\GL_2^+(\bR)\cdot\bH^{Q_0}\sbs\Hom(\bZ Q_0,\bC)\iso\Hom(\Ga,\bC)$.
We conclude from Theorem ~\ref{th:compat} and Lemma \ref{lm:special param}
that we can glue together stability data
for the category $D^b(\Coh X)$ and stability data for the quiver with potential $(Q,W)$ and define a family of stability data parametrized by $\cP^+(X)\cup\cP(Q)$.

\bibliography{../../../tex/papers}
\bibliographystyle{../../../tex/hamsplain}
\end{document}